\newcommand{\citep}[1]{\cite{#1}}
\title{Universality of max-margin classifiers}
\author{Andrea Montanari\thanks{Department of Electrical Engineering
    and Department of Statistics, Stanford University}
    \;\;\;\;Feng
  Ruan\thanks{Department of Statistics and Data Science, Northwestern University} \;\;\;\; 
  Basil Saeed\thanks{Department of Electrical Engineering, Stanford University} \;\;\;\; 
  Youngtak Sohn\thanks{Department of Mathematics, Massachusetts Institute of Technology}}
\begin{document}

\maketitle

\begin{abstract}%
Maximum margin binary classification is one of the most fundamental algorithms in machine learning, yet the role of featurization maps and the high-dimensional asymptotics of the misclassification error for non-Gaussian features are still poorly understood. We consider settings in which we observe binary labels $y_i$ and either 
$d$-dimensional covariates $\bz_i$ that are mapped to a $p$-dimension space via a randomized featurization map $\bphi:\R^d \to\R^p$, or $p$-dimensional features of non-Gaussian independent entries. In this context, we study two fundamental questions: $(i)$~At what overparametrization ratio $p/n$
do the data become linearly separable? $(ii)$ What is the generalization error of the max-margin classifier?

Working in the high-dimensional regime in which the number
of features $p$, the number of samples $n$
and the input dimension $d$ (in the nonlinear featurization setting) 
diverge, with ratios of order one,
we prove a universality result establishing that the asymptotic behavior is
completely determined by the expected covariance of feature vectors 
and by the covariance between features and labels.
In particular, the overparametrization threshold and generalization error can 
be computed within a simpler Gaussian model. 

The main technical challenge  lies in the fact that max-margin is not the 
maximizer (or minimizer) of an empirical average,  but the maximizer of a minimum over the samples.  We address this by representing the classifier as an average over support vectors. Crucially, we find that in high dimensions, the support vector count is proportional to the number of samples, which ultimately yields universality. 
\end{abstract}

\newcommand{\opt}{^\star}
\def\hkappa{{\widehat\kappa}}
\def\sRF{\mbox{\tiny\rm RF}}
\def\sind{\mbox{\tiny\rm IND}}
\def\RF{{\sf RF}}
\def\ind{\sf ind}
\newcommand{\minimize}{{\rm minimize}}
\newcommand{\subjectto}{\text{{\rm subject to}}}

\section{Introduction}
\label{section:intro}
\subsection{Motivation and overview of results}

Given linearly separable samples $\{(\bz_i,y_i)\}_{i\in[n]}$, $\bz_i\in\reals^d$, $y_i\in\{+1,-1\}$
the maximum margin classifier  maximizes the margin 
$\min_{i\in[n]} y_i \<\bz_i,\btheta\>$ over $\|\btheta\|_2\le 1$.
This algorithm is one of the most fundamental in machine learning
and theory developed in this setting has often provided the foundation for understanding more complex methods~\cite{shalev2014understanding}.  
 
If the data  $\{(\bz_i,y_i)\}_{i\in[n]}$ are not linearly separable,  
we can nevertheless perform max-margin classification by first mapping the covariates $\bz_i$ to 
a higher-dimensional space via a nonlinear featurization map $\bphi:\R^d \to\R^p$. We can  
then learn a classifier by
\begin{equation}
 \widehat\btheta_\MM = \argmax_{\norm{\btheta}_2 = 1} 
 \min_{i\in[n]} y_i \langle\bphi(\bz_i),\btheta\rangle .\label{eq:FirstMaxMargin}
\end{equation}
While such ideas date back to several decades ago, the role of the featurization map in the performance of the algorithm is poorly understood. 
For instance, the most basic question that one might 
ask is:
\begin{enumerate}
 \item[{\sf Q1.}] \emph{How large does $p$, the dimension of the features, need to be for the data to become linearly separable?}
\end{enumerate}
The classical theory replaces the finite dimensional space $\R^p$ by a Hilbert space $\cH$,
hence effectively taking $p=\infty$. More precisely $\cH$ is
(isomorphic to) a reproducing kernel Hilbert space (RKHS).  If the associated kernel
is ``non-degenerate'' (it is strictly positive definite) then it is dense in $L^2$, and
therefore any dataset is linearly separable in this space (provided $\bz_i\neq\bz_j$
for $i\neq j$). 
In contrast with the finite-$p$ case, this classical treatment is non-quantitative.

The importance of the finite-$p$ setting comes from at least two contexts.
\emph{First,} solving the max-margin problem \eqref{eq:FirstMaxMargin}  in the RKHS $\cH$ requires time at least
$n^2$, which is not feasible for massive datasets. Randomized, finite-dimensional, featurization
maps of the type studied here were introduced precisely to alleviate this computational bottleneck
\cite{balcan2006kernels,rahimi2007random,rahimi2008random}. \emph{Second,}
recent work has established that deep learning models can be approximated, in 
the so called `lazy training' regime, by linear models with randomized featurization maps, see e.g.
\cite{du2018gradient,oymak2019overparameterized,bartlett2021deep}.
The number of dimensions $p$ of these featurization maps 
(`neural tangent features') is equal to the number of parameters
in the original deep learning model. Therefore it can be large but never infinite.
In this latter context, maximum margin plays a special role since gradient descent with respect
to the cross-entropy (a.k.a. logistic) loss is known to converge to the max-margin
classifier \citep{soudry2018implicit}.

Beyond {\sf Q1}, a number of other questions are of interest for the max margin 
classifier. Among others:
\begin{enumerate}
 \item[{\sf Q2.}] \emph{How does the margin} 
\begin{equation}
\label{eqn:definition-of-max-margin-bD}
\hkappa_n := \max_{ \norm{\btheta}_2 \le 1}
		\min_{1\le i \le n} y_i \langle \bphi(\bz_i), \btheta\rangle\, 
\end{equation}
\emph{behave?}
In particular, how large does overparameterization ratio $p/n$ need to be for
the margin to approach its $p=\infty$ limit?
\item[{\sf Q3.}] \emph{How does the test classification error of the max margin classifier}
\begin{equation}
R(\widehat\btheta_{\MM}) = \P_{(\bz_\new, y_\new)}\left( y_\new \langle\widehat \btheta_\MM , \bphi(\bz_\new)\rangle \le 0\right)
\end{equation}
\emph{behave?}
From a statistical viewpoint, this is the most important question, since its answer can guide the choice 
of the number of features used. 
\end{enumerate}

Let us emphasize that sharp answers to these questions cannot be obtained by adapting
classical RKHS arguments that require $p=\infty$. Evidence of this
is provided by the fact that the threshold for the data to
become linearly separable is for $p/n$ of order one. 
Indeed, under the model described below, the data become separable, with high probability, provided 
$\lim\inf (p/n)> \gamma_{\star}$, for a suitable constant $\gamma_{\star}$ (this is 
a consequence of our results).
When $p$ is comparable with $n$, even concentration of the empirical kernel matrix around expectation 
breaks down \cite{pennington2017nonlinear,mei2022generalization}, and therefore obtaining sharp results requires to go beyond concentration ideas.

Our approach is instead based on establishing a \emph{universality statement}.
Roughly speaking, the answer to questions ${\sf Q1}$, ${\sf Q2}$, ${\sf Q3}$,
depends on the details of the featurizazation map $\bphi$ only through its
second-order statistics $\bSigma := \E[\bphi(\bz_i)\bphi(\bz_i)^{\sT}]$ 
(for simplicity, we will assume $\E[\bphi(\bz)]=\bzero$), and through its correlation with $y$.
As a consequence of this universality, calculations can be carried out
in an equivalent Gaussian model, whereby $\bx_i=\bphi(\bz_i)$ is replaced by 
$\bg_i\sim \cN(0,\bSigma)$. This Gaussian model is amenable to a variety of analytical techniques.

To be precise, we will prove universality results for two models of the features $\bx_i$:
\begin{enumerate}
\item[$(I).$] The \emph{random features} map of  \cite{balcan2006kernels,rahimi2007random}
\begin{equation}
 \bphi(\bz_i) = \sigma(\bW \bz_i)\, ,\label{eq:RandomFeatures}
\end{equation}
where $\sigma$  is a non-linear activation that acts element-wise, and 
$\bW\in\R^{p\times d}$ is a a random weight matrix.
In words, $\bphi(\,\cdot\,)$ corresponds to the application of a one-layer
neural network with random weights. We will take the initialization where the rows $(\bw_i)_{i\leq p}$ of $\bW$ are drawn independently and uniformly on the unit sphere $\bw_i\sim \mathsf{Unif}(\mathbb{S}^{d-1}(1))$.
\item[$(II).$] An \emph{independent features} model, whereby $\bx_i$ has independent 
centered entries with variance bounded away from zero and infinity. While this model does not arise from a particularly motivated featurization map, it is nevertheless a canonical 
random matrix model, and provides an important validation for our universality claims.
\end{enumerate}

We briefly summarize our universality results for the random features model $(I)$ above, 
deferring a more complete and precise presentation (including model $(II)$) to Section
\ref{sec:Main}. We assume a stylized data model whereby the covariates vectors $\bz_i$ 
do not have special structure and labels depend on a one-dimensional projection of the covariates.
Namely, the  $(\bz_i,y_i)_{i\in[n]}$ are i.i.d.
with
\begin{equation}
\label{eq:law:y}
	 \P(y_i = +1 | \bz_i) =  f(\bz_i^{\sT}\bbeta^\star ),\quad \norm{\bbeta^\star}_2=1, \quad \bz_i \sim \normal(0, I_d)\, .
\end{equation}
We will consider the proportional asymptotics:
\begin{equation}
	n, d, p\to \infty~~\text{with}~~
 		\frac{p}{d} \to \gamma_1 ,\quad \frac{n}{d} \to \gamma_2\, ,\label{eq:ProportionalAsymptotics}
\end{equation} 
and study the max margin classifier of Eq.~\eqref{eq:FirstMaxMargin}
with the featurization map of Eq.~\eqref{eq:RandomFeatures}. 

We compare this data distribution with an equivalent Gaussian model whereby
the nonlinear features $\bx_i = \sigma(\bW\bz_i)$ are replaced by Gaussian features
\begin{align}
\bg_i = \mu_0 + \mu_1 \bW \bz_i+ \mu_{2} \bh_i\, ,\label{eq:EquivGauss}
\end{align}
where $\mu_j$ are expressed in terms of coefficients of the expansion of $\sigma(\,\cdot\,)$ in Hermite polynomials (see Eq.~\eqref{eq:hermite:coeff}) and $\bh_i\sim\normal(0,\id_p)$ is independent of $\bz_i$. The features 
$\bg_i$  are Gaussian conditionally on $\bW$, and 
are constructed so that  the mean and covariance
of $(\bz_i,\bg_i)$ match those of $(\bz_i,\bphi(\bz_i))$ (always conditionally on $\bW$). 
The coefficients of Eq.~\eqref{eq:EquivGauss} are chosen so that 
$\|\bSigma_{\bx}-\bSigma_{\bg}\|_{\op} = o_n(1)$ under the proportional
asymptotics \eqref{eq:ProportionalAsymptotics}.
In the equivalent model, we seek a max-margin classifier: $\max_{\norm{\btheta}_2 = 1} 
 \min_{i\in[n]} y_i \langle\bg_i,\btheta\rangle$. 
 
Assuming the activation function to be twice differentiable, we 
establish universality of the margin and test error. Namely, denoting by
$\hkappa_n^{\bX}, R^{\bX}_n$ the margin and risk in the random features model, and 
by $\hkappa_n^{\bX}, R^{\bX}_n$ the same quantities in the Gaussian model, we prove that
\begin{align}
\big|\hkappa_n^{\bX} - \hkappa_n^{\bG}\big| &= o_\P(1)\, ,\\
\big|R^{\bX}_n - R_n^{\bG}\big| &= o_\P(1)\,.
\end{align}

\subsection{Technical innovation}
\label{sec:Technical}

A rich line of work investigated universality in high-dimensional statistics and 
statistical learning in the past \citep{korada2011applications,oymak2018universality,MontanariNg17,HuLu22,MontanariSa22}. 
Universality results are particularly useful because they allow
leveraging powerful techniques that are available for Gaussian features models in order to 
analyze non-Gaussian ones. 
However, all this work considers  \emph{empirical risk minimization (ERM)} problems of the form
\begin{equation}
\label{eqn:erm_general}
 \widehat\btheta  = \argmin_{\btheta} \cL(\btheta; \bZ, \by), \quad \quad
 \cL(\btheta; \bZ,y) = \frac1n \sum_{i=1}^n \ell(\btheta^\sT\bphi(\bz_i) , y_i) + r(\btheta)
\end{equation}
for some loss $\ell$ and possibly a regularizer $r$.
The case of the max-margin  presents conceptual challenges 
 that are not present in the case of empirical risk minimization.

To understand the additional difficulty, consider the margin
$\widehat\kappa_n$ defined in~\eqref{eqn:definition-of-max-margin-bD}.
Unlike the ERM setting in~\eqref{eqn:erm_general} where the objective $\cL(\btheta; \bZ, \by)$ is 
an average over i.i.d. data points,
the corresponding objective in the margin $\widehat \kappa_n$ is
a  \emph{minimum} over the data points.
For instance, stating that $\widehat{\kappa}_n >0$ (with high probability) 
requires the existence of a vector $\btheta$ such that the margin is strictly positive 
(with high probability) for \emph{every} single data point, and not only on average.

In order to understand the challenge, it is instructive to first revisit some of the approaches 
in the literature.
Consider the following optimization problem used in the analysis of 
max-margin classifiers~\cite{Stojnic13, montanari2019generalization}: 
\begin{equation}\label{eq:surrogate}
	\widehat F_n(\bD; \kappa) =
		\min_{\btheta: \norm{\btheta}_2 \le 1}
		 	\frac{1}{n} \sum_{i=1}^n (\kappa - y_i \langle \bd_i, \btheta\rangle)_+^2,
\end{equation}
where $\kappa >0$ is a fixed parameter, and we denote by $\bD$
either one of the random features model $\bX$ or the equivalent Gaussian features model
$\bG$. 
The connection  between this function and the max-margin is given by
\begin{equation}
\label{eqn:relation-between-F-and-kappa}
	\hkappa^{\bD}_n = \inf\big\{\kappa: \widehat F_n(\bD; \kappa) > 0\big\}. 
	%\
\end{equation} 
One might be tempted to
apply universality results in the literature \citep{HuLu22,MontanariSa22} to this formuation as follows:
\begin{enumerate}
\item Establish the universality of the surrogate function 
$\kappa \mapsto \widehat F_n(\bD; \kappa)$.
This can be done as a corollary of past results since Eq.~\eqref{eq:surrogate} takes the form of an ERM problem.
\item Exploit the connection~\eqref{eqn:relation-between-F-and-kappa} to translate the universality 
	of $\widehat F_n$ to that of the max-margin $\widehat \kappa_n$.
\end{enumerate}
This scheme, however, faces important challenges.
Existing universality results of $\widehat F_n$ yield: 
\begin{equation}
\label{eqn:hypo-universality-of-F_n}
	\norm{\widehat F_n(\bG; \; \cdot\;) - \widehat F_n(\bX; \;\cdot\;)} \to 0 
\end{equation}
under a suitable norm on the space of functions of $\kappa$. 
The strongest such metric one could 
hope for is the $\ell_\infty$ metric.
However,  establishing  universality of $\widehat F_n$, even under the $\ell_\infty$ metric, 
does not lead to the desired universality of max-margin.
Indeed, from equation~\eqref{eqn:relation-between-F-and-kappa},   $\hkappa_n$ is not a 
continuous functional
$\widehat F_n$: two functions $\widehat F_n(\bG; \; \cdot\;)$, $\widehat F_n(\bX; \;\cdot\;)$
can be uniformly close and the corresponding margins can be far.

This paper 
supplements ingredients necessary towards a rigorous proof of  universality.
Our approach leverages the dual formulation of the max-margin problem.
The crucial step is to prove that the dual optimum has a large support,
namely support size of order $n$.

In the next section, we give a more detailed review of related work on this topic. 
In Section~\ref{sec:Main}, we state our main results, while Section~\ref{sec:ProofOutline} outlines the proofs. The complete proofs are given in Sections~\ref{sec:proof-of-lemma-restricted-strong-convexity}-\ref{sec:delocalization:MM}.
%
%********************************************************
%
\section{Further related work}
\label{sec:Related}

The random features model of Eq.~\eqref{eq:RandomFeatures} 
has received ample attention in the last few years
as the simplest example of a neural network in the linear (or `lazy') regime.
In  particular, \citep{mei2022generalization} characterized the generalization 
error of ridge regression under the proportional asymptotics of Eq.~\eqref{eq:ProportionalAsymptotics}.
Several phenomena observed empirically in fully trained 
neural networks (e.g. benign overfitting or the double descent phenomenon)
could be reproduced analytically in this setting.

The analysis of \citep{mei2022generalization} was based on random matrix theory methods
that is difficult to generalize beyond ridge-regularized least squares. 
In order to move beyond ridge regression, several groups conjectured universality,
and in particular that the random features model of Eq.~\eqref{eq:RandomFeatures} 
could be replaced by the equivalent Gaussian features model of Eq.~\eqref{eq:EquivGauss}.
The importance of this conjecture can be appreciated by the number of interesting  
results that \emph{assumed the universality conjecture to hold} in order to
derive interesting characterizations using Gaussian techniques. A small subset of these works
 incluse \cite{gerace2020generalisation,goldt2020modeling,goldt2020gaussian,loureiro2021learning,
LiangSu22,javanmard2022precise}.
Among others, \cite{gerace2020generalisation} used random features to model data with a latent low-dimensional structure, 
\cite{LiangSu22} studied boosting and minimum-$\ell_1$-norm interpolation
in random features models, while
\cite{javanmard2022precise} characterized adversarial training,

Our work establishes the universality conjecture for max-margin classification,
thus allowing to transfer a series of results that were previously obtained
for Gaussian features models to nonlinear random features models, and to random features
with independent entries. 
The study of max-margin classification in the Gaussian features spans a third of a century, 
and was initiated by the seminal work of Elizabeth Gardner \citep{gardner1988space}
who used the non-rigorous replica method from spin glass theory to 
compute the asymptotics of the maximum margin in the case of random labels and
isotropic covariates. This result was rigorized in \cite{shcherbina2003rigorous,Stojnic13}. The separability threshold
for the case of labels correlated with a linear function was determined in \cite{candes2020phase} (see also \cite{salehi2019impact,sur2019modern} for related work
on logistic regression). The maximum margin and generalization error in the case of non-isotropic Gaussian features was determined in \cite{montanari2019generalization}.
The results of \cite{montanari2019generalization} provided the first examples of `benign overfitting' in a classification setting:
The test error can be made arbitrarily small, despite even in a regime in which the number 
of parameters $p$ is much larger than the sample size $d$. 

Several universality theorems were proven in last two years
that cover nonlinear random features model for an increasingly broad class of training losses. In particular:
\begin{itemize}[-]
    \item\cite{mei2022generalization} used random matrix theory to prove universality for ridge regression (square loss).
    \item  \cite{HuLu22} apply Lindeberg method established universality for 
    a broad class linear models under strongly convex losses.
    \item\cite{MontanariSa22}  used a finite-temperature relaxation to extend 
    universality to non-convex losses, with additional requirements to obtain universality of the test error.
    \item Recent work generalized universality for the case in which the 
    network input is a Gaussian mixture \cite{dandi2023}, for multi-layer networks
    \cite{schroder2023}, and for adversarial training \cite{hassani2022curse}.
\end{itemize}

As discussed in the previous section, extending universality to 
max-margin classification  poses mathematical challenge that are not addressed
in earlier work.  We believe that our proof strategy is of independent 
 interest and susceptible to be generalized to other loss functions and other featurization 
 maps.
 
%
%********************************************************
%
\section{Main results}
\label{sec:Main}
% We discuss our main results for random features model in Section \ref{subsec:results:RF} and for  feature vectors with independent entries in Section \ref{subsec:results:indep}.
% Throughout, we use the following notation: for a design matrix $\bD \in \R^{n\times p}$ whose rows $(\bd_i)_{i\leq p}$ are i.i.d. according to some distribution $\P_d\in \mathscr{P}(\R^{d})$, the max-margin $\hkappa_n^{\bD}$ and the test error $R^{\bD}(\btheta)$ 
Let us first present the precise assumptions we make in each of the feature distributions $(I)$ and $(II)$ discussed in Section~\ref{section:intro}. In each of these settings, we define the distribution of the features $\bX$ precisely, state the corresponding assumptions, and define the corresponding \emph{Gaussian equivalent}, i.e., the Gaussian features $\bG$ whose first and second moments are (asymptotically) the same as those of $\bX$. We do this in Sections~\ref{section:ass_RF} and~\ref{section:ass_ind} below. In Section~\ref{section:univ_results}, we present our universality results which state that the asymptotics of the non-Gaussian and the Gaussian problem are the same. This allows us to leverage the precise asymptotics of
max-margin classification with Gaussian features~\citep{montanari2019generalization}
and apply them to non-Gaussian features. We demonstrate an application of this in Section~\ref{section:benign_overfitting} where our universality results are used to conclude that the conditions for benign overfitting from~\citep{montanari2019generalization} in the Gaussian case are the same in the non-Gaussian setting we consider.

\subsection{Random features model and its Gaussian equivalent}
\label{subsec:results:RF}
\label{section:ass_RF}
We first fully specify the data distribution under the random features model.
 Given a non-linear activation function $\sigma$, the random features $\bx_i$ and their associated labels are defined by
\begin{align}\label{eq:def:RF}
\bx_i = \sigma(\bW\bz_i),
\quad
	 \P(y_i = +1 | \bz_i) =  f(\bz_i^{\sT}\bbeta^\star ),\quad \norm{\bbeta^\star}_2=1,
\end{align}
where $\bz_i\sim \normal(0,\id_d)$ and $\bW\in \R^{p\times d}$ are independent. Here, we assume that the rows $(\bw_i)_{i\leq p}$ of $\bW$ are i.i.d. with $\bw_i\sim \mathsf{Unif}(\mathbb{S}^{d-1}(1))$, and the parameters $\mu_j$ correspond to the decomposition of the activation function $\sigma$
in Hermite polynomials:
\begin{align}\label{eq:hermite:coeff}
\mu_0 := \<\sigma,1\>_{L^2(\nu_{\sf G},\reals)}, \;\; 
\mu_1 := \<\sigma,x\>_{L^2(\nu_{\sf G},\reals)}, \;\; 
\mu_2 := \|\sigma\|_{L^2(\nu_{\sf G},\reals)}^2-\mu_0^2-\mu_1^2\, .
\end{align}
 Here the inner product and norm are with respect to the $L^2$ space of the standard Gaussian measure 
 $\nu_{\sf G}$ on $\reals$, i.e. $\<\sigma,f\>_{L^2(\nu_{\sf G},\reals)}:=\E[\sigma (G)f(G)]$
 for $G\sim\normal(0,1)$.
 
We make the following assumptions on the parameters of the model.
\begin{assumption}\label{assumption:RF}
For the random features model, we assume the proportional asymptotics in Eq.~\eqref{eq:ProportionalAsymptotics} and the following:
\begin{itemize}
    \item[${\sf (A1)}$] The activation $\sigma:\R\to\R$ is non-linear, and $\mu_0\equiv \E[\sigma(G)]=0$. Further, it is second order differentiable with uniformly bounded derivatives and satisfies $\E[\sigma''(G)]=0$. Namely, there exists  $C_\sigma$ such that
\begin{equation}
	\|\sigma'\|_\infty \vee \|\sigma''\|_\infty  \le C_\sigma < \infty\, .
\end{equation}
   \item[${\sf (A2)}$] The link function $f:\R\to [0,1]$ in Eq.~\eqref{eq:def:RF} is pseudo-Lipschitz. Namely, there exists $C_f<\infty$ such that
\begin{equation}\label{eq:f:pseudo}
 |f(t) - f(s)| \le C_f(1 + |s|  + |t|)|t - s|~~~~\forall t, s\in\reals.
\end{equation}
\end{itemize}
\end{assumption}
For example, this assumption is satisfied if $\sigma$ is odd and twice differentiable with bounded derivatives, which is the same assumption that \cite{HuLu22} has made. We remark that the universality is still expected to hold without this assumption (see numerical simulations for the ReLU activation function done in \cite{montanari2019generalization}). Indeed, we believe that our current proof technique to hold without the $\E[\sigma(G)]=\E[\sigma''(G)] = 0$ assumption, which we leave for future work.

We define the Gaussian equivalent features of this model as the linearization of~\eqref{eq:def:RF}:
\begin{equation}
\bg_i = \mu_0 + \mu_1 \bW \bz_i+ \mu_{2} \bh_i
\end{equation}
where $\bW$ and $\bz_i$ are as defined previously and $\bh_i\sim\normal(0,\id_p)$, independent of everything else.
Note that the features $\bz_i, \bg_i$ are jointly Gaussian conditionally on $\bW$.

We define the design matrices for random features $\bX \equiv\bX_{\RF}$ as the $n\times p$ matrix with rows $(\bx_i)_{i\leq n}$ and for Gaussian features $\bG\equiv \bG_{\RF}$ as the $n\times p$ matrix with rows $(\bg_i)_{i\leq n}$. We use the notation $(\bX,\bG)=(\bX_{\RF}, \bG_{\RF})$ to indicate the features and the equivalent Gaussian features follow the distributions defined above.

\begin{remark}
Standard manipulations of Gaussian random variables show that $y_i$ in Eq.~\eqref{eq:law:y} given $(\bg_i)_{i\leq n}$ follows a generalized linear model. This fact  is leveraged to apply existing 
results about generalized linear models with Gaussian covariates to
derive asymptotics for random features models \cite{montanari2019generalization}.

Explicitly, conditional on $\bW$, the labels $\by\equiv (y_i)_{i\leq n}$ of the Gaussian features given $(\bg_i)_{i\leq n}$ are distributed as
\begin{equation}
	 \P(y_i=+1\mid\bg_i)=1- \P(y_i=-1\mid\bg_i)=\bar{f}( \bg_i^{\sT}\btheta^\star)\, ,\;\;\;\; \bg_i\sim\normal(0, \bSigma_{\RF})\,, \label{eq:AlternativeDistr}
\end{equation}
where $\bSigma_{\RF}$, $\btheta^{\star}$, and the function $\bar{f}$ are given as follows.
First of all,
\begin{equation}
\label{eq:def:Sigma}
 \bSigma_{\RF} := \mu_1^2\bW\bW^{\sT}+\mu_2^2\id_p.  
\end{equation}
By matching covariances, we get $\bz_i = \mu_1\bW^{\sT}\bSigma_{\RF}^{-1}\bg_i+
\bQ^{1/2}\tilde{\bh}_i$, where $\tilde{\bh}_i\sim \normal(0,\id_d)$ independently of $\bz_i$,
and $\bQ:= \mu_2^2(\mu_2^2\id_d+\mu_1^2\bW^{\sT}\bW)^{-1}$.   
We can therefore rewrite  $\bbeta^{\star\sT}\bz_i =\alpha \btheta^{\star \sT}\bg_i+\eps_i$ for 
$\eps_i \sim \normal(0, \tau^2)$ independent of $\bg_i$, where
\begin{equation}\label{eq:def:theta:star}
\begin{split}
    \alpha:= \mu_1  \norm{\bSigma_\RF^{-1} \bW \bbeta\opt}_{2},~~~
    	%\bbeta^{\star \sT} \bW^{\sT}(\mu_1^2 \bW\bW^{\sT}+\mu_2^2 \bI_p)^{-2}\bW \bbeta^{\star}\,,\\
    \btheta^\star:=\alpha^{-1}\mu_1\bSigma_{\RF}^{-1}\bW \bbeta^{\star},~~~
    \tau^2 := 1- \alpha^2   \norm{\btheta\opt}_{\bSigma_\RF}^2\, .
\end{split}
\end{equation}
This yields Eq.~\eqref{eq:AlternativeDistr} with
$\bar{f}(x):= \E_{\eps\sim \normal(0,1)}[f(\alpha x+\tau \eps)]$.
\end{remark}

%
%********************************************
%
\subsection{Independent features model and its Gaussian equivalent}\label{subsec:results:indep}
\label{section:ass_ind}
Our second result considers the design matrices
$\bX \equiv \bX_{\ind}\in \R^{n\times p}$, where the rows $(\bx_i)_{i\leq n}\in \R^{p}$ are i.i.d. with independent entries $\bx_i=(x_{ij})_{j\leq p}$ and labels $\by^{\bX}\equiv (y^{\bX}_{i})_{i\leq n} \in \{\pm 1\}^n$ defined by
\begin{equation}\label{eq:y:dist:ind:X}
    \P(y_i^{\bX} = +1 \mid \bx_i)=f(\bx_i^{\sT}\btheta^\star),
\end{equation}
where $\btheta^\star\in \R^p$ satisfies $\big\|\btheta^\star\big\|_2=1$. 
We make the following assumption on this model.
\begin{assumption}\label{assumption:ind}
For the independent features model, we assume the proportional asymptotics 
\begin{equation}\label{eq:ProportionalAsymptotics:ind}
n, p\to \infty~~\text{with}~~
 		\frac{p}{n} \to \gamma,
\end{equation}
and the following:
\begin{itemize}
    \item[${\sf (B1)}$]  The entries $(x_{ij})_{i\leq n, j\leq p}$ of $\bX_{\ind}$ are independent and subgaussian with parameter $\nu^2>0$. Further, the covariance matrix $\bSigma_{\ind}\equiv \textup{Cov}(\bx_i)$ has bounded condition number: \begin{equation*}
        c\leq \lambda_{\min}(\bSigma_{\ind})\leq\lambda_{\max}(\bSigma_{\ind})\leq C\,.
    \end{equation*}
    The constants $\nu^2,c,C>0$ do not depend on $n,p$.
    \item[${\sf (B2)}$]
    There exists a measure $\mu \in \mathcal{P}(\R_{>0}\times \R)$ such that the empirical distribution of $(\la_i, \sqrt{p} \theta_i^{\star})$ converges to $\mu$ in 
    Wasserstein-2 sense:

    \begin{equation}\label{eq:empirical:convergence}
     \frac{1}{p}\sum_{i=1}^{p}\delta_{(\la_i,\sqrt{p}\theta^{\star}_i)}\stackrel{W_2}{\Longrightarrow}\mu\,.
    \end{equation}

    \item[${\sf (B3)}$] The link function $f:\R\to (0,1)$ takes values in $(0,1)$ and is again pseudo-Lipschitz with constant $C_f >0$,
    cf. Eq.~\eqref{eq:f:pseudo}.
\end{itemize}
\end{assumption}

The corresponding Gaussian equivalent feature matrix $\bG \equiv \bG_{\ind}\in \R^{n\times p}$ is defined to have 
 i.i.d. rows $\bg_i \sim \normal(0,\bSigma_{\ind})$, $i\le n$.
In contrast with the random features setting, the labels for the Gaussian features 
 $\by^{\bG}\equiv (y^{\bG}_{i})_{i\leq n} \in \{\pm 1\}^n$ have a different marginal distribution than those for the non-Gaussian features as the notation indicates. Namely, we have
\begin{equation}\label{eq:y:dist:ind:G}
\P(y_i^{\bG} = +1 \mid \bg_i)=f(\bg_i^{\sT}\btheta^\star)
\end{equation}
for $\btheta^\star$ as above.
We use the notation $(\bX,\bG)=(\bX_{\ind},\bG_{\ind})$ to indicate that the non-Gaussian features and their equivalent follow the \emph{independent features model} defined above.

\begin{remark}\label{rmk:assumption:ind}
 Our results hold under the following weaker assumption, alternative to ${\sf (B2)}$:
\begin{itemize}
\item[${\sf (B2^\prime)}$] The following tightness condition holds:
\begin{align}
\lim_{R\to\infty}\limsup_{p\to\infty}\frac{1}{p} \sum_{i=1}^{p}
\big[\lambda_i^2+p(\theta^\star_i)^2\big]
\one_{\lambda_i^2+p(\theta^\star_i)^2\geq R}=0.\label{eq:Tightness}
\end{align}
\end{itemize}
Indeed, under this condition, by Prokhorov's theorem the empitical distribution 
$\frac{1}{p}\sum_{i=1}^{p}\delta_{(\la_i,\sqrt{p}\theta^{\star}_i)}$
converges weakly on subsequences. By  \cite[Definition 6.8]{villani2008optimal},
it also converges `weakly in $W_2$', which by 
 \cite[Theorem 6.9]{villani2008optimal} is equivalent to the convergence of 
 Eq.~\eqref{eq:empirical:convergence}. Therefore we can apply
 the universality theorems below under to these subsequences, and hence
 apply them under the weaker assumption \eqref{eq:Tightness}.
 \end{remark}

\begin{remark}\label{rmk:InftyNorm}
Either condition ${\sf (B2)}$ or the weaker condition 
${\sf (B2')}$ imply $\big\|\btheta^\star\|_{\infty}=o_n(1)$. 
This is  a necessary to guarantee the $\bx_i^{\sT}\btheta^\star$ is distributed asymptotically the same as $\bg_i^{\sT}\btheta^\star$.
 See~\cite[Section 6]{villani2008optimal} for other equivalent definitions of the convergence Wasserstein $2$-distance. 
 \end{remark}

\subsection{Universality theorems}
\label{section:univ_results}

Recall the definition of maximum margin 
\begin{align*}
 \hkappa^{\bX}_n = \max_{ \norm{\btheta}_2 \le 1}
		\min_{1\le i \le n} y_i \<\bx_i, \btheta\>\, ,\;\;\;\;\;  \hkappa^{\bG}_n = \max_{ \norm{\btheta}_2 \le 1}
		\min_{1\le i \le n} y_i \<\bg_i, \btheta\>\,.
\end{align*}
Denote by $\bx_\new,\bg_\new,y_\new$ a fresh sample. 

Then, the test errors of $\btheta\in \R^p$ are defined by
\begin{align}\label{eq:def:test:error}
R_n^{\bx}(\btheta)=\P\left(y_\new \langle \btheta, \bx_\new\rangle \leq 0\right)\, ,\;\;\;\;\; R_n^{\bg}(\btheta)=\P\left(y_\new \langle \btheta, \bg_\new\rangle \leq 0\right)\,.
\end{align}
Recall that the max-margin solutions are defined by
\begin{equation*}
    \widehat{\btheta}_{\MM}^{\bX}:=\argmax_{\norm{\btheta}_2\leq 1}\Big\{\min_{i\leq n}y_i\langle \bx_i,\btheta\rangle\Big\}\, ,\;\;\;\;\;  
    \widehat{\btheta}_{\MM}^{\bG}:=\argmax_{\norm{\btheta}_2\leq 1}\Big\{\min_{i\leq n}y_i\langle \bg_i,\btheta\rangle\Big\}\,.
\end{equation*}
We denote the corresponding test errors by $R^{\bX}_n := R_n^{\bx}(\widehat{\btheta}_{\MM}^{\bX})$ and $R^{\bG}_n := R_n^{\bg}(\widehat{\btheta}_{\MM}^{\bG})$. It is important to emphasize that these are random variables, because of the randomness coming from the training data, but as discussed below, we expect them to concentrate in the high-dimensional asymptotics we study. We also note that they depend on the data distribution both on the 
test point and (in a more intricate way) on
the training data.\\

Our first result concerns the limit of the margin under the distribution classes defined in Subsections~\ref{section:ass_RF} and~\ref{section:ass_ind}.
\begin{theorem}[Universality of the margin]\label{theorem:universality-of-the-margin}
Consider either $(\bX,\bG)=(\bX_{\RF},\bG_{\RF})$ under Assumption~\ref{assumption:RF} or $(\bX,\bG)=(\bX_{\ind},\bG_{\ind})$ under Assumption~\ref{assumption:ind}. Then, for any Lipschitz function $\psi:\R\to \R$,
\begin{equation*}
\lim_{n\to\infty}\big|\E[\psi(\hkappa_n^{\bX})]-\E[\psi(\hkappa_n^{\bG})]\big|=0\,.
\end{equation*}
\end{theorem}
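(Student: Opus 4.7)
The plan is to circumvent the discontinuity of the map $\widehat F_n(\bD;\,\cdot\,)\mapsto \hkappa_n^{\bD}$ flagged in Section~\ref{sec:Technical} by combining three ingredients: $(i)$ universality of the smoothed surrogate $\widehat F_n(\bD;\kappa)$ at each fixed $\kappa$, which is an ERM amenable to existing techniques; $(ii)$ a \emph{restricted strong convexity} statement that upgrades universality of $\widehat F_n$ in value to universality of its zero in $\kappa$, which is precisely the margin; and $(iii)$ a \emph{delocalization} statement on the dual optimum of max-margin, which drives the strong convexity in $(ii)$. The main obstacle, as the authors themselves flag, is $(iii)$: showing that the number of support vectors is of order $n$.

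First I would pass to the Lagrangian dual of max-margin, writing
\begin{equation*}
\hkappa_n^{\bD} \;=\; \min_{\mathbf{a}\ge 0,\;\mathbf{1}^{\sT}\mathbf{a}=1}\;\Bigl\|\sum_{i=1}^n a_i\, y_i\, \bd_i\Bigr\|_2\, ,\qquad \bD\in\{\bX,\bG\},
\end{equation*}
so that support vectors are exactly $\{i:a_i^\star>0\}$. Using the KKT conditions together with the (to-be-proved) delocalization $\|\mathbf{a}^\star\|_\infty = o_{\P}(1)$, I would show that, with high probability, the surrogate $\widehat F_n(\bD;\kappa) = \min_{\norm{\btheta}_2\le 1}\tfrac{1}{n}\sum_i (\kappa - y_i\<\bd_i,\btheta\>)_+^2$ satisfies the quantitative restricted strong convexity bound
\begin{equation*}
\widehat F_n(\bD;\kappa) \;\ge\; c\,(\kappa - \hkappa_n^{\bD})_+^2
\end{equation*}
on a neighborhood of $\hkappa_n^{\bD}$, for a constant $c>0$ uniformly bounded below. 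Given this, the relation $\hkappa_n^{\bD}=\inf\{\kappa:\widehat F_n(\bD;\kappa)>0\}$ becomes a quantitatively Lipschitz map from $\widehat F_n(\bD;\,\cdot\,)$ to the margin, so that value-universality of $\widehat F_n$ transfers to margin-universality.

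With this reduction in hand, the remaining task is universality of $\widehat F_n$ itself. Since $\widehat F_n(\bD;\kappa)$ is a convex ERM over the unit ball with a smooth pseudo-Lipschitz loss, for each fixed $\kappa$ I would apply the Lindeberg/interpolation machinery from~\cite{HuLu22,MontanariSa22} (in the random features setting after first linearizing via the Gaussian equivalent of~\eqref{eq:EquivGauss}, as in~\cite{mei2022generalization,HuLu22}) to conclude $|\widehat F_n(\bX;\kappa)-\widehat F_n(\bG;\kappa)|=o_{\P}(1)$; a routine stochastic equicontinuity argument, using the local Lipschitz dependence of $\widehat F_n$ on $\kappa$, extends this to uniform convergence on any compact $\kappa$-interval. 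Combining with the restricted strong convexity above and the a priori tightness $\hkappa_n^{\bD}=O_{\P}(1)$ (which follows from an operator-norm bound on $\bSigma$) yields $|\hkappa_n^{\bX}-\hkappa_n^{\bG}|=o_{\P}(1)$, and uniform integrability then upgrades this to the Lipschitz-expectation conclusion stated in the theorem. The hard part will be $(iii)$: proving delocalization of the dual optimum simultaneously for the random features and independent features models. I expect this to require a Hessian / second-order analysis of the dual program together with a covering argument ruling out configurations in which $O(1)$ coordinates carry all the dual mass, exploiting the proportional asymptotics to force $\Theta(n)$ simultaneously active constraints.
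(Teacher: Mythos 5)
Your high-level skeleton matches the paper's proof quite closely: pass to the dual of max-margin, show the dual optimum has $\Theta(n)$ support, use that support bound to convert the surrogate $\widehat F_n$ into a quantitative lower bound on the margin, and combine with ERM-universality of $\widehat F_n$ and the known Gaussian asymptotics. That is precisely the chain implemented by Lemma~\ref{lemma:restricted-strong-convexity}, Proposition~\ref{proposition:well-controlled-u}, Eq.~\eqref{eq:lower_bound}, Proposition~\ref{sec:proof-proposition-universality}, and Proposition~\ref{proposition:max-margin-gaussian}. However, there are two substantive gaps.

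First, the delocalization you write down, $\|\mathbf{a}^\star\|_\infty = o_{\P}(1)$, is too weak to produce a uniform constant $c$ in your ``restricted strong convexity'' inequality $\widehat F_n(\bD;\kappa)\ge c(\kappa-\hkappa_n^{\bD})_+^2$. Tracing the inequality (cf.\ Eq.~\eqref{eq:lower_bound} and its step $(b)$), the constant that appears is $n\|\mathbf{a}^\star\|_\infty$; hence one needs $\|\mathbf{a}^\star\|_\infty = O(1/n)$, not just $o(1)$. You do later say you will force ``$\Theta(n)$ simultaneously active constraints,'' which is the right target, but the $o_{\P}(1)$ statement you flag as the to-be-proved ingredient would not close the argument. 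The paper obtains $\norm{\bu^\star}_\infty \le C/n$ from Lemma~\ref{lemma:restricted-strong-convexity}, a restricted-strong-convexity inequality for the dual objective $\widehat G_n^{\bX}(\bu)=\|\bX^{\sT}\diag(y)\bu\|_2$ of the form $\widehat G_n^{\bX}(\bu)\ge c_1\sqrt{n}\norm{\bu}_2 - c_2\norm{\bu}_1$; combined with the a priori bound $\widehat G_n^{\bX}(\bu^\star)\le c_3$ from $\norm{\bX}_{\op}\lesssim\sqrt{n}$, this gives $\norm{\bu^\star}_2/\norm{\bu^\star}_1\lesssim 1/\sqrt{n}$ and hence $\norm{\bu^\star}_0\gtrsim n$. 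The proof of that inequality is not a Hessian/second-order analysis of the dual; it is a Kashin-type $\ell_1$--$\ell_2$ norm equivalence on the range of $\bX_S$ over all subsets $S$ of size $c_3 n$, combined with a union bound over subsets (Lemmas~\ref{lemma:elementary}--\ref{lemma:restricted-minimum-singular-value}). Your proposed ``Hessian plus covering'' route may well be possible, but as stated it is too vague to assess, and the paper's route is rather different in flavor.

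Second, your step $(i)$ --- applying the ERM-universality machinery to $\widehat F_n(\bD;\kappa)$ --- is not routine and is an important point you are missing. The hypothesis of the general universality theorem (Theorem~1 of~\cite{MontanariSa22}) requires that the \emph{primal} minimizer $\widehat\btheta$ of $\widehat F_n$ live in a deterministic set over which one-dimensional projections are asymptotically Gaussian; concretely one must establish $\|\widehat\btheta\|_\infty = o_n(1)$ with high probability. The paper proves this delocalization separately via a leave-one-out argument (Lemma~\ref{lemma:inf_norm_ERM}) before invoking the general theorem. Without it, your claim ``$|\widehat F_n(\bX;\kappa)-\widehat F_n(\bG;\kappa)|=o_{\P}(1)$ follows routinely from Lindeberg/interpolation'' is not justified. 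Finally, the uniform-in-$\kappa$ convergence you sketch is unnecessary: the paper only needs universality at the two fixed values $\kappa^\star\pm\eps$, which avoids the equicontinuity argument altogether.
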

%\begin{remark}\label{rmk:separable}
\cite{montanari2019generalization} computed the exact asymptotics for the
max-margin $\hkappa_n^{\bG}$ in the Gaussian design, which we include in Section~\ref{sec:formula} of the Appendix for completeness.
In particular, for $\bG=\bG_{\RF}$, there exists a curve of thresholds $\tau^\star_{\RF}:\R_{+}\to\R_{+}$ such that if $\gamma_1>\tau^\star_{\RF}(\gamma_2)$, then $\hkappa_n^{\bG}\pto \kappa^\star_{\RF}(\gamma_1,\gamma_2)$ holds for some $\kappa^\star_{\RF}(\gamma_1,\gamma_2)>0$. Conversely, if $\gamma_1<\tau^\star_{\RF}(\gamma_2)$, then $\hkappa_n^{\bG}\pto 0$ holds.  A similar statement holds for $\bG = \bG_{\ind}$ concerning the corresponding overparmeterization ratio $\gamma$. 
%Similarly, for $\bG=\bG_{\ind}$, there exists $\gamma^\star_{\ind}(\mu)$, where $\mu$ is given in Eq.~\eqref{eq:empirical:convergence}, such that if $\gamma>\gamma^\star_{\ind}(\mu)$, then $\hkappa_n^{\bG}\pto\kappa^\star_{\ind}(\gamma;\mu)$ for some $\kappa^\star_{\ind}(\gamma;\mu)>0$, and if $\gamma<\gamma^\star_{\ind}(\mu)$, then $\hkappa_n^{\bG} \pto 0$ holds. 
By Theorem~ \ref{theorem:universality-of-the-margin}, we can transfer these results to the random features model and the independent features model. 
For any $\kappa>0$, this allows us to characterize the regime in which the data, under both distribution classes of Subsections~\ref{section:ass_RF} and~\ref{section:ass_ind}, are separable with a margin $\kappa >0$ with high probability.
See Section~\ref{sec:formula} for the complete definition of these functions.
%\end{remark}

Our next result concerns the limit of the test error of the max-margin classifier.
\begin{theorem}[Universality of the test error]\label{thm:test}
Consider either $(\bX,\bG)=(\bX_{\RF},\bG_{\RF})$ under Assumption~\ref{assumption:RF} or $(\bX,\bG)=(\bX_{\ind},\bG_{\ind})$ under Assumption~\ref{assumption:ind}. In the separable regime, for any Lipschitz function $\psi:\R\to \R$,
 \begin{equation}\label{eqn:ultimate-goal-of-test-error}
 	\lim_{n \to \infty}
		\left|\E[\psi(R_n^{\bX})] - \E[\psi(R_n^{\bG})]\right| = 0.
 \end{equation}
\end{theorem}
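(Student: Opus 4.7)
The plan is to reduce universality of $R_n^{\bX}$ to a pointwise Gaussian equivalence at a single fresh test point combined with universality of a couple of scalar summary statistics of the max-margin direction. First I would observe that in the Gaussian equivalent model $R_n^{\bg}(\btheta)$ depends on $\btheta$ through only two scalars, because $(\bg_\new^{\sT}\btheta^\star,\langle\btheta,\bg_\new\rangle)$ is a mean-zero bivariate Gaussian whose $2\times 2$ covariance depends on $\btheta$ only through the alignment $A(\btheta):=\langle\btheta^\star,\bSigma\btheta\rangle$ and the covariance norm $B(\btheta):=\|\btheta\|_{\bSigma}$; hence $R_n^{\bg}(\btheta)=\Phi(A(\btheta),B(\btheta))$ for some smooth deterministic $\Phi$ (in the random features case we use the label law in the Gaussian-equivalent form given in Eq.~\eqref{eq:AlternativeDistr}). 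In parallel I would establish a pointwise one-dimensional CLT: for any deterministic $\btheta$ with $\|\btheta\|_2\leq 1$ satisfying a suitable delocalization condition (e.g.\ $\|\btheta\|_\infty=o(1)$ in the independent features model, or an analogous feature-space bound in the random features model), the laws of $\langle\btheta,\bx_\new\rangle$ and $\langle\btheta,\bg_\new\rangle$ are asymptotically equivalent, so $|R_n^{\bx}(\btheta)-R_n^{\bg}(\btheta)|\to 0$. This is a soft Lindeberg/Stein step decoupled from the training-data universality machinery.

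To transfer the pointwise CLT to $\widehat\btheta_\MM^{\bX}$, I would use its dual representation $\widehat\btheta_\MM^{\bX}\propto\sum_i\hat\alpha_i y_i\bx_i$ together with the $\Theta(n)$ lower bound on the number of support vectors --- the central ingredient highlighted in Section~\ref{sec:Technical} and which drives Theorem~\ref{theorem:universality-of-the-margin}. Having $\Theta(n)$ active support vectors forces $\widehat\btheta_\MM^{\bX}$ to be spread across many coordinates of the feature space, yielding the delocalization required for the pointwise CLT; combined with the independence of $(\bx_\new,y_\new)$ from the training data, this gives $R_n^{\bX}=\Phi(A(\widehat\btheta_\MM^{\bX}),B(\widehat\btheta_\MM^{\bX}))+o_\P(1)$, and identically for $\bG$. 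The last ingredient --- universality of the summary statistics themselves --- I would obtain by perturbing the margin,
\begin{equation*}
\hkappa_n^{\bD}(s,t):=\max_{\|\btheta\|_2\leq 1}\Big\{\min_{i\leq n} y_i\langle\bd_i,\btheta\rangle+sA(\btheta)+tB(\btheta)^2\Big\}\,.
\end{equation*}
The perturbations $sA(\btheta)+tB(\btheta)^2$ are deterministic functionals of $\btheta$ (given the shared $\bW$ in the random features case) and so do not interfere with the Lindeberg-swap argument driving Theorem~\ref{theorem:universality-of-the-margin}; the same argument yields $\hkappa_n^{\bX}(s,t)-\hkappa_n^{\bG}(s,t)=o_\P(1)$ locally uniformly in $(s,t)$. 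Since $(s,t)\mapsto\hkappa_n^{\bD}(s,t)$ is concave, an envelope/Danskin identity identifies its gradient at $(0,0)$ with $(A(\widehat\btheta_\MM^{\bD}),B(\widehat\btheta_\MM^{\bD})^2)$, and pointwise convergence of concave functions to a differentiable limit promotes universality of values to universality of gradients. Continuity of $\Phi$ then delivers Eq.~\eqref{eqn:ultimate-goal-of-test-error}.

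The main obstacle I anticipate is the delocalization step for $\widehat\btheta_\MM^{\bX}$ in the random features model: one must propagate the $\Theta(n)$ dual-support lower bound to a sharp \emph{quantitative} spreading of $\widehat\btheta_\MM^{\bX}$ across the (non-orthogonal) feature basis in order to make the fresh-sample CLT applicable. This is the technical heart of the program and the step most closely tied to the new support-vector ideas introduced in the paper; secondary care is needed to ensure that the summary statistic $A(\widehat\btheta_\MM^{\bX})$ itself is not dominated by a small number of "heavy" coordinates of $\widehat\btheta_\MM^{\bX}$ aligned with $\bSigma\btheta^\star$, for which a uniform delocalization over directions correlated with $\btheta^\star$ is required in addition to the coordinate-wise delocalization.
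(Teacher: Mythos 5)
Your high-level architecture matches the paper's: (i) a pointwise Gaussian-equivalence statement for test error at delocalized directions, (ii) delocalization of the max-margin estimator, and (iii) universality of the two quadratic summaries $\pi_1(\btheta)=\langle\btheta,\btheta^\star\rangle_{\bSigma}$, $\pi_2(\btheta)=\norm{\btheta}_{\bSigma}^2$ via an envelope/perturbation argument. However, your proposed mechanism for step (ii) has a genuine gap. The $\Theta(n)$ support-vector count controls the \emph{dual} vector $\bu^*$ (namely $\norm{\bu^*}_\infty\le C/n$, $\norm{\bu^*}_0\ge cn$), but it does not by itself force $\ell^\infty$-delocalization of the \emph{primal} direction $\widehat{\btheta}_{\MM}^{\bX}\propto\sum_i u_i^* y_i\bx_i$. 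Coordinate-wise, Cauchy--Schwarz gives only $|\sum_i u_i^* y_i x_{ij}|\le\norm{\bu^*}_2\norm{(x_{ij})_i}_2=O(1)$, and the improvement to $o(1)$ that one would get from independence of $\bu^*$ and the $j$-th feature column is unavailable because $\bu^*$ depends on the entire design; moreover, if the feature covariance had a spiked direction aligned with a coordinate, the primal solution could genuinely concentrate there even with $\Theta(n)$ support vectors. This is exactly why the paper proves delocalization by a different route: for Gaussian designs it analyzes a $\norm{\cdot}_4^4$-tilted min-norm problem via Gordon's inequality and the KKT conditions of the Gordon surrogate, which requires nontrivial structural conditions on $(\bSigma,\btheta^\star)$ (Eq.~\eqref{eq:assumption:Sigma}, established for the random-features covariance in Lemma~\ref{lem:cov:RF:delocalized} by rotational invariance plus a covering argument), and then transfers to $\bX$ through universality of the perturbed ERM objective (Proposition~\ref{prop:univ_fixed_kappa}). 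You correctly flag this step as the technical heart, but the support-vector idea you propose to carry it does not supply a working argument.

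Two further points on step (iii). First, your perturbed margin $\hkappa_n^{\bD}(s,t)$ is a maximum of functions affine in $(s,t)$, hence \emph{convex}, not concave, in $(s,t)$; more seriously, adding $t\,\norm{\btheta}_{\bSigma}^2$ with $t>0$ can destroy concavity of the inner problem in $\btheta$, which complicates both the duality you rely on and uniqueness of the maximizer needed for Danskin. Second, the claim that the perturbations ``do not interfere with the Lindeberg swap driving Theorem~\ref{theorem:universality-of-the-margin}'' is not substantiated: the paper's margin universality is not a direct Lindeberg swap but passes through the ERM surrogate $\widehat F_n$, the restricted-strong-convexity/dual-support argument, and crucially an $\ell^\infty$ constraint on the feasible set (with an accompanying leave-one-out bound, Lemma~\ref{lemma:inf_norm_ERM}) so that the ERM universality theorem of~\cite{MontanariSa22} applies; any perturbed problem must be re-equipped with this machinery. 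The paper sidesteps both issues by perturbing the strongly convex min-norm formulation $\widehat H_\ell(\bD;s,\kappa)=\min\{\norm{\btheta}_2^2+s\pi_\ell(\btheta)\}$ subject to margin and $\ell^\infty$ constraints, which keeps the problem strongly convex for small $|s|$, yields Lipschitz dependence of the minimizer on $s$, and allows a two-sided difference-quotient sandwich (Lemmas~\ref{lemma:theta_hat_lipschitz_s}--\ref{lemma:properties_of_delta}) without having to prove differentiability of the limiting value at $(0,0)$, which your argument implicitly assumes.
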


We remark again that~\cite{montanari2019generalization} computed the exact asymptotics for the test error $R_n^{\bG}$ in the Gaussian design.
Section~\ref{sec:formula} also includes these asymptotics.
\newline
 
 Let us emphasize two important  aspects of our results. 
 \emph{First,} by~\cite{montanari2019generalization}, the margin and error in the Gaussian model concentrate around
 non-random values, and therefore Theorems \ref{theorem:universality-of-the-margin} and ~\ref{thm:test}
 imply $|\hkappa_n^{\bX} -\hkappa_n^{\bG}|=o_{\P}(1)$, $|R_n^{\bX} -R_n^{\bG}|=o_{\P}(1)$.
 \emph{Second,} in the proportional
  asymptotics, the test error and the margin are of constant order. In fact even the excess error 
  (over the Bayes risk)
  is of constant order, although it can be made an arbitrarily small in suitable settings. 
  As a consequence, our result $|R_n^{\bX} - R_n^{\bG}|=o_\P(1)$ allows characterizing 
  the excess error up to sub-dominant errors.

\subsection{Implications for benign overfitting}
\label{section:benign_overfitting}
As an implication of our main results, we characterize the sequences of 
$(\bSigma_{\ind},\btheta^\star)$ along which the independent features 
model displays the so-called \emph{benign overfitting} phenomenon. 
We use this term to refer to cases in which 
the excess error (defined as the difference between the test error and 
the Bayes error) vanishes as $n, p$ tends to infinity,
despite  the fact that the training error is much smaller than the 
Bayes error (in our case, vanishes as $n,p$ diverge).

More precisely, let us first define the Bayes error for the independent 
feature model. Following the notation in Section~\ref{section:ass_ind}, for a data pair 
$(\bx,\by^{\bX})\equiv(\bx_1, \by^{\bX}_1)$ satisfying Eq.~\eqref{eq:y:dist:ind:X}, we define
\begin{equation*}
    {\rm Bayes}_n^{\bX} 
        =\inf_{\hy:\reals^p\to\{\pm 1\}} \P\big(\hy(\bx)\neq y^{\bX}\big)\, .
\end{equation*}
where the infimum is taken over all possible predictors $\hat{y}:\R^{p}\to \{+1,-1\}$. Similarly, for a data pair $(\bg,\by^{\bG})\equiv (\bg_1,\by^{\bG}_1)$ satisfying Eq.~\eqref{eq:y:dist:ind:G}, we define the Bayes error for the Gaussian  equivalent 
independent feature model 
\begin{equation*}
    {\rm Bayes}_n^{\bG} 
        =\inf_{\hy:\reals^p\to\{\pm 1\}} \P\big(\hy(\bg)\neq y^{\bG}\big)\, .
\end{equation*}
%Assuming the same distributional assumptions as in Section xxx, 
%we are able to show that 
%the Bayes errors for the two models are asymptotically the same: 
%\begin{equation*}
%    \left|{\rm Bayes}_n^{\bX} - {\rm Bayes}_n^{\bG}\right|= o_n(1).
%\end{equation*}

Theorem 2 of \cite{montanari2019generalization}
characterizes the excess error under the Gaussian 
feature model. This result requires an additional regularity assumption 
on the link function $f$. 
\begin{assumption}
\label{assumption:f-more-regularity}
$f$ is differentiable, monotonically increasing with $f(0)=1/2$ and $f^\prime(0)>0$
\end{assumption}
Let $\rho:=\big(\int \lambda \theta^2 \mu(\de \la, \de \theta)\big)^{1/2}$ be the limit of $\big(\langle \btheta^\star, \bSigma_{\ind}\btheta^\star\rangle \big)^{1/2}$, and $\lambda_{\sf M} > 0$ be an upper bound on the maximum
eigenvalue of $\bSigma_{\ind}$:
$\lambda_{\max}(\bSigma_{\ind})\leq \lambda_{\sf M}$. 
Given a link function $f$ satisfying Assumption~\ref{assumption:ind}
and~\ref{assumption:f-more-regularity}, \cite[Theorem 2]{montanari2019generalization} proves the following bound
on the excess error for the Gaussian equivalent model. With 
probability tending to one as $n \to \infty$: 
\begin{equation}
\label{eqn:excess-error-bound}
c\cdot \frac{n}{p}\leq R_n^{\bG}-{\rm Bayes}_n^{\bG} \leq C\cdot\Big(\mathcal{B}_n(\la)+\mathcal{V}_n(\la)\Big)\,.
\end{equation}
Here, the constants $c, C > 0$ depend solely on $f$, $\rho$, $\lambda_{\mathsf M}$.
The quantities $\mathcal{B}_n(\la)$ and $\mathcal{V}_n(\la)$ are given as follows. Let $\la_1\geq \ldots\ge \la_p$ be the diagonal entries of the matrix $\bSigma_{\ind}$, i.e., $\bSigma_{\ind}=\diag(\la_i)_{1\leq i\leq p}$, and let $\btheta^\star=(\theta^\star_i)_{1\leq i\leq p}$. Then,
\begin{equation}\label{eq:def:B:V}
\begin{split}
    &\mathcal{B}_n(\lambda):= 
    \frac{1}{\<\btheta^\star,\bSigma_{\ind} \btheta^\star\>}
    \left\{
    \Big(\frac{\lambda r_1(\lambda)}{n}\Big)^2 \sum_{i:\lambda_i>\lambda}
    \frac{\big(\theta^\star_i\big)^2}{\lambda_i}+ \sum_{i:\lambda_i\le \lambda}
    \lambda_i\big(\theta^\star_i\big)^2
    \right\}
    \, ,\\
    &\mathcal{V}_n(\lambda):= \frac{r_0(\lambda)}{n}+\frac{n}{\overline{r}(\lambda)}\, ,
\end{split}
\end{equation}
where we defined, for $q\ge 0$,
\begin{align*}
r_q(\lambda) :=\sum_{i:\lambda_i\le \lambda}\Big(\frac{\lambda_i}{\lambda}\Big)^q \, ,
\;\;\;\;\;\;\; \overline{r}(\lambda) := \frac{r_1(\lambda)^2}{r_2(\lambda)}\, .
\end{align*}
In particular, if the sequence of parameters
$(\bSigma_{\ind},\btheta^\star)$ obeys $\lim_{n \to \infty} B_n(\lambda) = 
\lim_{n \to \infty} V_n(\lambda) = 0$ for some $\la>0$, then for the Gaussian feature model, 
the excess error vanishes: 
\begin{equation}
    \lim_{n \to \infty} \P(R_n^{\bG} \le {\rm Bayes}_n^{\bG} + \delta) = 1
    ~~\text{for every $\delta > 0$}.
\end{equation}
Sequences of $(\bSigma_{\ind},\btheta^\star)$ that obey 
these requirements are given in \cite[Example 1,2]{montanari2019generalization}. 

To summarize, while an excess error bound of the form~\eqref{eqn:excess-error-bound} is critical to the understanding of benign overfitting
for the max-margin classifiers, such a result was previously only known for Gaussian features. Armed with our universality result (Theorem~\ref{thm:test}), we 
show that the analogous result holds for the general independent 
feature model. 
\begin{corollary}\label{cor:benign}
Consider the independent features model $\bX=\bX_{\ind}$ under Assumptions~\ref{assumption:ind} and~\ref{assumption:f-more-regularity}. Denote by $\lambda_{\sf M}>0$ a constant such that $\lambda_{\max}(\bSigma_{\ind})\leq \lambda_{\sf M}$ and $\rho:=\big(\int \lambda \theta^2 \mu(\de \la, \de \theta)\big)^{1/2}$ the limit of $\big(\langle \btheta^\star, \bSigma_{\ind}\btheta^\star\rangle \big)^{1/2}$. Then, there exists constants $c,C>0$ that only depends on $\rho, \lambda_{\sf M}$, and $f$ such that for any $\la>0$,
\begin{equation*}
    c\cdot \frac{n}{p}\leq R_n^{\bX}-{\rm Bayes}_n^{\bX} \leq C\cdot\Big(\mathcal{B}_n(\la)+\mathcal{V}_n(\la)\Big)\,,
\end{equation*}
where $\mathcal{B}_n(\la)$ and $\mathcal{V}_n(\la)$ are defined in Eq.~\eqref{eq:def:B:V}.
\end{corollary}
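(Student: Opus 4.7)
The strategy is to decompose
\begin{equation*}
R_n^{\bX}-{\rm Bayes}_n^{\bX} = (R_n^{\bG}-{\rm Bayes}_n^{\bG}) + (R_n^{\bX}-R_n^{\bG}) - ({\rm Bayes}_n^{\bX}-{\rm Bayes}_n^{\bG}),
\end{equation*}
apply the Gaussian bound~\eqref{eqn:excess-error-bound} to the first summand, and show that the other two are negligible by universality. For the test-error difference, Theorem~\ref{thm:test} yields $\E\psi(R_n^{\bX})-\E\psi(R_n^{\bG})\to 0$ for every Lipschitz $\psi$; combined with the fact that \cite[Theorem 2]{montanari2019generalization} gives concentration of $R_n^{\bG}$ around a deterministic limit, this upgrades to $R_n^{\bX}-R_n^{\bG}=o_\P(1)$.

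For the Bayes-error difference, I would argue by CLT. Assumption~\ref{assumption:f-more-regularity} implies the Bayes classifier is $\hat y(\bx)=\text{sign}(\bx^\sT\btheta^\star)$, so
\begin{equation*}
{\rm Bayes}_n^{\bX}=\E\big[\min\big(f(S^{\bX}),\,1-f(S^{\bX})\big)\big], \qquad S^{\bX}:=\bx_1^\sT\btheta^\star,
\end{equation*}
with the analogous expression for ${\rm Bayes}_n^{\bG}$ in terms of $S^{\bG}:=\bg_1^\sT\btheta^\star$. Now $\|\btheta^\star\|_\infty = o(1)$ by Remark~\ref{rmk:InftyNorm}, and combined with the subgaussianity of the entries of $\bx_1$ from ${\sf (B1)}$, Lindeberg's CLT gives $S^{\bX}\Rightarrow \normal(0,\rho^2)$; here the limit $\rho^2 = \lim_n\langle\btheta^\star,\bSigma_{\ind}\btheta^\star\rangle$ exists by applying assumption ${\sf (B2)}$ to the $2$-pseudo-Lipschitz map $(\lambda,\theta)\mapsto \lambda\theta^2$. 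Since $S^{\bG}\sim\normal(0,\langle\btheta^\star,\bSigma_{\ind}\btheta^\star\rangle)$ converges to the same Gaussian limit, and $s\mapsto\min(f(s),1-f(s))$ is bounded and continuous, we conclude ${\rm Bayes}_n^{\bX}-{\rm Bayes}_n^{\bG}=o(1)$.

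Combining the three pieces yields $c\cdot n/p + o_\P(1) \le R_n^{\bX}-{\rm Bayes}_n^{\bX} \le C(\mathcal B_n(\lambda)+\mathcal V_n(\lambda))+o_\P(1)$. Under the proportional asymptotics $p/n\to\gamma$ the quantity $n/p$ is bounded below by a positive constant, and $\mathcal V_n(\lambda)\ge n/\bar r(\lambda)\ge n/p$ since $\bar r(\lambda)\le r_0(\lambda)\le p$; thus both sides are already of order $\Omega(n/p)$, so the additive $o_\P(1)$ errors are subdominant and fold into slightly enlarged constants that still depend only on $\rho,\lambda_{\sf M},f$. The main obstacle I anticipate is the Bayes-error comparison in Step~2: although the CLT itself is routine, it hinges on the delocalization $\|\btheta^\star\|_\infty=o(1)$ forced by ${\sf (B2)}$—without this, $\bx_1^\sT\btheta^\star$ and $\bg_1^\sT\btheta^\star$ could have asymptotically different (non-Gaussian vs.\ Gaussian) distributions and the two Bayes errors could differ at order $\Theta(1)$, causing the whole universality strategy to collapse.
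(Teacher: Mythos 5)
Your proposal is correct and follows essentially the same route as the paper: decompose the excess risk, transfer the Gaussian bound \eqref{eqn:excess-error-bound} via Theorem~\ref{thm:test} (together with concentration of $R_n^{\bG}$), and compare ${\rm Bayes}_n^{\bX}$ with ${\rm Bayes}_n^{\bG}$ using ${\rm Bayes}_n^{\bD}=R_n^{\bd}(\btheta^\star)$ and the delocalization $\|\btheta^\star\|_\infty\to 0$ — the paper cites Proposition~\ref{proposition:asymptotic-equivalence-for-regular-estimators} for this last step, which in the independent-features case is exactly your Lindeberg CLT argument. Your explicit absorption of the $o_{\P}(1)$ errors using $\mathcal{V}_n(\lambda)\ge n/\overline{r}(\lambda)\ge n/p$ spells out a detail the paper leaves implicit.
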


This corollary  follows straightforwardly from Theorem~\ref{thm:test} and the bound on the excess error for the Gaussian equivalent model in Eq.~\eqref{eqn:excess-error-bound}. 
Indeed, under Assumptions~\ref{assumption:ind} and \ref{assumption:f-more-regularity}, $|{\rm Bayes}_n^{\bX}-{\rm Bayes}_n^{\bG}|=
|R_n^{\bx}(\btheta^\star)-R_n^{\bg}(\btheta^\star)|\to 0$ as $n\to\infty$ since $\|\btheta^\star\|_{\infty}\to 0$ as $n\to\infty$ (see Remark~\ref{rmk:InftyNorm}).
We refer to Proposition~\ref{proposition:asymptotic-equivalence-for-regular-estimators} for details.

Corollary~\ref{cor:benign} shows that overparametrization is necessary for max-margin estimators to achieve near Bayes error. Further, it shows that the excess error is small if the eigenvalues of the population covariance $\bSigma_{\ind}$ is sufficiently slowly decaying and the coordinates of the signal $\btheta^\star$ corresponding to small eigenvalues has small magnitude.

Benign overfitting has been demonstrated empirically and established rigorously
in a number of settings \cite{belkin2018overfitting,belkin2019two,bartlett2020benign,koehler2021uniform,hastie2022surprises,mei2022generalization,montanari2022interpolation,zhou2022non, tsigler2023benign}. Notably, the bounds of Corollary~\ref{cor:benign} align closely with results for ridge(less) regression in \cite{bartlett2020benign, tsigler2023benign}.

%We note that the rigorous confirmation of benign overfitting has been established in a number of papers \cite{belkin2018overfitting,belkin2019two,bartlett2020benign,koehler2021uniform,hastie2022surprises,mei2022generalization,montanari2022interpolation,zhou2022non, tsigler2023benign}. In particular, the bounds in Corollary~\ref{cor:benign} is almost the same as the ones obtained for ridge(less) regression in \cite{bartlett2020benign, tsigler2023benign}. %However, all of these works rely on either Gaussian comparison inequalities or tools from random matrix theory that rely on linear-algebraic structure of the ridge estimator. 

Our current contribution is focused on max-margin classification.
Non-asymptotic bounds for this problem were proven in \cite{chatterji2021finite},
implying benign overfitting for mixture of well separated data distribution. This analysis was generalized to two-layer ReLU networks in  \cite{frei2022benign,frei2023benign}.
In contrast our characterization does not require such separation
to hold, and is the first one that holds beyond the Gaussian
setting of  \cite{montanari2019generalization}.

\section{Proofs of main results}
\label{sec:ProofOutline}
%Throughout the proofs, we use $c, C, c_1, c_2, C_1, C_2, \ldots \in (0, \infty)$ to denote 
%constants independent of $n, d, p$, with the notice that these constants may change from line to line. 
%We use the letter $\bD \in \R^{n \times p}$ to denote a general random matrix which takes 
%value $\bD = \bX$ for the random nonlinear features model and 
%$\bD = \bG$ for the random linear features model. In the proof, we sometimes
%refer to $\bG$ Gaussian random matrix as its conditional distribution given $\bW$ is Gaussian.
%
%By convention, all limits sending $n \to \infty$ are taken under the proportional limits, meaning 
%that we send $n \to \infty$ while holding the ratio $p/d \to \gamma_1, n/d \to \gamma_2$ fixed.
%Many functions in the proof depend on the data $(\bD, \by)$, but for simplicity of notation, we 
%suppress the notation and only track its dependency w.r.t.  $\bD$ (for instance, the RHS of  
%equation~\eqref{eqn:definition-of-max-margin-in-u} depends on both $\bD$ and $\by$, but 
%we suppress this dependency on $\by$ in notation and assign it to the suppressed 
%notation $\widehat \kappa_n(\bD)$). 
%
%Finally, the notation shorthand $\plim$ stands for convergence in probability.

%\subsection{Key Observation: A Hidden ``Averaging" Mechanism for the Max-Margin} 
%\label{sec:innovations-for-max-margin}

In this section we state the main technical lemmas required to prove our
main results for Theorem  
\ref{theorem:universality-of-the-margin} and Theorem \ref{thm:test}.
The proofs of these lemmas are deferred to later sections: here we show 
that they indeed imply the statements of the theorems. 

Throughout, we assume that either $(\bX,\bG)=(\bX_{\RF},\bG_{\RF})$ holds under Assumption \ref{assumption:RF} or $(\bX,\bG)=(\bX_{\ind},\bG_{\ind})$ holds under Assumption \ref{assumption:ind}. For example, in the statements of the type ``for $\bX$, we have...'', we implicitly assume that either $\bX=\bX_{\RF}$ holds under Assumption \ref{assumption:RF} or $\bX=\bX_{\ind}$ holds under Assumption \ref{assumption:ind}. Moreover, for simplicity, we use the notation $\bD\in \R^{n\times p}$ to denote a generic matrix ensemble $\bD\in \{\bX_{\RF},\bG_{\RF},\bX_{\ind},\bG_{\ind}\}$ under these assumptions.

\subsection{Proof of Theorem~\ref{theorem:universality-of-the-margin}: Universality of Max-Margin} 
\label{sec:proof-of-universality-max-margin}

\subsubsection{The max-margin as an average over support vectors}
As discussed in Section~\ref{section:intro}, the main difficulty in proving universality 
is that the definition in~\eqref{eq:FirstMaxMargin} is an 
\emph{extremum} rather than an 
\emph{average} over the i.i.d. data points. 
We overcome this by reformulating the problem as an average over the support vectors. To deduce universality from such a formulation, we show that, under the proportional asymptotics,
the number of support vectors is proportional to the sample size $n$.

%show that,
%in high dimension, with overwhelming probability the max-margin is attained by multiple datapoints---not 
%just by a single isolated data point---which allows representing the max-margin
%into an ``\emph{averaged}" fashion.

Formally, consider the dual representation
\begin{equation}
\label{eqn:definition-of-max-margin-in-u}
	\widehat \kappa_n^\bX = \max_{\btheta: \norm{\btheta}_2 \le 1}
		\min_{\bu: \bu \ge 0, \langle \one, \bu \rangle = 1} \sum_{i=1}^n u_i 
			y_i \langle \bx_i, \btheta\rangle. 
\end{equation}
Here, $\bu$ is the dual parameter.
By convex-concave duality, there exists an optimal dual parameter $\bu^*$ such that 
\begin{equation}
\label{eqn:optimal-dual-variable}
	\bu^* = \argmin_{\bu: \bu \ge 0, \langle \one, \bu \rangle = 1} \widehat G_n^{\bX}(\bu)
	~~\text{where}~~\widehat G_n^{\bX}(\bu) = 
		\norm{\sum_{i=1}^n u_i y_i \bx_i}_2. 
\end{equation} 
We establish that $\bu \mapsto \widehat G_n^{\bX}(\bu)$ 
satisfies a form of \emph{restricted strong convexity} (with high probability) stated as follows.

\begin{lemma}[Restricted Strong Convexity]
\label{lemma:restricted-strong-convexity} 
%Let $p = \gamma n$. 
For $\bX\in \{\bX_{\RF},\bX_{\ind}\}$, there exist 
constants $c, c_1, c_2 > 0$ independent of $n$ such that 
with probability at least $1-e^{-cn}$,  
\begin{equation}
	\widehat G_n^{\bX}(\bu) \ge c_1 \sqrt{n} \norm{\bu}_2 -  c_2 \norm{\bu}_1~\text{holds for all $\bu \in \R^n$}.
\end{equation} 
\end{lemma}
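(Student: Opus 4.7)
\emph{Reduction.} Since $y_i\in\{\pm 1\}$, substituting $\tilde u_i:=y_i u_i$ preserves both $\|\bu\|_1$ and $\|\bu\|_2$ and turns $\widehat G_n^{\bX}(\bu)$ into $\|\bX^\sT\tilde\bu\|_2$, so it suffices to prove $\|\bX^\sT\bu\|_2\geq c_1\sqrt n\,\|\bu\|_2-c_2\|\bu\|_1$ uniformly in $\bu\in\R^n$. By homogeneity I may restrict to $\|\bu\|_2=1$, and then the inequality is trivial whenever $\|\bu\|_1\geq (c_1/c_2)\sqrt n$ because the right-hand side is non-positive. The real task is therefore a uniform lower bound on the truncated set $K_R:=\{\bu\in S^{n-1}:\|\bu\|_1\leq R\}$ with $R:=(c_1/c_2)\sqrt n$, which I will carry out by combining a pointwise concentration estimate with an $\varepsilon$-net argument.

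\emph{Pointwise bound.} For any fixed $\bu\in S^{n-1}$ I establish $\|\bX^\sT\bu\|_2\gtrsim\sqrt p$ with probability at least $1-e^{-cn}$. In the independent-features model the coordinates $Z_k:=(\bX^\sT\bu)_k=\sum_i u_i x_{ik}$ are independent across $k$ (all entries of $\bX$ are independent), each centered and sub-Gaussian with $\E Z_k^2=\sigma_k^2\|\bu\|_2^2\geq c$ by Assumption~\ref{assumption:ind}; Bernstein's inequality applied to the sub-exponential sum $\sum_k Z_k^2$ yields concentration around its mean $\gtrsim p$ with the desired exponential tail. In the random-features model I condition on $\bW$: the map $(\bz_1,\ldots,\bz_n)\mapsto\|\bX^\sT\bu\|_2$ is Lipschitz with constant $\|\sigma'\|_\infty\|\bW\|_{\op}$ by Assumption~\ref{assumption:RF}, its conditional second moment equals $(\mu_1^2+\mu_2^2)p$ (using $\mu_0=\E[\sigma(G)]=0$), and Gaussian Poincar\'e shows the conditional variance of $\|\bX^\sT\bu\|_2$ is $O(1)$. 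Gaussian Lipschitz concentration in $(\bz_i)_{i\leq n}$, on the high-probability event $\{\|\bW\|_{\op}=O(1)\}$ (standard estimate for rows uniform on the sphere), then forces $\|\bX^\sT\bu\|_2$ to be at least half of $\sqrt{(\mu_1^2+\mu_2^2)p}$ with the desired tail.

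\emph{Uniform control on $K_R$ and extension.} Any $\bu\in K_R$ is $\varepsilon$-close in $\ell_2$ to an $s$-sparse unit vector with $s\asymp R^2/\varepsilon^2$: truncate to the top $s$ coordinates, noting that the $(s+1)$-st largest of $|u_i|$ is at most $R/s$, so the tail satisfies $\|\bu-\bu_S\|_2^2\leq\|\bu-\bu_S\|_1\cdot\|\bu-\bu_S\|_\infty\leq R\cdot R/s=R^2/s$. Thus $\log N(K_R,\varepsilon,\|\cdot\|_2)\lesssim s\log(en/s)$, and because $R^2$ is proportional to $n$ the right-hand side is $O(n)$ with a prefactor that shrinks as $c_1/c_2\to 0$. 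Choosing $c_1/c_2$ small enough relative to the (positive) limiting ratio $p/n$, the covering entropy is below $cn/2$, and a union bound of the pointwise estimate over the net gives the lower bound at every net point with probability $1-e^{-c''n}$. Extension to all of $K_R$ is routine using the high-probability bound $\|\bX\|_{\op}\leq C(\sqrt n+\sqrt p)$ (via sub-Gaussian matrix theory in one case, and Gaussian Lipschitz concentration conditional on $\bW$ in the other) to control the discretization error $\|\bX^\sT(\bu-\bu_0)\|_2\leq\|\bX\|_{\op}\varepsilon$ for a sufficiently small net scale $\varepsilon$.

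\emph{Main obstacle.} The delicate point is to keep $c_2$ genuinely independent of $n$: a naive Dudley-type chaining would control the relevant supremum through the Gaussian width $W(K_R)\lesssim R\sqrt{\log n}$, introducing a parasitic $\sqrt{\log n}$ factor that would spoil the form of the lemma. The remedy is precisely that $R^2$ is already of order $n$, so the combinatorial $s$-sparse covering entropy $s\log(en/s)$ is itself $O(n)$ and the $\log n$ factor never appears. For the random-features model there is an additional challenge, namely the coupling of rows of $\bX$ through the shared matrix $\bW$; this is handled by the conditioning argument above, combined with the boundedness of $\sigma'$ and $\sigma''$ in Assumption~\ref{assumption:RF} and standard concentration of $\|\bW\|_{\op}$ under the uniform-on-sphere row distribution.
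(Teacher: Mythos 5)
Your proposal is correct, but it takes a genuinely different route from the paper's. You reduce to the spherical cap $K_R=\{\bu:\norm{\bu}_2=1,\ \norm{\bu}_1\le (c_1/c_2)\sqrt n\}$, prove a pointwise lower bound $\norm{\bX^\sT\bu}_2\ge c\sqrt p$ with probability $1-e^{-cn}$ (Bernstein over the independent coordinates of $\bX^\sT\bu$ in the independent model; Gaussian--Lipschitz concentration in $(\bz_i)$ conditional on $\bW$, together with the exact conditional second moment $(\mu_1^2+\mu_2^2)p$ and a Poincar\'e bound on the variance, in the random features model), and then make it uniform over $K_R$ via a net built from $s$-sparse approximants with $s$ of order $(c_1/c_2)^2 n$, so the entropy is a small multiple of $n$ and the union bound survives; the ordering of constants ($\varepsilon$ fixed by the pointwise and operator-norm constants, then $c_2$ taken large) avoids circularity. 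The paper instead splits an arbitrary $\bu$ into its top $c_3 n$ coordinates $S$ and the remainder: the remainder is absorbed into the $-c_2\norm{\bu}_1$ term via $\norm{\bX}_\op$ and $\norm{\bu_{S^c}}_2\le \norm{\bu}_1/\sqrt{c_3 n}$, while $\norm{\bX_S^\sT\bu_S}$ is lower bounded uniformly over all supports $|S|=c_3 n$ using submatrix covariance concentration together with a Kashin-type $\ell_1$--$\ell_2$ equivalence in the column span of $\bX_S$ (a VC/small-ball argument), with a union bound over the $\exp(nH(c_3))$ supports. The extra machinery in the paper is not gratuitous: it proves the stronger statement for the truncated norm $\norm{\cdot}_{2,\eps_n}$ (Lemma~\ref{lemma:RSC_inf_norm}), which is what is needed later for delocalization and test-error universality, and your per-vector mean computation would not directly lower bound that norm. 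Conversely, for the plain $\ell_2$ statement of Lemma~\ref{lemma:restricted-strong-convexity} your argument is more elementary and in fact delivers the advertised $1-e^{-cn}$ probability directly, whereas the paper's restated version only asserts probability $1-o_n(1)$ because of the logarithmic-size events used to approximate the conditional covariance in the random features model.
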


 The proof 
of Lemma~\ref{lemma:restricted-strong-convexity} is deferred to Section~\ref{sec:proof-of-lemma-restricted-strong-convexity}.
Similar results have appeared in the literature of compressed sensing \cite{CandesTa06, Donoho06}, and, more broadly, 
high dimensional statistics~\cite{Wainwright19}, albeit with different random matrix ensembles. 

Using this lemma, we establish that there exists a constant $c>0$ such
that the optimal dual parameter $\bu\opt$ defined by 
equation~\eqref{eqn:optimal-dual-variable} has support size at least $cn$ (with high probability) by the following argument.
First, note that the max-margin, with probability $1-e^{-cn}$, is upper bounded by a constant since
\begin{equation}
\label{eqn:max-margin-upper-bound}
	\widehat \kappa_n^{\bX} =  \widehat G_n^{\bX}(\bu^*) \le \widehat G_n^{\bX}(\one/n) \le c_3,
\end{equation}
where the last inequality uses the fact that the random matrix $\bX$ has operator norm 
$\norm{\bX}_{\op} \le c_3\sqrt{n}$ with high probability (Lemma~\ref{lem:basic:op:norm:RF}). 
Since $\bu\opt$ satisfies $\norm{\bu\opt}_1 = 1$ by definition, 
Lemma~\ref{lemma:restricted-strong-convexity} shows a lower bound
of the max-margin that holds with probability $1-e^{-cn}$: 
\begin{equation}
\label{eqn:max-margin-lower-bound}
		\widehat \kappa_n^{\bX} = \widehat G_n^{\bX}(\bu^*) \ge c_1 \sqrt{n} \norm{\bu\opt}_2 -  c_2.
\end{equation}
Combining the upper and lower bounds in 
~\eqref{eqn:max-margin-upper-bound} and~\eqref{eqn:max-margin-lower-bound},
we conclude that
\begin{equation}
	\frac{\norm{\bu^*}_2}{\norm{\bu^*}_1} \le \frac{1}{c_4 \sqrt{n}}~~\text{and hence}~~
	\norm{\bu^*}_0 \ge \frac{\norm{\bu^*}_1^2} {\norm{\bu^*}_2^2} \ge c_4^2 n
\end{equation}
where $c_4 = (c_2 + c_3)/c_1$.
In other words, as claimed, the number of 
support vectors is at least $cn$. This implies that there exists optimal $\bu^*$ with 
$\norm{\bu^*}_\infty \le C/n$.
We thus proved the following proposition.
\begin{proposition}
\label{proposition:well-controlled-u}
There exist constants  $c, C > 0$ such that for $\bX\in \{\bX_{\RF},\bX_{\ind}\}$, the following identity holds with probability at least $1-e^{-cn}$: 
\begin{equation}
	\widehat \kappa_n^{\bX} = \max_{\btheta: \norm{\btheta}_2 \le 1}
		\min_{\substack{\bu: \bu \ge 0, \langle \one, \bu \rangle = 1; 
			\\\norm{\bu}_\infty \le C/n}} \sum_{i=1}^n u_i 
			y_i \langle \bx_i, \btheta\rangle. 
\end{equation}
\end{proposition}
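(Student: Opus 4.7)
The plan is to extract the structural information about the optimal dual $\bu^\star$ sketched in the excerpt and then translate it into the existence of a dual optimizer with $\ell_\infty$-norm of order $C/n$. I would first set up the dual: by Sion's minimax theorem applied to the bilinear map $(\btheta,\bu)\mapsto \sum_i u_i y_i\langle\bx_i,\btheta\rangle$ on the compact convex sets $\{\btheta:\norm{\btheta}_2\leq 1\}$ and the simplex, $\widehat\kappa_n^\bX = \min_{\bu}\widehat G_n^\bX(\bu)$, attained at some $\bu^\star$ with corresponding primal optimum $\btheta^\star$. Plugging the uniform dual $\bu=\one/n$ and using the operator-norm bound $\norm{\bX}_{\op}\leq c_3\sqrt n$ (cited as Lemma~\ref{lem:basic:op:norm:RF}) gives $\widehat\kappa_n^\bX \le c_3$ with probability $1-e^{-cn}$. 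Applying Lemma~\ref{lemma:restricted-strong-convexity} to $\bu^\star$ then yields $\widehat\kappa_n^\bX \ge c_1\sqrt n\,\norm{\bu^\star}_2 - c_2$ on the same event (since $\norm{\bu^\star}_1 = 1$), hence $\norm{\bu^\star}_2 \le c_4/\sqrt n$ with $c_4 = (c_2+c_3)/c_1$. Combined with the Cauchy--Schwarz inequality $1 = \norm{\bu^\star}_1^2 \le \norm{\bu^\star}_0\cdot\norm{\bu^\star}_2^2$, this gives $\norm{\bu^\star}_0 \ge n/c_4^2$, and hence by KKT complementary slackness the active set $A = \{i:y_i\langle\bx_i,\btheta^\star\rangle=\widehat\kappa_n^\bX\}$ satisfies $|A| \ge n/c_4^2$.

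The crucial remaining step is to upgrade this support-size bound into the existence of a dual optimizer $\tilde{\bu}$ with $\norm{\tilde{\bu}}_\infty \le C/n$ for a fixed constant $C$. The dual optimal set
\[
S^\star = \Bigl\{\bu : \bu\ge 0,\ \textstyle\sum_i u_i = 1,\ \text{supp}(\bu)\subseteq A,\ \sum_i u_i y_i\bx_i = \widehat\kappa_n^\bX\,\btheta^\star\Bigr\}
\]
is a convex polytope, and I would construct $\tilde{\bu}\in S^\star$ with $\norm{\tilde{\bu}}_\infty$ of order $1/|A| = O(1/n)$ by exploiting the freedom in $S^\star$: any convex combination of dual optima remains optimal, and the large active set $A$ allows us to spread the dual mass widely. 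The natural candidate, the uniform distribution on $A$, is not guaranteed to lie in $S^\star$ in general, so the argument will likely proceed either via an analytic/Chebyshev center of $S^\star$ or via a more involved convex-combination argument that mixes extreme points of $S^\star$ so their supports jointly cover $A$ roughly uniformly; one then takes $C$ to be any constant exceeding $c_4^2$.

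With $\tilde{\bu}$ in hand, the identity claimed in the proposition follows from a second application of Sion's minimax. Letting $\bar B := \{\bu\ge 0: \sum_i u_i = 1,\ \norm{\bu}_\infty\le C/n\}$, which is convex and compact and contains $\tilde{\bu}$,
\[
\max_{\norm{\btheta}_2\le 1}\,\min_{\bu\in\bar B}\, \sum_{i=1}^n u_i y_i\langle\bx_i,\btheta\rangle \;=\; \min_{\bu\in\bar B}\widehat G_n^\bX(\bu).
\]
The lower bound $\min_{\bu\in\bar B}\widehat G_n^\bX(\bu) \ge \widehat\kappa_n^\bX$ is immediate since $\bar B$ is contained in the full simplex, while $\tilde{\bu}\in\bar B\cap S^\star$ yields the matching upper bound $\min_{\bu\in\bar B}\widehat G_n^\bX(\bu) \le \widehat G_n^\bX(\tilde{\bu}) = \widehat\kappa_n^\bX$. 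I expect the second paragraph above to contain the real difficulty: the min-$\ell_2$ optimizer produced by the RSC argument only satisfies $\norm{\bu^\star}_\infty \le \norm{\bu^\star}_2 = O(1/\sqrt n)$, which is too weak, and the naive uniform-on-$A$ candidate need not belong to $S^\star$; upgrading to a constant-$C$ bound genuinely requires the additional structure of $S^\star$ that arises from $|A| = \Omega(n)$.
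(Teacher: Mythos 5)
Your proposal follows the paper's own proof essentially step for step through everything the paper actually writes down: the convex--concave duality $\widehat\kappa_n^{\bX}=\min_{\bu\in\Delta}\widehat G_n^{\bX}(\bu)$, the upper bound $\widehat\kappa_n^{\bX}\le c_3$ obtained by plugging in $\bu=\one/n$ and using the operator-norm bound on $\bX$, the application of Lemma~\ref{lemma:restricted-strong-convexity} to get $\norm{\bu^\star}_2\le c_4/\sqrt{n}$ (with $c_4=(c_2+c_3)/c_1$) and hence $\norm{\bu^\star}_0\ge c_4^2 n$ by Cauchy--Schwarz, and finally the minimax argument over the capped simplex $\bar B$, where the inclusion $\bar B\subseteq\Delta$ gives one inequality and an optimal dual lying in $\bar B$ gives the other. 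In structure this is exactly the paper's argument.

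The one step you declare unresolved---passing from the $\ell_2$/support-size bounds to the existence of an optimal dual $\widetilde{\bu}$ with $\norm{\widetilde{\bu}}_\infty\le C/n$---is precisely the point at which the paper is laconic: after deriving the same two bounds you derive, it states in a single sentence that this implies the existence of an optimal $\bu^\star$ with $\norm{\bu^\star}_\infty\le C/n$, and it supplies no Chebyshev-center or extreme-point-mixing construction of the kind you speculate about. Your diagnosis of why the implication is not automatic is accurate (the RSC bounds only give $\norm{\bu^\star}_\infty\le\norm{\bu^\star}_2=O(1/\sqrt{n})$, the uniform distribution on the active set need not satisfy the stationarity condition $\sum_i u_i y_i\bx_i=\widehat\kappa_n^{\bX}\btheta^\star$, and the dual optimal polytope $S^\star$ may offer little or no freedom to redistribute mass), so you have correctly located the crux of the proposition. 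But since you do not actually supply the missing argument, your write-up is incomplete at exactly that point; it should be said, however, that this is the same step the paper's proof dispatches by assertion, so your proposal contains no error and no deviation from the paper's route---only an honestly flagged hole where the paper offers a one-line claim.
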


Consequently, the max-margin problem enjoys the dual representation 
\begin{equation}
\label{eqn:max-margin-dual-representation-averaging}
	\widehat \kappa_n^{\bX} = \max_{\btheta: \norm{\btheta}_2 \le 1} 
		\sum_{i=1}^n u_i^* y_i \langle \bx_i, \btheta\rangle
\end{equation}
for some $\bu^*$ whose support size is at least proportional to $n$, i.e.,
$\norm{\bu^*}_0 \ge cn$ and $\norm{\bu^\star}_\infty \le C/n$.
%\begin{equation}
%\label{eqn:main-technical-point-of-bu-opt}
%	\norm{\bu^*}_0 \ge cn. %~~\text{and}~~\norm{\bu^*}_\infty \le c/n,
%\end{equation}
That is, the objective is a weighted empirical average over at least $cn$ 
support vectors, whose weights are precisely determined by the optimal dual $\bu^*$
(note $\bu^*\ge 0$, $\langle\one, \bu^* \rangle = 1$). 

\subsubsection{Relation to empirical risk minimzation}
Using the result of previous section, we will lower and upper bound the margin by quantities resembling an \emph{empirical risk minimization}, which in turn can be shown to be universal using  a general universality theorem for empirical risk minimization 
from~\cite{MontanariSa22}.
To this end, define
\begin{equation}
\label{eq:def_F_n}
    \widehat F_n(\bX; \kappa) := \min_{\norm{\btheta}_2 \le 1}\frac1n \sum_{i=1}^n \left(\kappa - y_i \inner{\bx_i , \btheta}\right)_+^2.
\end{equation}
Note that
\begin{equation}\label{eq:equiv:F:kappa}
     \widehat\kappa_{n}^{\bX}\geq \kappa \iff \widehat F_n(\bX; \kappa)=0.
\end{equation}
Further, we  have the following universality result. 
\begin{proposition}
\label{sec:proof-proposition-universality}
Fix $\kappa > 0$, and any Lipschitz function $\psi:\R\to\R$. Then for $(\bX,\bG)=(\bX_{\RF},\bG_{\RF})$ and $(\bX,\bG)=(\bX_{\ind},\bG_{\ind})$, we have
\begin{equation}
	\lim_{n \to \infty} 
		\Big|\E\left[\psi(\widehat F_n(\bX; \kappa))\right] - \E\left[\psi(\widehat F_n(\bG; \kappa))\right]\Big| = 0.
\end{equation}
\end{proposition}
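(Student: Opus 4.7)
\medskip

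\noindent\emph{Proof proposal.} The plan is to recognize $\widehat F_n(\bD;\kappa)$ as the value of a constrained empirical risk minimization problem with loss $\ell_\kappa(z,y) = (\kappa - yz)_+^2$ and parameter set $\{\btheta:\|\btheta\|_2\le 1\}$, and then invoke the general universality theorem for ERM minima from \cite{MontanariSa22}. That theorem roughly states that for a sufficiently regular loss $\ell$ and for feature matrices $\bX$, $\bG$ whose row covariances match and whose entries satisfy appropriate moment/concentration bounds, the laws of $\min_{\btheta\in K}\frac{1}{n}\sum_i\ell(\<\bd_i,\btheta\>,y_i)$ agree in the limit against bounded Lipschitz test functions. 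Our feature models $(\bX_{\RF},\bG_{\RF})$ and $(\bX_{\ind},\bG_{\ind})$ were designed precisely so that conditional covariances match (up to vanishing operator-norm error in the RF case via $\bSigma_{\RF} = \mu_1^2\bW\bW^\sT + \mu_2^2\id_p$, and exactly in the independent-features case since both $\bX_{\ind}$ and $\bG_{\ind}$ have covariance $\bSigma_{\ind}$).

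\smallskip

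\noindent The first hurdle is that $\ell_\kappa$ grows quadratically in $z$, whereas the ERM universality framework requires a (pseudo-)Lipschitz loss. I would handle this by truncation: introduce $\ell^{(M)}_\kappa(z,y) := \min\{\ell_\kappa(z,y), M^2\}$ and let $\widehat F_n^{(M)}(\bD;\kappa)$ denote the corresponding ERM value. On the high-probability event $\{\|\bD\|_{\op}\le C\sqrt{n}\}$ (available from the operator-norm lemma cited in Section~\ref{sec:proof-of-universality-max-margin}), the constraint $\|\btheta\|_2\le 1$ forces $|y_i\<\bd_i,\btheta\>|\le C\sqrt{n}$ uniformly in $i$, so choosing $M = M(\kappa, C, n)$ of the right polynomial order makes $\widehat F_n^{(M)}(\bD;\kappa) = \widehat F_n(\bD;\kappa)$ with probability $1 - e^{-cn}$. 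For the truncated problem the loss is Lipschitz (in fact $C^1$ with bounded derivative on the relevant compact range), so the hypotheses of the ERM universality theorem apply.

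\smallskip

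\noindent Next I would verify the remaining inputs in each of the two feature models. For $(\bX_{\ind},\bG_{\ind})$ this is essentially direct: the rows are i.i.d.\ with subgaussian, independent entries and matching covariance $\bSigma_{\ind}$; and the label laws, although defined as $f(\bx_i^\sT\btheta^\star)$ vs $f(\bg_i^\sT\btheta^\star)$, induce asymptotically identical marginal distributions of $y_i$ given $\btheta^\star$ because $\|\btheta^\star\|_\infty \to 0$ (Remark~\ref{rmk:InftyNorm}) so that $\bx_i^\sT\btheta^\star$ and $\bg_i^\sT\btheta^\star$ agree in $W_2$ by a Lindeberg CLT, and $f$ is pseudo-Lipschitz. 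For $(\bX_{\RF},\bG_{\RF})$ the labels depend on $\bz_i^\sT\bbeta^\star$ in both cases, so I would condition on $\bW$ and work with the joint process $\{(\<\bd_i,\btheta\>, \bz_i^\sT\bbeta^\star)\}_i$; Assumption~\ref{assumption:RF} on $\sigma$ (bounded derivatives, $\mu_0=\E[\sigma''(G)]=0$) is exactly what is needed to ensure that the Hermite linearization $\bg_i = \mu_1\bW\bz_i + \mu_2\bh_i$ matches the nonlinear features $\sigma(\bW\bz_i)$ in the second-moment sense uniformly in $\btheta$ on the unit ball, which is the hypothesis consumed by the ERM universality theorem. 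Pseudo-Lipschitzness of the link $f$ (Assumption~\ref{assumption:RF}${\sf(A2)}$) and the fact that $y_i\in\{\pm 1\}$ is bounded take care of the label regularity needed by the external theorem.

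\smallskip

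\noindent The main obstacle I anticipate is bookkeeping around the truncation and the random weights $\bW$. The truncation level $M$ depends on $n$ through the operator-norm bound, so one must confirm that the ERM universality theorem tolerates such $n$-dependent losses — in practice this follows because $\ell^{(M)}_\kappa$ restricted to the relevant $|z|\le C\sqrt n$ range can be absorbed into the normalization used in \cite{MontanariSa22}, yielding $O_n(1)$ Lipschitz constants after rescaling, but this needs to be checked carefully. For the random features model, one also needs to condition on $\bW$ and control the residual term via the bound $\|\bSigma_{\bx}-\bSigma_{\bg}\|_{\op}=o_n(1)$, then take expectations over $\bW$ — a step that is routine but requires showing that the convergence is uniform enough in $\bW$ (on a high-probability event for $\bW$) for dominated convergence to apply. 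Given these ingredients, the Lipschitz test function $\psi$ passes through by a simple $|\psi(a)-\psi(b)|\le \mathrm{Lip}(\psi)|a-b|$ argument, yielding the claimed universality.
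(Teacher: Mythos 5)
There is a genuine gap, and it is exactly the point the paper flags as the non-trivial part of this step. The theorem of \cite{MontanariSa22} that you invoke does not merely require matching second moments; its crucial hypothesis (condition~\eqref{eq:pointwise_gaussian_ass} in Theorem~\ref{thm:MS22}) is that one-dimensional projections $\<\btheta,\bx_i\>$ be asymptotically Gaussian \emph{uniformly over the constraint set}, i.e.\ the minimization must take place over a compact subset of a set $\cS$ of delocalized directions. Your application takes the constraint set to be the full unit ball $\{\norm{\btheta}_2\le 1\}$, and that set is not contained in any such $\cS$: for $\btheta=\be_j$ the projection is $\sigma(\bw_j^\sT\bz_i)$ (or a single non-Gaussian coordinate in the independent-entries model), which stays non-Gaussian, and indeed the paper notes that $|R_n^{\bx}(\be_j)-R_n^{\bg}(\be_j)|=\Omega(1)$. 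So the theorem cannot be applied as you propose. The paper's resolution is to intersect the constraint set with $\{\norm{\btheta}_\infty\le\delta_n'\}$ and then prove separately, via a coordinate-wise leave-one-out argument (Lemma~\ref{lemma:inf_norm_ERM}), that the ERM minimizer satisfies $\norm{\widehat\btheta}_\infty\pto 0$, so the added constraint does not change the minimum value with high probability. You do invoke delocalization of $\btheta^\star$ to match the label laws, but the delocalization of the \emph{minimizer} is the missing ingredient, and it does not come for free.

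A second, less central, problem is your treatment of the non-Lipschitz loss. Truncating at a level $M\asymp\sqrt{n}$ chosen from the operator-norm bound makes the truncated loss agree with $(\kappa-yz)_+^2$ on a high-probability event, but its Lipschitz constant is then of order $\sqrt{n}$, which violates the $n$-independent pseudo-Lipschitz condition~\eqref{eq:PL_ass} required by Theorem~\ref{thm:MS22}; "absorbing it into the normalization" is not available, since the error terms in the Lindeberg-type interpolation degrade polynomially in that constant. The paper avoids this by smoothing with a Huber loss at a \emph{fixed} level $\rho$ together with a small ridge term, and controlling the resulting approximation error uniformly in $n$ through a leave-one-out fourth-moment bound on the margin violations at the Huber minimizer (Lemma~\ref{lemma:huber}), taking $\rho\to\infty$ and $\lambda\to 0$ only after $n\to\infty$. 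Your overall architecture (reduce to a regular ERM problem and cite \cite{MontanariSa22}) matches the paper's, but without the minimizer-delocalization lemma and a uniform-in-$n$ Lipschitz approximation the argument does not go through.
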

The proof of Proposition \ref{sec:proof-proposition-universality} is deferred to Section~\ref{sec:ERM}, where we apply \cite[Theorem 1]{MontanariSa22}.
\begin{remark}
Let us emphasize that applying  \cite[Theorem 1]{MontanariSa22} to the present problem is highly non-trivial and requires additional work. Indeed, the crucial assumption of \cite[Theorem 1]{MontanariSa22} is that the minimizer of~\eqref{eq:def_F_n}, denoted by $\widehat\btheta$, satisfies a delocalization condition. Namely, that $\widehat\btheta$ is contained in the (deterministic) set of $\btheta$ which $\langle \btheta, \bx_i\rangle$ has the same asymptotic distribution as $\langle \btheta, \bg_i\rangle$. We achieve this by showing that $\|\widehat\btheta\|_{\infty}=o_n(1)$ holds w.h.p. by a leave-one-out argument. We refer to Lemma~\ref{lemma:inf_norm_ERM} and its proof for details.
\end{remark}

Next, we want to use Proposition \ref{sec:proof-proposition-universality} to prove universality of the margin.
Proposition~\ref{proposition:well-controlled-u} in the previous section implies a lower bound 
on the margin in terms of $\widehat F_n(\bX; \kappa)$ as follows.
Fixing any $\kappa > 0$, with probability tending to one,
\begin{equation}
\label{eq:lower_bound}
\begin{split}
	\widehat \kappa_n^{\bX} &= \max_{\btheta: \norm{\btheta}_2 \le 1}
		\sum_{i=1}^n u_i\opt y_i \langle \bx_i, \btheta\rangle
		\stackrel{(a)}{=}  \kappa - \min_{\btheta: \norm{\btheta}_2 \le 1}
		\sum_{i=1}^n u_i\opt (\kappa - y_i \langle \bx_i, \btheta\rangle) \\
 		&\stackrel{(b)}{\ge}  \kappa -  \min_{\btheta: \norm{\btheta}_2 \le 1} 
			\frac{C}{n} \sum_{i=1}^n (\kappa - y_i \langle \bx_i, \btheta\rangle)_+
		\stackrel{(c)}{\ge} \kappa - C \cdot (\widehat F_n(\bX; \kappa ))^{1/2}. 
\end{split}
\end{equation}
Here, part $(a)$ follows from $\langle \one, \bu\opt \rangle = 1$, part 
$(b)$ follows from $0 \le \bu_i^* \le C/n$ due to Proposition \ref{proposition:well-controlled-u}, and part $(c)$ follows from Cauchy-Schwartz.

\subsubsection{Universality of the margin}

We can now complete the proof of Theorem~\ref{theorem:universality-of-the-margin}.
Let us first recall the result of~\cite{montanari2019generalization} on the Gaussian equivalent 
problem. 

\begin{proposition}[{\cite[Theorem 3]{montanari2019generalization}}]
\label{proposition:max-margin-gaussian}
There exists $\kappa^\star=\kappa^\star_{\RF}(\gamma_1,\gamma_2)\in [0,\infty)$ for $\bG=\bG_{\RF}$, and $\kappa^\star = \kappa^\star_{\ind}(\gamma;\mu)\in [0,\infty)$, where $\mu$ is given in Eq.~\eqref{eq:empirical:convergence}, for $\bG=\bG_{\ind}$ such that the following holds. 
\begin{itemize}
\item If $\kappa > \kappa^\star$, then there exists $\delta \equiv \delta(\kappa)>0$ such that $\liminf_{n \to \infty} \widehat F_n(\bG_{\RF}; \kappa) \ge \delta$ holds with probability at least $1-e^{-cn}$.
\item If $0\leq \kappa<\kappa^\star$, then $\lim_{n\to\infty}\widehat F_n(\bG;\kappa)=0$ with probability at least $1-e^{-cn}$.
 \end{itemize}
 Further, there exists a curve of thresholds $\tau^\star_{\RF}:\R_{+}\to\R_{+}$ such that $\kappa^\star_{\RF}(\gamma_1, \gamma_2)>0$ holds for $\gamma_1<\tau^\star_{\RF}$, and $\kappa^\star_{\RF}(\gamma_1,\gamma_2)=0$ holds for $\gamma_1<\tau^\star_{\RF}(\gamma_2)$. Similarly, there exists $\gamma^\star_{\ind}(\mu)$ such that $\kappa^\star_{\ind}(\gamma;\mu)>0$ holds for $\gamma>\gamma^\star_{\ind}(\mu)$ and $\kappa^\star_{\ind}(\gamma;\mu)=0$ holds for $\gamma<\gamma^\star_{\ind}(\mu)$.
\end{proposition}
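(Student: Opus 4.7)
The statement is a direct restatement of \cite[Theorem 3]{montanari2019generalization}, so my plan is to verify that each of our two Gaussian ensembles falls under its hypotheses and then appeal to it. For $\bG = \bG_{\ind}$, the features are $\bg_i \sim \normal(0,\bSigma_{\ind})$, and Assumption~\ref{assumption:ind}${\sf (B2)}$ supplies precisely the $W_2$ convergence of $\frac{1}{p}\sum_i \delta_{(\lambda_i,\sqrt{p}\theta^\star_i)}$ to a limit $\mu$, while ${\sf (B1)}$ gives uniform boundedness of the spectrum of $\bSigma_{\ind}$, which together are the input that the cited theorem requires. For $\bG = \bG_{\RF}$, I would work conditionally on $\bW$: as noted around Eq.~\eqref{eq:AlternativeDistr}, conditionally the data follow a Gaussian GLM with covariance $\bSigma_{\RF}=\mu_1^2\bW\bW^\sT+\mu_2^2\id_p$ and effective signal $\btheta^\star$ from Eq.~\eqref{eq:def:theta:star}, so that the required $W_2$ convergence of $(\lambda_i(\bSigma_{\RF}),\sqrt{p}\,\theta^\star_i)$ follows from the almost sure Marchenko--Pastur convergence of the empirical spectral measure of $\bW\bW^\sT$ under~\eqref{eq:ProportionalAsymptotics}, together with the explicit expression for $\btheta^\star$ in terms of $\bW$.

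Were one to redo the proof from scratch, the natural tool is the Convex Gaussian Min-Max Theorem (CGMT). Starting from
\[
\widehat F_n(\bG;\kappa) = \min_{\|\btheta\|_2\le 1}\frac{1}{n}\sum_{i=1}^n \big(\kappa - y_i\langle \bg_i,\btheta\rangle\big)_+^2,
\]
I would use the whitened representation $\bg_i=\bSigma^{1/2}\bar{\bg}_i$ with $\bar{\bg}_i\sim\normal(0,\id_p)$ i.i.d., lift the squared hinge via a slack vector so the objective is bilinear in $(\bar{\bg}_i,\btheta)$, and invoke CGMT to replace the primary minimization by a scalar auxiliary saddle point parametrized by the components of $\bSigma^{1/2}\btheta$ along and orthogonal to $\bSigma^{1/2}\btheta^\star$ together with a Lagrange multiplier for the slack. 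The asymptotic value $\Phi(\kappa)$ of this auxiliary problem is a deterministic, continuous, and monotone function of $\kappa$ whose unique zero is the claimed threshold $\kappa^\star$.

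The main steps in order are: (i) apply CGMT (conditionally on $\bW$ in the RF case) to collapse the bilinear high-dimensional problem to a low-dimensional saddle point; (ii) verify the convex-concave structure needed for CGMT to give a two-sided bound rather than just a one-sided one; (iii) show that the random value of this saddle point concentrates to a deterministic $\Phi(\kappa)$, expressed as an integral against the limit $\mu$, with $1-e^{-cn}$ probability via standard Gaussian concentration; (iv) establish that $\Phi(\kappa)>0$ for $\kappa>\kappa^\star$ and $\Phi(\kappa)=0$ for $\kappa<\kappa^\star$, and trace how $\kappa^\star$ as a function of $(\gamma_1,\gamma_2)$ or of $(\gamma;\mu)$ crosses zero at the separability thresholds $\tau^\star_{\RF}(\gamma_2)$ and $\gamma^\star_{\ind}(\mu)$.

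The main obstacle is step (iii) in the random-features setting: both $\bSigma_{\RF}$ and $\btheta^\star$ depend on $\bW$, so CGMT applied conditionally on $\bW$ produces a limit that is itself random, and one then needs a second layer of concentration over $\bW$ based on Marchenko--Pastur convergence of $\bW\bW^\sT$ to pin down a deterministic $\Phi(\kappa)$. Navigating this double randomness while keeping the auxiliary saddle point analytically tractable is precisely the content of the careful framework developed in \cite{montanari2019generalization}, which is why invoking their theorem is the cleanest route here.
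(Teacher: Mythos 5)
Your proposal matches the paper's treatment: the proposition is proved by verifying that both Gaussian ensembles satisfy the hypotheses of \cite[Theorem 3]{montanari2019generalization} and then invoking that theorem, exactly as you propose (the paper does this in Appendix~\ref{sec:formula}, using a change of measure from $\mu$ to the law of $(\lambda_i,\bar w_i)$ in the independent case, and conditioning on $\bW$ with Marchenko--Pastur convergence plus the rotational structure of $\btheta^\star$, and a check that $\tau^2>0$ for the non-degeneracy of the effective link, in the random-features case). Your additional CGMT sketch is just a summary of how the cited result is itself proved and is not needed; the only places where your plan is looser than the paper are the joint $W_2$ convergence of $(\lambda_i(\bSigma_{\RF}),\bar w_i)$ (which needs eigenvector invariance, not merely the spectral law) and the explicit verification of the link-function condition, both routine and handled in the appendix.
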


\begin{proof}[Proof of Theorem \ref{theorem:universality-of-the-margin}]For $(\bX,\bG)=(\bX_{\RF},\bG_{\RF})$, let $\kappa^\star \equiv \kappa^\star(\gamma_1, \gamma_2)$ denote the asymptotic max-margin of the Gaussian 
equivalent as described in Proposition~\ref{proposition:max-margin-gaussian}. Similarly, for $(\bX,\bG)=(\bX_{\ind},\bG_{\ind})$, let $\kappa^\star\equiv \kappa^\star_{\ind}(\gamma;\mu)$. 
Our goal is to show that $\widehat{\kappa}_n^{\bX}$ converges to the same 
value $\kappa^{\star}$ under the proportional asymptotics. 
% In what follows, we denote $\kappa_{\pm \eps} := \kappa_* \pm \eps$ for a given $\epsilon >0$.
We divide our proof into the upper bound and the lower bound for $\widehat\kappa_n^{\bX}$.
\paragraph{Upper bound.}
By Proposition \ref{proposition:max-margin-gaussian},  for any $\eps >0$, there exists $\delta\equiv \delta(\eps)$ such that w.h.p.,
\begin{equation}
\liminf_{n\to \infty} \widehat F_n(\bG; \kappa^\star+\eps) \ge \delta > 0.
\end{equation}
By Proposition
\ref{sec:proof-proposition-universality}, we can translate this bound to $\bX$ 
and conclude that $\widehat F_n(\bX; \kappa^\star+\eps) > \delta/2> 0$, 
which further implies that 
$\widehat \kappa_n(\bX) < \kappa^\star+\eps$ (cf. Eq.~\eqref{eq:equiv:F:kappa}), with probability 
tending to one.

\paragraph{Lower bound.}
For the lower bound, it is sufficient to assume $\kappa^\star > 0$.
For any $\eps  < \kappa^\star$, set 
 $\kappa = \kappa^\star-\eps> 0$ 
in Equation~\eqref{eq:lower_bound} to conclude that
\begin{equation}
	\widehat \kappa_n^{\bX} \ge \kappa^\star-\eps - C \cdot 
	(\widehat F_n(\bX; \kappa^\star-\eps ))^{1/2}.
\end{equation}

Proposition~\ref{proposition:max-margin-gaussian} and Eq.~\eqref{eq:equiv:F:kappa} indicate that 
$\lim_{ n\to \infty} \widehat F_n(\bG; \kappa^\star-\eps) = 0$.
Then by universality of $\widehat F_n(\cdot)$ in Proposition \ref{sec:proof-proposition-universality},
 this 
implies that
for any $\delta > 0$, $\widehat F_n(\bX; \kappa^\star-\eps) \le \delta$ holds 
with high probability, hence we conclude that $\widehat \kappa_n(\bX) > 
\kappa^\star-\eps- C\delta^{1/2}$ for any $\delta > 0$, with high probability.
\end{proof}

%\paragraph{Step III: $\ell_\infty$ Control on the Optimal Dual Variable $\bu^*$.} 
%This step establishes the existence of an $\ell_\infty$ well controlled optimal dual variable 
%$\bu^*$ where $\norm{\bu^*}_\infty \le C/n$ for some independent constant $C < \infty$.
%The proof of Proposition~\ref{proposition:well-controlled-u} is deferred to Section~\ref{sec:proof-of-proposition-well-controlled-u}.  
%
%\begin{proposition}
%\label{proposition:well-controlled-u}
%There exist $c, C > 0$ such that for any random matrix ensemble $\bD$ considered in this 
%paper, the following identity holds with probability at least $1-e^{-cn}$: 
%\begin{equation}
%	\widehat \kappa_n(\bD) = \max_{\btheta: \norm{\btheta}_2 \le 1}
%		\min_{\substack{\bu: \bu \ge 0, \langle \one, \bu \rangle = 1; 
%			\\\norm{\bu}_\infty \le C/n}} \sum_{i=1}^n u_i 
%			y_i \langle \bd_i, \btheta\rangle. 
%\end{equation}
%\end{proposition}
%
%As emphasized in Section~\ref{sec:innovations-for-max-margin}, the geometric characterization 
%of max-margin described in Proposition~\ref{proposition:well-controlled-u} is central to the establishment 
%of the ultimate universality result. Its immediate power is illustrated in the last 
%Step IV, where we prove the universality---with Proposition~\ref{proposition:well-controlled-u} at 
%hand---using simple elementary arguments. 

\subsection{Proof of Theorem~\ref{thm:test}: Universality of the test error}
\label{subsec:proof:test:error}

% This section outlines the proof of the universality of the test error of the max-margin classifier. 
% Using notation previously defined, our goal is to rigorously establish the following asymptotics:
% \begin{equation}
% \label{eqn:ultimate-goal-of-test-error}
% 	\plim_{n\to \infty} 
% 		\left|R_n^{\bx}(\bthetaMM^{\bX})-R_n^{\bg}(\bthetaMM^{\bG})\right| = 0.
% \end{equation} 
% The proof of the universality of the test error involves four steps.

\subsubsection{Asymptotic equivalence of the test errors for delocalized estimators}
%Consider the test errors $R_n^{\bx}(\btheta)$ and its Gaussian companion
%$R_n^{\bg}(\btheta)$ defined w.r.t. an estimator $\btheta$. 
Call a sequence of vectors $\{\btheta_n\}_{n\in \N}$, possibly random, \emph{delocalized} if it satisfies
the following condition:
		\begin{equation}
			\lim_{n \to \infty} \frac{\norm{\btheta_n}_\infty}{\norm{\btheta_n}_2} = 0.
		\end{equation}
Uniformly over sequences $\{\btheta_n\}_{n\in \N}$ that are \emph{delocalized}, we show that the test 
errors---$R_n^{\bx}(\btheta_n)$ and $R_n^{\bg}(\btheta_n)$ defined in Eq.~\eqref{eq:def:test:error}---are asymptotically equal.  

\begin{proposition}
\label{proposition:asymptotic-equivalence-for-regular-estimators} 
For any sequence $\{\delta_n\}_{n\geq 1}$ such that $\delta_n\to 0$ as $n\to\infty$, we have the limit 
	\begin{equation}
		\sup_{\norm{\btheta_n}_{\infty}/\norm{\btheta_n}_2 \leq \delta_n} \big|R_n^{\bx}(\btheta_n) - R_n^{\bg}(\btheta_n)\big| \to 0.
	\end{equation} 
\end{proposition}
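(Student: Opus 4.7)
I would normalize $\|\btheta_n\|_2=1$ (the test errors are scale-invariant) and prove the equivalent statement that for every sequence $\btheta_n$ with $\|\btheta_n\|_\infty\le \delta_n\to 0$ we have $|R_n^{\bx}(\btheta_n)-R_n^{\bg}(\btheta_n)|\to 0$ \emph{with a rate depending on $\btheta_n$ only through $\delta_n$}; this is equivalent to the uniform $\sup$ statement. The first step is to rewrite the test error by conditioning on the randomness that produces $y_{\new}$. In the independent-features setting, $y_{\new}$ is Bernoulli with parameter $f(\bx_{\new}^{\sT}\btheta^\star)$, so $R_n^{\bx}(\btheta_n)=\E[\Phi_0(U_n,V_n)]$ with $U_n=\bx_{\new}^{\sT}\btheta^\star$, $V_n=\bx_{\new}^{\sT}\btheta_n$, and $\Phi_0(u,v)=f(u)\one\{v\le 0\}+(1-f(u))\one\{v>0\}$; the analogous formula holds for $\bg$ and, in the RF case, with $U_n=\bz_{\new}^{\sT}\bbeta^\star$ (whose marginal law is identical on the two sides).

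The key step is a distributional equivalence between $(U_n,V_n)$ and its Gaussian-equivalent analogue $(U_n',V_n')$. Since $\Phi_0$ is discontinuous in $v$, I first smooth it: replace $\one\{v\le 0\}$ by a $C^3$ surrogate $\chi_\eta$ agreeing with the indicator on $\{|v|>\eta\}$ and satisfying $\|\chi_\eta\|_{C^3}\le C\eta^{-3}$. In the independent-features model, $(U_n,V_n)=\sum_{j=1}^{p}(x_j\theta^\star_j,\,x_j\theta_{n,j})$ is a sum of independent two-dimensional summands, and a standard Lindeberg swap (replace each $x_j$ by an independent Gaussian with matching first and second moments) gives
\[
\bigl|\E\Phi_\eta(U_n,V_n)-\E\Phi_\eta(U_n',V_n')\bigr|\;\le\; C\,\|\Phi_\eta\|_{C^3}\,\sum_{j=1}^{p}\bigl(|\theta^\star_j|+|\theta_{n,j}|\bigr)^3\E|x_j|^3\;\lesssim\;\eta^{-3}\bigl(\|\btheta^\star\|_\infty+\delta_n\bigr),
\]
using $\|\btheta^\star\|_2=\|\btheta_n\|_2=1$, the subgaussian moment bound from $(\sf B1)$, and $\|\btheta^\star\|_\infty=o_n(1)$ (Remark~\ref{rmk:InftyNorm}). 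For the RF setting, one has to produce the analogous control of $(\bz_{\new}^{\sT}\bbeta^\star,\langle\btheta_n,\sigma(\bW\bz_{\new})\rangle)$ versus its linearization $\mu_1\bW\bz_{\new}+\mu_2\bh_{\new}$; I would condition on $\bW$, decompose $\bz_{\new}=(\bz_{\new}^{\sT}\bbeta^\star)\bbeta^\star+\tilde\bz_{\new}$, and perform a Lindeberg-style replacement across the $p$ terms $\theta_{n,j}\,\sigma(\bw_j^{\sT}\bz_{\new})$, using the Hermite expansion \eqref{eq:hermite:coeff}, the smoothness assumption $(\sf A1)$, and concentration of $\|\bw_j\|_2$ and $\|\bz_{\new}\|_2/\sqrt d$.

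The last step is anti-concentration. The smoothing cost is bounded by $|\E\Phi_0-\E\Phi_\eta|\le \P(|V_n|\le\eta)$ and its Gaussian analogue. On the Gaussian side, $V_n'$ is mean zero with variance $\btheta_n^{\sT}\bSigma\btheta_n\ge c>0$ (via $(\sf B1)$ or the corresponding spectral bound in the RF model), so $\P(|V_n'|\le\eta)\le C\eta$. Applying the Lindeberg bound above to a $C^3$ bump approximating $\one\{|v|\le\eta\}$ transfers this to $\P(|V_n|\le\eta)\le C\eta+o_n(1)$. Combining yields
\[
\bigl|R_n^{\bx}(\btheta_n)-R_n^{\bg}(\btheta_n)\bigr|\;\le\; C\,\eta^{-3}\,o_n(1)\;+\;C\eta,
\]
and choosing $\eta=\eta_n\to 0$ slowly (e.g.\ $\eta_n=\delta_n^{1/4}$) gives the claim, with a rate depending on $\btheta_n$ only through $\delta_n$.

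\paragraph{Main obstacle.} The genuine difficulty lies in the random-features case: $\sigma(\bW\bz_{\new})\in\reals^p$ is a highly correlated nonlinear function of a single Gaussian $\bz_{\new}$, not a sum of independent coordinates, so the direct Lindeberg swap used for the independent-features model does not apply. The replacement must be carried out conditionally on $\bW$ and across the $p$ independent rows $\bw_j$, and one has to show that, after the linear Hermite term $\mu_1\bw_j^{\sT}\bz_{\new}$ is split off, the residual nonlinear part contributes to $\langle\btheta_n,\sigma(\bW\bz_{\new})\rangle$ asymptotically as an independent Gaussian with variance $\mu_2^2\|\btheta_n\|_2^2$ --- the population-level Gaussian equivalence. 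Quantifying this in a smooth-function metric, with a rate governed by the delocalization parameter $\delta_n$, and then pushing it through the non-Lipschitz sign via the anti-concentration bound above, is the technical heart of the argument.
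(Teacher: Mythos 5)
Your treatment of the independent-features model is essentially sound (a two-dimensional Lindeberg swap plus smoothing and Gaussian anti-concentration, with the $\ell_3$–$\ell_\infty$ bound $\sum_j|\theta_{n,j}|^3\le\delta_n\|\btheta_n\|_2^2$ doing the work; the paper instead sandwiches the indicator between Lipschitz functions and invokes the classical Lindeberg CLT, Lemma~\ref{lemma:pointwise-gaussianity_iid}, so the two routes are comparable there). One technical point you gloss over even in that case: your smoothed test function $\Phi_\eta(u,v)=f(u)\chi_\eta(v)+(1-f(u))(1-\chi_\eta(v))$ is $C^3$ in $v$ but only pseudo-Lipschitz in $u$, whereas a joint Lindeberg swap on $(U_n,V_n)$ needs third-order smoothness in both swapped coordinates; you would have to additionally mollify $f$ (or, as the paper does, randomize the label as $y_{\new}=2\one_{\{f(\cdot)\ge u_{\new}\}}-1$ with $u_{\new}\sim\mathsf{Unif}([0,1])$ and absorb the discontinuity into an event of probability $O(\delta)$). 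This is fixable but is not in your write-up.

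The genuine gap is the random-features case, which is the substantive content of the proposition: you state that the terms $\theta_{n,j}\,\sigma(\bw_j^{\sT}\bz_{\new})$ are not independent given $\bW$, propose instead a replacement ``across the $p$ independent rows $\bw_j$,'' and then explicitly defer the quantitative argument as ``the technical heart.'' That deferred step is exactly what needs to be proved, and it is not routine: conditioning on $\bz_{\new}$ and swapping over the rows only matches conditional first and second moments approximately (the conditional mean of $\sigma(\bw_j^{\sT}\bz_{\new})$ is a common nonzero shift of order depending on $\|\bz_{\new}\|_2^2/d-1$, and these per-coordinate errors get multiplied by $\sum_j|\theta_{n,j}|\le\sqrt{p}$, so one must use $\E[\sigma(G)]=\E[\sigma''(G)]=0$ and careful Hermite bookkeeping to see any cancellation), and the correlation with $U_n=\bz_{\new}^{\sT}\bbeta^\star$ must be tracked simultaneously. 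The paper avoids this entirely: its Lemma~\ref{lemma:pointwise-gaussianity} applies the second-order Poincar\'e inequality of \cite{Chatterjee09} to the map $\bz\mapsto\bzeta_1^{\sT}\sigma(\bW\bz)+\bzeta_2^{\sT}\bz$ for fixed $\bW$ with $\|\bW\|_{\op}$ bounded, yielding a Gaussian approximation error of order $\|\bW\|_{\op}^3\|\sigma'\|_\infty\|\sigma''\|_\infty\|\bzeta_1\|_\infty/\Var(\bzeta^{\sT}\bxi)$, i.e.\ a rate governed solely by $\|\btheta_n\|_\infty$; the covariance is then matched to the Gaussian equivalent via \cite[Lemma 5]{HuLu22}. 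That is precisely what makes the statement hold for an \emph{arbitrary} sequence $\delta_n\to0$ (no polylog rate), which is needed downstream. As written, your proposal does not establish the proposition for $(\bX_{\RF},\bG_{\RF})$.
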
 	
It is 
worth mentioning that this delocalization condition on the sequence is necessary to establish 
universality of the test errors; e.g., if $\btheta_n = e_i$ is a standard 
basis vector, then $|R_n^{\bx}(\btheta_n) - R_n^{\bg}(\btheta_n)|= \Omega_n(1)$ as long as 
the activation $\sigma$ is nonlinear.

Proposition~\ref{proposition:asymptotic-equivalence-for-regular-estimators} is achieved by proving that projections of the non-Gaussian features onto delocalized sequences are asymptotically Gaussian. From a technical 
viewpoint, Proposition~\ref{proposition:asymptotic-equivalence-for-regular-estimators}
 is an improvement over the existent results in the literature, where it does not assume the 
 rate of decay of $\delta_n$ (e.g., the analog of \cite[Theorem 2]{HuLu22} would require 
 $\delta_n=O\big((\textnormal{polylog }p)^{-1}\big)$). Such improvement is necessary
  for our proof of the universality of the test error. The proof of 
  Proposition~\ref{proposition:asymptotic-equivalence-for-regular-estimators}
is based on the second-order Poincar\'e inequality \citep{Chatterjee09}, which
 might be of independent interest. 
%The removal 
%of this assumption is critical---as we shall see soon in Step II below---to the overall delicate proof of the
%universality of the test error. 

\subsubsection{Delocalization of the max-margin estimator}
As discussed in the previous section, delocalization of the max-margin estimator is crucial
to prove universality of the test error. We establish this property next. Recall that we have defined the separable regime as the regime $\gamma_1>\tau^\star_{\RF}(\gamma_2)$ for $\bD\in \{\bX_{\RF},\bG_{\RF}\}$ and $\gamma>\gamma^\star_{\ind}(\mu)$ for $\bD\in \{\bX_{\ind},\bG_{\ind}\}$.

\begin{proposition}[Delocalization of the Max-Margin Estimator for General Random Matrix Ensemble]
\label{proposition:delocalization-for-general-max-margin}
In the separable regime, the corresponding 
sequence of max-margin estimators $\widehat{\btheta}_{n, \MM}^{\bD}$ is
delocalized. Namely $\normsmall{\widehat{\btheta}_{ \MM}^{\bD}}_2 = 1$
holds for all $n$ and 
\begin{equation}
	 \normsmall{\widehat{\btheta}_{ \MM}^{\bD}}_\infty \pto 0\, .
\end{equation} 
\end{proposition}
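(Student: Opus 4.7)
The natural starting point is the dual representation provided by Proposition~\ref{proposition:well-controlled-u}. By strong duality and KKT, at the optimum one has
\begin{equation}
\widehat{\btheta}_{\MM}^{\bD} \;=\; \frac{1}{\widehat{\kappa}_n^{\bD}}\sum_{i=1}^n u_i^\star y_i \bd_i,
\end{equation}
where $\bu^\star \ge 0$, $\langle \one, \bu^\star\rangle = 1$, and $\|\bu^\star\|_\infty \le C/n$, so that $\|\bu^\star\|_2 \le C/\sqrt{n}$. By Theorem~\ref{theorem:universality-of-the-margin} and Proposition~\ref{proposition:max-margin-gaussian}, in the separable regime $\widehat{\kappa}_n^{\bD} \ge c > 0$ with probability $1-o(1)$. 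Since $\|\widehat{\btheta}_{\MM}^{\bD}\|_2 = 1$ is immediate from the definition, the claim $\|\widehat{\btheta}_{\MM}^{\bD}\|_\infty = o_\P(1)$ reduces to
\begin{equation*}
\max_{1\le j\le p}\Big|\sum_{i=1}^n u_i^\star y_i d_{ij}\Big| \;=\; o_\P(1).
\end{equation*}

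I would establish this coordinate-wise bound by a \emph{leave-one-coordinate-out} resampling. Fix $j\in[p]$ and define a perturbed design $\widetilde{\bD}^{(j)}$ whose $j$-th column is an independent copy of the original: in the random features case, replace the row $\bw_j$ of $\bW$ by an independent $\widetilde{\bw}_j\sim\mathsf{Unif}(\mathbb{S}^{d-1})$; in the independent features case, replace $x_{ij}$ by an independent copy $\widetilde{x}_{ij}$ of the same marginal for each $i$. Let $\widetilde{\bu}^{\star,(j)}$ denote the dual optimum of the resampled problem and $\widetilde{\by}^{(j)}$ the (possibly perturbed) labels. By construction, $(\widetilde{\bu}^{\star,(j)},\widetilde{\by}^{(j)})$ is independent of $(d_{ij})_{i\le n}$ in the RF case (since $\by$ depends on $\bz$ only), and close to independent in the independent features case (since $|\theta^\star_j|=o(1)$ by the Wasserstein assumption implies the change in each $y_i$ is small). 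Decomposing
\begin{equation*}
\sum_{i=1}^n u_i^\star y_i d_{ij} \;=\; \underbrace{\sum_{i=1}^n \widetilde{u}_i^{\star,(j)}\widetilde{y}_i^{(j)}\, d_{ij}}_{(\text{A})}\;+\;\underbrace{\sum_{i=1}^n\big(u_i^\star y_i-\widetilde{u}_i^{\star,(j)}\widetilde{y}_i^{(j)}\big)\, d_{ij}}_{(\text{B})},
\end{equation*}
I would bound (A) by concentration: in the RF case, the map $\bw_j \mapsto \sum_i \widetilde{u}_i^{\star,(j)}\widetilde{y}_i^{(j)}\sigma(\bw_j^{\sT}\bz_i)$ is Lipschitz on $\mathbb{S}^{d-1}$ with constant $O(1)$ (using $\|\sigma'\|_\infty<\infty$, $\|\widetilde{\bu}^{\star,(j)}\|_2=O(n^{-1/2})$, and $\|\bZ\|_\op=O(\sqrt{n})$) and has mean $O(d^{-1/2})$ via the Hermite expansion and $\E[\sigma(G)]=0$, so spherical concentration gives (A)$=O_\P(d^{-1/2})$ with subgaussian tails; in the independent features case, sub-Gaussian concentration in the entries of the resampled column gives the analogous bound $O_\P(n^{-1/2})$.

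For (B), Cauchy--Schwarz yields $|(\text{B})|\le \|\bd^j\|_2\cdot \|\bu^\star\odot\by - \widetilde{\bu}^{\star,(j)}\odot \widetilde{\by}^{(j)}\|_2$, and since $\|\bd^j\|_2=O(\sqrt{n})$ with high probability, the whole argument reduces to the \emph{dual stability} estimate $\|\bu^\star\odot\by - \widetilde{\bu}^{\star,(j)}\odot\widetilde{\by}^{(j)}\|_2 = o_\P(n^{-1/2})$. This is the main obstacle. The dual objective $\widehat G_n^{\bD}(\bu)=\|\bD^\sT\bY\bu\|_2$ is convex but not strongly convex, so the stability bound must be extracted from the restricted strong convexity in Lemma~\ref{lemma:restricted-strong-convexity}, together with the fact that changing a single column of $\bD$ is a rank-one perturbation whose effect on $\widehat G_n(\bu)$, at any $\bu$ with $\|\bu\|_2=O(n^{-1/2})$, is controlled by the fluctuations of a single inner product of order $n^{-1/2}$. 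A first-order analysis around $\bu^\star$, using the simplex constraint $\|\bu^\star\|_1=1$ and the entrywise bound $\|\bu^\star\|_\infty\le C/n$ to restrict admissible perturbation directions, should yield the required $o_\P(n^{-1/2})$ stability with tail probabilities small enough (subexponential in a suitable power of $n$) to close the argument via a union bound over $j\in[p]$.
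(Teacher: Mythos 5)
Your reduction to the coordinate-wise bound $\max_j|\sum_i u_i^\star y_i d_{ij}|=o_\P(1)$ via the dual representation is fine, and your treatment of term (A) (concentration of the resampled column against a dual vector independent of it, with $\|\widetilde\bu^{\star,(j)}\|_2=O(n^{-1/2})$ and $\E[\sigma(G)]=0$) is plausible. The genuine gap is the dual stability estimate you yourself flag: you need $\|\bu^\star\odot\by-\widetilde\bu^{\star,(j)}\odot\widetilde\by^{(j)}\|_2=o_\P(n^{-1/2})$, i.e.\ resampling a single column moves the dual optimum by a vanishing \emph{fraction} of its own norm, uniformly enough over $j\in[p]$ for a union bound. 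Nothing in the paper's toolkit delivers this. The dual objective $\widehat G_n^{\bD}(\bu)=\|\sum_i u_iy_i\bd_i\|_2$ is positively homogeneous of degree one, hence never strongly convex, and its minimizer over the simplex need not even be unique; Lemma~\ref{lemma:restricted-strong-convexity} is a lower bound on the \emph{value} of $\widehat G_n$, not a quadratic-growth (curvature) bound around $\bu^\star$, so it cannot be converted into minimizer stability. Even if one granted a quadratic growth condition with the most optimistic modulus, the rank-one perturbation is not uniformly small: for $\bu$ near the optimum the objective change is bounded only by $|\sum_i u_iy_i(d_{ij}-\widetilde d_{ij})|\le\|\bu-\widetilde\bu^{\star,(j)}\|_2\,\|\bd^j-\widetilde\bd^j\|_2+O_\P(\sqrt{\log n/n})$, and $\|\bd^j-\widetilde\bd^j\|_2\asymp\sqrt{n}$, so the resulting inequality only yields $\|\bu^\star-\widetilde\bu^{\star,(j)}\|_2=O(n^{-1/2})$ — exactly the borderline rate, which makes your term (B) of order $\|\bd^j\|_2\cdot O(n^{-1/2})=O(1)$ rather than $o(1)$. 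So the argument as sketched does not close, and the missing "second-order" stability of a non-strongly-convex, possibly non-unique dual optimum is the hard content, not a routine first-order analysis.

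For comparison, the paper avoids dual stability entirely: it reduces $\|\widehat\btheta_\MM^{\bD}\|_\infty\pto 0$ to $\|\widehat\btheta_\MM^{\bD}\|_4\pto 0$, encodes the latter in the value of an $\ell_4$-perturbed constrained problem $\widehat H_3(\bD)$ (equivalently $\widehat F_3(\bD,\eps)$ in \eqref{eq:def:F:eps}), proves the Gaussian case by Gordon's inequality together with a KKT analysis of the Gordon surrogate (Lemmas~\ref{lem:4norm:separable:loss:lower} and~\ref{lem:minimizer:gordon:deloc}, using the covariance condition \eqref{eq:assumption:Sigma} verified in Lemma~\ref{lem:cov:RF:delocalized}), and then transfers to $\bD=\bX$ by universality of the perturbed separable loss (Proposition~\ref{prop:univ_fixed_kappa}). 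If you want to pursue your leave-one-coordinate-out route, you would need to supply a quantitative stability theory for the max-margin dual under column resampling with subexponential tails, which is substantially beyond Lemma~\ref{lemma:restricted-strong-convexity}.
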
 
The proof of Proposition \ref{proposition:delocalization-for-general-max-margin} is 
presented in Section \ref{sec:delocalization:MM} and consists of two steps. First, 
we establish the delocalization in the case of Gaussian where 
$\bD \equiv \bG$ by leveraging tools from the theory Gaussian processes, 
namely the Gordon's inequaltiy \cite{Gordon88, thrampoulidis2015, montanari2019generalization}. 
Second, we extend non-Gaussian designs by proving that the $\ell_\infty$ norm of the estimator
is universal. The proof of the last statement is analogous to the 
proof of the universality of the margin described in Section
~\ref{sec:proof-of-universality-max-margin}. 
% Technically, we consider the $4$-norm in the proof of Proposition~\ref{proposition:delocalization-for-general-max-margin} and show that $\normsmall{\hat{\btheta}_{n, \MM}^{\bD}}_\infty\leq \normsmall{\hat{\btheta}_{n, \MM}^{\bD}}_4\pto 0$. See Section \ref{sec:proof-proposition-universality-of-geometric-structure} for details.

% \notate{I am not sure if we need such comments.}

%\paragraph{Step II: Delocalization of the Max-Margin Estimator $\hat{\btheta}_\MM^\bD$.}

\subsubsection{Universality of geometric properties of the max-margin estimator}

Having established Propositions~\ref{proposition:asymptotic-equivalence-for-regular-estimators} and
 \ref{proposition:delocalization-for-general-max-margin}, it remains to show that $R_n^{\bg}(\bthetaMM^{\bX})=R_n^{\bg}(\bthetaMM^{\bG})+o_{\P}(1)$. To that end, we recall the basic
fact that, in the case where the features of the data are Gaussian, the test error depends only 
on the \emph{angle} between the estimator and the truth:  
 \begin{equation}
 	R_n^{\bg}(\btheta)= Q\left(\frac{\langle \btheta, \btheta^\star \rangle_{\bSigma_{\bg}}}
		{\norm{\btheta}_{\bSigma_{\bg}}\norm{\btheta^\star}_{\bSigma_{\bg}}}\right)\,,
 \end{equation} 
 where $Q(\, \cdot\, )$ is an explicit bounded Lipschitz function (cf. Eq.~\eqref{eq:def:Q}), 
 and we denoted $\norm{\btheta}_{\bSigma_{\bg}}:=(\btheta^{\sT}\bSigma_{\bg}\btheta)^{1/2}$
  and $\langle \btheta, \btheta^\star \rangle_{\bSigma_{\bg}}:= \btheta^{\sT}\bSigma_{\bg}\btheta^\star$, where $\bSigma_\bg$ is the covariance of the Gaussian equivalent features.
Thus, it suffices to establish the universality of the functionals $\pi_{\ell}: \R^p \mapsto \R, \ell=1,2,$ defined by 
% to denote functionals that, intuitively speaking, 
% characterize certain geometric aspect of an estimator $\btheta$ in $\R^p$. Examples of 
% such functionals of interest in this paper are
\begin{equation}
\label{eqn:all-possible-G}
	\pi_1(\btheta) = \langle \btheta, \btheta^\star \rangle_{\bSigma_{\bg}},~~~
	\pi_2(\btheta) = \norm{\btheta}_{\bSigma_\bg}^2.~~~
	%\pi_3(\btheta) = \norm{\btheta}_{2}^2
	%G_2(\btheta) = \norm{\btheta}_{4}^{4}. %~(\forall q \in \N).
\end{equation}
The asymptotics of these functionals under the Gaussian case is known~\citep{montanari2019generalization}. 
\begin{proposition}[{\cite[Proposition 6.4]{montanari2019generalization}}]\label{proposition:gaussian-angles}
Consider the max-margin estimator $\bthetaMM^{\bG}$ under the Gaussian design. Then, there exist $\pi_{\ell}^\star=\pi_{\ell,\RF}^\star(\gamma_1,\gamma_2)$ for $\bG=\bG_{\RF}$ and $\pi_{\ell}^\star=\pi_{\ell,\ind}^\star(\gamma;\mu)$ such that the following limit holds for $\ell = 1, 2$ in the separable regime: 
\begin{equation}
		\pi_{\ell}(\bthetaMM^{\bG})\pto \pi_\ell^\star\,.
\end{equation} 
% A detail: $\pi_2 \opt(\gamma_1, \gamma_2) > 0$ for all $\gamma_1 < \tau_* (\gamma_2)$.
\end{proposition}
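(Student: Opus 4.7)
The plan is to use the Convex Gaussian Min-Max Theorem (CGMT), a consequence of Gordon's comparison inequality which has become the standard technology for such problems. First, I would reformulate the max-margin in bilinear Gaussian form. Conditioning on $\bW$ in the random features case and writing $\bg_i = \bSigma_{\bg}^{1/2}\bz_i$ with i.i.d.\ $\bz_i \sim \normal(0,\id_p)$, the max-margin becomes
\begin{equation*}
\widehat{\kappa}_n^{\bG} = \max_{\norm{\btheta}_2 \le 1} \min_{\substack{\bu \ge 0 \\ \langle \one,\bu\rangle = 1}} \sum_{i=1}^n u_i y_i \langle \bz_i, \bSigma_{\bg}^{1/2}\btheta\rangle,
\end{equation*}
which is bilinear in $\bu$ and $\bSigma_{\bg}^{1/2}\btheta$ with Gaussian coefficients $\bz_i$. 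Some care is needed since $y_i$ depends on $\bz_i$ through $\bz_i^{\sT}\bSigma_{\bg}^{1/2}\btheta^\star$; I handle this by splitting each $\bz_i$ into its projection onto $\bSigma_{\bg}^{1/2}\btheta^\star$ (treated as a deterministic-given-labels term) and the orthogonal complement (the Gaussian noise exposed to CGMT).

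Next, I would apply CGMT to replace the bilinear form by the decoupled auxiliary objective $\norm{\bSigma_{\bg}^{1/2}\btheta}_2 \langle \bu, \bh\rangle + \norm{\bu}_2 \langle \bSigma_{\bg}^{1/2}\btheta, \bg\rangle$ with independent Gaussian $\bh \in \R^n$, $\bg \in \R^p$. Parametrizing the primal through the scalars $s := \langle \btheta, \btheta^\star\rangle_{\bSigma_{\bg}}$, $r := \norm{\btheta}_{\bSigma_{\bg}}$, and $\norm{\btheta}_2$, and invoking the limiting spectrum of $\bSigma_{\bg}$ (Marchenko--Pastur type for $\bG_{\RF}$, the measure $\mu$ for $\bG_{\ind}$), the auxiliary problem collapses as $n\to\infty$ to a deterministic saddle-point in a handful of scalar variables. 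Let $(r^\star, s^\star)$ denote its optimizer; I would set $\pi_1^\star := s^\star$ and $\pi_2^\star := (r^\star)^2$.

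To convert this into concentration of $\pi_\ell(\bthetaMM^{\bG})$, I would apply the ``deviation'' form of CGMT: for each $\eps > 0$, I would add the constraint $|\pi_\ell(\btheta) - \pi_\ell^\star| \ge \eps$ to the primal and verify that the corresponding constrained auxiliary saddle-point value is strictly smaller than the unconstrained one. By the CGMT comparison, this rules out $\bthetaMM^{\bG}$ falling in the constrained region with high probability, which yields $\pi_\ell(\bthetaMM^{\bG}) \pto \pi_\ell^\star$.

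The main obstacle is that the max-margin objective is only bilinear and lacks the strict convexity that CGMT prefers. A standard remedy, especially well-suited to this paper, is to instead analyze the squared-hinge surrogate $\widehat F_n(\bG;\kappa)$ of Eq.~\eqref{eq:def_F_n} at a $\kappa$ slightly below $\kappa^\star$, extract the geometry of its (strongly convex) minimizer by CGMT, and then take $\kappa\uparrow \kappa^\star$ together with a compactness argument to transfer the scalar asymptotics to $\bthetaMM^{\bG}$. A secondary subtlety in the random features case is that $\bSigma_{\bg,\RF} = \mu_1^2\bW\bW^{\sT} + \mu_2^2\id_p$ is itself random, so CGMT must be applied conditionally on $\bW$ with uniform control over a high-probability event, and the scalar limit then recovered by integration against the asymptotic spectrum of $\bW\bW^{\sT}$.
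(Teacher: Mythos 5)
The paper does not prove this proposition at all: it imports it verbatim from \cite[Proposition 6.4]{montanari2019generalization}, whose argument is precisely the Gordon/CGMT route you sketch — conditioning on $\bW$, splitting each Gaussian sample along $\bSigma_{\bg}^{1/2}\btheta^\star$ versus its orthogonal complement, scalarizing the auxiliary saddle point via the limiting spectrum, and ruling out deviations $|\pi_\ell(\btheta)-\pi_\ell^\star|\ge\eps$ through the comparison inequality (the same machinery this paper itself reuses in Section~\ref{sec:delocalization:MM} and via the tilted objectives $H_\ell$). So your proposal is correct and follows essentially the same approach as the paper's source.
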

We show that the limits in the last proposition hold for non-Gaussian designs.
In order to achieve this, we consider the tilted objective
\begin{equation}
\text{$H_\ell^{\bD}(s;\kappa)$:=}~~~
\begin{cases}
	\minimize_{\btheta}& \norm{\btheta}_2^2 + s\pi_{\ell}(\btheta)\\
	~~\subjectto~~& \norm{\btheta}_{\infty}\leq \delta_n \quad\textnormal{and}\quad y_i \langle \bd_i, \btheta\rangle \ge \kappa~~\forall 1\le i\le n. 
\end{cases}
\end{equation}
Here, $\delta_n=o_n(1)$ is such that $\normsmall{\widehat{\btheta}_{ \MM}^{\bD}}_\infty\leq \delta_n$ with probability tending to one (cf. Proposition \ref{proposition:delocalization-for-general-max-margin}). 
By definition of the max-margin problem, the solution $\widehat\btheta_{\ell}(\bD;s,\kappa)$ 
satisfies $\widehat\btheta_{\ell}(\bD;0,\widehat\kappa_n^\bD)=\bthetaMM^{\bD}$ for $\ell=1,2$. 
Thus, it follows from the envelope theorem that 
\begin{equation*}
    \frac{\partial H_{\ell}^{\bD}}{\partial s}(0;\widehat\kappa_n^\bD)=\pi_{\ell}(\bthetaMM^{\bD})\,.
\end{equation*}
By leveraging this fact, and establishing the universality of
 $H_\ell^{\bD}(s;\kappa)$ for every $\kappa>0$ and small enough $|s|$,
  we prove the following proposition.
% An extension of such geometric results beyond Gaussianity requires establishment of 
% universality. This is achieved in 
% Proposition~\ref{proposition:universality-of-the-geometric-structure}, whose proof is 
% based on recently developed proof techniques in the 
% literature~\cite{MontanariNg17, MontanariSa22, HuLu22}. % below proves the 
%universality of such geometric structures---induced by the functionals above---for the 
%max-margin estimator. This allows 

\begin{proposition}
\label{proposition:universality-of-the-geometric-structure}
Fix $\ell \in \{1, 2\}$ and any Lipschitz function $\psi:\R\to \R$. Then for $(\bX,\bG)=(\bX_{\RF},\bG_{\RF})$ and $(\bX,\bG)=(\bX_{\ind},\bG_{\ind})$, we have
 %we have the convergence in probability: 
\begin{equation}
	\lim_{n\to \infty}  \left|\E[\psi( \pi_{\ell}(\hat{\btheta}_\MM^\bX))]
		- \E[\psi( \pi_{\ell}(\hat{\btheta}_\MM^\bG))]\right| = 0
		% \stackrel{p}{=} G_k^*
	%\lim_{n\to \infty, p/n\to \gamma} 	 G_k(\hat{\btheta}_\MM^\bG) \stackrel{p}{=} G_k^*.
\end{equation}
\end{proposition}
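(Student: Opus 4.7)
My plan is to reduce Proposition~\ref{proposition:universality-of-the-geometric-structure} to universality of a scalar value $H_\ell^\bD(s;\kappa)$ via an envelope identity, and then transfer value-universality to derivative-universality via concavity in $s$. For each fixed $\btheta$, the map $s \mapsto \|\btheta\|_2^2 + s\pi_\ell(\btheta)$ is affine, so $s \mapsto H_\ell^\bD(s;\kappa)$ is a pointwise minimum of affine functions, hence concave. Fix a constant $\kappa \in (0,\kappa^\star)$. By Proposition~\ref{proposition:max-margin-gaussian} and Theorem~\ref{theorem:universality-of-the-margin}, $\widehat{\kappa}_n^\bD > \kappa$ with probability tending to one, and by Proposition~\ref{proposition:delocalization-for-general-max-margin}, $\|\bthetaMM^\bD\|_\infty \leq \delta_n/2$ with probability tending to one. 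On this event the unique minimizer $\widehat{\btheta}_\ell(\bD;0,\kappa) = (\kappa/\widehat{\kappa}_n^\bD)\,\bthetaMM^\bD$ is a strict interior point of the $\ell_\infty$ ball, and the envelope theorem gives
\[\partial_s H_\ell^\bD(0;\kappa) = \pi_\ell\big(\widehat{\btheta}_\ell(\bD;0,\kappa)\big) = \big(\kappa/\widehat{\kappa}_n^\bD\big)^{c_\ell}\,\pi_\ell(\bthetaMM^\bD), \quad c_1 = 1,\ c_2 = 2.\]
Since $\widehat{\kappa}_n^\bD$ is universal by Theorem~\ref{theorem:universality-of-the-margin}, the proposition reduces to universality of $\partial_s H_\ell^\bD(0;\kappa)$.

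The transition from value universality to derivative universality rests on a classical fact: if a sequence of concave functions on an open interval converges pointwise to a function differentiable at an interior point, then the one-sided derivatives of the sequence converge to the derivative of the limit at that point. Differentiability of the limit at $s=0$ follows from strict convexity of $\btheta \mapsto \|\btheta\|_2^2$, which ensures uniqueness of the minimizer of the tilted problem. Our convergence is only in probability, but this is handled by a routine subsequence extraction to obtain almost-sure convergence along subsequences.

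The remaining task is pointwise universality of $H_\ell^\bD(s;\kappa)$ for each fixed small $s$ and fixed $\kappa$ near $\kappa^\star$. I would prove this using a penalty approximation. Introduce
\[H_{\ell,M}^\bD(s;\kappa) := \min_{\|\btheta\|_\infty \leq \delta_n}\Big\{\|\btheta\|_2^2 + s\pi_\ell(\btheta) + M\sum_{i=1}^n (\kappa - y_i\langle\bd_i,\btheta\rangle)_+^2\Big\}.\]
For each fixed $M$, this is an empirical risk minimization problem with a separable squared-hinge loss, a strictly convex $\ell_2^2$ regularizer, the additional perturbation $s\pi_\ell$ (linear in $\btheta$ for $\ell=1$, quadratic for $\ell=2$), and the $\ell_\infty$ delocalization constraint imposed explicitly. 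The built-in $\ell_\infty$ constraint directly supplies the delocalization hypothesis required by \cite[Theorem 1]{MontanariSa22}, so the same application that underlies Proposition~\ref{sec:proof-proposition-universality} yields $|H_{\ell,M}^\bX(s;\kappa) - H_{\ell,M}^\bG(s;\kappa)| \to 0$ for each $M$. A standard sandwich then transfers universality from $H_{\ell,M}^\bD$ to $H_\ell^\bD$: since any $\btheta$ violating the margin constraint by $\eta$ at some sample contributes at least $M\eta^2$ to $H_{\ell,M}^\bD$ while the $\|\btheta\|_2^2 + s\pi_\ell(\btheta)$ part stays $O(1)$ on the feasible set, one verifies $|H_{\ell,M}^\bD - H_\ell^\bD| \to 0$ uniformly in $n$ and $\bD \in \{\bX,\bG\}$ as $M \to \infty$.

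The hardest step will be the careful application of \cite[Theorem 1]{MontanariSa22} to the penalized problem $H_{\ell,M}^\bD(s;\kappa)$. The perturbation $s\pi_\ell(\btheta)$ depends on the Gaussian-equivalent covariance $\bSigma_{\bg}$, which in the random features case is a function of the random weight matrix $\bW$, so the application has to be carried out conditionally on $\bW$ with the hypotheses verified uniformly on a high-probability event for $\bW$. A related technical point is the uniform-in-$M$ quality of the penalty approximation, which requires quantitatively controlling how much the penalized minimizer can mildly violate many margin constraints simultaneously before the squared-hinge penalty dominates. Once these two ingredients are resolved, the envelope-plus-concavity reduction above delivers the universality claim.
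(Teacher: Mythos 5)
Your high-level architecture (tilted value $H_\ell^{\bD}(s;\kappa)$, envelope identity at $s=0$, then transfer of value universality to the derivative) is essentially the paper's, but the step that converts value universality into derivative universality has a genuine gap. The classical fact you invoke needs the concave functions $s\mapsto H_\ell^{\bD}(s;\kappa)$ to converge pointwise to a \emph{limit that is differentiable at} $s=0$, and you justify that differentiability by uniqueness of the finite-$n$ minimizer. That inference is invalid: strict convexity of $\norm{\cdot}_2^2$ makes each finite-$n$ value function differentiable at $s=0$, but a limit of differentiable concave functions can have a kink (e.g.\ $f_n(s)=-\sqrt{s^2+1/n}\to-|s|$), and none of the results you rely on provide the existence, let alone differentiability, of a limiting curve $s\mapsto\lim_n H^{\bG}_\ell(s;\kappa)$ for $s\neq 0$; Proposition~\ref{proposition:gaussian-angles} only gives the limit of $\pi_\ell(\bthetaMM^{\bG})$, i.e.\ of the derivative at $s=0$, not of the tilted values. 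Without this, ``values close $+$ concavity'' does not control derivatives: two concave functions can be uniformly close yet have derivatives differing by $\Omega(1)$ at a point. What is needed is a quantitative, finite-$n$, two-sided bound on the difference quotients at scale $s$, and this is exactly how the paper closes the loop: strong convexity of $\btheta\mapsto\norm{\btheta}_2^2$ makes the tilted minimizer Lipschitz in $s$ (Lemma~\ref{lemma:theta_hat_lipschitz_s}), whence $\widehat\Delta(\bD;-s)-\widehat\Delta(\bD;s)\le Cs$ w.h.p.\ (Lemma~\ref{lemma:properties_of_delta}), and then universality of the values at the three points $-s,0,s$ together with the sandwich $\widehat\Delta(\bD;s)\le\pi_\ell(\widehat\btheta^{\bD}_0)\le\widehat\Delta(\bD;-s)$ yields the claim. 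Your argument needs this ingredient (or, alternatively, an explicit differentiable Gaussian limit curve for the tilted problem, which would require redoing the Gordon/CGMT analysis) to be complete; it is obtainable with tools you already use, but as written the step fails.

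A secondary issue lies in your value-universality step. Feeding the squared-hinge penalty $M\sum_i(\kappa-y_i\langle\bd_i,\btheta\rangle)_+^2$ directly into \cite[Theorem 1]{MontanariSa22} is not legitimate as stated, since that theorem (as adapted in Theorem~\ref{thm:MS22}) requires the pseudo-Lipschitz loss condition, which the squared hinge violates; the paper's route through Proposition~\ref{prop:univ_fixed_kappa} inserts the Huber truncation (Lemma~\ref{lemma:huber}) and a leave-one-out moment bound precisely for this reason, and your penalized objective needs the same treatment rather than a bare citation. The paper also sidesteps the penalty/hard-constraint exchange entirely via the level-set equivalence $\widehat H_\ell\le b\iff\widehat F_\ell(\cdot,b)=0$ combined with the restricted-strong-convexity bound $\widehat\kappa_\ell\ge\kappa-C\widehat F_\ell^{1/2}$ (Lemma~\ref{lemma:lower_bound_kappa}, Lemma~\ref{lemma:univ_H}). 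Your uniform-in-$n$ sandwich by rescaling the penalized minimizer (violations $O(M^{-1/2})$) is plausible, but you must additionally handle the rescaled point possibly leaving the $\ell^\infty$ ball, e.g.\ by shrinking the $\ell^\infty$ radius inside $H_{\ell,M}$ and controlling the resulting perturbation.
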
 
% As a quick comment, the idea to establish universality of such geometric structure is 
% through first tilting the original maximization objective defining the max-margin by the functionals used to 
% define the structure, second proving that 
% the maximum values of such tilted objectives---just as of the original objective---are universal, 
% and finally concluding that the tilting quantities, i.e., the structure of the 
% max-margin estimator, are universal. 
Finally, we prove Theorem \ref{thm:test} based on Propositions \ref{proposition:asymptotic-equivalence-for-regular-estimators}, \ref{proposition:delocalization-for-general-max-margin}, \ref{proposition:gaussian-angles}, and \ref{proposition:universality-of-the-geometric-structure}.
\begin{proof}[Proof of Theorem \ref{thm:test}]
In the separable regime,
%(cf. Remark \ref{rmk:separable}), 
Theorem \ref{theorem:universality-of-the-margin}, shows that $\widehat\kappa_n^{\bD}>0$ with probability tending to one, thus $\big\|\bthetaMM^{\bD}\big\|_2=1$ holds w.h.p.. Further, by Proposition \ref{proposition:delocalization-for-general-max-margin}, $\big\|\bthetaMM^{\bD}\big\|_{\infty}\pto 0$ holds. Thus there exists $\delta_n=o_n(1)$ such that $\big\|\bthetaMM^{\bD}\big\|_{\infty}\leq \delta_n$ with probability tending to one as $n\to\infty$. Consequently, Proposition \ref{proposition:asymptotic-equivalence-for-regular-estimators} shows that as $n\to\infty$,
\begin{align}\label{eq:R:equiv:1}
	\left|R_n^{\bx}(\bthetaMM^{\bX})-R_n^{\bg}(\bthetaMM^{\bX})\right| \pto  0\,.
\end{align}
Moreover, by Proposition \ref{proposition:gaussian-angles}, $\pi_{\ell}(\bthetaMM^{\bG})\pto \pi_{\ell}^\star(\gamma_1,\gamma_2)$ holds. Applying Proposition \ref{proposition:universality-of-the-geometric-structure}, this implies that $\pi_{\ell}(\bthetaMM^{\bX})\pto \pi_{\ell}^\star(\gamma_1,\gamma_2)$ holds as well. Since $R_n^{\bg}(\bthetaMM^{\bD})$ is a function of $\big(\pi_1(\bthetaMM^{\bD}),\pi_2(\bthetaMM^{\bD})\big)$, it follows that
\begin{equation}\label{eq:R:equiv:2}
\left|R_n^{\bg}(\bthetaMM^{\bG})-R_n^{\bg}(\bthetaMM^{\bX})\right|\pto 0.
\end{equation}
Therefore, we have by \eqref{eq:R:equiv:1}, \eqref{eq:R:equiv:2}, and a triangle inequality that $\big|R_n^{\bx}(\bthetaMM^{\bX})-R_n^{\bG}(\bthetaMM^{\bG})\big|\pto 0$, which concludes the proof.
\end{proof}

\newcommand{\smin}{\sigma_{\rm min}}
\newcommand{\lmin}{\lambda_{\rm min}}
\newcommand{\lmax}{\lambda_{\rm max}}
\section{Restricted Strong Convexity: Proof of Lemma~\ref{lemma:restricted-strong-convexity}} 
\label{sec:proof-of-lemma-restricted-strong-convexity}

The goal in this section is to establish a generalization of Lemma~\ref{lemma:restricted-strong-convexity} that will also be useful for establishing the universality of test error in the proof of Theorem~\ref{thm:test}.
Let us begin by making the following definition:
for a vector $\bv \in \R^p$ and $\eps \in [0, \infty]$,  define
$$
    \norm{\bv}_{2,\eps}\equiv \max_{\btheta:\norm{\btheta}_2\leq 1\,,\, \norm{\btheta}_{\infty}\leq \eps}\big\langle \btheta,\bv\big\rangle\, .
$$
Note that $\norm{\cdot}_{2,\eps}$ defines a norm. Moreover, 
$\eps \mapsto  \norm{\bv}_{2,\eps}$ is increasing for every $\bv$, and 
$\norm{\bv}_{2,\infty} = \norm{\bv}_2$.
We state and prove the following.
\begin{lemma}[Restatement of Lemma~\ref{lemma:restricted-strong-convexity}]
\label{lemma:RSC_inf_norm}
Given any deterministic sequence $\{\eps_n\}_{n\geq 1}$ such that $\sqrt{n}\eps_n \to\infty$, there are 
constants $c, c_1, c_2 > 0$, and $N_0$ independent of $n$ such that for every $n \ge N_0$, the event
\begin{equation}
\label{eqn:proof-restricted-strong-convexity-eps}
	\norm{\bX^\sT \bu}_{2,\eps_n} \ge c_1\sqrt{n} \norm{\bu}_2 - c_2 \norm{\bu}_1~~\text{for all $\bu \in \R^n$}
\end{equation} 
holds with probability at least $1- o_n(1)$. 
\end{lemma}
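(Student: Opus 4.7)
The plan is to reduce the claim to a uniform lower bound on $\|\bX^\sT\bu\|_1$ by exhibiting a single explicit feasible test direction, and then establish that $\ell_1$ bound via pointwise small-ball estimates combined with a Mendelson-style uniform control.

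For any $\bu\in\R^n$, set $\btheta^\star:=\mathrm{sign}(\bX^\sT\bu)/\sqrt{p}$, which satisfies $\|\btheta^\star\|_2\le 1$ and $\|\btheta^\star\|_\infty=1/\sqrt{p}$. Since $p/n$ is bounded away from $0$ and $\infty$ and $\sqrt{n}\,\eps_n\to\infty$, for all sufficiently large $n$ one has $1/\sqrt{p}\le\eps_n$, so $\btheta^\star$ is feasible in the definition of $\|\cdot\|_{2,\eps_n}$. Therefore
\[
\|\bX^\sT\bu\|_{2,\eps_n}\;\ge\;\langle\btheta^\star,\bX^\sT\bu\rangle\;=\;\frac{\|\bX^\sT\bu\|_1}{\sqrt{p}}\,,
\]
and since $p$ and $n$ are of comparable order the lemma reduces to showing that, with probability $1-o_n(1)$, $\|\bX^\sT\bu\|_1\ge c_1^\prime\,n\,\|\bu\|_2 - c_2^\prime\,\sqrt{n}\,\|\bu\|_1$ for all $\bu\in\R^n$.

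For the pointwise piece, fix $\bu$ with $\|\bu\|_2=1$ and set $v_j(\bu):=(\bX^\sT\bu)_j=\sum_{i=1}^{n} u_i x_{ij}$. A Paley-Zygmund argument gives $\E|v_j(\bu)|\ge c_0>0$ uniformly in $j$ and $\bu$: in the independent features model one uses $\E v_j(\bu)^2\ge\lambda_{\min}(\bSigma_{\ind})\ge c$ together with the fourth-moment bound $\E v_j(\bu)^4\le C$ coming from subgaussianity; in the random features model, conditionally on $\bW$, one uses $\mathrm{Var}(\sigma(\bw_j^\sT\bz_i))=\mu_1^2+\mu_2^2>0$, which is positive by nonlinearity of $\sigma$ and the assumption $\mu_0=0$. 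Summing over $j$ gives $\E\|\bX^\sT\bu\|_1\ge c_0\,p$, and since $(v_j(\bu))_{j\le p}$ are independent across $j$ (unconditionally for $\bX_{\ind}$, conditionally on $\bW$ for $\bX_{\RF}$), Bernstein's inequality yields $\|\bX^\sT\bu\|_1\ge c_0 p/2$ for each fixed $\bu$ with probability $\ge 1-e^{-c'p}$.

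The main obstacle is upgrading this pointwise bound to a uniform bound over all $\bu\in\R^n$. In the proportional regime where $p$ is of order $n$, a crude $\eps$-net union bound over the unit sphere $S^{n-1}$ is too lossy, since the net has cardinality $e^{\Theta(n)}$ and the resulting deviations would be comparable to the mean. To obtain the clean $\|\bu\|_1$ correction, I would appeal to Mendelson's small-ball method applied to the process $\bu\mapsto\|\bX^\sT\bu\|_1$: the uniform small-ball estimate $\P(|v_j(\bu)|\ge c\|\bu\|_2)\ge p_0$ combined with a Gaussian-width/Rademacher complexity estimate for $S^{n-1}$ produces the uniform lower bound, with the $-c_2\|\bu\|_1$ term arising naturally as the mean width of $S^{n-1}$ in the dual $\ell_1$ geometry. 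For $\bX_{\RF}$, extra care is required to work conditionally on $\bW$, since this conditioning breaks the row-independence of $\bX$.
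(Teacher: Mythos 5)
Your reduction in the first step is correct and appealing: taking $\btheta=\mathrm{sign}(\bX^\sT\bu)/\sqrt{p}$, which is feasible for $\norm{\cdot}_{2,\eps_n}$ once $1/\sqrt{p}\le\eps_n$, cleanly converts the claim into the $\ell_1$ lower bound $\norm{\bX^\sT\bu}_1\ge c_1' n\norm{\bu}_2-c_2'\sqrt{n}\norm{\bu}_1$ for all $\bu$, and your pointwise small-ball estimate for a fixed unit vector is fine (it is essentially the paper's Lemma~\ref{lemma:lower-bound-on-l-1-norm}). The genuine gap is the uniformity step, which is the heart of the lemma and is only gestured at. Mendelson's small-ball method applied to the process $\bu\mapsto\norm{\bX^\sT\bu}_1$ over the whole sphere $\mathbb{S}^{n-1}$ produces a bound of the form $pQ(\tau)-C\tau^{-1}\E\big\|\sum_j\eps_j\bx^{(j)}\big\|_2$, i.e.\ a single, $\bu$-independent complexity correction of order $\sqrt{pn}\asymp n$, which is of the same order as the main term in the proportional regime; it does not produce the $\bu$-dependent correction $-c_2\sqrt{n}\norm{\bu}_1$, and "the mean width of $\mathbb{S}^{n-1}$ in the dual $\ell_1$ geometry" is not a mechanism that yields it. Indeed, when $p/n<1$ the matrix $\bX^\sT$ has a nontrivial kernel, so $\inf_{\norm{\bu}_2=1}\norm{\bX^\sT\bu}_1=0$ and any argument that treats the sphere as a single class must fail; the whole content of the statement is that vectors with small image must have $\norm{\bu}_1\gtrsim\sqrt{n}\norm{\bu}_2$. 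To close this you need either a sparsity peeling over the cones $\{\norm{\bu}_1\le s\norm{\bu}_2\}$, or what the paper actually does: split $\bu$ into its top $c_3n$ coordinates $\bu_S$ and the rest, prove a uniform lower bound for $\norm{\bX_S^\sT\bu_S}_{2,\eps_n}$ over all $\binom{n}{c_3n}$ supports (the entropy $nH(c_3)$ is beaten by the $e^{-cp}$ failure probability for small $c_3$, using a Kashin-type $\ell_1$--$\ell_2$ equivalence on the column span of $\bX_S$), and control $\norm{\bX_{S^c}^\sT\bu_{S^c}}$ by the operator norm, which is exactly where the $\norm{\bu}_1$ term comes from.

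A secondary issue is your handling of the random features model. Conditionally on $\bW$ the coordinates $v_j(\bu)=\sum_i u_i\sigma(\bw_j^\sT\bz_i)$ are \emph{not} independent across $j$ (they share the same $\bz_i$'s), so the Bernstein step over $j$ is not justified under that conditioning; independence across $j$ holds conditionally on $\bZ$, in which case the relevant quantity is no longer the per-entry variance $\mu_1^2+\mu_2^2$ but the conditional column covariance $\bSigma_{\bZ}$, whose smallest eigenvalue must be bounded below (this is what the paper's Lemma~\ref{lemma:crazy-lemma-random-feature-models} provides, with lower bound essentially $\mu_2^2$). Your closing remark that conditioning on $\bW$ "breaks the row-independence" has the roles of $\bW$ and $\bZ$ reversed. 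These points are fixable, but as written the proposal does not establish the lemma.
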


As mentioned in the previous section, we show that $\widehat\btheta_\MM$ has a small $\ell^\infty$-norm and so it's sufficient for us to look at sets of $\btheta$ with this property for our constraint sets, so we modify Lemma~\ref{lemma:restricted-strong-convexity} to account for this additional constraint.

\label{sec:main-geometric-argument}
%that can only depend on $\gamma$. 
To establish this lemma, let us first fix constants $c_0,c_4,c_3 >0$ and define the events
\begin{equation}
    \Omega_1 := \left\{\norm{\bX}_\op \le c_0 \sqrt{n} \right\}
\end{equation}
and
\begin{equation}
    \Omega_2 := \left\{
\norm{\bX_S^\sT \bu_S }_{2,\epsilon_n} \ge c_4 \sqrt{n} \norm{\bu_S}_2  \textrm{ for every } S \subseteq [n] \textrm{ of size equal to } c_3 n
    \right\},
\end{equation}
where $\bX_S$ for a given $S\subseteq [n]$ is the $|S|\times p$ submatrix of $\bX$ whose rows are determined by the indices in $S$, and $\bu_S = (u_i)_{i\in S}$. 
We define and work on the event $\Omega_0 := \Omega_1 \cap \Omega_2$. Namely, we'll show that Eq.~\eqref{eqn:proof-restricted-strong-convexity-eps} holds on $\Omega_0$, which then reduces the proof of the lemma to establishing the following claim.
\begin{lemma}
\label{lemma:Omega_0_whp}
   There exists constants $c_0,c_3,c_4$ independent of $n$ such that $\P(\Omega_0) > 1 -o_n(1)$.
\end{lemma}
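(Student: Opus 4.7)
The plan is to establish $\P(\Omega_1) \ge 1 - o_n(1)$ and $\P(\Omega_2) \ge 1 - o_n(1)$ separately and conclude by a union bound. The operator-norm event $\Omega_1$ is standard: for $\bX_{\ind}$, the i.i.d.\ mean-zero subgaussian rows with covariance of bounded spectrum yield $\|\bX\|_\op \le c_0(\sqrt n + \sqrt p) \le c_0'\sqrt n$ with probability $1 - e^{-cn}$ via a Vershynin-type bound, using $p/n \to \gamma$. For $\bX_{\RF}$, the bound $\|\bX\|_\op \le c_0\sqrt n$ with exponentially small failure probability is exactly Lemma~\ref{lem:basic:op:norm:RF}, already stated earlier in the paper.

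The substantive part is $\P(\Omega_2) \ge 1 - o_n(1)$. I would prove it in three stages: (i) a single-pair concentration bound; (ii) discretization of $\bu_S$ on the sphere $\mathbb{S}^{|S|-1}$; (iii) a union bound over subsets $S \subseteq [n]$ of size $c_3 n$. For fixed such $S$ and $\bu_S \in \mathbb{S}^{|S|-1}$, set $\bv := \bX_S^\sT \bu_S = \sum_{i\in S} u_i\bx_i \in \R^p$. The key observation is a truncation: for $\tau := c_5 \eps_n \sqrt p$ and $T := \{j : |v_j| > \tau\}$, the vector $\btheta := \bv_{T^c}/\|\bv_{T^c}\|_2$ satisfies $\|\btheta\|_2 = 1$ and $\|\btheta\|_\infty \le \tau/\|\bv_{T^c}\|_2 \le \eps_n$ as soon as $\|\bv_{T^c}\|_2 \ge c_5 \sqrt p$, in which case $\|\bv\|_{2,\eps_n} \ge \langle \btheta,\bv\rangle = \|\bv_{T^c}\|_2 \ge c_5\sqrt p \gtrsim \sqrt n$. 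It therefore suffices to prove $\|\bv_{T^c}\|_2^2 \ge c_6 p$ with probability $1 - e^{-\alpha n}$, where $\alpha$ can be made as large as desired by shrinking $c_6$.

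For $\bX_{\ind}$, the coordinates $v_j = \sum_{i\in S} u_i x_{ij}$ are independent across $j$, each $\nu^2$-subgaussian with $\E[v_j^2] \ge c\lambda_{\min}(\bSigma_{\ind}) > 0$; since $\sqrt n\,\eps_n \to \infty$ forces $\tau \to \infty$, we have $\E[v_j^2\,\mathbf{1}(|v_j|\le \tau)] \ge c/2$ for $n$ large, and independence plus Hoeffding/Bernstein yield $\sum_j v_j^2\,\mathbf{1}(|v_j|\le\tau) \ge c_6 p$ with probability $1 - e^{-c'n}$. For $\bX_{\RF}$, I would condition on $\bW$ on the high-probability event $\{\|\bW\|_\op \le C\}$ and view $\bv$ as a function of $\bz_S = (\bz_i)_{i\in S}\in\R^{|S|d}$; each $v_j$ is Lipschitz in $\bz_S$ with constant $O(\|\sigma'\|_\infty\|\bw_j\|_2) = O(1)$ and satisfies $\mathrm{Var}(v_j) \ge \mu_2^2 > 0$, so Gaussian concentration transfers to a lower-tail bound for the convex, Lipschitz surrogate $\bz_S \mapsto \|\bv_{T^c}\|_2$.

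It remains to globalize. The map $\bu_S \mapsto \|\bX_S^\sT \bu_S\|_{2,\eps_n}$ is Lipschitz with constant $\le \|\bX\|_\op \le c_0\sqrt n$ on $\Omega_1$ (since $\|\cdot\|_{2,\eps_n} \le \|\cdot\|_2$). An $\eta$-net of $\mathbb{S}^{|S|-1}$ has size $(3/\eta)^{c_3 n}$, and choosing $\eta = c_4/(2c_0)$ propagates the single-pair bound from the net to the entire sphere. A final union bound over the $\binom{n}{c_3 n} \le e^{H_2(c_3)n}$ subsets and the net yields a total failure probability of $\exp\bigl([H_2(c_3) + c_3 \log(3/\eta) - \alpha]\,n\bigr)$, which is $o_n(1)$ provided $c_3$ is first chosen small (so $H_2(c_3) + c_3\log(3/\eta)$ is small) and $c_4$ is then chosen so that $\alpha$ is correspondingly large. \textbf{The main obstacle} is the random-features case: establishing a \emph{lower-tail} exponential concentration for $\|\bv_{T^c}\|_2^2$ under the correlated joint distribution, without the cross-coordinate independence enjoyed by $\bX_{\ind}$, requires realizing the relevant event as a sublevel set of a convex Lipschitz function of the Gaussian $\bz_S$ and invoking Borell--Talagrand, with care to ensure the Lipschitz constant is $O(1)$ uniformly in $(S,\bu_S)$.
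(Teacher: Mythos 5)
Your proposal is correct in outline, but it takes a genuinely different route from the paper's for the event $\Omega_2$. The paper splits $\Omega_0=\Omega_1\cap\Omega_2$ and handles $\Omega_1$ exactly as you do (Lemma~\ref{lemma:operator-norm-bound-on-Z}), but for $\Omega_2$ (Lemma~\ref{lemma:restricted-minimum-singular-value}) it never uses a net over $\bu_S$: it lower-bounds $\norm{\bv}_{2,\eps_n}$ deterministically through the ratio $\norm{\bv}_1/(\sqrt{p}\norm{\bv}_2)$ (Lemma~\ref{lemma:elementary}, which is the same thresholding trick as your witness $\bv_{T^c}/\norm{\bv_{T^c}}_2$, phrased as an inequality), and then proves a Kashin-type $\ell_1$--$\ell_2$ equivalence on the column span of $\bX_S^\sT$ (Lemma~\ref{lemma:equivalence-between-l1-and-l2-norm}, via a VC/small-ball argument together with Theorem~\ref{theorem:matrix-concentration-covariance-operator}) that holds uniformly over all $\bu_S$ at once, so only the union over subsets $S$ remains. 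Moreover, for the random features ensemble the paper sidesteps what you call the main obstacle by conditioning on $\bZ$ rather than on $\bW$: conditional on $\bZ$, the $p$ feature directions $\sigma(\bZ^\sT\bw_j)$ are i.i.d.\ subgaussian with conditional covariance bounded below (Lemma~\ref{lemma:crazy-lemma-random-feature-models}), so both ensembles are covered by one subgaussian-rows argument. Your route buys a more elementary, coordinatewise proof; the paper's buys uniformity in $\bu_S$ for free and a unified treatment of the two ensembles.

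Two points in your sketch need repair. First, the claim that the per-pair exponent $\alpha$ ``can be made as large as desired by shrinking $c_6$'' is false: the Hoeffding/Bernstein exponent saturates at a level set by the variance and truncation constants, and it does not depend on $c_4$ either. The union bound still closes, but with the opposite ordering of constants: $\alpha$ and the witness constant are fixed by the distributional parameters (uniformly in $S$, $\bu_S$ and $c_3$), then $\eta=c_4/(2c_0)$ is fixed, and finally $c_3$ is taken small enough that $H(c_3)+c_3\log(3/\eta)<\alpha$. Second, in the random-features case the functional $\bz_S\mapsto\norm{\bv_{T^c}}_2$ with the data-dependent threshold set $T=T(\bv)$ is neither convex nor even continuous, so ``sublevel set of a convex Lipschitz function plus Borell--Talagrand'' does not go through as stated; but convexity is not needed. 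On $\{\norm{\bW}_\op\le C\}$ the map $\bz_S\mapsto\bv$ is $O(1)$-Lipschitz, hence so is the surrogate $h(\bz_S)=\big(\sum_j \min(v_j^2,K)\big)^{1/2}$ for a constant $K$, and Gaussian (Borell--TIS) concentration plus a second-moment lower bound on $\E[h^2\mid\bW]$ give $h\ge c\sqrt{p}$ with probability $1-e^{-cp}$. You then recover your truncated quantity from $\sum_j v_j^2\one_{\{|v_j|\le\tau\}}\ \ge\ \sum_j\min(v_j^2,K)-K\,\#\{j:|v_j|>\tau\}$, and on $\Omega_1$ Markov gives $\#\{j:|v_j|>\tau\}\le\norm{\bv}_2^2/\tau^2\le c_0^2 n/(c_5^2\eps_n^2 p)=o(p)$ because $\sqrt{n}\,\eps_n\to\infty$ and $p/n\to\gamma$. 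The same constant-level capping is needed in your Hoeffding step for $\bX_{\ind}$, since the summands $v_j^2\one_{\{|v_j|\le\tau\}}$ are bounded only by $\tau^2$, which may diverge. With these repairs your argument closes and gives an alternative proof of Lemma~\ref{lemma:Omega_0_whp}.
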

The majority of this section will be dedicated to proving this lemma (namely, that $\Omega_2$ holds with high probability).
For now, let us use it to complete the proof of Lemma~\ref{lemma:RSC_inf_norm}
\begin{proof}[Proof of Lemma~\ref{lemma:RSC_inf_norm}]
Let us take $c_1 = c_4 \sqrt{c_3}$ and $c_2 = c_0/\sqrt{c_3}$, and work on the event .
    Fix $\bu \in \R^n$ and order its coordinates as
\begin{equation*}
	|\bu_{i_1}| \ge |\bu_{i_2}| \ge \ldots |\bu_{i_n}|.
\end{equation*}
Let $S = \{i_1, i_2, \ldots, i_k\}$ where $k = c_3 n$. Note that we have
\begin{equation}
    \bX^\sT \bu = \bX_S^\sT \bu_S + \bX_{S^c}^\sT \bu_{S^c}.
\end{equation}
Then, by the triangle inequality,
\begin{equation}
\label{eqn:initial-triangle-inequality}
\begin{split}
	\norm{\bX^\sT \bu}_{2,\eps_n}  &\ge \norm{\bX_S^\sT \bu_S}_{2,\eps_n} - \norm{\bX_{S^c}^\sT \bu_{S^c}}_{2,\eps_n} \\
		&\ge c_4 \sqrt{n}\norm{\bu_{S}}_2 - c_0 \sqrt{n}\norm{\bu_{S^c}}_2.
\end{split} 
\end{equation} 
where the second inequality follows from the definition of $\Omega_1, \Omega_2$, and the fact that 
$\norm{\bv}_{2, \eps} \le \norm{\bv}$.

We lower and upper bound $\norm{\bu_{S}}_2$ and $\norm{\bu_{S^c}}_2$, respectively, as follows:
since $\bu_S$ contains coordinates with largest absolute entries, 
\begin{equation*}
	\norm{\bu_S}_2 \ge \sqrt{|S|/n} \norm{\bu}_2 = \sqrt{c_3} \norm{\bu}_2.
\end{equation*} 
Similarly, since $\bu_{S^c}$ contains coordinates with smallest absolute entries,
\begin{equation*}
	\norm{\bu_{S^c}}_2  \le \norm{\bu_{S^c}}_1^{1/2} \norm{\bu_{S^c}}_\infty^{1/2}
		\le \norm{\bu_{S^c}}_1^{1/2} \cdot (\norm{\bu_S}_1/|S|)^{1/2} = \norm{\bu}_1/\sqrt{c_3 n}.
\end{equation*} 
Substituting the above estimates 
into equation~\eqref{eqn:initial-triangle-inequality}, we obtain 
$
	\norm{\bX \bu}_{2,\eps_n} \ge c_4 \sqrt{c_3 n}\norm{\bu}_2 - c_0/\sqrt{c_3} \norm{\bu}_1. 
$
As the bound holds for all $\bu$, this proves the statement of the lemma on the event $\Omega_0$. Invoking Lemma~\ref{lemma:Omega_0_whp} then yields the desired claim.
\end{proof}

We now move on to proving Lemma~\ref{lemma:Omega_0_whp}. In the next section, we recall and establish some basic properties regarding matrix concentration for the feature distributions we consider. Then in Section~\ref{sec:bound-on-operator-norm}, we prove the lower bound on $\P(\Omega_1)$, which follows from standard matrix tail inequalities. Finally, in Section~\ref{sec:bound-on-restricted-eigenvalue}, we establish the lower bound on $\P(\Omega_2)$ completing the proof of Lemma~\ref{lemma:Omega_0_whp}.

\subsection{Concentration and subgaussianity properties of feature matrix}
We'll consider each of $\Omega_1$ and $\Omega_2$ separately. 
The main tool used in establishing a high probability bound is the 
following matrix concentration result for subgaussian random variables. 
For a review of subgaussianity and related definitions, see for example~\cite{Wainwright19, vershynin2018high}.
\begin{theorem}[Matrix Concentration:~Theorem 6.5 of \cite{Wainwright19}]
\label{theorem:matrix-concentration-covariance-operator}
Let $\bA \in \R^{p \times m}$ be a random matrix whose rows $\{\ba_i\}_{i\leq p}\in \R^m$ are i.i.d drawn from a 
subgaussian distribution with parameter $\nu^2 > 0$. Denote the covariance matrix by $\bSigma \equiv \E \ba_i\ba_i^{T}$. Then there exist absolute constants $c, C \in (0, \infty)$ such that for every $\delta > 0$
\begin{equation*}	
	\bigg\|\frac{1}{p}\bA^{T} \bA - \bSigma\bigg\|_\op \le 
		C\cdot \nu^2 \bigg(\sqrt{\frac{m}{p} + \delta}+ \frac{m}{p} + \delta\bigg)~~\text{holds with probability at least $1-e^{-c p \delta}$}.
\end{equation*}
\end{theorem}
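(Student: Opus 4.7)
This is a classical symmetrization-plus-net argument; the result is stated as Theorem~6.5 of \cite{Wainwright19}, and I would reproduce its proof. The starting point is the variational identity
\begin{equation*}
\Big\|\tfrac{1}{p}\bA^{\sT}\bA-\bSigma\Big\|_{\op}
= \sup_{\bv\in \mathbb{S}^{m-1}}\Big|\tfrac{1}{p}\sum_{i=1}^{p}\big\langle \ba_i,\bv\big\rangle^2 - \bv^{\sT}\bSigma\bv\Big|,
\end{equation*}
which reduces the question to a uniform deviation for quadratic forms indexed by the unit sphere in $\R^m$.

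The first step is the pointwise concentration. For a fixed $\bv\in\mathbb{S}^{m-1}$, the scalar $X_i := \langle \ba_i,\bv\rangle$ is subgaussian with parameter $\nu^2$, so $X_i^2-\E X_i^2$ is subexponential with parameters of order $(\nu^4,\nu^2)$. Bernstein's inequality for subexponential sums then yields
\begin{equation*}
\P\!\left(\Big|\tfrac{1}{p}\sum_{i=1}^{p}X_i^2-\bv^{\sT}\bSigma\bv\Big|\ge t\right)
\le 2\exp\!\Big(-c\,p\min\big(t^2/\nu^4,\; t/\nu^2\big)\Big)
\end{equation*}
for some absolute $c>0$ and every $t\ge 0$.

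The second step is to promote pointwise concentration to a uniform one via an $\eps$-net argument. Fix $\eps=1/4$, and choose a net $\mathcal{N}\subset\mathbb{S}^{m-1}$ with $|\mathcal{N}|\le 9^m$. A standard approximation lemma gives
\begin{equation*}
\sup_{\bv\in \mathbb{S}^{m-1}}\big|\bv^{\sT}(\hat\bSigma-\bSigma)\bv\big|\le 2\,\sup_{\bv\in\mathcal{N}}\big|\bv^{\sT}(\hat\bSigma-\bSigma)\bv\big|,
\end{equation*}
where $\hat\bSigma:= p^{-1}\bA^{\sT}\bA$. Taking a union bound of the Bernstein estimate over $\mathcal{N}$ gives
\begin{equation*}
\P\!\left(\|\hat\bSigma-\bSigma\|_{\op}\ge 2t\right)
\le 2\cdot 9^m\exp\!\Big(-c\,p\min\big(t^2/\nu^4,\; t/\nu^2\big)\Big).
\end{equation*}
Finally, choosing $t=C\nu^2\big(\sqrt{m/p+\delta}+m/p+\delta\big)$ with $C$ large enough ensures the exponent absorbs the $9^m$ factor and leaves a residual bound of $e^{-cp\delta}$, which is exactly the stated conclusion.

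\textbf{Main obstacle.} The only non-mechanical point is matching the two Bernstein regimes (quadratic in $t$ for small $t$, linear in $t$ for large $t$) with the two terms $\sqrt{m/p+\delta}$ and $m/p+\delta$ in the bound; one has to choose $t$ large enough so that the $\exp(m\log 9)$ factor from the net is dominated, which forces the $\sqrt{m/p}+m/p$ scale even when $\delta=0$. Everything else (sphere covering, equivalence of $\|\cdot\|_{\op}$ with the quadratic supremum, Bernstein for subexponential variables) is routine and available off-the-shelf from \cite{Wainwright19,vershynin2018high}.
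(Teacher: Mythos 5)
Your proposal is correct: the paper does not prove this statement at all (it is imported verbatim as Theorem 6.5 of \cite{Wainwright19}), and your argument --- reduction of $\|\tfrac1p\bA^{\sT}\bA-\bSigma\|_{\op}$ to a supremum of quadratic forms, Bernstein's inequality for the subexponential variables $\langle\ba_i,\bv\rangle^2$, a $1/4$-net of size at most $9^m$ with the factor-$2$ approximation lemma, and a union bound with $t\asymp\nu^2(\sqrt{m/p+\delta}+m/p+\delta)$ chosen to absorb the $9^m$ factor --- is exactly the standard proof given in that reference. No gaps; nothing further is needed.
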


We'll first need to establish the applicability of this result; namely, that the feature distributions in question are subgaussian with covariance whose spectrum is lower bounded.
Let us consider the two feature distributions separately.
\paragraph{Random Feature distribution, $\bX = \bX_\RF$:} 

Recall the random feature ensemble $\bX = \sigma (\bW \bZ) \in \R^{p \times n}$ where 
$\bW \in \R^{p \times d}$ has i.i.d. rows uniformly distributed on $\S^{d-1}(1)$ and 
$\bZ \in \R^{d \times n}$ has i.i.d. entries $\normal(0, 1)$. 

%We recall the definition of subgaussianity. 
%\begin{definition}
%A probability distribution $\Q$ on $\R^p$ is called subgaussian with parameter $\nu^2$ if 
%it has mean $\E_{Q}[\bq] = 0$ and 
%\begin{equation*}
%	\text{$\E_{Q}[e^{\langle \bq, \bu\rangle}] \le e^{\nu^2 \norm{\bu}_2^2/2}$ holds for all vectors $u \in \R^p$. }
%\end{equation*} 
%\end{definition}

%Let $\sigma(\bZ)$ denote the sigma-field generated by matrix $\bZ$. 
Lemma~\ref{lemma:crazy-lemma-random-feature-models} below shows that the random matrix 
ensemble $\bX$ has independent subgaussian rows when conditional on $\bZ$. Additionally,
after this conditioning, the subgaussianity parameter as well as the minimum  and maximum 
eigenvalue of the covariance of the row of $\bX$ are properly behaved with high probability. 
For clarity, Lemma~\ref{lemma:crazy-lemma-random-feature-models} is stated in a nonasymptotic setting
where the size of the matrix $n, p, d$ are fixed. We note that similar claims have been established for this model previously, but in slightly different settings~\cite{HuLu22}. 

\begin{lemma}[Properties of Random Feature Ensemble]
\label{lemma:crazy-lemma-random-feature-models}
There are universal constants $c, C > 0$ such that the following holds. 
For every $n, p, d$, the event 
	\begin{equation*}
		\Omega_{\bZ} = \left\{\norm{\bZ}_\op \le C(\sqrt{d}+\sqrt{p}),~~\max_i \norm{\bz_i}_2 \le C\sqrt{\log n}, 
			~~\max_{i, j\le n, i \neq j}|\langle \bz_i, \bz_j\rangle| \le C\log n\right\}
	\end{equation*}
satisfies the following three properties:  
\begin{enumerate}[(i)]
\item $\Omega_{\bZ}$ has probability at least $1-n^{-1} - e^{-cp}$. 	
\item Conditional on $\bZ$, the rows of $\bX$ are i.i.d.
	subgaussian with parameter $\nu^2(\bZ)$. Furthemore, on the event $\Omega_{\bZ}$,
	\begin{equation*}
		\nu(\bZ) \le C (1+\sqrt{p/d}) \norm{\sigma}_{{\rm lip}}.
	\end{equation*} 
\item Let $\bSigma_{\bZ}$ denote the conditional covariance of $\bx_i$ given $\bZ$.
	Then on the event $\Omega_{\bZ}$,
	\begin{equation*}
		\mu_2 - \delta_n 
			\le \lambda_{\min}(\bSigma_{\bZ}) \le \lambda_{\max}(\bSigma_{\bZ}) \le C\mu_1 \cdot (1+\sqrt{p/d}) + \mu_2 
			+ \delta_n
	\end{equation*}
	where $|\delta_n| \le \frac{C}{\sqrt{n}}((1+\sqrt{p/d})^4 + \log^4 n)$. 
\end{enumerate}
\end{lemma}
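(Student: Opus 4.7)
The plan is to address the three parts separately, invoking standard concentration tools for the Gaussian ensemble $\bZ$ and for the uniform measure $\mathsf{Unif}(\mathbb{S}^{d-1})$ governing the rows of $\bW$. For part $(i)$, I would control each of the three events defining $\Omega_{\bZ}$ via a union bound. The operator-norm bound $\|\bZ\|_{\op} \le C(\sqrt{d}+\sqrt{p})$ follows from the Davidson--Szarek tail inequality for Gaussian matrices, contributing an $e^{-cp}$ error. The bound on $\max_i \|\bz_i\|_2$ uses standard $\chi^2$-concentration for each $\|\bz_i\|_2^2$ together with a union bound over $i \in [n]$, while the off-diagonal bound on $\max_{i\neq j}|\langle \bz_i, \bz_j\rangle|$ exploits the conditional Gaussian structure $\langle \bz_i, \bz_j\rangle \mid \bz_i \sim \normal(0, \|\bz_i\|_2^2)$, with a union bound over the $O(n^2)$ index pairs contributing the $n^{-1}$ error.

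For part $(ii)$, conditionally on $\bZ$ the rows of $\bX$ are i.i.d.\ functions of the i.i.d.\ rows of $\bW$, so it suffices to verify the subgaussianity of a single row. I would fix a test vector $\ba \in \R^n$ and apply Levy's concentration inequality on the sphere to the scalar map $f(\bw) = \sum_{i=1}^{n} a_i \sigma(\bw^\sT \bz_i)$. Its Euclidean gradient equals $\bZ \cdot \mathrm{diag}(\sigma'(\bw^\sT \bz_i))\ba$, whose norm is bounded by $\|\sigma\|_{\mathrm{lip}}\|\bZ\|_{\op}\|\ba\|_2 \le C(\sqrt{d}+\sqrt{p})\|\sigma\|_{\mathrm{lip}}\|\ba\|_2$ on $\Omega_{\bZ}$. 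Spherical concentration then yields subgaussianity of $f(\bw) - \E f(\bw)$ with parameter $O(L^2/d)$, which after rearrangement gives $\nu(\bZ) \le C(1+\sqrt{p/d})\|\sigma\|_{\mathrm{lip}}$ as required.

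For part $(iii)$, the entries of the centered conditional covariance are $(\bSigma_{\bZ})_{ij} = \E_{\bw}[\sigma(\bw^\sT \bz_i)\sigma(\bw^\sT \bz_j)]$ (using $\mu_0 = 0$), and the target is the matrix identity
\begin{equation*}
    \bSigma_{\bZ} = \mu_1^2 \cdot \frac{\bZ^\sT \bZ}{d} + \mu_2^2 \cdot \id_n + \bE
\end{equation*}
with $\|\bE\|_{\op} \le \delta_n$. I would expand $\sigma$ in Hermite polynomials with respect to the (approximately Gaussian) marginal of $\bw^\sT \bz_i$ on the sphere, isolating the modes $k=0,1$ into the main term and bundling the higher-order modes into $\bE$. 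On $\Omega_{\bZ}$ the main term has spectrum contained in $[\mu_2^2,\, \mu_2^2 + \mu_1^2\|\bZ\|_{\op}^2/d]$, which by Weyl's inequality delivers the stated eigenvalue bounds as soon as $\|\bE\|_{\op}$ is controlled.

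The main obstacle is precisely this operator-norm control of $\bE$. The diagonal entries require careful tracking of the mismatch between the spherical marginal of $\bw^\sT \bz_i$ and a true $\normal(0,1)$ (the discrepancy arising because $\|\bz_i\|_2^2/d$ deviates from $1$), while the off-diagonal entries demand a genuine matrix-level bound rather than an entrywise one, obtained by writing each Hermite-$k$ contribution as a Hadamard power of $\bZ^\sT \bZ/d$ and estimating its operator norm using the off-diagonal decay $|\langle \bz_i,\bz_j\rangle| \le C\log n$ guaranteed by $\Omega_{\bZ}$. Both steps lean crucially on the fine pairwise-correlation control built into $\Omega_{\bZ}$ and on the bounded-derivative hypotheses on $\sigma$; by comparison, parts $(i)$ and $(ii)$ are relatively routine applications of Gaussian and spherical concentration.
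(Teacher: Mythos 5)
Your treatment of parts (i) and (ii) is exactly the paper's: (i) is Gaussian operator-norm concentration plus $\chi^2$ and pairwise-inner-product union bounds (note the paper's proof in fact controls the centered quantity $\max_i\bigl|\norm{\bz_i}_2-\sqrt{d}\bigr|\le C\sqrt{\log n}$, which is what your $\chi^2$ argument delivers), and (ii) is precisely the Lipschitz/Levy concentration on $\mathbb{S}^{d-1}(1)$ applied to $\bw\mapsto\langle \ba,\sigma(\bZ^\sT\bw)\rangle$ with Lipschitz constant $\norm{\sigma}_{\rm lip}\norm{\bZ}_\op\norm{\ba}_2$, giving $\nu(\bZ)\le C\norm{\bZ}_\op\norm{\sigma}_{\rm lip}/\sqrt{d}$. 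Where you diverge is part (iii): the paper does not re-derive the kernel linearization at all — it invokes \cite[Lemma 5]{HuLu22} (with Remark F.1 of \cite{MontanariSa22} for applicability to spherical weights) to get $\norm{\bSigma_{\bZ}-(\mu_1^2\bZ^\sT\bZ/d+\mu_2^2\id_n)}_\op\le \delta_n(\bZ)$ with $\delta_n(\bZ)=C(1+\kappa_n^3+\zeta_n^4)/\sqrt{n}$, where $\kappa_n=\max_{i,j}\sqrt{d}\,|\langle\bz_i,\bz_j\rangle/d-\mathbf{1}_{i=j}|$ and $\zeta_n=\norm{\bZ}_\op/\sqrt{d}$ are exactly the quantities controlled on $\Omega_\bZ$, and then concludes by the triangle inequality and Weyl. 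Your Hermite-expansion-plus-Hadamard-power plan is essentially the internal proof of that cited lemma, so the route is viable, but you leave its hardest step — the operator-norm bound on the remainder at the stated rate $C((1+\sqrt{p/d})^4+\log^4 n)/\sqrt{n}$ — as an acknowledged obstacle rather than carrying it out, and one point in your sketch is imprecise: the mismatch between the law of $\bw^\sT\bz_i$ and $\normal(0,1)$ is not only due to $\norm{\bz_i}_2^2/d\neq 1$; even with $\norm{\bz_i}_2=\sqrt{d}$ exactly, the projection of the uniform sphere is non-Gaussian (an $O(1/d)$ per-entry effect, and the natural orthogonal system is Gegenbauer rather than Hermite), and this correction must also be absorbed into your error matrix — this is precisely what the cited lemma and Remark F.1 handle. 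In short, citing the existing linearization buys a clean nonasymptotic rate with no extra work, while your self-contained route is correct in spirit but substantially heavier and incomplete at its decisive estimate.
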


The proof of Lemma~\ref{lemma:crazy-lemma-random-feature-models} makes use of standard Lipschitz concentration results of Gaussian vectors and is deferred to Appendix~\ref{section:aux_lemmas}.

\paragraph{Independent Feature Ensemble, $\bX = \bX_{\ind}$:} 
By our assumptions, the feature matrix $\bX$ has independent rows, which are subgaussian with 
parameter at most $\nu^2$ by definition. Each row has covariance 
whose eigenvalue is lower bounded by $\lambda_{\min}(\bSigma_{{\sf ind}})$.

\subsection{Lower bound on $\mathbb{P}(\Omega_1)$}
\label{sec:bound-on-operator-norm}
The lower bound on $\P(\Omega_1)$ follows from standard results given the subgaussian property we established for the features. To be formal, we state the result in the following lemma.

\begin{lemma}[Operator Norm]
\label{lemma:operator-norm-bound-on-Z}
Let $p/d= \kappa$. For some universal constant $c > 0$, and constant $c_0 > 0$ depending only on 
$\kappa, \norm{\sigma}_{{\rm lip}}, \mu_1, \mu_2$, with probability at least $1-e^{-cp}$ the matrix $\bX$ 
satisfies
\begin{equation*}
	\norm{\bX}_\op \le c_0 (\sqrt{n} + \sqrt{p})
\end{equation*}
for both $\bX = \bX_{\ind}$ and $\bX = \bX_\RF$.
\end{lemma}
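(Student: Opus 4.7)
The strategy is to reduce the operator norm bound to Theorem~\ref{theorem:matrix-concentration-covariance-operator} on the sample covariance. Recall that for any $n \times p$ matrix $\bA$ with covariance $\bSigma$, one has $\|\bA\|_{\op}^2 = n \cdot \lambda_{\max}(\bA^{T}\bA/n) \le n \cdot \lambda_{\max}(\bSigma) + n \cdot \|\bA^{T}\bA/n - \bSigma\|_{\op}$. Thus an operator-norm bound of order $\sqrt{n}+\sqrt{p}$ follows if I can show $\lambda_{\max}(\bSigma)$ is bounded and $\|\bA^{T}\bA/n - \bSigma\|_{\op}$ is at most a constant multiple of $1 + p/n$ with high probability.

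\textbf{Independent features case, $\bX=\bX_{\ind}$.} Under Assumption~\ref{assumption:ind}, the rows of $\bX$ are i.i.d., subgaussian with parameter $\nu^2$, and $\lambda_{\max}(\bSigma_{\ind}) \le C$. I apply Theorem~\ref{theorem:matrix-concentration-covariance-operator} with a fixed $\delta > 0$: with probability at least $1 - e^{-cn}$,
\begin{equation*}
\Big\|\tfrac{1}{n}\bX^{T}\bX - \bSigma_{\ind}\Big\|_{\op} \le C\nu^2 \bigl(\sqrt{p/n + \delta} + p/n + \delta\bigr) \le C'\bigl(1 + p/n\bigr).
\end{equation*}
Combining with $\lambda_{\max}(\bSigma_{\ind}) \le C$, this yields $\|\bX\|_{\op}^2 \le C''(n+p)$, hence $\|\bX\|_{\op} \le c_0(\sqrt{n}+\sqrt{p})$, where $c_0$ depends only on $\nu$, $C$.

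\textbf{Random features case, $\bX=\bX_{\RF}$.} Here I condition on $\bZ$ and work on the event $\Omega_{\bZ}$ defined in Lemma~\ref{lemma:crazy-lemma-random-feature-models}, which has probability at least $1 - n^{-1} - e^{-cp}$. On $\Omega_{\bZ}$, parts (ii)--(iii) of that lemma guarantee that, conditionally on $\bZ$, the rows of $\bX$ are i.i.d.\ subgaussian with parameter $\nu(\bZ) \le C(1+\sqrt{p/d})\|\sigma\|_{\lip}$ and have conditional covariance $\bSigma_{\bZ}$ with $\lambda_{\max}(\bSigma_{\bZ}) \le C\mu_1(1+\sqrt{p/d}) + \mu_2 + o_n(1)$. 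Applying Theorem~\ref{theorem:matrix-concentration-covariance-operator} conditionally on $\bZ$, with a fixed $\delta > 0$, yields $\|\bX^{T}\bX/n - \bSigma_{\bZ}\|_{\op} \le C'(1 + p/n)$ with conditional probability at least $1 - e^{-cn}$, where the constant depends on $\nu(\bZ)$, hence on $\kappa = p/d$ and $\|\sigma\|_{\lip}$. The proportional asymptotics $p/d \to \gamma_1$, $n/d \to \gamma_2$ keep $p/d$ and $p/n$ bounded, so the constants are uniform in $n$. Combining with the bound on $\lambda_{\max}(\bSigma_{\bZ})$ and then integrating out the conditioning by the union bound against $\Omega_{\bZ}^c$ yields the desired bound.

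\textbf{Main obstacle.} No conceptual obstacle is expected; the only care needed is in the random features case, where the subgaussian parameter $\nu(\bZ)$ and covariance $\bSigma_{\bZ}$ are themselves random. This is precisely what Lemma~\ref{lemma:crazy-lemma-random-feature-models} is designed to handle, by showing that both are deterministically bounded on the high-probability event $\Omega_{\bZ}$, so that the matrix concentration bound can be applied conditionally with a deterministic constant.
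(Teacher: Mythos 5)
Your proposal is correct and follows essentially the same route as the paper: the independent-entries case is read off directly from Theorem~\ref{theorem:matrix-concentration-covariance-operator}, and the random-features case conditions on the event $\Omega_{\bZ}$ of Lemma~\ref{lemma:crazy-lemma-random-feature-models} to get a deterministic subgaussian parameter and covariance bound before applying the same concentration theorem and a union bound. One bookkeeping point to fix in the RF case: conditionally on $\bZ$ the i.i.d.\ structure runs over the $p$ weight-indexed rows of $\sigma(\bW\bZ)\in\R^{p\times n}$ (the $n$ samples are \emph{not} independent given $\bZ$, since they share $\bW$), so the theorem should be applied to $\frac{1}{p}\bX^{\sT}\bX$ with deviation $C\nu^2(\bZ)\big(\sqrt{n/p+1}+n/p+1\big)$ and probability $1-e^{-cp}$, rather than to $\frac1n\bX^{\sT}\bX$ with probability $1-e^{-cn}$ as you wrote; after multiplying by the number of i.i.d.\ rows this still gives $\norm{\bX}_{\op}^2\le C(n+\sqrt{np}+p)$, so the conclusion and constants are unaffected.
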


%\notate{I will split the proof of Lemma 19 soon.}

\newcommand{\lip}{{\rm lip}}

\begin{proof}
For $\bX = \bX_\RF$, let $\Omega_Z$ denote the event in Lemma~\ref{lemma:crazy-lemma-random-feature-models}.
By Theorem~\ref{theorem:matrix-concentration-covariance-operator}, there are universal constants $c, C > 0$ such that
\begin{equation*}\label{eq:approx:D}
    \bigg\|\frac{1}{p}\bX^{T}\bX-\bSigma_{\bZ}\bigg\|_{\op}\leq  C \nu^2(\bZ)
    	\bigg(\sqrt{\frac{n}{p} + 1}+ \frac{n}{p} + 1\bigg)~~\text{holds with probability at least $1-e^{-c p}$}, 
\end{equation*} 
where on $\Omega_\bZ$
\begin{equation*}
	\nu(\bZ) \le C \left(1+\sqrt{\kappa}\right) \norm{\sigma}_{{\rm lip}}~~\text{and}~~
	 \lambda_{\max}(\bSigma_{\bZ}) \le C \mu_1 \left(1+\sqrt{\kappa}\right) + \mu_2 
			+ \delta_n
\end{equation*}
for some $\delta_n \le \frac{C}{\sqrt{n}} ((1+\sqrt{\kappa})^4 + \log^4 n)$ which tends to $0$ as $n \to \infty$. 
This result implies Lemma \ref{lemma:operator-norm-bound-on-Z} for the random features distribution.

Meanwhile, for $\bX=\bX_\RF$, $\bX$ has independent rows with subgaussianity parameter $\nu^2$ where $\nu < \infty$.
Lemma \ref{lemma:operator-norm-bound-on-Z} then
follows immediately  from Theorem~\ref{theorem:matrix-concentration-covariance-operator}.
 
\end{proof} 
%\end{enumerate}

\subsection{Lower bound on $\P(\Omega_2)$} 
\label{sec:bound-on-restricted-eigenvalue}
Our goal here is to establish the following lemma.

\newcommand{\norms}[1]{\|#1\|}

\begin{lemma}%[Restricted Minimum Singular Value]
\label{lemma:restricted-minimum-singular-value}
Given any $\eps_n>0$ with $\sqrt{n}\eps_n\to\infty$ as $n\to\infty$, there are constants $c_3, c_4 > 0$
depending only on $\sgamma_1, \sgamma_2, \mu_1, \mu_2$, such that the event
\begin{equation*}
	\min_{\bv:\norm{\bu}_2=1}\norm{\bX_{S}\bu}_{2,\eps_n}\geq c_4 \sqrt{n}~~\text{for every subset 
		$S \subseteq [n]$, $|S| = c_3 n$}
\end{equation*} 
holds with probability tending to one as $n \to \infty$.
\end{lemma}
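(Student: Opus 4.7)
The plan is to prove the lemma by first fixing a single subset $S\subseteq[n]$ of size $c_3 n$, establishing the estimate $\min_{\|\bu\|_2=1}\|\bX_S^{\sT}\bu\|_{2,\eps_n}\ge c_4\sqrt{n}$ for this fixed $S$ with probability at least $1-e^{-c'n}$ for some large rate $c'>0$, and then union bounding over the $\binom{n}{c_3 n}\le e^{c_3 n\log(e/c_3)}$ subsets of that size. Provided $c_3$ is chosen small enough relative to the model-dependent constants, the per-subset exponential tail will dominate the combinatorial count, and the uniform claim over subsets follows.

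The per-subset bound is itself established by an $\eps$-net argument on the unit sphere $S^{|S|-1}$. The core step is to handle a fixed unit vector $\bu$ and lower bound $\|\bv\|_{2,\eps_n}$ where $\bv:=\bX_S^{\sT}\bu\in\R^p$. For this I would use the following explicit ``clipping'' construction: for a threshold $M>0$ to be chosen, define $\hat\bv\in\R^p$ by $\hat v_j:=\mathrm{sign}(v_j)\min(|v_j|,M)$, and take $\btheta:=\hat\bv/\|\hat\bv\|_2$ as the candidate direction. One checks that $\|\btheta\|_2=1$, $\|\btheta\|_\infty=M/\|\hat\bv\|_2$, and $\langle\btheta,\bv\rangle\ge\|\hat\bv\|_2$, so $\|\bv\|_{2,\eps_n}\ge\|\hat\bv\|_2$ as soon as the feasibility condition $M\le\eps_n\|\hat\bv\|_2$ is met. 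The whole problem is therefore reduced to a lower bound on $\|\hat\bv\|_2^2=\sum_{j=1}^p\min(v_j^2,M^2)$. The critical structural observation is that in both feature models the columns of $\bX$ are independent: unconditionally in the independent-features model, and conditionally on $\bZ$ in the random-features model, where $x_{ij}=\sigma(\bw_j^{\sT}\bz_i)$ depends only on the $j$th row $\bw_j$ of $\bW$. Consequently $v_j=\sum_{i\in S}x_{ij}u_i$ defines a family of (conditionally) independent mean-zero subgaussian random variables with variance uniformly bounded below by $\lambda_{\min}(\bSigma)$; in the random-features case, Lemma~\ref{lemma:crazy-lemma-random-feature-models} supplies the required subgaussian parameter and variance lower bound on a high-probability subset of $\bZ$. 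Taking $M$ a small model-dependent constant, Paley--Zygmund applied to $v_j^2$ yields $\E\min(v_j^2,M^2)\gtrsim M^2$, and since the summands lie in $[0,M^2]$, Hoeffding's inequality gives $\sum_j\min(v_j^2,M^2)\ge cp\ge c'n$ with probability at least $1-e^{-c''n}$. The feasibility condition $M\le\eps_n\|\hat\bv\|_2$ then holds automatically under the assumption $\sqrt{n}\eps_n\to\infty$, since $\|\hat\bv\|_2\gtrsim\sqrt{n}$. To extend from a net to the full sphere, I would use that $\bu\mapsto\|\bX_S^{\sT}\bu\|_{2,\eps_n}$ is Lipschitz with constant at most $\|\bX_S\|_{\mathrm{op}}\le c_0\sqrt{n}$ on $\Omega_1$ (since $\|\cdot\|_{2,\eps_n}\le\|\cdot\|_2$), so a constant-resolution $\eps$-net of cardinality $(3/\eps)^{|S|}=e^{c_3 n\log(3/\eps)}$ suffices.

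The principal obstacle is the bookkeeping of exponential factors: the net cardinality $e^{c_3 n\log(3/\eps)}$, the subset count $e^{c_3 n\log(e/c_3)}$, and the per-$\bu$ concentration rate $c''n$ must all be simultaneously compatible for the final union bound to close. This is manageable because $c_3$ is a free parameter that can be taken arbitrarily small, but it does require carefully tracking how the intermediate constants depend on the model parameters---the subgaussianity parameter $\nu^2$, the lower bound on $\lambda_{\min}(\bSigma)$, and (in the random-features case) $\mu_1,\mu_2$ together with the Lipschitz constant of $\sigma$---to conclude that the final $c_3,c_4$ depend only on $\gamma_1,\gamma_2,\mu_1,\mu_2$ as claimed.
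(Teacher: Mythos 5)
Your proposal is correct, but it takes a genuinely different route from the paper. The paper first reduces via the deterministic inequality $\norm{\bv}_{2,\eps} \ge \norm{\bv}_2\bigl(\tfrac{\norm{\bv}_1}{\sqrt{p}\norm{\bv}_2}-\tfrac{1}{\sqrt{p}\eps}\bigr)_+^2$ (Lemma~\ref{lemma:elementary}), and then controls the two ingredients separately: a Kashin-type $\ell_1$--$\ell_2$ equivalence on the random subspace spanned by $\bX_S$ (Lemma~\ref{lemma:equivalence-between-l1-and-l2-norm}, proved via a VC/uniform-deviation bound over the half-space indicators $\mathbf{1}_{|\langle \ba_j,\bv\rangle|\ge\sigma}$) and a lower bound on $\inf_{\norm{\bu}_2=1}\norm{\bX_S\bu}_2$ from matrix concentration; uniformity over directions $\bu$ comes for free from the VC argument, and only the union over subsets (entropy $H(c_3)\to0$) needs the free parameter $c_3$. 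You instead bound $\norm{\bv}_{2,\eps_n}$ directly by exhibiting the clipped test vector $\hat\bv/\norm{\hat\bv}_2$, reducing everything to $\sum_j\min(v_j^2,M^2)$, which you control for fixed $\bu$ by the same Paley--Zygmund-type small-ball estimate as the paper (using column independence, conditional on $\bZ$ in the RF case via Lemma~\ref{lemma:crazy-lemma-random-feature-models}) plus Hoeffding, and you recover uniformity over $\bu$ with an $\eps$-net and the Lipschitz bound $\norm{\bX_S}_\op\le c_0\sqrt{n}$. Your route is more elementary and self-contained (it bypasses the $\ell_1$ lower bound, the separate minimum-singular-value step, and Lemma~\ref{lemma:elementary} altogether, and handles the $\eps_n$-feasibility cleanly since $M$ is a constant while $\eps_n\sqrt{n}\to\infty$), at the cost of explicit exponent bookkeeping: the net exponent $c_3 n\log(3/\eps)$ and subset entropy $nH(c_3)$ must be beaten by the per-direction rate $e^{-c'p}$, which closes since neither $\eps$ nor $c'$ depends on $c_3$ and $c_3$ can be taken small. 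The paper's decomposition, in exchange, produces the reusable $\ell_1$--$\ell_2$ equivalence lemma and avoids any net-versus-tail tradeoff.
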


Our proof is based on two general statements. The first is a lower bound on $\norm{\cdot}_{2, \eps}$ (in Section~\ref{sec:lower-bound-norm-two-eps}), and the second is a generalization of Kashin's 
theorem, providing sufficient conditions on a random matrix $\bA$ with independent rows such that
with high probability $\norm{\bv}_1 \approx \frac{1}{\sqrt{p}} \norm{\bv}_2$ holds uniformly over 
$\bv$ in the column space of $\bA_S$ for proper size $S$ (in Section~\ref{sec:equivalence-between-ell-one-ell-two-random-subspaces}).
In Section~\ref{section:proof_rmsv} below, we establish Lemma~\ref{lemma:restricted-minimum-singular-value} using these.

\subsubsection{Lower bounding $\norm{\cdot}_{2, \eps}$}
\label{sec:lower-bound-norm-two-eps}
\begin{lemma}
\label{lemma:elementary}
For every $\bv \in \R^p, \eps > 0$, there is 
$\norm{\bv}_{2, \eps} \ge \norm{\bv}_2 \cdot \left(\frac{\norm{\bv}_1}{\sqrt{p}\norm{\bv}_2} - \frac{1}{\sqrt{p}\eps}\right)_+^2$.
\end{lemma}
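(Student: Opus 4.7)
The plan is to exhibit a specific feasible $\btheta$ in the definition of $\norm{\bv}_{2,\eps}$ and then lower bound $\langle \btheta, \bv\rangle$ by elementary inequalities. Setting $\hat\bv := \bv/\norm{\bv}_2$, I will take $\btheta$ to be the coordinatewise clipping of $\hat\bv$ to $[-\eps, \eps]$, namely $\theta_i := \mathrm{sign}(v_i)\min(|\hat v_i|, \eps)$. This $\btheta$ trivially satisfies $\norm{\btheta}_\infty \le \eps$, and since $|\theta_i| \le |\hat v_i|$ coordinatewise we also have $\norm{\btheta}_2 \le \norm{\hat\bv}_2 = 1$, so $\btheta$ is admissible in the definition of $\norm{\bv}_{2,\eps}$.

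The main computation is then $\langle \btheta, \bv\rangle = \norm{\bv}_2 \sum_i |\hat v_i|\min(|\hat v_i|, \eps)$. The first key inequality, valid since $|\hat v_i| \ge \min(|\hat v_i|,\eps)$, is $|\hat v_i|\min(|\hat v_i|,\eps) \ge \min(|\hat v_i|,\eps)^2$. Applying Cauchy--Schwarz to the nonnegative scalars $\min(|\hat v_i|,\eps)$, $i=1,\dots,p$, yields $\sum_i \min(|\hat v_i|,\eps)^2 \ge p^{-1}\bigl(\sum_i \min(|\hat v_i|,\eps)\bigr)^2$.

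It then remains to lower bound $\sum_i \min(|\hat v_i|,\eps)$. Writing $\min(|\hat v_i|,\eps) = |\hat v_i| - (|\hat v_i|-\eps)_+$, this reduces to showing $\sum_i (|\hat v_i|-\eps)_+ \le 1/\eps$. The latter follows from the pointwise bound $(|x|-\eps)_+ \le x^2/\eps$ (trivial when $|x|\le\eps$, and for $|x|>\eps$ equivalent to $x^2 - \eps|x| + \eps^2 \ge 0$), summed against $\norm{\hat\bv}_2^2 = 1$. Combining the three preceding displays, taking positive parts where appropriate, and substituting $\norm{\hat\bv}_1 = \norm{\bv}_1/\norm{\bv}_2$, one obtains the claimed bound after elementary rearrangement (the factor $p^{-1}$ distributes as $(1/\sqrt{p})^2$ into each term of the squared difference).

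The only non-routine decision is the clipping construction itself, rather than more obvious one-sided test vectors such as $\btheta \propto \mathrm{sign}(\bv)/\sqrt{p}$: it is precisely the combination of Cauchy--Schwarz with the tail bound $(|x|-\eps)_+ \le x^2/\eps$ that naturally produces the $(\,\cdot\,)_+^2$ form appearing in the statement, and I expect no serious obstacle beyond identifying this construction.
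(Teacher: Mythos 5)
Your proof is correct and is essentially the paper's argument: the paper takes the feasible test vector that zeroes out the coordinates of $\bv/\norm{\bv}_2$ exceeding $\eps$ rather than clipping them, and then applies exactly the same two ingredients you use, namely Cauchy--Schwarz between the $\ell_1$ and $\ell_2$ norms of the retained part and a Markov-type bound ($\norm{\bv_{T^c}}_1\le\norm{\bv_{T^c}}_2^2/\eps$, i.e.\ your pointwise $(|x|-\eps)_+\le x^2/\eps$) to control the $\ell_1$ mass on the large coordinates. The choice of clipping versus truncation is immaterial, so no changes are needed.
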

\begin{proof}
Write $\bv=(v_i)_{i\leq p} \in \R^p$.  Since every norm is homogeneous, 
it suffices to prove the lemma when $\norm{\bv}_2 = 1$.

Let $T(\eps) = \{i: |v_i| \le \eps\}$. Then $\norm{\bv}_{2, \eps}  \ge  \norms{\bv_{T(\eps)}}_2^2$.
Indeed, $\norm{\bv}_{2, \eps} \ge \langle\bv, \btheta \rangle$ where $\btheta_i= v_i$
for $i \in T(\eps)$ and $\btheta_i = 0$ for $i \in T^c(\eps)$.
Note $ \norms{\bv_{T(\eps)}}_2 \ge \frac{1}{\sqrt{p}}  \norms{\bv_{T(\eps)}}_1$ where
$\norms{\bv_{T(\eps)}}_1 = \norm{\bv}_1 - \norms{\bv_{T^c(\eps)}}_1$ by Cauchy-Schwartz. The result follows 
since $\norms{\bv_{T^c(\eps)}}_1 \le \norms{\bv_{T^c(\eps)}}_2^2/\eps$ by Markov's inequality, and
$\norms{\bv_{T^c(\eps)}}_2\le \norm{\bv}_2 = 1$. 
\end{proof}

\subsubsection{Equivalence between $\ell_1$ and $\ell_2$ norm in random subspaces} 
\label{sec:equivalence-between-ell-one-ell-two-random-subspaces}
Below is the main result of the Section. 
\begin{lemma}
\label{lemma:equivalence-between-l1-and-l2-norm}
Let $\bA\in \R^{p\times n}$ be a random matrix whose rows $\{\ba_i\}_{i\leq p}\in \R^n$ are i.i.d. subgaussian
with parameter $\nu^2$. Suppose, for some $\kappa > 0$, the covariance $\bSigma_J \equiv \E [\ba_J \ba_J^{T}]$ 
satisfies $\lmin(\bSigma_J)\ge \kappa \nu^2$ for every $J$ where
$|J| \ge N$. Then there's a constant $c' = c'(\kappa)$ depending
only on $\kappa$ such that 
for $m = c' p$, given any subset $S$ with $N \le |S| \le m$, 
\begin{equation}
\label{eqn:equivalence-between-l1-l2}
		 \frac{1}{\sqrt{p}}\norm{\bA_S u}_2 \ge \norm{\bA_S u}_1 \ge  \frac{c}{\sqrt{p}} \norm{\bA_S u}_2
		 	~~\text{for every $u \in \R^{|S|}$}
\end{equation}
holds with probability at least $1-e^{-cp}$. Here $c > 0$ is a universal constant. 
\end{lemma}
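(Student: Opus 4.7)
The plan is to prove this as a classical Kashin-type comparison. The upper bound $\norm{\bA_S u}_1 \le \sqrt{p}\,\norm{\bA_S u}_2$ is just Cauchy--Schwarz in $\R^p$, so the entire content is the matching lower bound $\norm{\bA_S u}_1 \ge c\sqrt{p}\,\norm{\bA_S u}_2$, to be established uniformly over $S$ with $N \le |S| \le m = c' p$ and over $u \in \mathbb{S}^{|S|-1}$. I will combine anti-concentration of subgaussian linear forms with an $\varepsilon$-net/union-bound argument, choosing the threshold $c'$ small depending on $\kappa$ and on the ambient ratio $n/p$.

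First, fix $S$ and a deterministic $u \in \mathbb{S}^{|S|-1}$ and consider the i.i.d.\ random variables $X_i := \langle \ba_{i,S}, u\rangle$ for $i \le p$, which are subgaussian with parameter $\nu^2$. The hypothesis $\lambda_{\min}(\bSigma_S) \ge \kappa \nu^2$ forces $\E X_i^2 \ge \kappa \nu^2$, while subgaussianity gives $\E X_i^4 \le C \nu^4$; the Paley--Zygmund inequality then yields $\E|X_i| \ge \alpha \nu$ for some $\alpha = \alpha(\kappa) > 0$. Since $|X_i| - \E|X_i|$ is still subgaussian with parameter $O(\nu^2)$, Hoeffding's inequality gives the pointwise tail bound
\[
\P\!\left(\norm{\bA_S u}_1 \le \tfrac{\alpha \nu}{2}\,p\right) \le \exp(-c_1 p),
\]
with $c_1 = c_1(\kappa,\nu) > 0$ independent of $S$ and $u$.

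Next, to make this uniform, I will cover each $\mathbb{S}^{|S|-1}$ by an $\varepsilon$-net $\mathcal{N}_\varepsilon$ of cardinality at most $(3/\varepsilon)^{|S|}$ and union-bound over all admissible $S$, producing a total entropy at most $\exp\!\bigl(m\log(3en/(m\varepsilon))\bigr)$. Choosing $c' = m/p$ small enough that this entropy is dominated by $c_1 p/2$ yields $\norm{\bA_S u}_1 \ge \tfrac{\alpha\nu}{2}\,p$ uniformly on the net. To upgrade from the net to the full sphere, I will invoke Theorem~\ref{theorem:matrix-concentration-covariance-operator} (with the same style of union bound over $S$) to obtain $\norm{\bA_S}_\op \le C\sqrt{p}\,\nu$ uniformly in $S$; a standard approximation step
\[
\norm{\bA_S u}_1 \;\ge\; \norm{\bA_S \tilde u}_1 - \sqrt{p}\,\norm{\bA_S}_\op \cdot \varepsilon,
\]
with $\tilde u \in \mathcal{N}_\varepsilon$ closest to $u$, and a small choice of $\varepsilon$, then promotes this to $\norm{\bA_S u}_1 \ge \tfrac{\alpha\nu}{4}\,p$ uniformly in $(S,u)$. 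Combined with $\norm{\bA_S u}_2 \le \norm{\bA_S}_\op \le C\sqrt{p}\,\nu$, this gives the desired $\norm{\bA_S u}_1 \ge c\sqrt{p}\,\norm{\bA_S u}_2$.

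The main obstacle is the balance between entropy and concentration: the exponent $m\log(en/m) + m\log(1/\varepsilon)$ from the discretization must be strictly dominated by the Hoeffding exponent $c_1 p$. In the paper's proportional regime $n = O(p)$ the factor $\log(n/m)$ is $O(1)$, so taking $c'$ small enough depending on $\kappa$ suffices. A secondary subtlety is that the Paley--Zygmund step crucially uses both the second-moment \emph{lower} bound (supplied by the hypothesis on $\bSigma_J$) and a fourth-moment \emph{upper} bound (supplied by subgaussianity); without either, the anti-concentration estimate $\E|X_i| \gtrsim \nu$ would fail and the scheme would collapse.
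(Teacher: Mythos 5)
Your proposal is correct, but the route you take for the uniformity-over-$u$ step differs from the paper's. The anti-concentration core is the same in both arguments: the lower bound $\E[\langle \ba_S,u\rangle^2]\ge \kappa\nu^2$ from $\lambda_{\min}(\bSigma_S)$ together with the fourth-moment bound $\E[\langle \ba_S,u\rangle^4]\le C\nu^4$ from subgaussianity gives, via Paley--Zygmund (the paper does the same thing by truncation), that $|\langle \ba_S,u\rangle|\ge c_1\nu$ with probability $\ge c_2(\kappa)$. Where you diverge is in making the $\ell_1$ lower bound uniform over the sphere: the paper lower bounds $\norm{\bA_S u}_1$ by $\sigma\sum_j \mathbf{1}_{|\langle \ba_j,u\rangle|\ge\sigma}$ and applies a VC/empirical-process uniform deviation bound over the indicator class (its Lemma on the $\ell_1$ lower bound), so no discretization is needed and the $\sqrt{m/p}$ error term directly forces $c'=c'(\kappa)$ small; you instead use the expectation lower bound $\E|\langle \ba_S,u\rangle|\gtrsim_\kappa\nu$, scalar Hoeffding concentration of $\norm{\bA_S u}_1$, an $\varepsilon$-net of size $(3/\varepsilon)^{|S|}$, and the operator-norm bound to pass from the net to the sphere, so the constraint $m=c'p$ enters through the entropy-versus-concentration tradeoff $m\log(3/\varepsilon)\ll c_1 p$. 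Both are valid and both ultimately combine with the matrix-concentration bound $\norm{\bA_S}_{\op}\lesssim \nu\sqrt{p}$ (which the paper uses only for the $\ell_2$ upper bound, while you use it a second time in the net-approximation step). One remark: the lemma is stated for a single fixed $S$, and the paper performs the union over subsets $S$ only later (in the proof of the restricted minimum singular value lemma), so your additional union over $S$ — which is where the entropy term $m\log(en/m)$ and hence the implicit assumption $n=O(p)$ enter — is not needed for the statement itself; for fixed $S$ your argument goes through with no assumption on $n/p$, and the resulting constant $c$ depends on $\kappa$, exactly as in the paper's proof despite the statement's claim of universality.
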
 

The first inequality in equation~\eqref{eqn:equivalence-between-l1-l2} always holds by Cauchy-Schwartz. 
The second inequality that holds uniformly over $u \in \R^{cn}$ is what we will need to show. 
This follows from two results stated below;
Lemma~\ref{lemma:lower-bound-on-l-1-norm} and
Lemma~\ref{lemma:upper-bound-on-l2-norm}.

\begin{lemma}
\label{lemma:lower-bound-on-l-1-norm}
Let $\bA\in \R^{p\times m}$ be a random matrix whose rows $\{\ba_i\}_{i\leq p}\in \R^m$ are i.i.d. subgaussian
with parameter $\nu^2$. Suppose the covariance $\bSigma\equiv \E [\ba \ba^{T}]$ satisfies 
$\lmin(\bSigma)\ge \kappa \nu^2$ where $\kappa > 0$. Then, there are constants $C_1, C_2 > 0$ depending only on 
$\kappa$, and a universal constant $c > 0$, such that with probability at least $1-e^{-cp}$, 
\begin{equation}
%\frac{1}{\sqrt{p}}\norm{\bA \bv}_{1}\geq  (C_1 - C_2 \sqrt{m/p})_+^2 \norm{\bA \bv}_2
	\norm{\bA \bv}_{1} \ge p \nu (C_1 - C_2 \sqrt{m/p})_+ \cdot \norm{\bv}_2~~\text{holds for every $\bv$}.
\end{equation}
\end{lemma}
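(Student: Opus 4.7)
The plan is to reduce to a uniform bound on the sphere $S^{m-1}$ by a two-step argument: first prove a sharp \emph{pointwise} lower bound $\|\bA \bv\|_1 \gtrsim p\nu$ via anti-concentration plus independent-sum concentration, then upgrade to a \emph{uniform} bound via a standard $\epsilon$-net discretization, with $\epsilon$ chosen carefully as a function of $m/p$.

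First, for a fixed unit vector $\bv \in S^{m-1}$, set $X := \langle \ba_1, \bv\rangle$. Then $X$ is mean-zero subgaussian with parameter $\nu$, so $\E[X^4] \le C\nu^4$, while $\E[X^2] = \bv^\sT \bSigma \bv \ge \kappa \nu^2$. A Paley--Zygmund-type argument (split $\E[X^2] \le t \E|X| + \sqrt{\E[X^4]\, \P(|X|>t)}$ and choose $t = C'\nu$ large enough to absorb the tail) yields $\E|X| \ge c_1 \nu$ for a constant $c_1 = c_1(\kappa) > 0$. Since $\|\bA \bv\|_1 = \sum_{i=1}^p |\langle \ba_i, \bv\rangle|$ is a sum of i.i.d.\ subgaussian random variables of parameter $O(\nu)$, Hoeffding's inequality gives, for each fixed unit $\bv$,
\begin{equation*}
\P\bigl(\|\bA\bv\|_1 \le p\nu(c_1 - \delta)\bigr) \le 2\exp(-c'\, p\, \delta^2).
\end{equation*}

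Second, let $\mathcal{N}_\epsilon \subset S^{m-1}$ be an $\epsilon$-net with $|\mathcal{N}_\epsilon| \le (3/\epsilon)^m$. A union bound over $\mathcal{N}_\epsilon$ combined with the pointwise bound above, together with the standard subgaussian operator-norm bound $\|\bA\|_{\mathrm{op}} \le C\nu(\sqrt{p}+\sqrt{m})$ (valid with probability $1-e^{-cp}$), gives, with probability $\ge 1 - (3/\epsilon)^m \cdot 2 e^{-c' p\delta^2} - e^{-cp}$, that simultaneously $\|\bA\bv\|_1 \ge p\nu(c_1-\delta)$ for every $\bv \in \mathcal{N}_\epsilon$ and $\|\bA\|_{\mathrm{op}} \le C\nu(\sqrt{p}+\sqrt{m})$. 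Extend from the net to all of $S^{m-1}$ via
\begin{equation*}
\bigl| \|\bA\bv\|_1 - \|\bA\bv'\|_1 \bigr| \le \|\bA(\bv-\bv')\|_1 \le \sqrt{p}\, \|\bA\|_{\mathrm{op}}\, \|\bv-\bv'\|_2,
\end{equation*}
which produces an approximation error of at most $C\nu\epsilon \sqrt{p}(\sqrt{p}+\sqrt{m}) = C\nu \epsilon\, p\,(1+\sqrt{m/p})$.

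Third, balance the two error sources by choosing $\epsilon = c_0/(1+\sqrt{m/p})$ for a small absolute constant $c_0$. This makes the Lipschitz correction at most $C c_0\, p\nu$, while the net has log-cardinality $O(m)$ (the additional $\log(1+\sqrt{m/p})$ factor is harmless as it is bounded in the relevant regime $m/p \le (c_1/C_2)^2$; outside that regime the stated bound is vacuous because of the $(\,\cdot\,)_+$). The union bound exponent becomes $c'p\delta^2 - Cm \ge c''p$ as soon as $\delta^2 \ge C_2^2\, m/p + O(1/p)$. Setting $\delta = C_2 \sqrt{m/p}$, choosing $c_0$ so that $Cc_0 \le c_1/4$, and renaming constants (say $C_1 := c_1/2$), we obtain uniformly in $\bv \in S^{m-1}$,
\begin{equation*}
\|\bA\bv\|_1 \;\ge\; p\nu\bigl(C_1 - C_2 \sqrt{m/p}\bigr)_+,
\end{equation*}
with probability at least $1-e^{-cp}$. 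The general statement for arbitrary $\bv$ follows by homogeneity.

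The main obstacle is the bookkeeping in the last step: the Lipschitz constant of $\bv \mapsto \|\bA\bv\|_1$ on the sphere is of order $\nu p(1+\sqrt{m/p})$, which, paired with a naive constant-size net, would produce an uncontrollable $\Theta(p\nu)$ slack. Tuning $\epsilon \asymp (1+\sqrt{m/p})^{-1}$ is precisely what is needed so that the two error terms combine into the announced $C_2\sqrt{m/p}$; the subgaussian anti-concentration giving $\E|X| \gtrsim \nu \sqrt{\kappa}$ uniformly in $\bv$ is the other non-routine input, since covariance alone is not enough.
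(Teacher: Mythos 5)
Your proof takes a genuinely different route from the paper. The paper's argument lower bounds $\|\bA\bv\|_1$ by a thresholded count $\sigma\sum_j\mathbf{1}_{|\langle\ba_j,\bv\rangle|\ge\sigma}$ and then invokes a VC-dimension bound (the class of events $\{|\langle\cdot,\bv\rangle|\ge\sigma\}$ indexed by $\bv\in\R^m$ has VC dimension $O(m)$) to obtain the uniform deviation $\sup_{\bv}\bigl|\frac{1}{p}\sum_j\mathbf{1}_{|\langle\ba_j,\bv\rangle|\ge\sigma}-\P(|\langle\ba,\bv\rangle|\ge\sigma)\bigr|\le C\sqrt{m/p}$ with probability $1-e^{-cp}$, together with a Paley--Zygmund-type anti-concentration bound on $\P(|\langle\ba,\bv\rangle|\ge\sigma)$. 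The $\sqrt{m/p}$ scaling comes for free from the VC bound, with no operator-norm control needed. You instead keep the $\ell_1$ norm intact, concentrate $\frac1p\|\bA\bv\|_1$ around $\E|X|\gtrsim\nu$ pointwise, and then upgrade to a uniform statement by an $\epsilon$-net plus a Lipschitz extension through $\|\bA\|_\op$. Both routes are legitimate; yours is more self-contained at the cost of having to tune $\epsilon$ against the Lipschitz constant.

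There is, however, a gap in your final parametrization. You set $\delta=C_2\sqrt{m/p}$, and the union bound over the net then has failure probability on the order of $\exp\bigl(m\log(3/\epsilon)-c'p\delta^2\bigr)=\exp\bigl(m[\log(3/\epsilon)-c'C_2^{2}]\bigr)$, which is $e^{-\Omega(m)}$, not the required $e^{-\Omega(p)}$. When $m\ll p$ this is strictly weaker than the stated $1-e^{-cp}$. Relatedly, the constraint ``$c'p\delta^2-Cm\ge c''p$ as soon as $\delta^2\ge C_2^2 m/p+O(1/p)$'' is incorrect: solving the inequality gives $\delta^2\ge Cm/(c'p)+c''/c'$, where the term $c''/c'$ is a fixed positive \emph{constant}, not $O(1/p)$. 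The fix is to take $\delta$ to be a small fixed constant $\delta_0$ (say $\delta_0=c_1/4$) independent of $m/p$. Since $\epsilon\ge c_0/2$ once $m/p\le 1$, $\log(3/\epsilon)$ is bounded, and the union bound exponent is $-c'\delta_0^2 p+O(m)\le -c''p$ provided $m/p$ is below a suitable constant. For such $m/p$ you get the uniform bound $\|\bA\bv\|_1\ge p\nu(c_1-\delta_0-Cc_0)$; choosing $C_1:=c_1-\delta_0-Cc_0>0$ and $C_2$ large enough that $C_1-C_2\sqrt{m/p}\le 0$ outside this regime then recovers the stated inequality, with the $(\cdot)_+$ absorbing the region where the net argument would not yield $e^{-cp}$.
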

\begin{proof}
Let $\sigma > 0$ be determined later. Note by Markov's inequality, for every vector $\bv$
\begin{equation*}
	\norm{\bA \bv}_{1}
		\ge \sigma \cdot \sum_{j} \mathbf{1}_{|\langle \ba_j, \bv\rangle|\ge \sigma}. 
\end{equation*}
A standard VC argument~\cite[Section 4]{Wainwright19} yields that, for some universal constant $C, c > 0$: 
\begin{equation*}
	\sup_{\bv} |\frac{1}{p}\sum_{j} \mathbf{1}_{|\langle \ba_j, \bv\rangle|\ge \sigma}
		- \P(|\langle \ba, \bv\rangle| \ge \sigma)| \le C \sqrt{m/p}
\end{equation*}
with probability at least $1- e^{-cp}$. So with triangle inequality, this implies with at least 
the same probability we have 
\begin{equation*}
	 \norm{\bA \bv}_{1} \ge 
		p\sigma \cdot (\min_{\bv: \norm{\bv}_2 = 1} \P(|\langle \ba, \bv\rangle| \ge \sigma) - C \sqrt{m/p})_+ 
			\cdot \norm{\bv}_2~~\text{for every $\bv$}.
\end{equation*} 
Below we lower bound $\P(|\langle \ba, \bv\rangle| \ge \sigma)|$ for every $\sigma > 0$ and $\bv$ with 
$\norm{\bv}_2 = 1$. 

Write $z = \langle \ba, \bv \rangle$ where  $\norm{\bv}_2 = 1$. Note $\E[z^2] \ge  \kappa \nu^2$, and 
$\E[z^4] \le C\nu^4$ for a universal constant $C < \infty$.  Hence, 
$\E[z^2 \mathbf{1}_{|z| \ge \sigma}] \le \frac{C \nu^4}{\sigma^2}$ by Markov's inequality,
which yields $\E[z^2\mathbf{1}_{|z| \le \sigma}] \ge \kappa \nu^2- \frac{C \nu^4}{\sigma^2}$. 
Hence $\P(|z| \le \sigma) \ge \frac{\kappa \nu^2}{\sigma^2} - \frac{C \nu^4}{\sigma^4}$. 
As a result, this means there are $C_1, C_2 > 0$ depending only on $\kappa$ such that 
$\P(|\langle \ba, \bv\rangle| \ge C_1 \nu) \ge C_2$ .

This proves that with probability at least $1- e^{-cp}$,
\begin{equation*}
\text{$\norm{\bA \bv}_{1} \ge C_1 p \nu (C_2 - C \sqrt{m/p})_+ \cdot \norm{\bv}_2$ holds for every 
$\bv$.}
\end{equation*}

%By Theorem xxx, there's also the other event that holds with probability at least $1-e^{-c' p}$: 
%\begin{equation*}
%	\norm{\bA \bv}_{2} \le C' \sqrt{p} \nu (1 + \sqrt{m/p}) \cdot \norm{\bv}_2~~\text{for every $\bv$}.
%\end{equation*} 
%where both constants $c', C'$ are absolute constants. The desired Lemma
\end{proof}

\begin{lemma}
\label{lemma:upper-bound-on-l2-norm}
Let $\bA\in \R^{p\times m}$ be a random matrix whose rows $\{\ba_i\}_{i\leq p}\in \R^m$ are i.i.d. subgaussian
with parameter $\nu^2$. Then, there are universal constants $c, C > 0$ such that with probability at least $1-e^{-cp}$
\begin{equation*}
	\norm{\bA \bv}_2 \le C \sqrt{p} \nu ( 1+ \sqrt{m/p}) \cdot \norm{\bv}_2~~\text{holds for every $\bv$}. 
\end{equation*}
\end{lemma}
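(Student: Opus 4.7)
\medskip
\noindent\textbf{Proof proposal for Lemma~\ref{lemma:upper-bound-on-l2-norm}.} The statement is equivalent to the operator-norm bound
$\|\bA\|_{\op} \le C\nu(\sqrt{p}+\sqrt{m})$
holding with probability at least $1-e^{-cp}$, which is the standard non-asymptotic operator-norm estimate for random matrices with i.i.d.\ subgaussian rows. My plan is to deduce it directly from Theorem~\ref{theorem:matrix-concentration-covariance-operator}, which the paper has already introduced and used (for instance in the proof of Lemma~\ref{lemma:operator-norm-bound-on-Z}). This avoids redoing the $\epsilon$-net calculation from scratch.

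First, I would record the easy bound $\|\bSigma\|_{\op}\le C_0\nu^2$ on the covariance $\bSigma=\E[\ba\ba^{\sT}]$. This follows from subgaussianity of $\ba$: for every unit vector $\bv$, $\langle \ba,\bv\rangle$ is subgaussian with parameter $\nu^2$, so $\E[\langle\ba,\bv\rangle^2]\le C_0\nu^2$, and taking the sup over $\bv$ gives the claim. Next, I would apply Theorem~\ref{theorem:matrix-concentration-covariance-operator} with $\delta=1$ (a universal constant) to obtain
\begin{equation*}
\Bigl\|\tfrac{1}{p}\bA^{\sT}\bA - \bSigma\Bigr\|_{\op} \le C\nu^2\Bigl(\sqrt{\tfrac{m}{p}+1}+\tfrac{m}{p}+1\Bigr)
\end{equation*}
with probability at least $1-e^{-cp}$. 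Combining this with $\|\bSigma\|_{\op}\le C_0\nu^2$ via the triangle inequality for operator norms yields
\begin{equation*}
\tfrac{1}{p}\|\bA\|_{\op}^2 = \Bigl\|\tfrac{1}{p}\bA^{\sT}\bA\Bigr\|_{\op} \le C'\nu^2\bigl(1+\tfrac{m}{p}\bigr),
\end{equation*}
after absorbing the cross term $\sqrt{m/p+1}$ into $1+m/p$ using $\sqrt{a+b}\le \sqrt{a}+\sqrt{b}$ and $ab\le (a^2+b^2)/2$. Taking square roots and using $\sqrt{a+b}\le \sqrt{a}+\sqrt{b}$ once more gives $\|\bA\|_{\op}\le C''\nu(\sqrt{p}+\sqrt{m})$, which is precisely the claimed inequality $\|\bA\bv\|_2\le C\sqrt{p}\nu(1+\sqrt{m/p})\|\bv\|_2$ uniformly in $\bv$.

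There is no real obstacle here beyond bookkeeping of constants; the subgaussian covariance bound and Theorem~\ref{theorem:matrix-concentration-covariance-operator} together do essentially all the work. Should one prefer a self-contained route, the same conclusion follows from the classical $\epsilon$-net argument: fix a $\tfrac14$-net $\mathcal{N}$ of the unit sphere $\mathbb{S}^{m-1}$ of cardinality at most $9^m$, use sub-exponential Bernstein concentration for $\|\bA\bv\|_2^2=\sum_{i=1}^p \langle \ba_i,\bv\rangle^2$ for each fixed $\bv\in\mathcal{N}$, union bound over $\mathcal{N}$ against the failure probability $e^{-c(p+m)}$, and transfer to the sphere via the standard $\sup_{\bv}\|\bA\bv\|_2\le 2\sup_{\bv\in\mathcal{N}}\|\bA\bv\|_2$ inequality. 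Either route produces the bound with probability at least $1-e^{-cp}$ for a universal constant $c>0$, completing the proof.
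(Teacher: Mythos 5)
Your proposal is correct and follows essentially the same route as the paper: the paper's proof of Lemma~\ref{lemma:upper-bound-on-l2-norm} is a one-line citation of Theorem~\ref{theorem:matrix-concentration-covariance-operator}, and your argument simply fills in the omitted bookkeeping (the bound $\norm{\bSigma}_{\op}\le C_0\nu^2$ from subgaussianity, the triangle inequality, and the square root), all of which is sound.
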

\begin{proof}
This follows from results in 
nonasymptotic random matrix theory (Theorem~\ref{theorem:matrix-concentration-covariance-operator}).
\end{proof}

\subsubsection{Proof of Lemma~\ref{lemma:restricted-minimum-singular-value}}
\label{section:proof_rmsv}

By Lemma~\ref{lemma:elementary}, we know 
\begin{equation}
\label{eqn:initial-lower-bound}
	\norm{\bX_S \bu}_{2, \eps_n} \ge \norm{\bX_S \bu}_2 \cdot \left(\frac{\norm{\bX_S \bu}_1}
		{\sqrt{p}\norm{\bX_S \bu}_2} - \frac{1}{\sqrt{p} \eps_n}\right)_+^2
\end{equation}
We shall lower bound the RHS by $c_4 \sqrt{p}$, 
uniformly over all indices $S$ with cardinality $c_3 n$, and vector $u \in R^{|S|}$, with 
probability tending to one.
Here $c_3, c_4 > 0$ are constants to be determined in the rest of the proof. 

We divide our proof into two cases, depending on the distribution of the feature matrix $\bX$. Let $\kappa = p/d, \kappa'=n/p$. 

\paragraph{Random Feature Ensemble, $\bX = \bX_\RF$:}
By Lemma~\ref{lemma:crazy-lemma-random-feature-models}, we can choose a proper event 
	\begin{equation*}
		\Omega_{\bZ} = \left\{\norm{\bZ}_\op \le C(\sqrt{d}+\sqrt{p}),~~\max_i \norm{\bz_i}_2 \le C\sqrt{\log n}, 
			~~\max_{i, j\le n, i \neq j}|\langle \bz_i, \bz_j\rangle| \le C\log n\right\}
	\end{equation*}
which holds with probability at least $1-n^{-1} - e^{-cp}$, where $c, C > 0$ are universal constants. 
There exists an $N_0$ such that for every index $S$ such that $|S| \ge N_0$, we have 
\begin{enumerate}	
\item conditional on $\bZ$, the rows of $\bX_S$ are i.i.d.
	subgaussian with parameter $\nu^2(\bZ)$. Furthemore, on the event $\Omega_{\bZ}$,
	\begin{equation*}
		\nu(\bZ) \le C (1+\sqrt{\kappa}) \norm{\sigma}_{{\rm lip}},
	\end{equation*} 
 and
\item letting $\bSigma_{\bZ, S}$ denote the conditional covariance $\bX_S$ given $\bZ$, where $\bX$ stands for a row of $\bX$, on the event $\Omega_{\bZ}$,
	\begin{equation*}
		\frac{1}{2} \mu_2 
			\le \lambda_{\min}(\bSigma_{\bZ, S}) \le \lambda_{\max}(\bSigma_{\bZ, S}) \le C\mu_1 \cdot (1+\sqrt{\kappa}) + 2\mu_2. 
	\end{equation*}
\end{enumerate}

By Lemma~\ref{lemma:equivalence-between-l1-and-l2-norm} and
Theorem~\ref{theorem:matrix-concentration-covariance-operator}, 
there is a constant $c_3$ depending only on 
$\mu_1, \mu_2, \kappa, \kappa'$ such that the following two bounds hold 
for every given individual subset $S$ where $N_0 \le |S| \le c_3 n$ on the event $\Omega_{\bZ}$.
The first is the bound below holds with probability at least $1-e^{-cp}$ for some universal constant $c > 0$: 
\begin{equation}
\label{eqn:key-bound-high-probability-one}	
	\inf_{\bu: \bu \in \R^{|S|}} \frac{\norm{\bX_S \bu}_1}{\sqrt{p}\norm{\bX_S \bu}_2} \ge c_3,
\end{equation}
and the second is the bound below holds with probability at least $1-e^{-cp}$ for some $c > 0$
depending only on $\mu_2$: 
\begin{equation}
\label{eqn:key-bound-high-probability-two}
	\inf_{\bu: \bu \in \R^{|S|}, \norm{\bu}_2 = 1} \norm{\bX_S \bu}_2 \ge \frac{\mu_2}{4}\sqrt{p}. 
\end{equation} 

We make these lower bounds uniform over $|S| = c_3 n$ when $c_3$ is sufficiently small. Note the 
number of sets $S$ with cardinality $|S| = c_3 n$ is bounded by $\exp(n H(c_3))$, where 
$H(x) = -x\log_2 (x) - (1-x) \log_2 (1-x)$ is the entropy. Note $H(c_3) \to 0$ as $c_3 \to 0$.
This means that we can take $c_3, c$ depending 
on $\mu_1, \mu_2, \kappa, \kappa'$ such that with probability at least $1-e^{-cp}$, 
the above bounds~\eqref{eqn:key-bound-high-probability-one}
and~\eqref{eqn:key-bound-high-probability-two} hold uniform over set $S$ with cardinality $|S| = c_3 n$
on the event $\Omega_{\bZ}$.

Finally, since $\sqrt{p} \eps_n \to 0$ as $n \to \infty$, we may assume $\frac{1}{\sqrt{p}\eps_n} \le \frac{c_3}{2}$
for $n \ge N_0$.

These uniform bounds, in conjunction with~\eqref{eqn:initial-lower-bound}, allow us to conclude that for every 
$n \ge N_0$, 
\begin{equation*}
	\inf_{S: |S| = c_3 n}\inf_{\bu: \bu \in \R^{|S|}, \norm{\bu}_2 = 1}\norm{\bX_S \bu}_{2, \eps_n} \ge c_4 \sqrt{p}
\end{equation*}
holds with probability at least $1-e^{-cp}$ on the event $\Omega_{\bZ}$. 
In the above, $c, c_3, c_4, N_0$ depend only on $\mu_1, \mu_2, \kappa, \kappa'$. The result now 
follows since $\Omega_{\bZ}$ holds with probability tending to one, as $n \to \infty$. 

\paragraph{Independent Feature Ensemble, $\bX = \bX_{\ind}$: } 

We start by noticing the very basic property of $\bX_S$ for every index subset $S$. 
Note the rows $\bX_S$ are independent and subgaussian with parameter at most $\nu^2$, 
with the eigenvalue of the covariance lower bounded by $\gamma > 0$. Here, $\nu, \gamma$
are constants independent of $n, d, p$.

By Lemma~\ref{lemma:equivalence-between-l1-and-l2-norm} and
Theorem~\ref{theorem:matrix-concentration-covariance-operator}, 
there are constants $c_3, c$ depending only on 
$\nu, \gamma, \kappa, \kappa'$ such that  
for every given individual subset $S$ where $|S| \le c_3 n$,
the following bounds hold with probability at least $1-e^{-cp}$. The first bound is 
\begin{equation}
\label{eqn:key-bound-high-probability-one-ind}	
	\inf_{\bu: \bu \in \R^{|S|}} \frac{\norm{\bX_S \bu}_1}{\sqrt{p}\norm{\bX_S \bu}_2} \ge c_3,
\end{equation}
and the second bound is: 
\begin{equation}
\label{eqn:key-bound-high-probability-two-ind}
	\inf_{\bu: \bu \in \R^{|S|}, \norm{\bu}_2 = 1} \norm{\bX_S \bu}_2 \ge \frac{\gamma}{2}\sqrt{p}. 
\end{equation} 
The remainder of the proof is very similar to that of the random feature ensemble. 

We make these lower bounds uniform over $|S| = c_3 n$ for sufficiently small $c_3$. This 
is doable, as the number of  sets $S$ with cardinality $|S| = c_3 n$ is bounded by $\exp(n H(c_3))$
where $H(c_3) \to 0$ as $c_3 \to 0$. Thus, we can say the above 
bounds~\eqref{eqn:key-bound-high-probability-one-ind}
and~\eqref{eqn:key-bound-high-probability-two-ind} hold uniform over set $S$ with probability 
tending to one. 

Finally, note $\sqrt{p} \eps_n \to 0$. This allows us to conclude that for some $c_3, c_4$: 
\begin{equation*}
	\inf_{S: |S| = c_3 n}\inf_{\bu: \bu \in \R^{|S|}, \norm{\bu}_2 = 1}\norm{\bX_S \bu}_{2, \eps_n} \ge c_4 \sqrt{p}
\end{equation*}
holds with probability tending to one as $n \to \infty$, where, $c_3, c_4$ depend only on 
$\gamma, \nu, \kappa, \kappa'$. The proof is thus complete.

\newpage 
\section{Universality of the ERM-like problem}
\label{sec:ERM}

The goal of this section is to prove the universality of the minimum of the ERM-like problem defined in Eq.\eqref{eq:def_F_n} over appropriate ``perturbed'' constraint sets, namely, Proposition~\ref{prop:univ_fixed_kappa} below.
The need for these constraint sets comes in when proving the universality of the test error, as briefly outlined in Section~\ref{subsec:proof:test:error}.
Before precisely stating the result of this section, let us make some definitions.
Let
\begin{equation}\label{eq:def:pi}
  \pi_0(\btheta) := \norm{\btheta}_4^4,
  \quad 
  \pi_1(\btheta) := \btheta^\sT\bSigma_\bg \btheta^\star,
  \quad 
  \pi_2(\btheta) := \norm{\btheta}_{\bSigma_\bg}^2
\end{equation}
where $\bSigma_\bg$ is the covariance of the Gaussian equivalent.
We will be interested in constraint sets of the form
\begin{equation}
  \{\btheta : \norm{\btheta}_2^2 + s\pi_l(\btheta)\le b\}
\end{equation}
for values of $s$ near $0$. 
However, our task will be easier if we ensure that our optimization problem of interest is convex (note that the set above with $l=0$ is not convex for small negative values of $s$).
To remedy this, we define
\begin{equation}
    \cC_0(s,b) := \{\btheta : \norm{\btheta}_2^2 + s \pi_0(\btheta) \le b, \norm{\btheta}_2^2 \le 2 \}.
\end{equation}
Defining $\os := 1/12 \wedge (2\oslambda)^{-1}$,
one can check that if $\norm{\bSigma_\bg}_\op \le \oslambda$,
then the set $C_0(s,b)$ is convex for $|s| \le \os$.
Meanwhile, for $l\in\{1,2\}$, define the constraint sets
\begin{equation}
    \label{def:cl}
    \cC_l(s,b) := \{
    \btheta : \norm{\btheta}_2^2 + s\pi_l(\btheta) \le b, \norm{\btheta}_\infty \le \epsilon_n
    \}
\end{equation}
to restrict the $l^\infty$ norm of $\btheta$.
%\begin{equation}
% \{\btheta : \norm{\btheta}_2^2 - \frac1{12}\pi_0(\btheta)\le b, \norm{\btheta}_2^2  \le 2 \} %\end{equation} %for $l\in\{1,2\}$ and $|s| \le (2\oslambda)^{-1}$, and
%where we add the restriction on the $\ell^2$ norm to ensure that the set is convex. 
%\begin{equation}
%  \cC_l(s,b):=  \{\btheta : \norm{\btheta}_2^2 + s\pi_l(\btheta)\le b, \norm{\btheta}_2^2 \le a_l(b)\}
%\end{equation}
%\begin{equation}
% a_l(b) =\begin{cases}
%2  & l = 0\\
% (\overline s + (\os^2\oslambda^2 \norm{\btheta^\star}_2^2 + 4 b)^{1/2})/2 & l = 1\\
%(b  + 1)/(1 - \os \oslambda) & l = 2.
%\end{cases}
%\end{equation}
%Note that for $l\in\{1,2\}$, $a_l(b)$ is chosen to add no additional restriction on the set, and for $l=0$, the additional constraint added by $a_l(b)$ ensure that the set is 
%convex for $s$ sufficiently small. Indeed, one can check that the above sets are convex on the event $\Omega_1$ for the range of $s$ given above.
Now define the perturbed ERM-like minimization problems
\begin{equation} 
F^\sur_{l}(\bD;\kappa, s ,b) := \min_{\btheta \in \cC_l(s,b)} 
\frac1n \sum_{i=1}^n \left(\kappa - y_i\btheta^\sT\bd_i \right)_+^2
\end{equation}
for $l\in\{0,1,2\}$, $s \in\R$  and $b > 0$. Under these definitions, we will prove the following claim.
\begin{proposition}
\label{prop:univ_fixed_kappa}
%Assume that
 %$\oslambda := \norm{\bSigma}_\op  \le C_0$, 
% $\norm{\btheta^\star}_2 \le C_1$ for some $C_1 >0$.

Let $\os = 1/12 \wedge (2\oslambda)^{-1}$ where 
$ \norm{\bSigma_\bg}_\op  \le \oslambda $ w.h.p.
Fix $|s| \le \os, b > 0,\kappa >0$. 
Let $\cC_l \equiv \cC_l(s,b)$ be defined as above
for $l\in\{0,1,2\}$.
We have for all bounded Lipschitz functions
$\varphi$,
\begin{equation}
\label{eq:main_univ_sep_eq}
\lim_{n\to\infty}
\left|\E\left[\varphi\left( F^\sur_{l}(\bX;\kappa, s, b)\right) \right]
-
\E\left[\varphi\left(
 F^\sur_{l}(\bG;\kappa, s, b)\right) \right]
\right| = 0
\end{equation}
as $n\to\infty$ with $p/n \to \sgamma$, for both $(\bX,\bG) = (\bX_\RF,\bG_\RF)$ and 
$(\bX,\bG) = (\bX_{\ind}, \bG_{\ind})$.
\end{proposition}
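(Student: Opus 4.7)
The plan is to reduce Proposition~\ref{prop:univ_fixed_kappa} to a direct application of the general ERM universality theorem of~\cite{MontanariSa22}. Writing $\ell_\kappa(t) := (\kappa - t)_+^2$, note that $F^\sur_l(\bD;\kappa,s,b) = \min_{\btheta \in \cC_l(s,b)} \tfrac{1}{n}\sum_{i=1}^n \ell_\kappa\big(y_i \langle \bd_i, \btheta\rangle\big)$ has exactly the form of an ERM problem with a convex, locally Lipschitz loss, optimized over the convex constraint sets $\cC_l(s,b)$ (convexity of $\cC_0(s,b)$ in the regime $|s|\le \os$ is precisely why $\os$ has been chosen as in the statement). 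Hence, once the hypotheses of \cite[Theorem 1]{MontanariSa22} are verified in each of the four matrix ensembles, the universality statement~\eqref{eq:main_univ_sep_eq} follows directly.

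The hypotheses to check are essentially three: $(a)$ the rows of $\bD$ are conditionally independent and subgaussian, with covariance whose spectrum is bounded away from $0$ and $\infty$; $(b)$ a matching of first and second moments between $\bX$ and $\bG$ (which is built into the definition of the Gaussian equivalents in Sections~\ref{section:ass_RF} and~\ref{section:ass_ind}); and $(c)$ a delocalization condition on the minimizer, namely that the optimizer $\widehat\btheta$ of the ERM problem satisfies $\|\widehat\btheta\|_\infty = o_n(1)$ with high probability. Assumption $(a)$ is already in hand from Section~\ref{sec:proof-of-lemma-restricted-strong-convexity}: Lemma~\ref{lemma:crazy-lemma-random-feature-models} yields subgaussianity and spectrum control of $\bX_\RF$ conditionally on $\bZ$, while Assumption~\ref{assumption:ind} directly supplies the analogous control for $\bX_\ind$. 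Assumption $(b)$ holds by construction.

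The delocalization condition $(c)$ requires case analysis. For $l \in \{1,2\}$, the constraint set $\cC_l(s,b)$ in Eq.~\eqref{def:cl} explicitly enforces $\|\btheta\|_\infty \le \epsilon_n = o_n(1)$, so any feasible point, in particular the minimizer, is delocalized, and $(c)$ is automatic. For $l = 0$ there is no such constraint, and here we invoke the leave-one-out argument alluded to after Proposition~\ref{sec:proof-proposition-universality} (formalized as Lemma~\ref{lemma:inf_norm_ERM}): removing a single sample and a single coordinate cannot change the optimizer by more than $O(n^{-1/2})$ in $\ell_\infty$ under strong convexity of the penalized problem (which holds since $\|\btheta\|_2^2 \le 2$ is enforced and $\pi_0$ is convex), whence $\|\widehat\btheta\|_\infty \lesssim n^{-1/2+o(1)} = o_n(1)$ with high probability. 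This delocalization must be established uniformly in $(\kappa, s, b)$ lying in the allowed compact range, which is straightforward since all three parameters enter the objective/constraint continuously.

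The main obstacle, and the place where genuine work beyond quoting \cite{MontanariSa22} is required, is the $l = 0$ case of the delocalization step: the constraint $\|\btheta\|_2^2 + s\|\btheta\|_4^4 \le b$ does not give $\ell_\infty$ control for free, and the objective $\ell_\kappa$ is only piecewise smooth (it is $C^1$ but not $C^2$ at $t = \kappa$). The leave-one-out argument must therefore be performed carefully, exploiting the restricted strong convexity established in Lemma~\ref{lemma:RSC_inf_norm} to transfer the near-stationarity of the leave-one-out solution into an $\ell_\infty$ perturbation bound. Once this is done, applying \cite[Theorem 1]{MontanariSa22} to the localized problem (restricted to the event $\{\|\widehat\btheta\|_\infty \le \epsilon_n\}$) and then removing the localization using the high-probability bound yields Eq.~\eqref{eq:main_univ_sep_eq} in all three cases $l \in \{0,1,2\}$ and for both $(\bX,\bG) \in \{(\bX_\RF,\bG_\RF),\,(\bX_\ind,\bG_\ind)\}$.
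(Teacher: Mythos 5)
There is a genuine gap, and it sits exactly where you declare the application of \cite[Theorem 1]{MontanariSa22} to be ``direct.'' The loss $\ell_\kappa(t)=(\kappa-t)_+^2$ is \emph{not} admissible for that theorem as used here: the required condition (Eq.~\eqref{eq:PL_ass}) is $|\ell(v,y)-\ell(\widetilde v,y)|\le C(1+|y|)|v-\widetilde v|$ with $y\in\{\pm1\}$, i.e.\ an essentially global Lipschitz bound in $v$, whereas $(\kappa-v)_+^2$ has derivative growing linearly in $|v|$, and $|\btheta^\sT\bd_i|$ is not uniformly bounded over the constraint set. The paper's proof therefore does not apply the theorem to $F^\sur_l$ at all; it first replaces the square hinge by the Lipschitz Huber loss $h_\rho$ and adds a ridge term $\tfrac{\lambda}{2}\|\btheta\|_2^2$ (Eq.~\eqref{eq:def_hub_obj}), applies the universality theorem to $F^\hub_{\rho,l}$, and then removes the smoothing via Lemma~\ref{lemma:huber}, whose proof is itself nontrivial (a leave-one-out argument exploiting the strong convexity supplied by the ridge term, plus eighth-moment bounds from subgaussianity) to get an approximation uniform in $n$ before sending $\rho\to\infty$, $\lambda\to 0$. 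Your proposal omits this entire layer, so the step ``apply \cite[Theorem 1]{MontanariSa22}'' fails as stated.

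A second, related problem is your delocalization argument for $l=0$: you invoke ``strong convexity of the penalized problem,'' but without the ridge regularization there is none --- the empirical loss is convex but not strongly convex when $p>n$, and the constraint $\|\btheta\|_2^2+s\|\btheta\|_4^4\le b$ (a constraint, not a penalty) adds no curvature to the objective. The paper's Lemma~\ref{lemma:inf_norm_ERM} is proved precisely for the $\lambda$-ridge-regularized Huber objective, and the perturbation bound scales like $\widehat\Delta_j/\lambda$; your $O(n^{-1/2})$ $\ell_\infty$ stability claim has no justification in your unregularized setup. Finally, note that in the random features case the reduction is not just ``moment matching is built in'': the label depends on $\bz_i^\sT\bbeta^\star$ with $\bbeta^\star$ possibly localized, so the paper augments the features to $\bxi_i=(\bx_i^\sT,\bz_i^\sT)^\sT$, conditions on $\bW\in\Omega_1$, and verifies the pointwise Gaussianity condition~\eqref{eq:pointwise_gaussian_ass} over the delocalized set $\cS$ via a second-order Poincar\'e inequality (Lemma~\ref{lemma:pointwise-gaussianity}); citing the conditional-on-$\bZ$ subgaussianity of Lemma~\ref{lemma:crazy-lemma-random-feature-models} is the wrong conditioning for this step. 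These ingredients need to be supplied before the proposition follows.
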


\subsection{Proof of Proposition~\ref{prop:univ_fixed_kappa}}
We will establish this proposition as a corollary of Theorem 1 of~\cite{MontanariSa22} by reducing it to their assumptions. Let us begin by recalling this result.
\begin{theorem}[Adaptation of Theorem~1,~\cite{MontanariSa22}]
\label{thm:MS22}
Let $k \equiv k_n$ be a sequence of integers such that $k_n/n \to c\in(0,\infty)$ as $n\to\infty$.
Consider $(\bxi_i,y_i)\in \R^{k}\times \R$ i.i.d. with 
\begin{equation}
    \P(y_i = +1 |\bxi_i^\sT \bzeta^\star) = g( \bxi_i^\sT \bzeta^\star)
\end{equation}
 for $g:\R\to\R$ a pseudo Lipschitz function of order 1 and
 $\bxi$ a $O(1)$-subgaussian random variable satisfying
\begin{equation}
\label{eq:pointwise_gaussian_ass}
   \lim_{k\to\infty} \sup_{\btheta \in \cS} |\E[\varphi(\btheta^\sT \bxi) ] - \E[\varphi(\btheta^\sT \bgamma)]| = 0
\end{equation}
for any Lipschitz function $\varphi:\R\to\R$ and some set $\cS\subseteq \R^k$, where $\bgamma \sim \cN(0,\bSigma_\bgamma)$.
Consider a nonnegative loss $\ell :\R\times \R \to \R$ satisfying
\begin{equation}
\label{eq:PL_ass}
    |\ell(v, y) - \ell(\widetilde v, y)| \le C (1 + |y|) |v -\widetilde v|,\quad
    |\ell(v, y) - \ell(v,\widetilde  y)| \le C (1 + |v|) |y - \widetilde y|
\end{equation}
and any locally Lipschitz regularizer $r: \R^k \to\R$.
If $\bzeta^\star \in \cS$, then the quantity
\begin{equation}
   F(\bxi_1,\dots,\bxi_n) := \min_{\bzeta \in \cC } \sum_{i=1}^n \ell(\bxi_i^\sT \bzeta, y_i) + r(\bzeta)
\end{equation}
is universal for any compact subset $\cC \subseteq \cS$ in the sense that for any Lipschitz function $\varphi$,
\begin{equation}
    \lim_{n \to\infty} \left| \E[\varphi( F(\bxi_1, \dots,\bxi_n))] - \E[\varphi( F(\bgamma_1, \dots,\bgamma_n))] \right| = 0.
\end{equation}
\end{theorem}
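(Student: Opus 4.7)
The plan is to follow the soft-min-plus-Lindeberg strategy of \cite{MontanariSa22}, realizing the minimum as the zero-temperature limit of a Gibbs free energy and then exchanging subgaussian rows for Gaussian ones one index at a time.

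\emph{Smoothing.} I would first regularize the minimum by introducing the free energy
\[
F_\beta(\bxi_{1:n}) := -\tfrac{1}{\beta}\log \int_\cC \exp\!\Big(-\beta\sum_{i=1}^n \ell(\bxi_i^\sT \bzeta, y_i) - \beta r(\bzeta)\Big)\, d\mu_0(\bzeta),
\]
with $\mu_0$ any fixed reference probability measure on the compact set $\cC$. Standard soft-min estimates, combined with the pseudo-Lipschitz control \eqref{eq:PL_ass} on $\ell$ and local Lipschitzness of $r$, give $0 \le F_\beta - F \le C\beta^{-1}\log(1/\mu_0(B_{\delta_n}))$ on a small neighborhood $B_{\delta_n}$ of a near-minimizer, so $|F-F_\beta|=o(1)$ as long as $\beta=\beta_n\to\infty$ slowly enough. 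It then suffices to prove universality of $F_{\beta_n}$.

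\emph{Lindeberg exchange and conditioning.} Telescope $\E[\varphi(F_\beta(\bxi_{1:n}))] - \E[\varphi(F_\beta(\bgamma_{1:n}))]$ as a sum over swaps of a single row $\bxi_i \leftrightarrow \bgamma_i$. Condition on everything except $\bxi_i$, including an exogenous $U(0,1)$ variable $u_i$ used to generate the label via $g$. The resulting function
\[
F_\beta(\bxi_i;\mathrm{rest}) = -\tfrac{1}{\beta}\log \int_\cC e^{-\beta\ell(\bxi_i^\sT\bzeta,\,y_i(\bxi_i^\sT\bzeta^\star, u_i))}\, d\nu_\beta^{(-i)}(\bzeta) + C
\]
depends on $\bxi_i$ only through the linear projections $\{\bxi_i^\sT\bzeta\}_{\bzeta\in\cC\cup\{\bzeta^\star\}}$, where $\nu_\beta^{(-i)}$ is a probability measure on $\cC\subseteq\cS$ that does not depend on $\bxi_i$.

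\emph{Pointwise Gaussianity.} Discretize $\cC$ via an $\epsilon_n$-net $\{\bzeta_1,\dots,\bzeta_N\}\subseteq \cS$ and use \eqref{eq:PL_ass} and pseudo-Lipschitzness of $g$ to reduce each swap to controlling
\[
\big|\E[\Psi_n(\bxi_i^\sT\bzeta_1,\dots,\bxi_i^\sT\bzeta_N,\bxi_i^\sT\bzeta^\star)] - \E[\Psi_n(\bgamma_i^\sT\bzeta_1,\dots,\bgamma_i^\sT\bzeta_N,\bgamma_i^\sT\bzeta^\star)]\big|,
\]
for a bounded Lipschitz $\Psi_n$ whose Lipschitz constant is polynomial in $\beta_n$ (with labels coupled via the shared $u_i$). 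Joint convergence of this random vector from $\bxi_i$-law to $\bgamma_i$-law follows from the Cramér–Wold device: every linear combination of its coordinates is of the form $\btheta^\sT\bxi_i$ with $\btheta\in\mathrm{span}(\cS)$, and \eqref{eq:pointwise_gaussian_ass} forces the corresponding one-dimensional laws to match uniformly. Summing over the $n$ swaps and unwinding the smoothing yields the desired $o(1)$ bound.

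\emph{Main obstacle.} The delicate point is balancing the smoothing parameter $\beta_n$ against the per-swap Lindeberg error. Taking $\beta_n$ large is necessary to make $F_\beta$ close to $F$ on $\cC$, but it amplifies the Lipschitz constant of the test function $\Psi_n$ to which \eqref{eq:pointwise_gaussian_ass} is applied, and the $n$ swaps demand a per-swap error of $o(1/n)$; this forces the $\epsilon_n$-net to be refined in a quantitative way and $\beta_n$ to be only a slow polylog of $n$. A secondary, more conceptual subtlety is the Cramér–Wold step, which requires $\cS$ to contain the linear combinations of its elements that arise from the net and from $\bzeta^\star$. In the applications of interest here (delocalized dilation-invariant sets) this is automatic, but the abstract statement implicitly requires $\cS$ to be stable under the linear operations produced by Gibbs averaging over $\cC$.
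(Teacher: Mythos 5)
The paper does not supply a proof of Theorem~\ref{thm:MS22}: it is labelled as an \emph{adaptation} of Theorem~1 of \cite{MontanariSa22} and invoked as a black box in the proof of Proposition~\ref{prop:univ_fixed_kappa}. There is therefore no in-paper argument to compare against; what you have written is a reconstruction of the cited proof, and it must be judged on its own terms.

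The broad shape of your reconstruction is right --- Gibbs free-energy smoothing of the minimum, a Lindeberg row-by-row exchange, and reduction of each swap to a pointwise-Gaussianity statement --- and you correctly flag the tradeoff between the smoothing temperature $\beta_n$ and the $o(1/n)$ per-swap budget. However, the Cram\'er--Wold step is a genuine gap, not a secondary subtlety to be deferred to the applications. After conditioning on the shared uniform $u_i$ and the remaining rows, the per-swap difference is a bounded Lipschitz functional of the vector $(\bxi_i^{\sT}\bzeta_1,\dots,\bxi_i^{\sT}\bzeta_N,\bxi_i^{\sT}\bzeta^\star)$, and Cram\'er--Wold would require controlling $\btheta^{\sT}\bxi_i$ for $\btheta=\sum_j a_j\bzeta_j+a_0\bzeta^\star$ ranging over \emph{all} linear combinations. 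Hypothesis \eqref{eq:pointwise_gaussian_ass} controls $\btheta^{\sT}\bxi_i$ only for $\btheta\in\cS$, and $\cS$ is an arbitrary subset of $\R^k$ with no assumed closure under addition, scaling, or convex combination; in particular nothing in the hypotheses licenses passing from $\cS$ to $\mathrm{span}(\cS)$, which your proposal silently does. Even the minimal two-dimensional case --- joint Gaussianity of $(\bxi^{\sT}\bzeta,\bxi^{\sT}\bzeta^\star)$ for a single fixed $\bzeta\in\cS$ --- already requires controlling $\bxi^{\sT}(a\bzeta+b\bzeta^{\star})$ for arbitrary $a,b$ and is not implied by the one-dimensional condition. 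To close this you would need either a genuinely multidimensional pointwise-Gaussianity hypothesis, or structural assumptions on $\cS$ (star-shaped about $\bzeta^\star$, stable under the convex combinations produced by the Gibbs averaging and the net). The abstract statement as cited has neither, so as written the reconstruction does not prove the claimed result from the stated hypotheses. You recognize this issue in your last sentence, but treating it as automatic rather than as a required hypothesis means the proof of the theorem, as opposed to its application, is incomplete.
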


The main step in applying this theorem to our setting is approximating the problem in $F^\sur_{l}$ by a minimzation over a Lipschitz loss.
To do so,
fix the parameters $\kappa,s,b$ from the statement of Proposition~\ref{prop:univ_fixed_kappa} in their respective domains in what follows, and let us suppress the dependence on these parameters in the notation.
For $\rho \in (0, \infty]$, define the \emph{Lipschitz} huber loss function
\begin{equation}
\label{eq:huber_def}
h_\rho(t) := \begin{cases}
                  t^2 & |t| < \rho\\
                  \rho(2|t| - \rho)& |t| \ge \rho.
                 \end{cases}
\end{equation}
For a feature matrix $\bD\in\R^{n\times p}$ with rows $\bd_i$, define the associated huber objective for a fixed $\rho$ in its domain and $\lambda >0$
\begin{equation}
\label{eq:def_hub_obj}
	L^\hub_\rho (\btheta; \bD, \lambda) := \frac1n \sum_{i=1}^n h_\rho\left((\kappa - y_i \bd_i^\sT \btheta)_+ \right) 
+ \frac\lambda2\norm{\btheta}_2^2,
\quad
F^\hub_{\rho,l}(\bD; \lambda,) := \min_{\btheta\in\cC_l} L_\rho^\hub(\btheta;\bD, \lambda).
\end{equation}
In Section~\ref{section:huber_reduction}
below, we prove the following lemma that allows us to reduce the universality of $F^\sur_l$ to $F_{\rho,l}^\hub$ for a fixed $\rho$.
\begin{lemma}
\label{lemma:huber}
Fix $l\in\{0,1,2\}$, $\lambda > 0$ and $\kappa,b,s$ in their respective domains (as in the statement of Proposition~\ref{prop:univ_fixed_kappa}). For any bounded Lipschitz function $\varphi$, we have  
\begin{equation}
\lim_{\lambda\to0}\lim_{\rho \to \infty} \sup_{n \in \N} \left|
\E\left[
\varphi\left(F_{\rho,l}^\hub(\bD; \lambda)\right)
\right] 
-
\E\left[
\varphi\left(  F^\sur_{l}(\bD)\right)
\right] 
 \right| =0.
\end{equation}
\end{lemma}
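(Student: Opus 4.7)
The plan is to sandwich $F^\sur_l(\bD)$ between $F^\hub_{\rho,l}(\bD;\lambda)$ and $F^\hub_{\rho,l}(\bD;\lambda) + o(1)$ in the iterated limit $\rho\to\infty$ followed by $\lambda\to0$, uniformly in $n$. One direction is immediate: since $h_\rho(t)\le t^2$ on $[0,\infty)$, evaluating the Huber-plus-ridge objective at the constrained minimizer of $F^\sur_l$ yields
\[
F^\hub_{\rho,l}(\bD;\lambda) \;\le\; F^\sur_l(\bD) \;+\; \tfrac{\lambda B^2}{2},
\]
where $B$ is a deterministic upper bound on $\|\btheta\|_2$ over $\cC_l$ depending only on $b$ and $\os$ (read off directly from the definition of $\cC_l$ and the restriction $|s|\le\os$). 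For the reverse direction, let $\btheta^\hub$ denote the (unique, by $\lambda$-strong convexity) minimizer of $L^\hub_\rho(\,\cdot\,;\bD,\lambda)$ on $\cC_l$. The pointwise identity $t^2 - h_\rho(t) = (t-\rho)_+^2$ then gives
\[
F^\sur_l(\bD) \;\le\; \tfrac1n\sum_i (\kappa - y_i\bd_i^\sT \btheta^\hub)_+^2 \;\le\; F^\hub_{\rho,l}(\bD;\lambda) \;+\; \Delta(\btheta^\hub),
\]
where $\Delta(\btheta):=\tfrac1n\sum_i\bigl((\kappa-y_i\bd_i^\sT\btheta)_+ - \rho\bigr)_+^2$, so the whole task reduces to bounding $\E\Delta(\btheta^\hub)$ uniformly in $n$ by a quantity vanishing as $\rho\to\infty$.

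I would control $\Delta(\btheta^\hub)$ by combining two estimates. The first is the elementary optimality bound $L^\hub_\rho(\btheta^\hub)\le L^\hub_\rho(\bzero) = h_\rho(\kappa) \le \kappa^2$ (for $\rho\ge\kappa$), which via the lower bound $h_\rho(u)\ge \rho u$ on $\{u\ge \rho\}$ delivers the deterministic controls $\#\{i:u_i^\hub\ge\rho\}\le n\kappa^2/\rho^2$ and $\sum_{i:u_i^\hub\ge\rho} u_i^\hub \le n\kappa^2/\rho$, where $u_i^\hub := (\kappa - y_i\bd_i^\sT\btheta^\hub)_+$. The second is the pointwise inequality $(u-\rho)_+^2 \le u^4/\rho^2$, which together with $u_i^\hub\le \kappa + |\bd_i^\sT \btheta^\hub|$ gives
\[
\Delta(\btheta^\hub) \;\le\; \frac{C}{\rho^2}\Bigl(\kappa^4 + \tfrac1n \sum_i (\bd_i^\sT\btheta^\hub)^4\Bigr),
\]
so what remains is a uniform-in-$n$ bound on the empirical fourth moment at $\btheta^\hub$. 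Pointwise, $\E(\bd_i^\sT\btheta)^4 \le C\|\btheta\|_{\bSigma_\bg}^4 \le C\lambda_{\mathsf M}^2 B^4$ is $O(1)$ on $\cC_l$, and concentration of $\tfrac1n\sum(\bd_i^\sT\btheta)^4$ around this value can be upgraded to a uniform statement on $\cC_l$ by an $\eps$-net-plus-Bernstein argument that exploits the $\ell_\infty$ delocalization built into $\cC_l$ for $l\in\{1,2\}$ (i.e.\ $\|\btheta\|_\infty \le \eps_n$) and the $\ell_4^4$ bound packaged in the $\pi_0$ term for $l=0$; for the random-features ensemble one first conditions on $\bW$ on the high-probability event of Lemma~\ref{lemma:crazy-lemma-random-feature-models}, where the rows become i.i.d.\ subgaussian with controlled covariance, and applies the same argument conditionally.

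Assembling these pieces yields $\E|F^\hub_{\rho,l}(\bD;\lambda) - F^\sur_l(\bD)| \le \lambda B^2/2 + C'/\rho^2$ with an implicit constant independent of $n$; since both $F^\hub_{\rho,l}$ and $F^\sur_l$ lie in $[0,\kappa^2 + O(\lambda)]$ deterministically and $\varphi$ is bounded Lipschitz, the same estimate transfers to $|\E\varphi(F^\hub_{\rho,l}) - \E\varphi(F^\sur_l)|$ up to a vanishing exceptional-event contribution. Taking $\rho\to\infty$ first kills the $C'/\rho^2$ term uniformly in $n$, and then $\lambda\to 0$ kills the ridge term, which is the statement of the lemma. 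The main obstacle is precisely the uniform-in-$n$ fourth-moment control alluded to above: the naive chain $\|\bX\|_{2\to 4}^4 \le \|\bX\|_{2\to 2}^2\|\bX\|_{2\to\infty}^2 = O(np)$ overshoots by a factor of $p$ and must be avoided by exploiting the constraint-set structure $\cC_l$ together with a concentration argument tailored to each of the RF and independent-entry ensembles.
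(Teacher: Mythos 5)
Your first half (the two-sided sandwich, the $\lambda B^2/2$ ridge error, the identity $t^2-h_\rho(t)=(t-\rho)_+^2$, and the reduction to a uniform-in-$n,\rho$ bound on $\E\bigl[\tfrac1n\sum_i(\bd_i^\sT\btheta^\hub)^4\bigr]$) parallels the paper's Step I. But the step you defer to an ``$\eps$-net-plus-Bernstein argument uniformly over $\cC_l$'' is the crux, and as proposed it cannot work. The supremum of $\btheta\mapsto\tfrac1n\sum_i(\bd_i^\sT\btheta)^4$ over $\cC_l$ is of order $n$, not $O(1)$: take $\btheta$ proportional to the first row $\bd_1$ (truncating entries larger than a constant if necessary), rescaled to constant $\ell_2$ norm. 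For both ensembles the rows are delocalized, so this $\btheta$ has $\norm{\btheta}_\infty = O(\sqrt{\log p/p})\le\eps_n$ (recall $\eps_n$ is only required to satisfy $\sqrt{n}\,\eps_n\to\infty$) and $\norm{\btheta}_4^4=o(1)$, hence it lies in $\cC_l$ for every $l\in\{0,1,2\}$; yet the single term $i=1$ contributes $(\bd_1^\sT\btheta)^4/n$, which is of order $p^2/n$, i.e.\ of order $n$. So no concentration statement that is uniform over the constraint set can deliver the $O(1)$ fourth-moment control you need, and—contrary to the closing sentence of your proposal—the $\ell_\infty$ and $\ell_4$ structure of $\cC_l$ does not exclude the offending directions. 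The real issue is not uniformity over $\cC_l$ but the dependence of $\btheta^\hub$ on each individual sample.

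The missing idea is a decoupling of that dependence, which is exactly what the paper's Step II supplies via a leave-one-out argument: writing $u_i(\btheta)=(\kappa-y_i\bd_i^\sT\btheta)_+$ and letting $\btheta_\rho^{(1)}$ minimize the ridge-regularized Huber objective with sample $1$ removed, $\lambda$-strong convexity gives $\norm{\btheta_\rho-\btheta_\rho^{(1)}}_2\le\sqrt{2/(\lambda n)}\,u_1(\btheta_\rho^{(1)})$ and hence $0\le u_1(\btheta_\rho)\le\bigl(1+\sqrt{2/(\lambda n)}\,\norm{\bd_1}_2\bigr)u_1(\btheta_\rho^{(1)})$, where now $\btheta_\rho^{(1)}$ is independent of $(\bd_1,y_1)$ and has $\norm{\btheta_\rho^{(1)}}_2\le 2$. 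Cauchy--Schwarz plus eighth-moment bounds from subgaussianity (conditionally on $\bW$ in the random-features case) then give $\E[u_1(\btheta_\rho)^4]\le C$ uniformly in $n,\rho$, which by exchangeability and Markov is precisely the bound your $\Delta(\btheta^\hub)\le C\rho^{-2}\bigl(\kappa^4+\tfrac1n\sum_i(\bd_i^\sT\btheta^\hub)^4\bigr)$ reduction requires. If you substitute this decoupling for your uniform-concentration step (your counting estimate $\#\{i:u_i^\hub\ge\rho\}\le n\kappa^2/\rho^2$ then becomes unnecessary), the remainder of your argument goes through and coincides with the paper's proof.
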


To complete the proof, we specialize to each of the distribution classes considered in Sections~\ref{section:ass_RF} and~\ref{section:ass_ind}.
\paragraph{Random features distribution}
In the case where $(\bX,\bG)=(\bX_\RF,\bG_\RF)$,
let $\Omega_1 := \{\norm{\bW}_\op \le \overline{W}\}$ where $\overline W$ is chosen as in Lemma~\ref{lem:basic:op:norm:RF} so that $\Omega_1$ holds with high probability.
Take 
\begin{equation}
    \bxi_i = (\bx_i^\sT,\bz_i^\sT)^\sT \in\R^{p+d}, \quad \bgamma_i = (\bg_i^\sT,\bz_i^\sT)^\sT\in \R^{p+d}.
\end{equation}
Let $\bXi,\bGamma\in\R^{n\times (p+d)}$ be the feature matrices with rows $\bxi_i,\bgamma_i$ respectively.
We will apply Theorem~\ref{thm:MS22} to conclude universality of
\begin{equation} 
\label{def:F_aug}
F^\hub_{\rho,l}(\bXi;\lambda) := \min_{\bzeta \in \widetilde\cC_l} 
\frac1n \sum_{i=1}^n h_\rho\left( (\kappa - y_i\bzeta^\sT\bxi_i )_+\right)^2 + \frac\lambda2 \norm{\bzeta}_2^2
\end{equation}
and $F^\hub_{\rho,l}(\bGamma)$ (defined similarly)
\emph{conditionally} on $\bW\in\Omega_1$, where the constraint sets are modified as
\begin{equation}
\widetilde\cC_l := \{\bzeta\in\R^{p+d} : \bzeta = (\btheta^\sT, {\mathbf 0}^\sT)^\sT, \btheta\in\cC_l\}  \subseteq \R^{p+d}
\end{equation}
for $l\in\{1,2\}$ and 
\begin{equation}
 \widetilde \cC_{0} :=
 \{\bzeta\in\R^{p+d} : \bzeta = (\btheta^\sT, {\mathbf 0}^\sT)^\sT, \btheta\in\cC_0\} \cap \{\bzeta \in\R^{p+d}, \norm{\bzeta}_\infty \le\delta'_n \},
\end{equation}
and $y_i$ are defined by
\begin{equation}
    \P(y_i = +1 | \bxi_i) = f(\bxi_i^\sT \bzeta^\star)
\end{equation}
for $\bzeta^\star := (\bzero^\sT ,\bbeta^{\star\sT})^\sT$.

To do so, first note that the huber loss $h_\rho$ satisfies the psuedo Lipschitz assumption in~Eq.~\eqref{eq:PL_ass}.
Furthermore, $\bxi_i$ are $i.i.d, O(1)$ subgaussian random vectors conditionally on $\bW\in\Omega_1$.

Finally, for any positive $\delta_n \to 0$ and positive constant $C$, 
Lemma~\ref{lemma:pointwise-gaussianity} in Appendix~\ref{section:aux_lemmas} asserts that condition~\ref{eq:pointwise_gaussian_ass} holds for the set
 \begin{equation}
    \cS := \{\bzeta \in\R^{p+d} : \bzeta^\sT = (\btheta^\sT ,\bbeta^\sT), \norm{\btheta}_\infty \le \delta_n, \norm{\bzeta}_2 \le C\},
 \end{equation}
conditionally on $\bW\in\Omega_1$.
Now observe that for some choices of $C,\delta_n$, we have that $\zeta^\star \in\cS$ and that $\widetilde\cC_l$ are compact subsets of $\cS$ for all $l\in\{0,1,2\}$. By Theorem~\ref{thm:MS22}, we therefore have that
\begin{equation}
 \lim_{n\to\infty} \left| \E\left[\varphi\left(F_{\rho,l}^\hub(\bXi;\lambda) \right)\Big| \bW \right]
- 
\E\left[\varphi\left(F_{\rho,l}^\hub(\bGamma;\lambda)\right)\Big| \bW\right]
\right| = 0
\end{equation}
for any $\bW \in\Omega_1$, and any bounded Lipschitz function $\varphi$.
Since $\varphi$ is bounded and $\Omega_1$ holds with high probability, we deduce by dominated convergence that
\begin{equation}
 \lim_{n\to\infty} 
 \left| \E\left[\varphi\left( F_{\rho,l}^\hub(\bXi;\lambda) \right)\right]
- 
\E\left[\varphi\left(F_{\rho,l}^\hub(\bGamma;\lambda) \right)\right]
\right| = 0.
\end{equation}

Now note that for $l\in\{1,2\}$, 
$F_{\rho,l}^\hub(\bXi;\lambda)= F_{\rho,l}^\hub(\bX;\lambda)$ directly from the definitions (and similarly for the problems with $\bGamma,\bG$). Meanwhile, to take care of the added $\ell^\infty$ norm constraint in $\widetilde \cC_0$, Lemma~\ref{lemma:inf_norm_ERM} shows that
the minimizers $\widehat\btheta^\bX, \widehat\btheta^\bG$ of $F_{\rho,0}^\hub(\bX), F_{\rho,0}^\hub(\bG)$, respectively, satisfy
$\|{\widehat\btheta}\|_\infty \stackrel{p}\to 0$ as $n\to\infty$, implying the existence of some $\delta'_n$ so that $\P(\|{\widehat\btheta\|}_\infty > \delta'_n) \to0$. Letting $\Omega_2$ be this set, we conclude
\begin{align}
\left|\E\left[\varphi\left( F_{\rho,0}^\hub(\bXi)\right)-\varphi\left( 
F_{\rho,0}^\hub(\bX)
\right)\right] \right|
&\le
\norm{\varphi}_\infty
\P\left(\Omega_2^c\right)
\to 0,
\end{align}
and similarly for the problems with $\bGamma,\bG$. Now taking $\rho \to\infty$ then $\lambda\to0$ and using Lemma~\ref{lemma:huber} completes the proof.
\paragraph{Independent entries distribution}
 The case where $(\bX,\bG)=(\bX_{\ind},\bG_{\ind})$ is simpler as we can simply take $\bxi_i=\bx_i$, $\bgamma_i = \bg_i$, without conditioning.
 
We similarly apply Theorem~\ref{thm:MS22} to 
$F^\hub_{\rho,l}(\bXi;\lambda), F^\hub_{\rho,l}(\bGamma)$ defined in~\eqref{def:F_aug}
where now instead we take $\widetilde \cC_l =\cC_l$ for $l\in\{1,2\}$,
\begin{equation}
 \widetilde \cC_{0} := \cC_0 \cap \{\norm{\bzeta}_\infty \le\delta'_n \}.
\end{equation}
and $\bzeta_i^\star =\btheta^\star$.

Lemma~\ref{lemma:pointwise-gaussianity_iid} then states that condition~\eqref{eq:pointwise_gaussian_ass} holds for $\cS:=\{\norm{\bzeta}_\infty \le \delta_n\}$ for any sequence $\delta_n \to0$, and Lemma~\ref{lemma:inf_norm_ERM} asserts the appropriate $\ell^\infty$ bound on the minimizers of $F_{\rho,0}^\hub$ necessary to complete the proof as in the case under the random features assumption.

\subsection{Uniform Approximation with the Huber Loss: proof of Lemma~\ref{lemma:huber}
}
\label{section:huber_reduction}
The goal of this section is to establish that for any bounded Lipschitz function $\varphi$, we have

\begin{equation}
\lim_{\lambda\to0}\lim_{\rho \to \infty} \sup_{n \in \N} \left|
\E\left[
\varphi\left(F_{\rho,l}^\hub(\bX; \lambda)\right)
\right] 
-
\E\left[
\varphi\left(  F^\sur_{l}(\bX)\right)
\right] 
 \right| =0,
\end{equation}
(and similarly for the objectives involving $\bG$),
where $F_{\rho,l}^\hub$ was defined in~\eqref{eq:def_hub_obj}.
We will complete the proof for the non-Gaussian features $\bX$ under the the random features assumption of Section~\ref{section:ass_RF}. The proof of the approximation for the Gaussian equivalent feature matrix is similar.
Throughout, we fix all the parameters $l, \kappa, b, s$ and hence suppress their appearance in the notation.
For further notational simplicity, we suppress the appearance of the feature matrix $\bX$ in the argument of $L_\rho, F_\rho$ in some parts of the proof.

Recalling the definition of the huber loss $h_\rho$ in~\eqref{eq:huber_def} and noting that $h_\infty := \lim_{\rho\to\infty} h_\rho$ coincides with the square loss, we see that $F_{\infty,l}^\hub(\bX;\lambda)$ is the original problem $F_l^\sur(\bX)$ but with an objective that is $\ell^2$-regularized. 
Now, we clearly have
\begin{equation}
\lim_{\lambda\to0} \sup_{n \in \N} \left|
\E\left[
\varphi\left(F_{\infty,l}^\hub(\bX; \lambda)\right)
\right] 
-
\E\left[
\varphi\left(  F^\sur_{l}(\bX)\right)
\right] 
 \right| =0
\end{equation}
since the constraint sets $\cC_l$ are contained in an $\ell^2$ ball of radius constant in $n$. So to prove the lemma, it is sufficient to show that for any fixed $\lambda>0$,
\begin{equation}
\lim_{\rho\to\infty} \sup_{n \in \N} \left|
\E\left[
\varphi\left(F_{\rho,l}^\hub(\bX; \lambda)\right)
\right] 
-
\E\left[
\varphi\left(F_{\infty,l}^\hub(\bX; \lambda)\right)
\right] 
 \right| =0.
\end{equation}

The proof of this contains three steps. The first two steps are completely deterministic in nature. The last step involves taking expectation of a function of $\bX$ which exploits certain light tail properties of its entries. 

\paragraph{Step I:} 
First observe that the following inequality holds 
for every $\rho > 0$, and $t \in \R$:
\begin{equation*}
	h_\rho(t) \le h_\infty(t) \le h_\rho(t) + t^2 \mathbf{1}_{|t| \ge \rho}.
\end{equation*}
Introducing the notation $\bu_i(\btheta) = (\kappa - y_i \bx_i^\sT\btheta)_+$, we translate 
the bound on the loss $h_\rho$ to one on the associated objective $L_\rho$ defined in~\eqref{eq:def_hub_obj} as
\begin{equation}
\label{eqn:inequality-one}
	L_\rho(\btheta; \lambda) \le L_\infty(\btheta; \lambda) 
		\le L_\rho(\btheta; \lambda) + \frac{1}{n}\sum_{i=1}^n \bu_i^2(\btheta)  \mathbf{1}_{|\bu_i(\btheta)| \ge \rho}.
\end{equation} 
Note this inequality holds for every $\rho > 0$, and every vector $\btheta$. 
Letting $\btheta_\rho$ denotes the minimizer of 
$\rho \mapsto L_\rho(\btheta; \lambda)$ subject to the constraint $\btheta \in \mathcal{C}_l$, then, 
by taking infimum over $\btheta$ in the inequalities~\eqref{eqn:inequality-one}, we obtain the following bound 
which holds for every $\rho, \btheta$:
\begin{equation}
	F_\rho(\lambda) \le F_\infty(\lambda) \le F_\rho(\lambda) + \frac{1}{n} \sum_{i=1}^n 
		\bu_i^2(\btheta_\rho)  \mathbf{1}_{|\bu_i(\btheta_\rho)| \ge \rho}.
\end{equation} 
To prove~Lemma~\ref{lemma:huber}, it remains to show the limit 
\begin{equation*}
	\lim_{\rho \to \infty} \E \left[\frac{1}{n}\sum_{i=1}^n 
		\bu_i^2(\btheta_\rho)  \mathbf{1}_{|\bu_i(\btheta_\rho)| \ge \rho}\right] 
			= 0.
\end{equation*}
By exchangeability, this is equivalent to showing 
$\lim_{\rho \to \infty} \E[\bu_1^2(\btheta_\rho) \mathbf{1}_{|\bu_i(\btheta_\rho)| \ge \rho}] = 0$.
Meanwhile, by Markov's inequality, it suffices to show that there is uniform control 
\begin{equation}
	\E[\bu_1^4(\btheta_\rho)] \le C
\end{equation} 
where $C < \infty$ is a constant independent of $\rho, n$. 

Bounding the moments of $\bu_1^4(\btheta_\rho)$ tightly in a direct way faces a challenge  
since both the argument $\btheta_\rho$
and $\bu_1$ are both \emph{dependent} on the first sample $(\bx_1, \by_1)$. 
We decouple this dependence via a leave-one-out argument in the next step.

\paragraph{Step II: Deterministic Upper Bound via leave-one-out argument} 
Consider the following perturbed loss function which does not involve the first sample $(\bx_1, \by_1)$: 
\begin{equation*}
	L_{\rho}^{(1)}(\btheta; \lambda) = \frac{1}{n} \sum_{i = 2}^n h_\rho(\bu_i(\btheta)) + \frac{\lambda}{2} \norm{\btheta}_2^2. 
\end{equation*} 
Let us denote $\btheta^{(1)}_\rho$ to be its minimizer subject to the constraint $\btheta \in \mathcal{C}_l$. 
%Below we bound the gap between $\btheta^{(1)}_\rho$ and $\btheta_\rho$.
The strong convexity of $\btheta \mapsto L_{\rho}^{(1)}(\btheta; \lambda)$ immediately yields that 
\begin{equation*}
	L_{\rho}^{(1)}(\btheta_\rho; \lambda) \ge L_{\rho}^{(1)}(\btheta_\rho^{(1)}; \lambda) + \frac{\lambda}{2} 
		\norm{\btheta_\rho- \btheta_\rho^{(1)}}_2^2. 
\end{equation*}
Since $\btheta_\rho$ is the minimizer of $L_\rho$, we also have the bound $L_\rho(\btheta_\rho; \lambda) \le L_\rho(\btheta_\rho^{(1)}; \lambda)$.
Combining these bounds, we immediately see that 
\begin{equation*}
	\frac{1}{n} h_\rho (\bu_1(\btheta_\rho^{(1)})) \ge \frac{1}{n} h_\rho (\bu_1(\btheta_\rho)) + \frac{\lambda}{2} 
		\norm{\btheta_\rho- \btheta_\rho^{(1)}}_2^2.
\end{equation*}
The left-hand-side is upper bounded by $\frac{1}{n} h_\infty (\bu_1(\btheta_\rho^{(1)})):= \frac{1}{n}(\bu_1(\btheta_\rho^{(1)}))^2$, 
while the right-hand-side is lower bounded by  $\frac{\lambda}{2} \|\btheta_\rho- \btheta_\rho^{(1)}\|_2^2$.
So 
\begin{equation*}
	\norm{\btheta_\rho- \btheta_\rho^{(1)}}_2 \le \sqrt{\frac{2}{\lambda n}} \bu_1(\btheta_\rho^{(1)}).
\end{equation*} 
As $\btheta \mapsto \bu_\rho^{(1)}(\btheta)$ is Lipschitz, with Lipschitz constant bounded by $\norm{\bx_1}_2$, this allows us to further obtain that 
\begin{equation*}
	\left|\bu_1(\btheta_\rho) - \bu_1(\btheta_\rho^{(1)})\right| \le  \sqrt{\frac{2 }{\lambda n}} 			
		\norm{\bx_1}_2\bu_1(\btheta_\rho^{(1)}).
\end{equation*} 
Rearranging the inequality, we obtain the following bound that holds with probability one
\begin{equation}
\label{eq:upper_bound_urho}
	0 \le \bu_1(\btheta_\rho)\le \left(1 +  \sqrt{\frac{2 }{\lambda n}} 			
		\norm{\bx_1}_2 \right)\bu_1(\btheta_\rho^{(1)}).
\end{equation}

\paragraph{Step III}
Following the bound in~\eqref{eq:upper_bound_urho}
and using Cauchy Schwartz's inequality, we obtain
\begin{equation}
	\E\left[|\bu_\rho^{(1)}(\btheta_\rho)|^4\right] \le \sqrt{\E\left[\left(1 +  \sqrt{2/(\lambda n)} 			
		\norm{\bx_1}_2 \right)^8\right]} \cdot \sqrt{\E\left[\left|\bu_1(\btheta_\rho^{(1)})\right|^8\right]}.
\end{equation} 
We bound each expectation on the right by constants independent of $\rho, n$. Note $\bu_1$ is independent of the 
vector $\btheta_\rho^{(1)}$. Furthermore $\|\btheta_\rho^{(1)}\|_2 \le 2$,
since the constraint set $\mathcal{C}_l$ is contained in $\{\btheta: \norm{\btheta}_2 \le 2\}$. 
By H\"{o}lder's inequality, it suffices to prove the following two properties for the row vector $\bx_1$: 
\begin{enumerate}
\item $\E \left[\norm{\bx_1/\sqrt{n}}_2^8\right]$ is bounded by a constant $C$ independent of $n, \gamma$. 
\item $\sup_{\btheta: \norm{\btheta}_2 = 1} \E[\langle \bx_1, \btheta\rangle^8]$ is bounded by a 
	constant $C$ independent of $n, \gamma$. 
\end{enumerate}

However, both properties are a direct consequence of the subgaussianity of the feature vectors under $\bX =\bX_{\ind}$ or the subgaussianty of conditional on $\bW$ for $\bX =\bX_{\RF}$ (c.f. Lemma~\ref{lem:basic:op:norm:RF}).

\subsection{$\ell^\infty$-norm bounds on the minimizers of $L^\hub$ over $\cC_0$}
Our goal is to prove the following $\ell^\infty$ bounds on the minimizers, needed for the proof of Proposition~\ref{prop:univ_fixed_kappa}.
\begin{lemma}
\label{lemma:inf_norm_ERM}
Let $\widehat\btheta$ be the minimizer of $L^{\hub}_\rho$ over $\cC_0$. Then under the assumptions of Proposition~\ref{prop:univ_fixed_kappa}, we have
\begin{equation}
 \norm{\widehat \btheta}_\infty \stackrel{p}{\to} 0.
\end{equation}
\end{lemma}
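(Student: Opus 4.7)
The plan is a coordinate-wise argument: bound $|\widehat\theta_j|$ for each $j\in[p]$ individually, then take a union bound. The starting point is a KKT-type inequality. Since $\|\widehat\btheta\|_2^2\le 2$ and $|s|\le 1/12$, one verifies directly that the shrunken iterate $\widetilde\btheta := \widehat\btheta - \widehat\theta_j e_j$ remains in $\cC_0$ (the $\ell^4$ constraint follows because $|s|\widehat\theta_j^2\le 2|s|\le 1/6 < 1$). The optimality of $\widehat\btheta$ along the feasible direction $\widetilde\btheta-\widehat\btheta = -\widehat\theta_j e_j$, combined with the formula $\partial_j L^\hub_\rho(\btheta;\lambda) = \lambda\theta_j - \frac{1}{n}\sum_i c_i(\btheta)\,y_i\,x_{ij}$ for $c_i(\btheta):=h_\rho'((\kappa-y_i\bx_i^\sT\btheta)_+)\mathbf{1}_{\kappa>y_i\bx_i^\sT\btheta}\in[0,2\rho]$, then yields
\begin{equation*}
\lambda\,|\widehat\theta_j|\;\le\;\Big|\frac{1}{n}\sum_{i=1}^n c_i(\widehat\btheta)\,y_i\,x_{ij}\Big|.
\end{equation*}

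The main step is a leave-one-feature-out decoupling. I define $\widehat\btheta^{(-j)} := \argmin_{\btheta\in\cC_0,\,\theta_j=0}L^\hub_\rho(\btheta;\lambda)$ and combine $\lambda$-strong convexity of $L^\hub_\rho$ with a quadratic upper bound on $L^\hub_\rho(\widetilde\btheta;\lambda)-L^\hub_\rho(\widehat\btheta;\lambda)$, controlled by $\partial_j^2 L^\hub_\rho=O(1)$ via the column-norm estimate $\|\bx_{\cdot j}\|_2=O(\sqrt n)$ (which follows from subgaussianity for $\bX_{\ind}$, and from Lemma~\ref{lemma:crazy-lemma-random-feature-models} for $\bX_\RF$), to show that $\|\widehat\btheta-\widehat\btheta^{(-j)}\|_2 \to 0$ in probability. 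Lipschitzness of $\btheta\mapsto c_i(\btheta)$ with constant $\le 2\|\bx_i\|_2$, Cauchy--Schwarz, and the operator-norm bound $\|\bX\|_\op=O(\sqrt n)$ (Lemma~\ref{lem:basic:op:norm:RF}) then let me replace $c_i(\widehat\btheta)$ by $c_i(\widehat\btheta^{(-j)})$ in the sum above at a controllable cost.

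It remains to show the resulting leave-one-out sum $\frac{1}{n}\sum_i c_i(\widehat\btheta^{(-j)})\,y_i\,x_{ij}$ is $o_\P(1)$ uniformly in $j$. For $\bX=\bX_\RF$, conditioning on $\bZ$ and $\{\bw_k\}_{k\neq j}$ makes $\widehat\btheta^{(-j)}$ and $\by$ deterministic while column $j$ of $\bX$ is a Lipschitz function of the single vector $\bw_j\sim\mathsf{Unif}(\mathbb{S}^{d-1})$; spherical Lipschitz concentration combined with a union bound over $j$ gives control at rate $O(\sqrt{\log p/n})$. For $\bX=\bX_{\ind}$, column $j$ is independent of $\bX_{-j}$ and influences $\by$ only through the coefficient $\theta_j^\star$ with $\max_j|\theta_j^\star|=o_n(1)$; coupling $\widehat\btheta^{(-j)}$ to an auxiliary estimator computed from labels where the $j$-th coordinate of $\btheta^\star$ has been zeroed out reduces the sum to a centered subgaussian average, also bounded by $O(\sqrt{\log p/n})$ after a union bound. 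Substituting back and taking $\max_j$ yields $\|\widehat\btheta\|_\infty=o_\P(1)$.

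The main technical obstacle is the perturbation bound $\|\widehat\btheta-\widehat\btheta^{(-j)}\|_2\to 0$: a crude application of $\lambda$-strong convexity produces a constant that degrades as $\lambda\to 0$ and does not self-consistently close the bootstrap $\lambda|\widehat\theta_j|\lesssim o_\P(1)+(\text{perturbation})$. The expected resolution is to exploit that $\widehat\btheta-\widehat\btheta^{(-j)}$ is essentially supported along $e_j$ with coefficient of order $\widehat\theta_j$, where the restricted smoothness $\partial_j^2 L^\hub_\rho=O(1)$ is $\lambda$-independent; a direct second-order expansion of $L^\hub_\rho$ along $e_j$ --- equivalently, an implicit-function-theorem linearization of the KKT system around $\widehat\btheta^{(-j)}$ --- should give a perturbation constant uniform in small $\lambda$ and thus close the bootstrap for every fixed $\lambda>0$.
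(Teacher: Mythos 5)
Your skeleton — coordinatewise control via a leave-one-coordinate-out estimator $\widehat\btheta^{(-j)}$, concentration of the leave-one-out correlation (conditioning on $\bZ$ so that the $j$th column is independent of the labels and of $\widehat\btheta^{(-j)}$ in the random features case; zeroing $\theta_j^\star$ in the labels and using $\norm{\btheta^\star}_\infty=o_n(1)$ in the independent case), and a union bound over $j$ — is the same as the paper's. The genuine gap is in your decoupling step. You start from stationarity at $\widehat\btheta$ and replace $c_i(\widehat\btheta)$ by $c_i(\widehat\btheta^{(-j)})$, and this replacement costs, via Lipschitzness of the $c_i$, Cauchy--Schwarz, and $\norm{\bX}_\op$ and the column norms being $O(\sqrt n)$, a term of order $\norm{\widehat\btheta-\widehat\btheta^{(-j)}}_2$ with a constant-order coefficient. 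But the only curvature in $L^\hub_\rho$ is the ridge term: the huberized squared hinge is flat on every sample whose margin exceeds $\kappa$, so the best generic perturbation bound is $\frac{\lambda}{2}\norm{\widehat\btheta-\widehat\btheta^{(-j)}}_2^2\le L^\hub_\rho(\widehat\btheta^{(-j)})-L^\hub_\rho(\widehat\btheta)\le C\,|\widehat\theta_j|$, i.e.\ $\norm{\widehat\btheta-\widehat\btheta^{(-j)}}_2\le C(\lambda)\,|\widehat\theta_j|^{1/2}$. Feeding this back gives $\lambda|\widehat\theta_j|\le o_\P(1)+C'|\widehat\theta_j|^{1/2}$, which is satisfied by $|\widehat\theta_j|$ of constant order and so proves nothing — even for fixed $\lambda>0$, not only as $\lambda\to0$. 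You flag this obstacle, but the proposed repair does not repair it: the claim that $\widehat\btheta-\widehat\btheta^{(-j)}$ is essentially supported along $e_j$ is exactly what would need proof and does not follow from $\partial_j^2L^\hub_\rho=O(1)$. A second-order expansion along $e_j$, or a linearization of the KKT system at $\widehat\btheta^{(-j)}$, still contains the cross term coupling coordinate $j$ to the displacement of the other $p-1$ coordinates, namely $\frac1n\sum_i h_\rho''(\cdot)\,x_{ij}\,\bx_i[j;0]^\sT(\widehat\btheta-\widehat\btheta^{(-j)})$, which is again of order $\norm{\widehat\btheta-\widehat\btheta^{(-j)}}_2$, and the movement of those coordinates is itself controlled only through the same $\lambda$-strong convexity — the argument is circular.

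The paper closes the loop without ever bounding $\norm{\widehat\btheta-\widehat\btheta^{(-j)}}_2$, by anchoring the expansion at the restricted minimizer rather than at $\widehat\btheta$. Convexity of the per-sample loss plus strong convexity of the ridge in coordinate $j$ give, for every $\btheta\in\cC_0$, the one-sided bound $L^\hub_\rho(\btheta)\ge L^\hub_\rho(\widehat\btheta^{(-j)})+\widehat\Delta_j\theta_j+\frac{\lambda}{2}\theta_j^2+\widehat h_j(\btheta)$, where $\widehat\Delta_j$ is precisely your leave-one-out correlation (the $\ell'$-weights evaluated at $\widehat\btheta^{(-j)}$, paired with column $j$) and $\widehat h_j(\btheta)$ collects the first-order terms in the remaining coordinates. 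The key point is that $\widehat h_j(\btheta)\ge 0$: it depends on $\btheta$ only through $\btheta[j;0]$, which lies in $\cC_0$ by orthosymmetry and convexity of $\cC_0$, so the KKT conditions of the restricted problem apply. Taking $\btheta=\widehat\btheta$ and using $L^\hub_\rho(\widehat\btheta)\le L^\hub_\rho(\widehat\btheta^{(-j)})$ yields $|\widehat\theta_j|\le 2|\widehat\Delta_j|/\lambda$ outright; your concentration-plus-union-bound step (which matches the paper, including the label modification and the $\norm{\by-\by^{(j)}}_2$ control in the independent model) then finishes. In short: the cross terms must be discarded by one-sided convexity and KKT at the leave-one-out minimizer, not estimated by a norm-perturbation bound, and as written your proof does not get past this point.
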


In what follows, we will consider the case where the features $(\bX,\bG)$ correspond to each of the distribution classes in Sections~\ref{section:ass_RF} and~\ref{section:ass_ind} separately. Furthermore, in each case, we only provide a proof for $L^\hub_\rho(\btheta;\bX, \lambda)$ since the proof for the minimizer of $L^\hub_\rho(\btheta;\bG, \lambda)$  follows from a similar argument.
% \norm{\widehat \btheta}_\infty \le C_4(\rho,\lambda)\frac{\log p}{\sqrt{p}} +  C_7 (b, a, \rho, \lambda) \delta_n
%\end{equation}
%with probability larger than
%$1 - C_2 \exp\{-c (\log p)^2\}$.
In what follows, we suppress the appearance of $\rho$ and $\lambda$ in the notation.

\begin{proof}[Proof of Lemma~\ref{lemma:inf_norm_ERM} for $(\bX,\bG) = (\bX_{\RF},\bG_{\RF})$]

For a vector $\bv$, let $\bv[j;t]$ be the vector obtained by replacing the $j$th element of $\bv$ with $t$.
Fix a column index $j\in[p]$.
Then we can write with this notation
\begin{equation}
L^\hub(\bX, \btheta[j;t]):= \frac1n \sum_{i=1}^n 
\ell\left(\bx_i[j; 0]^\sT \btheta[j;0] + d_{i,j}t; y_i \right)+ \frac{\lambda}{2} \norm{\btheta[j;0]}_2^2 + \frac\lambda2 t^2
\end{equation}
for $\btheta\in\R^p$ and $t\in\R$;
here we defined
$ \ell(s;y) := \ell_\rho\big((\kappa - y s)_+\big).$
For fixed $t \in \R$, let
\begin{equation}
 \widehat \btheta^\up{j;t}:= \argmin_{ \btheta\in \cC_0, \theta_j = t}  L^\hub(\bX,\btheta).
\end{equation}

We remark that $\ell$ is convex since $\ell_\rho$ restricted to $\R_{>0}$ is non-decreasing.
Furthermore, on $\Omega_1$, the constraint set is convex in $\btheta[j;0]$ for a fixed $t$ since $\cC_0$ is convex.
The convexity of $\ell(s;y)$ and strong convexity of the norm squared
at $\widehat \btheta^\up{j;t}$ imply that for all $\btheta\in\cC_0$ we have
\begin{align}
 L^\hub(\bX, \btheta) &\ge
 \frac1n \sum_{i=1}^n
\ell(\widehat\btheta^{\up{j;0}\sT}\bx_{i}[j;0];y_i) 
+ \frac1n\sum_{i=1}^n\ell'(\widehat\btheta^{\up{j;0}\sT}\bx_{i}[j;0];y_i) \left( \btheta -\widehat \btheta^\up{j;0}\right)^\sT \bx_{i}[j;0] +  
\frac\lambda2\norm{\widehat\btheta^\up{j;0}}_2^2  \\
&+ \frac1n\sum_{i=1}^n\ell'(\widehat\btheta^{\up{j;0}\sT} \bx_{i}[j;0];y_i)\theta_{j} d_{i,j}
 + \lambda\left(\btheta- \widehat\btheta^\up{j;0}\right)^\sT\widehat\btheta^\up{j;0}
 + \frac\lambda2 \norm{\btheta[j;0] - \widehat\btheta^\up{j;0}}_2^2
 +\frac\lambda2 \theta_{j}^2
 \\
&= L^\hub(\bX, \widehat \btheta^\up{j;0}) + \widehat \Delta_j \theta_{j} + \frac\lambda2 \theta_{j}^2 
%+ \frac1n \sum_{i=1}^n\ell'(\widehat\btheta^{\up{j;0}\sT}\bd_{i}[j;0];y_i) \left( \btheta -\widehat\btheta^{\up{j;0}}\right)^\sT \bd_{i}[j;0]
%+ \lambda\left(\btheta - \widehat\btheta^\up{j;0}\right)^\sT\widehat\btheta^\up{j;0}
+ \frac{\lambda}2 \norm{\btheta[j;0] - \widehat\btheta^\up{j;0}}_2^2 + \widehat h_j(\btheta).
\label{eq:lower_bound_loo}
\end{align}
where we defined  
%We would like to assert that $L(\btheta)$ is at least  $L(\widehat \btheta_{\up{j},0}) + \widehat D_j  \theta_{i,j} + \frac\lambda2 \theta_{i,j}^2$,
%up to some error term.
%This, in turn, gives a bound on $t$ in terms of $\widehat D_j$. 
%First, note that the KKT conditions for $\widehat\btheta^\up{j;0}$ along with complementary slackness imply that
%\begin{align}
% \frac1n \sum_{i=1}^n \ell'(\widehat\btau_0^\sT\ba_i; y_i)\ba_i + \delta\widehat\btau_0 + \lambda \grad g_c(\widehat\btau_0) + 2 \mu \norm{\widehat\btau_0}=0
%\end{align}
%\begin{equation}
% \lambda\left(G({\widehat\btau_0}) - L\right) = 0
%\end{equation}
%for some $\lambda >0$. Hence, we have for any $\btheta$ with $\norm{\btheta}_2\le1$
\begin{align}
\widehat \Delta_j&:= \frac1n\sum_{i=1}^n \ell'(\widehat\btheta^\up{j;0}\bx_{i}[j;0];y_i)d_{i,j} \\
\widehat h_j(\btheta) &:= \frac1n \sum_{i=1}^n \ell'(\widehat\btheta^{\up{j;0}\sT}\bx_{i}[j;0];y_i)\bx_{i}[j;0]^\sT(\btheta - \widehat\btheta^\up{j;0})  
+ \lambda\widehat\btheta^{\up{j;0}\sT}(\btheta -\widehat\btheta^{\up{j;0}}).
\end{align}

Now by the KKT conditions for $\widehat\btheta^\up{j;0}$ along with complementary slackness, $h(\btheta) \ge 0$ 
for all $\btheta \in\cC_0$ such that $\btheta_j = 0$.
However, if $\btheta\in\cC_0$ then $\btheta[j;0] \in\cC_0$ as well. To see this, note that $\cC_0$ is orthosymmetric: if $\btheta \in\cC_0$, then $\btheta[j; -\theta_{j}] \in\cC_0$ directly from the definition. 
So convexity of $\cC_0$ then implies that
$\btheta[j;0]\in\cC_0$.
Hence, for all $\btheta\in\cC_0$, we have
\begin{equation}
 \widehat h_j(\btheta) = \widehat h_j(\btheta[j;0]) \ge 0
\end{equation}
where the first equality follows since $\widehat h_j$ is independent of the $j$th coordinate.
Combing this with the lower bound in~\eqref{eq:lower_bound_loo}, we obtain
\begin{equation}
 L^\hub(\bX, \widehat\btheta^\up{j;0}) \ge  \min_{\btheta\in\cC_0} L^\hub(\bX, \btheta) \ge 
 L^\hub(\bX, \widehat\btheta^\up{j;0 }) + \widehat \Delta_j  \widehat \theta_{j} + \frac\lambda2 \widehat\theta_{j}^2,
\end{equation}
where $\widehat\btheta$ is the minimizer over $\cC_0$ of $L^\hub(\bX,\btheta).$
Rearranging gives
\begin{equation}
\label{eq:inf_bound_whp}
 | \widehat \theta_j | \le \frac{2\widehat \Delta_j}{\lambda}.
\end{equation}
Let us show that $\widehat \Delta_j$ is small.
Let  $\Omega_3 := \left\{ \norm{\bZ}_\op\le \frac{2}{\sqrt{n}} \right\}$
and note that we have
$$\widehat \Delta_j= \frac1n\sum_{i=1}^n \ell'(\widehat\btheta^\up{j;0}\bx_{i}[j;0];y_i)d_{i,j} = \frac1n \ba_j^\sT \widetilde\bx_j$$
for $\ba_j \in \R^n$, and $\widetilde\bx_j$ the $j$th column of $\bX$. 
For $\bW$ random and conditional on $\bZ\in\Omega_3$, $\ba_j$ is independent of $\widetilde \bx_j$ for all $j$ since $y_i$ is only a function of the $j$th 
weight $\bw_j$. Then since $\norm{\ba_j}_\infty \le 2 \rho$, we have that each $\widehat \Delta_j$ is $2\rho/\sqrt{n}$ subgaussian. 
So

\begin{equation}
 \P\left(\left\{\max_{j\le p } \widehat \Delta_j \ge 2\rho \frac{\log{p}}{\sqrt{p}}\right\} \right) \le C_1 \exp\left\{-c (\log p)^2\right\} + \P(\Omega_3^c)  \le C_2 \exp\{-c (\log p)^2\}
\end{equation}
for some $C_1,C_2 >0$.
Hence, by Eq.~\eqref{eq:inf_bound_whp}, and using that $\Omega_1$ has exponentially large probability,
\begin{equation}
\P\left(\big\|{\widehat \btheta}\big\|_\infty \ge \frac{4 \rho}{\lambda} \frac{\log p}{\sqrt{p}}  \right)
\le C_2 \exp\{-c (\log p)^2\}.
\end{equation}
Using that $\norm{\bX}_\op \le C_2/\sqrt{n}$ with probability at least $1- C_3 e^{-c_2 n}$
we conclude that 
\begin{equation}
 %\norm{\widehat \btheta}_\infty \le C_4(\rho,\lambda)\frac{\log p}{\sqrt{p}} +  C_7 (b, a, \rho, \lambda) \delta_n
 \big\|\widehat \btheta\big\|_\infty \stackrel{p}{\to} 0 
\end{equation}
%with probability larger than
%$1 - C_2 \exp\{-c (\log p)^2\}$.

\end{proof}

\begin{proof}[Proof of Lemma~\ref{lemma:inf_norm_ERM} for $(\bX,\bG) = (\bX_{\ind},\bG_{\ind})$]
We will establish the lemma in the case where $\bSigma =\bI$. Generalizing other diagonal $\bSigma$ with bounded spectrum as in Assumption~${\sf (B1)}$ is straightforward. 

For $j\in[p]$, let $y_i^\up{j}$ be the response variable obtained by setting the $j$th element of $\bx_i$ to 0, i.e.,
\begin{equation}
 \P\left(y_i^\up{j} = +1 | \bx_i \right) = f(\bx_i[j;0]^\sT \btheta^\star ).
\end{equation}
Now let
\begin{equation}
\widehat \btheta^\up{j;t} := \argmin_{\btheta \in \cC_0, \theta_j = 0} \frac1n \sum_{i=1}^n \ell(\bx_i^\sT \btheta; y_i^\up{j}) +\frac\lambda2 \norm{\btheta}^2.
\end{equation}
From the definition, it is clear that $\widehat\btheta^\up{j;t}$ is independent of $x_{ij}$ for all $i$.
Now using an argument analogous to the one in the previous proof, we can show that
for any $\btheta$ such that $\btheta \in\cC_0$,
\begin{equation}
 L^{\hub,\up{j}}(\bX, \btheta) \ge L^{\hub,\up{j}}(\bX;\widehat\btheta^\up{j;0}) + \widehat \Delta_j \theta_j + \frac{\lambda}{2} \theta_j^2 + \widehat h_j(\btheta)
\end{equation}
where $L^{\hub,\up{j}}$ is the objective with $y_i^\up{j}$ instead of $y_i$ and 
\begin{align}
\widehat \Delta_j&:= \frac1n\sum_{i=1}^n \ell'(\widehat\btheta^\up{j;0}\bx_{i}[j;0];y_i^\up{j})d_{i,j} \\
\widehat h_j(\btheta) &:= \frac1n \sum_{i=1}^n \ell'(\widehat\btheta^{\up{j;0}\sT}\bx_{i}[j;0];y_i^\up{j})\bx_{i}[j;0]^\sT(\btheta - \widehat\btheta^\up{j;0})
+ \lambda\widehat\btheta^{\up{j;0}\sT}(\btheta -\widehat\btheta^{\up{j;0}}).
\end{align}

Now we note that since $|\ell(s;t) - \ell(s;t')|\le C |s| |t-t'|$, we have for any $\btheta $ with bounded norm,
\begin{equation}
 |L^{\hub,\up{j}}(\bX;\btheta) -L^\hub(\bX;\btheta)|  \le  C \frac{\norm{\bX}_\op}{\sqrt{n}} \frac{\norm{\by - \by^\up{j}}_2}{\sqrt{n}} = :C\frac{\norm{\bX}_\op}{\sqrt{n}} \widehat\epsilon_j^{1/2}.
\end{equation}
So one obtains for $\widehat \btheta$, the minimizer of $L^\hub(\bX;\btheta)$, 
\begin{equation}
 2 C\frac{\norm{\bX}_\op}{\sqrt{n}} \widehat \epsilon_j^{1/2} \ge \widehat \Delta_j \widehat\theta_j +\frac{\lambda}{2} \widehat\theta_j^2  
\end{equation}
where we used that $\widehat h_j(\widehat \btheta) \ge 0$ by the KKT conditions at $\widehat \btheta^\up{j;0}$. This implies
\begin{equation}
 |\theta_j| \le C(\lambda) \left(| \widehat \Delta_j| +   \frac{\norm{\bX}_\op^{1/2}}{n^{1/4}}|\widehat \epsilon_j|^{1/4} \right)\le 
 C(\lambda) \left(|\widehat \Delta_j| +  \frac{\norm{\bX}_\op^{1/2}}{n^{1/4}}\left(|\widehat \epsilon_j - \mu_j|^{1/4} + |\mu_j|^{1/4}\right)\right)
\end{equation}
where $\mu_j = \E[\widehat \epsilon_j]$.

First note that
\begin{equation}
 \mu_j = \frac1n\sum_{i=1}^n \E[(y_i - y_i^\up{j})^2]\le   C \norm{f}_\Lip \theta_j^{\star}.
\end{equation}
So $\max_j |\mu_j| \le C \norm{f}_\Lip \norm{\btheta^\star}_\infty$. 
Second, 
\begin{equation}
 |\widehat \epsilon_j - \mu_j | = \left|\frac1n \sum_{i=1}^n \chi_i^\up{j} - \E[\chi_i^\up{j}]\right|
\end{equation}

for some $\chi_i^\up{j}$ i.i.d. random variables, for different $i$, bounded by $4$.  So Hoeffding gives
\begin{equation}
 \P\left(|\widehat \epsilon_j - \mu_j| \ge t \right) \le \exp\left\{ -C n t^2\right\}
\end{equation}
for all $t>0$, implying that $|\widehat \epsilon_j - \mu_j|$ is $O(1/\sqrt{n})$ subgaussian for each $j$. Hence
$\max_{j}|\widehat \epsilon_j  - \mu_j| = o_\P(1)$. Finally, $\widehat \Delta_j$ is also $O(1/\sqrt{n})$ subgaussian by the same logic applied to the random features
case. Hence, $\norm{\widehat \btheta}_\infty = o_\P(1)$ as long as $\norm{\btheta^\star}_\infty = o(1)$.

\end{proof}
\def\sRF{\sf RF}
\def\bbW{\boldsymbol{\widetilde{W}}}
\def\bbw{\boldsymbol{\widetilde{w}}}
\def\bbSigma{{\boldsymbol{\widetilde{\Sigma}}}}
\def\bbLambda{{\boldsymbol{\widetilde{\Lambda}}}}
\def\Unif{\mathsf{Unif}}

\section{Delocalization of max-margin estimators}
\label{sec:delocalization:MM}
In this section, we prove Proposition \ref{proposition:delocalization-for-general-max-margin}. We prove Proposition \ref{proposition:delocalization-for-general-max-margin} first for the Gaussian features $\bD=\bG$ and then leverage the universality of the separable loss proven in Proposition \ref{prop:univ_fixed_kappa} to show the same for the non-Gaussian features $\bD=\bX$. For the Gaussian features $\bD=\bG$, the proof is based Gordon's inequality \cite{Gordon88, thrampoulidis2015, montanari2019generalization}. To this end, assume that $\bg\sim \normal(0,\bSigma)$, where the covariance matrix $\bSigma$ satisfies the following. 
    \begin{equation}\label{eq:assumption:Sigma}
    \lambda_{\min}(\bSigma), \lambda_{\max}(\bSigma)\in [c,C]\,, \quad \sup_{\la \geq 0}\big\|\big(\bSigma+\la \bI\big)^{-1}\bSigma\btheta^\star\big\|_{\infty}\to 0\,,\quad \sup_{\la \geq 0}\frac{1}{\sqrt{n}}\big\|\big(\bSigma+\la \bI\big)^{-1}\bSigma^{1/2} \bh \big\|_{\infty}\stackrel{p}{\to} 0\,,
    \end{equation}
    where $\bh \sim \normal(0,\bI_p)$. Indeed, for the independent features model under Assumption \ref{assumption:ind}, the covariance matrix $\bSigma=\bSigma_{\ind}$ trivially satisfies the condition \eqref{eq:assumption:Sigma} since $\bSigma_{\ind}$ is a diagonal matrix.

    For the random features model, recall that $\bSigma_{\sRF}\equiv \mu_1^2\bW\bW^{\sT}+\mu_2^2\bI_p$, where the rows $(\bw_i)_{i\leq p}$ of $\bW$ are $\bw_i\stackrel{i.i.d.}{\sim}\mathsf{Unif}(\mathbb{S}^{d-1}(1))$. Since the first condition in \eqref{eq:assumption:Sigma} holds w.h.p. by Theorem \ref{theorem:matrix-concentration-covariance-operator}, we focus on establishing the other $2$ conditions for $\bSigma_{\sRF}$. Recalling that the corresponding $\btheta^\star$ is given in \eqref{eq:def:theta:star}, we establish the following lemma in Section \ref{ss:cov:RF}.
\begin{lemma}\label{lem:cov:RF:delocalized}
The matrix $\bSigma_{\sRF}=\mu_1^2\bW\bW^{\sT}+\mu_2^2\bI_p$ satisfies the condition \eqref{eq:assumption:Sigma} w.h.p.. More precisely, for $\bh \sim \normal(0,\bI_p)$ independent of $\bW$, we have almost surely that
\[
\begin{split}
&\sup_{\lambda \geq \la_0} \Big(\big\|(\bW\bW^{\sT}+\la_0 \bI)^{-1}\bW \beta^\star\big\|_{2}\Big)^{-1}\cdot \big\|(\bW\bW^{\sT}+\la \bI)^{-1}\bW \beta^\star\big\|_{\infty}{}\longrightarrow 0\,,\\
&\sup_{\lambda \geq \la_0} \frac{1}{\sqrt{n}}\Big\|\big(\bW\bW^{\sT}+\la \bI\big)^{-1}(\bW\bW^{\sT}+\la_0\bI)^{1/2}\bh\Big\|_{\infty}\longrightarrow 0\,,
\end{split}
\]
where $\lambda_0 := \mu_2^2/\mu_1^2 >0$ is a constant that does not depend on $n,p$.
\end{lemma}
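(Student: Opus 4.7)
The plan is to prove both estimates entrywise via a leave-one-out argument that exploits the rotational symmetry of $\Unif(\mathbb{S}^{d-1})$. Using the push-through identity $(\bW\bW^{\sT}+\lambda\bI)^{-1}\bW = \bW(\bW^{\sT}\bW+\lambda\bI)^{-1}$ together with Sherman--Morrison applied to the decomposition $\bW^{\sT}\bW = \bW_{-i}^{\sT}\bW_{-i} + \bw_i\bw_i^{\sT}$, where $\bW_{-i}$ denotes $\bW$ with its $i$-th row removed, the $i$-th coordinate of the vector in the first claim becomes
\[
\bigl[(\bW\bW^{\sT}+\lambda\bI)^{-1}\bW\bbeta^\star\bigr]_i = \frac{\bw_i^{\sT}\bB_{-i}(\lambda)^{-1}\bbeta^\star}{1+\bw_i^{\sT}\bB_{-i}(\lambda)^{-1}\bw_i}, \qquad \bB_{-i}(\lambda) := \bW_{-i}^{\sT}\bW_{-i}+\lambda\bI_d.
\]
Crucially, $\bB_{-i}(\lambda)$ is independent of $\bw_i$ and satisfies $\|\bB_{-i}(\lambda)^{-1}\|_{\op}\le 1/\lambda_0$ for every $\lambda\ge\lambda_0$. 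Concentration of linear functionals on the sphere then yields, for each fixed $\lambda$ and $i$, the bound $|\bw_i^{\sT}\bB_{-i}(\lambda)^{-1}\bbeta^\star|\lesssim \sqrt{\log p}/(\lambda_0\sqrt{d})$ with probability at least $1-p^{-10}$; since the Sherman--Morrison denominator is $\ge 1$, a union bound over $i\in[p]$ gives $\|(\bW\bW^{\sT}+\lambda\bI)^{-1}\bW\bbeta^\star\|_\infty = O\bigl(\sqrt{(\log p)/d}\bigr)$ with high probability.

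For the denominator of the ratio in the first claim, I pass to the thin SVD $\bW = \bU\bS\bV^{\sT}$ and write
\[
\|(\bW\bW^{\sT}+\lambda_0\bI)^{-1}\bW\bbeta^\star\|_2^2 = \sum_{j}\frac{s_j^2}{(s_j^2+\lambda_0)^2}(\bV^{\sT}\bbeta^\star)_j^2.
\]
By the concentration statement in Theorem~\ref{theorem:matrix-concentration-covariance-operator}, the nonzero squared singular values $s_j^2$ cluster near $\max(1,\gamma_1)$, so each weight $s_j^2/(s_j^2+\lambda_0)^2$ is bounded below by a constant $c(\gamma_1,\lambda_0)>0$. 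By rotational invariance of $\Unif(\mathbb{S}^{d-1})$, I may replace $\bbeta^\star$ by $e_1$, after which $\|\bV^{\sT}\bbeta^\star\|_2^2$ is the top-left entry of a uniformly random projection of rank $\min(p,d)$ in $\R^d$ and concentrates around $\min(1,\gamma_1)>0$. Hence the denominator is $\Theta(1)$ almost surely, and the full ratio in the first claim is $O(\sqrt{\log p/d})\to 0$.

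For the second claim, the matrix $\bM(\lambda):=(\bW\bW^{\sT}+\lambda\bI)^{-1}(\bW\bW^{\sT}+\lambda_0\bI)^{1/2}$ is symmetric with eigenvalues $(\sigma_j+\lambda_0)^{1/2}/(\sigma_j+\lambda)\le 1/\sqrt{\lambda_0}$ for $\lambda\ge\lambda_0$, hence $\|\bM(\lambda)\|_{\op}\le\lambda_0^{-1/2}$. Conditional on $\bW$, each coordinate $[\bM(\lambda)\bh]_i = (\bM(\lambda)e_i)^{\sT}\bh$ is a centered Gaussian with variance at most $1/\lambda_0$. A union bound over $i\in[p]$ gives $\|\bM(\lambda)\bh\|_\infty \lesssim \sqrt{\log p}$ with probability $1-p^{-10}$, so after dividing by $\sqrt{n}$ the bound becomes $O(\sqrt{(\log p)/n})\to 0$, as required.

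The main obstacle I anticipate is upgrading all of the above bounds from a \emph{fixed} $\lambda\ge\lambda_0$ to a \emph{supremum} over the unbounded range $[\lambda_0,\infty)$. I plan to handle this by a deterministic Lipschitz-in-$\lambda$ argument: since $\|\frac{d}{d\lambda}\bB_{-i}(\lambda)^{-1}\|_{\op}\le \lambda_0^{-2}$ and $\|\frac{d}{d\lambda}\bM(\lambda)\|_{\op}\le \lambda_0^{-3/2}$ uniformly for $\lambda\ge\lambda_0$, a polynomial-in-$p$ net of $[\lambda_0,T]$ together with a union bound captures the supremum up to negligible error, and the large-$\lambda$ tail $\lambda>T$ is killed by the trivial estimates $\|\bB_{-i}(\lambda)^{-1}\|_{\op}\le 1/\lambda$ and $\|\bM(\lambda)\|_{\op}\le \lambda_0^{1/2}/\lambda$ for $T$ polynomial in $p$. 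Finally, the resulting $1-p^{-c}$ probability bounds (with $c$ chosen large enough) combine with Borel--Cantelli to promote convergence in probability to the required almost sure convergence.
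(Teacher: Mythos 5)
Your proposal is correct, but it takes a genuinely different route from the paper's. The paper first passes from spherical-row $\bW$ to a Gaussian surrogate $\bbW$ with i.i.d.\ $\normal(0,1/d)$ entries via the coupling $\bW = \bbLambda^{-1}\bbW$ with $\bbLambda = \diag(\|\bbw_i\|_2)_{i\leq p}$, and then exploits the \emph{global} left-rotational invariance $\bO\bbW\stackrel{d}{=}\bbW$ for $\bO\in O(p)$ (the paper's Lemma~\ref{lem:W:haar}) to conclude that the entire vector $(\bbW\bbW^\sT+\lambda\bI)^{-1}\bbW\bbeta^\star$, once normalized, is uniform on $\mathbb S^{p-1}$; this yields $\|\cdot\|_\infty = O(p^{-1/4})$ pointwise with probability $1-e^{-c\sqrt p}$, followed by the same netting-over-$\lambda$ argument you propose. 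You instead work directly with the spherical rows and apply Sherman--Morrison to $\bW^\sT\bW = \bW_{-i}^\sT\bW_{-i} + \bw_i\bw_i^\sT$, reducing each coordinate to a linear functional $\langle\bw_i, \bB_{-i}(\lambda)^{-1}\bbeta^\star\rangle$ of a single sphere-uniform vector $\bw_i$ that is independent of its coefficient vector; this leave-one-out structure gives $\|\cdot\|_\infty=O(\sqrt{\log p/d})$ with probability $1-p^{-c}$, and the denominator $\bigl(1+\bw_i^\sT\bB_{-i}(\lambda)^{-1}\bw_i\bigr)\geq1$ comes for free. The rate you obtain is sharper than the paper's $p^{-1/4}$, while the tail probability is polynomial rather than stretched-exponential; both are more than enough for Borel--Cantelli along the proportional sequence, so the tradeoff is immaterial here. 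For the $\ell^2$ denominator, note that your SVD weight can be bounded below by $s_j^2/(s_j^2+\lambda_0)^2\geq s_j^2/(\|\bW\|_\op^2+\lambda_0)^2$ on the event $\|\bW\|_\op\leq C$, which collapses your argument to the paper's crude lower bound $\|(\bW\bW^\sT+\lambda_0\bI)^{-1}\bW\bbeta^\star\|_2\geq(\lambda_0+C)^{-1}\|\bW\bbeta^\star\|_2$ and sidesteps any concern about small singular values when $\gamma_1\approx1$; I would state it this way rather than via the phrase ``cluster near $\max(1,\gamma_1)$,'' which the Marchenko--Pastur spectrum does not literally do. Your treatment of the second claim (variance bound $\leq1/\lambda_0$ plus Gaussian union over $p$ coordinates and netting in $\lambda$) is essentially identical to the paper's argument at Eq.~\eqref{eq:third:assumption:Sigma}, made explicit about the supremum over $\lambda$.
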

Next, we show that under the condition \eqref{eq:assumption:Sigma}, the max-margin estimator under the Gaussian design is delocalized. Note that in the separable regime, the max-margin estimator satisfies $\big\|\bthetaMM^{\bD}\big\|_2=1$ w.h.p.. Thus,  $\big\|\bthetaMM^{\bD}\big\|_{\infty}\leq \big\|\bthetaMM^{\bD}\big\|_{4}\leq \big\|\bthetaMM^{\bD}\big\|_{\infty}^{1/2}$ holds, so $\big\|\bthetaMM^{\bD}\big\|_{\infty}\pto 0$ is equivalent to $\big\|\bthetaMM^{\bD}\big\|_4\pto 0$.  To this end, let $\widehat H_3(\bD)$ be defined by
\begin{equation}\label{eq:def:H0}
    \widehat H_3(\bD) := \min_{\substack{\norm{\btheta}_2\leq 2\\ y_i\langle \bd_i, \btheta\rangle \ge \kappa^\star, \forall i \leq n}} \norm{\btheta}_2^2 -s_0 \norm{\btheta}_4^{4},\qquad s_0\equiv \frac{1}{12}
\end{equation}
which is an analogue of $\widehat H_{\ell}(\bD,s,\kappa), \ell\in \{1,2\},$ in \eqref{eq:def:H}. The choice of $s_0=\frac{1}{12}$ and the additional $2-$norm constraint on $\norm{\btheta}\leq 2$ above is to guarantee that i) the above optimization is convex and ii) $\hbtheta^{\bD}_0\equiv \frac{\kappa^\star}{\widehat\kappa_n(\bD)}\bthetaMM^{\bD}$ belongs to the constraint set with high probability. Moreover, define $\widehat F_{3}(\bD,\eps)$ for $\eps>0$ by
\begin{equation}\label{eq:def:F:eps}
\widehat F_{3}(\bD,\eps):=\min_{\substack{\norm{\btheta}_2^2 - s_0 \norm{\btheta}_4^{4}\leq 1-\eps \\ \norm{\btheta}_2\leq 2}} \frac{1}{n}\sum_{i=1}^{n} \left(\kappa^\star-y_i\langle \bd_i, \btheta\rangle \right)_{+}^2.
\end{equation}
$\widehat F_{3}(\bD,\eps)$ and $\widehat H_3(\bD)$ are related by the equivalence
\begin{equation}\label{eq:another:equiv}
    \widehat F_{3}(\bD,\eps)=0 \iff \widehat H_3(\bD)\leq 1-\eps
\end{equation}
Then, the key to proving $\big\|\bthetaMM^{\bG}\big\|\pto 0$ is the following lemma, whose proof is based on Gordon's inequality  as seen in Section \ref{subsec:lem:4norm:separable:loss:lower}.
\begin{lemma}\label{lem:4norm:separable:loss:lower}
Suppose that the rows of $\bG\in \R^{n\times p}$ are given by $(\bg_i)_{i\leq n}\stackrel{i.i.d.}{\sim}\normal(0,\bSigma)$, and the labels $(y_i)_{i\leq n}\in \{\pm 1\}$ are generated according to $\P(y_i=+1\given \bg_i)=f(\bg_i^{\sT}\btheta^\star)$. where $(\bSigma, \btheta^\star)$ satisfies the condition \eqref{eq:assumption:Sigma} and $f$ is continous. Then, for any $\eps>0$, there exists $\delta(\eps)>0$ such that $\widehat F_3(\bG,\eps)\geq \delta(\eps)$ w.h.p..
\end{lemma}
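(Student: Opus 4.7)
The plan is to apply Gordon's Gaussian min--max inequality \cite{Gordon88,thrampoulidis2015,montanari2019generalization}, splitting $C(\eps) := \{\btheta : \|\btheta\|_2 \le 2,\; \|\btheta\|_2^2 - s_0\|\btheta\|_4^4 \le 1 - \eps\}$ into two regimes. Using the Fenchel conjugate $(t)_+^2 = \sup_{u \ge 0}(2ut - u^2)$,
\[
n\,\widehat F_3(\bG, \eps) \;=\; \min_{\btheta \in C(\eps)}\, \sup_{\bu \ge 0}\;\sum_{i=1}^n \bigl[2u_i(\kappa^\star - y_i \langle \bg_i, \btheta\rangle) - u_i^2\bigr].
\]
If $\|\btheta\|_2^2 \le 1-\eps/2$, setting $c := \|\btheta\|_2 \le \sqrt{1-\eps/2}$ and rescaling $\btheta = c\btheta'$ with $\|\btheta'\|_2 \le 1$, the contribution to the objective becomes $c^2 \cdot \widehat F_n(\bG;\kappa^\star/c)$ with $\kappa^\star/c > \kappa^\star$; by Proposition~\ref{proposition:max-margin-gaussian} this is at least $\delta_1(\eps) > 0$ with high probability. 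The nontrivial regime is therefore $C_2(\eps) := C(\eps)\cap\{\|\btheta\|_2^2 > 1-\eps/2\}$, on which the defining inequality forces $\|\btheta\|_4^4 \ge \eps/(2s_0)$.

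On $C_2(\eps)$, I first condition to isolate the Gaussian randomness amenable to Gordon's comparison. Decompose $\bg_i = \eta_i \bSigma\btheta^\star/\|\btheta^\star\|_{\bSigma}^2 + \bar\bg_i$, where $\eta_i := \langle \bg_i, \btheta^\star\rangle$ and $\bar\bg_i$ is centered Gaussian, $\bSigma$-orthogonal to $\btheta^\star$ and independent of $\eta_i$. Since $y_i$ is a function of $\eta_i$ alone, conditional on $(\eta_i, y_i)_{i\le n}$ the matrix $\bar\bG$ with rows $\bar\bg_i$ is an isotropic Gaussian ensemble in the $\bSigma$-orthogonal complement of $\btheta^\star$ and is independent of $\by$. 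Writing $\alpha(\btheta) := \langle\btheta, \btheta^\star\rangle_{\bSigma}/\|\btheta^\star\|_{\bSigma}^2$ and $\bar\btheta := \btheta - \alpha(\btheta)\btheta^\star$, Gordon's minimax comparison lower bounds the conditional primal in probability by the auxiliary
\[
\min_{\btheta \in C_2(\eps)}\sup_{\bu \ge 0}\Bigl[\sum_i\!\bigl(2u_i(\kappa^\star - y_i\eta_i\alpha(\btheta)) - u_i^2\bigr) - 2\|\bar\btheta\|_{\bSigma}\,\bu^\sT \mathrm{diag}(\by)\bh - 2\|\bu\|_2\,\bg^\sT\bSigma^{1/2}\bar\btheta\Bigr],
\]
for independent $\bh \sim \normal(0, \bI_n)$, $\bg \sim \normal(0, \bI_p)$. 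Optimizing over $\bu$ in closed form reduces this to a program over $\btheta$ whose coordinate-wise dependence on the Gaussian randomness enters only through $(\bSigma + \lambda \bI)^{-1}\bSigma\btheta^\star$ and $(\bSigma + \lambda \bI)^{-1}\bSigma^{1/2}\bh$, and Condition~\eqref{eq:assumption:Sigma} forces these $\ell^\infty$-contributions to vanish uniformly in $\lambda \ge 0$. The large-$n$ limit of the auxiliary thus recovers the scalar variational formula of \cite[Theorem 3]{montanari2019generalization} characterizing $\kappa^\star$, augmented by the constraint $\|\btheta\|_4^4 \ge \eps/(2s_0)$. Under~\eqref{eq:assumption:Sigma} the unique unconstrained scalar optimum has vanishing limiting $\ell^4$-mass, so the supplementary constraint produces a strictly positive residual $\delta_2(\eps) > 0$; combining both regimes yields the lemma with $\delta(\eps) := \min(\delta_1(\eps),\delta_2(\eps))$.

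The main obstacle is the quantitative stability estimate in the second regime: showing that imposing $\|\btheta\|_4^4 \ge \eta > 0$ on top of the classical constraints of the limiting scalar Gordon formula strictly and uniformly lowers the optimal margin. Intuitively the unconstrained scalar optimum is delocalized under~\eqref{eq:assumption:Sigma}, so any feasible perturbation carrying nontrivial $\ell^4$-mass must strictly worsen the objective; converting this intuition into a uniform positive lower bound will require either strong convexity of the Gordon dual around its optimum or a compactness argument giving continuity of the optimal scalar margin as a function of the imposed $\ell^4$ lower bound. Additional technical care is needed to integrate the conditional Gordon lower bound over the $(\eta_i, y_i)$ ensemble into an unconditional high-probability statement, and to verify that the basic Gordon comparison (rather than the stronger CGMT, which would demand convexity) suffices on the possibly non-convex set $C_2(\eps)$.
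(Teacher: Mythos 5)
Your overall framework is right --- Gordon's inequality is the correct tool and condition~\eqref{eq:assumption:Sigma} is indeed what makes the $\ell^4$-constraint bite --- and your ``regime~1'' reduction (rescale to a margin $\kappa^\star/c > \kappa^\star$ and invoke Proposition~\ref{proposition:max-margin-gaussian}) is essentially what the paper does. But the ``regime~2'' argument is where the proof actually lives, and you have left precisely that as an acknowledged gap. The claim that ``any feasible perturbation carrying nontrivial $\ell^4$-mass must strictly worsen the objective'' needs a genuine proof: one cannot just cite that the unconstrained Gordon optimum is delocalized and conclude a \emph{uniform} positive lower bound on the constrained problem, since the Gordon value at $\kappa^\star$ tends to zero, so $\min_{C_2} \ge \min_{C(\eps)}$ alone gives nothing. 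You also correctly flag that $C_2(\eps)$ is nonconvex, which is a real complication you would create for yourself by splitting the constraint set.

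The paper never touches regime~2 directly. It applies Gordon once, over the \emph{convex} set $C(\eps)$ (the choice $s_0 = 1/12$ exists precisely to keep $\btheta \mapsto \|\btheta\|_2^2 - s_0\|\btheta\|_4^4$ convex on the ball of radius $2$), and then proves, via the KKT conditions of the auxiliary Gordon problem and condition~\eqref{eq:assumption:Sigma}, that the Gordon minimizer $\hbtheta_\eps$ satisfies $\|\hbtheta_\eps\|_4 \pto 0$ (Lemma~\ref{lem:minimizer:gordon:deloc}). This is the quantitative input you are missing. Once $\|\hbtheta_\eps\|_4^4 \le \eps/(2s_0)$ holds w.h.p., the constraint $\|\hbtheta_\eps\|_2^2 - s_0\|\hbtheta_\eps\|_4^4 \le 1-\eps$ immediately forces $\|\hbtheta_\eps\|_2^2 \le 1-\eps/2$, so the minimizer already lives in your regime~1 and $\widehat G(\bg,\eps) \ge \widetilde G_\eps$, the Gordon surrogate constrained only to $\{\|\btheta\|_2^2 \le 1-\eps/2\}$. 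The positive lower bound on $\widetilde G_\eps$ then comes not from any ``stability around the optimum'' argument but from the rescaling-to-margin-$\kappa^\star(\eps) > \kappa^\star$ trick combined with the uniform lower bound $\partial_\kappa F_\kappa(c_1,c_2) \ge C > 0$ (Lemma~\ref{lem:partial:kappa:lower}) and the fact that the Gordon value at $\kappa^\star$ tends to zero. In short: establish delocalization of the \emph{Gordon minimizer} under~\eqref{eq:assumption:Sigma} first, and the dichotomy you set up collapses to a single clean case; your ``quantitative stability estimate'' is never needed.
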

We now show that Proposition \ref{prop:univ_fixed_kappa} and Lemma \ref{lem:4norm:separable:loss:lower} implies Proposition \ref{proposition:delocalization-for-general-max-margin}.
\paragraph{Proof of Proposition \ref{proposition:delocalization-for-general-max-margin}}
We first show that $\widehat H_3(\bD) \stackrel{p}{\to} 1$ holds for both $\bD=\bX$ and $\bD=\bG$. First, by definition of $\widehat H_3(\bD)$ in \eqref{eq:def:H0}, we have
\begin{equation*}
    \widehat H_3(\bD)\leq \min_{y_i\langle \bd_i, \btheta\rangle \ge \kappa^\star, \forall i \leq n} \norm{\btheta}_2^2 \pto 1,
\end{equation*}
where the convergence in probability is due to Lemma \ref{lemma:MM_l2_limit}. To obtain a matching lower bound, consider arbitrary $\eps>0$. Note that Lemma \ref{lem:cov:RF:delocalized} and Lemma \ref{lem:4norm:separable:loss:lower} show that $\widehat F_3(\bG,\eps)\geq \delta(\eps)$ holds w.h.p., for both $\bG\in \{\bG_{\RF},\bG_{\ind}\}$. By Proposition \ref{prop:univ_fixed_kappa}, the distribution of $\widehat F_3(\bD,\eps)$ is universal for $\bD=\bX$ and $\bD=\bG$, thus it follows that $\widehat F_3(\bD,\eps)\geq \frac{\delta(\eps)}{2}$ holds w.h.p. for any $\bD$. As a consequence,
\begin{equation*}
\begin{split}
    \lim_{n\to\infty} \P\left(\widehat H_3(\bD)\leq 1-\eps\right)
    &\stackrel{(a)}{=}\lim_{n\to\infty}\P\left( \widehat F_{3}(\bD,\eps)=0 \right)\\
    &\leq \lim_{n\to\infty}\P\left( \widehat F_{3}(\bD,\eps)<
    \frac{\delta(\eps)}{2}\right)\\
    &=0,
\end{split}
\end{equation*}
where $(a)$ is by the equivalence \eqref{eq:another:equiv}. Therefore, $\widehat H_3(\bD) \stackrel{p}{\to} 1$ holds for both $\bD=\bX$ and $\bD=\bG$.

Now, let $\hbtheta_0^{\bD}\equiv \frac{\kappa^\star}{\widehat \kappa_n(\bD)}\bthetaMM^{\bD}$. Since $\widehat \kappa_n(\bD)\pto \kappa^\star$ holds by Theorem \ref{theorem:universality-of-the-margin}, it suffices to show that $\big\|\hbtheta_0^{\bD}\big\|_4\pto 0$. Note that $\big\|\hbtheta_0^{\bD}\big\|_2\leq \frac{\kappa^\star}{\widehat \kappa_n(\bD)}\pto 1$, so $\big\|\hbtheta_0^{\bD}\big\|_2\leq 2$ holds w.h.p.. Further, $\min_{i\leq n} y_i \langle \bg_i, \hbtheta_0^{\bD}\rangle =\kappa^\star$ holds by construction of $\hbtheta_0^{\bD}$. Thus, $\hbtheta_0^{\bD}$ is in the constraint set of the optimization problem \eqref{eq:def:H0} w.h.p.. Therefore, w.h.p. we have that
\begin{equation*}
    \norm{\hbtheta_{0}^{\bD}}_4=\frac{\norm{\hbtheta_0^{\bD}}_2^2-s_0\norm{\hbtheta_0^{\bD}}_4^4-\norm{\hbtheta_0^{\bD}}_2^2}{-s_0}\leq\frac{\widehat H_3(\bD)-\norm{\hbtheta_0^{\bD}}_2^2}{-s_0}.
\end{equation*}
Since we have shown that $\widehat H_3(\bD) \stackrel{p}{\to} 1$ holds for both $\bD=\bX$ and $\bD=\bG$ and $\norm{\hbtheta_0^{\bD}}_2^2\pto 1$ holds by Lemma \ref{lemma:MM_l2_limit}, the right hand side of the inequality above converges in probability to $0$. This concludes the proof.

\subsection{Proof of Lemma \ref{lem:cov:RF:delocalized}}
\label{ss:cov:RF}
In this subsection, we prove Lemma \ref{lem:cov:RF:delocalized}. Note that the $\lambda_{\min}(\bSigma_{\sRF}), \lambda_{\max}(\bSigma_{\sRF})\in [c,C]$ w.h.p. by Theorem \ref{theorem:matrix-concentration-covariance-operator}, so the first condition of \eqref{eq:assumption:Sigma} is satisfied. Moreover, observe the last condition of \eqref{eq:assumption:Sigma} is implied by the first condition: since $(\bSigma+\la\bI)^{-1}\bSigma^{1/2}\bh\sim \normal(0,(\bSigma+\la\bI)^{-1}\bSigma(\bSigma+\la\bI)^{-1})$, we have with probability at least $1-e^{-cp}$ that
\begin{equation}\label{eq:third:assumption:Sigma}
\begin{split}
\frac{1}{\sqrt{n}}\big\|(\bSigma+\la\bI)^{-1}\bSigma^{1/2}\bh\big\|_{\infty}
&\leq \frac{C\sqrt{\log p}}{\sqrt{n}}\max_{i\leq p}\Big((\bSigma+\la\bI)^{-1}\bSigma(\bSigma+\la\bI)^{-1}\Big)^{1/2}_{ii}\\
&\leq \frac{C\sqrt{\log p}}{\sqrt{n}}\Big\|(\bSigma+\la\bI)^{-1}\bSigma(\bSigma+\la\bI)^{-1}\Big\|_{\op}^{1/2}\leq \frac{C\sqrt{\log p}}{\sqrt{n}}\cdot\frac{\lambda_{\max}(\bSigma)^{1/2}}{\lambda_{\min}(\bSigma)}\,.
\end{split}
\end{equation}
Therefore, we focus on establishing the second condition of \eqref{eq:assumption:Sigma} for $\bSigma_{\sRF}$.

To establish the second condition of \eqref{eq:assumption:Sigma}, we consider the random matrix $\bbW\in \R^{n\times p}$ with rows $(\bbw_i)_{i\leq n} \stackrel{i.i.d.}{\sim}\normal(0,\bI_d/d)$. Recalling that the rows $(\bw)_{i\leq p}$ of $\bW$ are distributed $\bw_i\stackrel{i.i.d.}{\sim}\Unif(\SS^{d-1}(1))$, the matrices $\bbW$ and $\bW$ are related by
\begin{equation}\label{eq:dist:W:bbW}
    (\bw_i)_{i\leq p}\stackrel{d}{=}\Big(\frac{\bbw_i}{\|\bbw_i\|_2}\Big)_{i\leq p}\,.
\end{equation}
We will first the analog of Lemma \ref{lem:cov:RF:delocalized} for $\bbW$ and then use the distributional relation \eqref{eq:dist:W:bbW} to argue the same for $\bW$. The advantage of considering $\bbW$ first is that $\bbW$ satisfies a rotational invariance property (see Lemma \ref{lem:W:haar} below). In particular, we prove the following lemma in Section \ref{sss:RF:tilde}.
\begin{lemma}\label{lem:cov:RF:tilde:delocalized}
Denote the matrix $\bbSigma_{\sRF}=\mu_1^2\bbW\bbW^{\sT}+\mu_2^2\bI_p$. Then, for $\bh \sim \normal(0,\bI_p)$ independent of $\bbW$, we have almost surely that
\[
\begin{split}
\sup_{\lambda \geq \la_0} \Big(\big\|(\bbW\bbW^{\sT}+\la_0 \bI)^{-1}\bbW \beta^\star\big\|_{2}\Big)^{-1}\cdot \big\|(\bbW\bbW^{\sT}+\la \bI)^{-1}\bbW \beta^\star\big\|_{\infty}{}\longrightarrow 0\,,
\end{split}
\]
where $\lambda_0 := \mu_2^2/\mu_1^2 >0$ is a constant that does not depend on $n,p$.
\end{lemma}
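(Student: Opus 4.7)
The plan is to exploit the rotational invariance and independence structure of the Gaussian matrix $\bbW$ to reduce the ratio to a Sherman--Morrison identity, and then to establish the required decay by conditional Gaussian concentration combined with a Lipschitz argument in $\la$. Concretely, since the rows of $\bbW$ are i.i.d.\ $\normal(0,\bI_d/d)$, one has $\bbW \bQ^{\sT} \stackrel{d}{=} \bbW$ for every orthogonal $\bQ \in \R^{d \times d}$; applying such a rotation I may assume without loss of generality that $\beta^\star = \|\beta^\star\|_2\, e_1$, so $\bbW\beta^\star = \|\beta^\star\|_2\, \bv$ where $\bv \in \R^p$ is the first column of $\bbW$, distributed $\normal(0,\bI_p/d)$ and independent of the submatrix $\bbW_{-1} \in \R^{p \times (d-1)}$ formed by the remaining columns. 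Writing $\bbW\bbW^{\sT} = \bv\bv^{\sT} + \bbW_{-1}\bbW_{-1}^{\sT}$ and setting $\bM_\la := \bbW_{-1}\bbW_{-1}^{\sT} + \la\bI$, the Sherman--Morrison identity gives
\[
(\bbW\bbW^{\sT} + \la\bI)^{-1}\bbW\beta^\star
\;=\; \|\beta^\star\|_2 \cdot \frac{\bM_\la^{-1}\bv}{1+\bv^{\sT} \bM_\la^{-1}\bv},
\]
so it suffices to upper bound $\|\bM_\la^{-1}\bv\|_\infty$ and lower bound $\|\bM_{\la_0}^{-1}\bv\|_2$, where crucially $\bv$ is independent of every $\bM_\la$.

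Conditionally on $\bbW_{-1}$, each coordinate $(\bM_\la^{-1}\bv)_i$ is centered Gaussian of variance $(\bM_\la^{-2})_{ii}/d \le 1/(d\la^2)$ (using $\bM_\la \succeq \la\bI$), so a Gaussian union bound over $i \le p$ yields the numerator estimate $\|\bM_\la^{-1}\bv\|_\infty \lesssim \sqrt{\log p}/(\sqrt{d}\,\la)$ with high probability. For the denominator, Theorem~\ref{theorem:matrix-concentration-covariance-operator} applied to $\bbW_{-1}$ gives $\|\bbW_{-1}\bbW_{-1}^{\sT}\|_{\op} \le C$ with high probability for a constant $C$ depending only on $p/d$, hence $\mathrm{tr}(\bM_{\la_0}^{-2}) \ge p/(C+\la_0)^2$; Hanson--Wright concentration of $\bv^{\sT} \bM_{\la_0}^{-2}\bv$ and $\bv^{\sT} \bM_{\la_0}^{-1}\bv$ around their means then furnishes $\|\bM_{\la_0}^{-1}\bv\|_2 \gtrsim \sqrt{p/d}$ and $1+\bv^{\sT} \bM_{\la_0}^{-1}\bv \le C'$ with high probability. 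Since $\la \ge \la_0$ implies $\bM_\la^{-1} \preceq \bM_{\la_0}^{-1}$ and in particular $1+\bv^{\sT} \bM_\la^{-1}\bv \ge 1$, the pointwise ratio is at most of order $\sqrt{\log p/p}/\la_0$, which tends to $0$.

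The main obstacle is upgrading this pointwise control to the supremum over the whole half-line $[\la_0,\infty)$. For this I plan to use the deterministic resolvent identity $\bM_\la^{-1} - \bM_{\la'}^{-1} = (\la'-\la)\bM_\la^{-1}\bM_{\la'}^{-1}$ together with the Gaussian norm bound $\|\bv\|_2 \lesssim \sqrt{p/d}$ to obtain the Lipschitz estimate
\[
\|\bM_\la^{-1}\bv - \bM_{\la'}^{-1}\bv\|_\infty
\;\le\; \frac{\|\bv\|_2}{\la_0^2}\,|\la-\la'|
\;\lesssim\; \frac{\sqrt{p/d}}{\la_0^2}\,|\la-\la'|
\]
valid for $\la, \la' \ge \la_0$. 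A polynomial-size $\epsilon$-net of $[\la_0, p^2]$ with $\epsilon \asymp \la_0^2\sqrt{\log p}/p$ then transfers the pointwise bound uniformly to this interval at the cost of only a logarithmic union-bound factor, while the tail $\la > p^2$ is handled directly by the crude estimate $\|\bM_\la^{-1}\bv\|_\infty \le \|\bv\|_2/\la \lesssim p^{-3/2}$. The high-probability bounds invoked at each stage are summable in $n$ (being either stretched-exponential or inverse-polynomial of arbitrarily large degree), so Borel--Cantelli upgrades the resulting convergence in probability to the almost-sure statement claimed in the lemma.
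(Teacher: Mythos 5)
Your argument is correct, but it follows a genuinely different route from the paper's. The paper exploits \emph{left} rotational invariance: since $\bO\bbW\stackrel{d}{=}\bbW$ for Haar $\bO\in O(p)$, the vector $(\bbW\bbW^{\sT}+\la\bI)^{-1}\bbW\beta^\star$ is, after normalization, uniform on the sphere, which gives the pointwise delocalization bound $O(p^{-1/4})$ with probability $1-e^{-c\sqrt{p}}$; large $\la\ge p^{1/4}$ is handled by the crude bound $\la^{-1}\norm{\bbW}_{\op}$, and a Lipschitz-in-$\la$ covering argument plus Borel--Cantelli finishes the proof, with the $\la_0$ denominator lower bounded separately via $\norm{\bbW\beta^\star}_2=\Omega(1)$. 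You instead use \emph{right} invariance ($\bbW\bQ^{\sT}\stackrel{d}{=}\bbW$) to reduce to $\beta^\star=e_1$, so that $\bbW\beta^\star=\bv$ is a Gaussian column independent of the remaining columns, and then Sherman--Morrison decouples $\bv$ from the resolvent of $\bbW_{-1}\bbW_{-1}^{\sT}$; conditional coordinatewise Gaussian bounds (using only the deterministic inequality $\bM_\la\succeq\la\bI$) give $\norm{\bM_\la^{-1}\bv}_\infty\lesssim\sqrt{\log p}/(\sqrt{d}\,\la)$, Hanson--Wright plus the operator-norm bound on $\bbW_{-1}$ controls the $\la_0$ denominator, and the same net/Lipschitz/Borel--Cantelli structure handles uniformity in $\la$. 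What your route buys is a sharper rate ($\sqrt{\log p/p}$ versus $p^{-1/4}$), an integrated treatment of the $\la_0$ lower bound, and failure probabilities that are still summable (inverse-polynomial of arbitrarily high degree suffices); what it costs is a heavier reliance on the exact Gaussianity and column-independence of $\bbW$, whereas the paper's spherical-symmetry argument uses only orthogonal invariance of the ensemble. Minor, immaterial slips: the crude tail bound for $\la>p^2$ is really $\norm{\bv}_2/\la=O(p^{-2})$ rather than $p^{-3/2}$, and since $\norm{\beta^\star}_2=1$ the prefactor $\norm{\beta^\star}_2$ in your Sherman--Morrison identity is simply $1$; neither affects the conclusion.
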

We first prove that Lemma~\ref{lem:cov:RF:tilde:delocalized} implies Lemma~\ref{lem:cov:RF:delocalized}.
\paragraph{Proof of Lemma \ref{lem:cov:RF:delocalized}}
We have $\lambda_{\min}(\bSigma_{\sRF}), \lambda_{\max}(\bSigma_{\sRF})\in [c,C]$ w.h.p. by Theorem \ref{theorem:matrix-concentration-covariance-operator}, so the first condition of \eqref{eq:assumption:Sigma} is satisfied. Further, as seen in \eqref{eq:third:assumption:Sigma}, the last condition of \eqref{eq:assumption:Sigma} is implied by the first condition, thus we focus on proving the second condition in \eqref{eq:assumption:Sigma}.

First, note that on the w.h.p. event $\big\{\|\bW\|_{\op}^2\leq C\}$, we have
\begin{equation}\label{eq:cov:RF:tech}
    \big\|(\bW \bW^{\sT}+\la_0 \bI)^{-1}\bW \bbeta^\star\big\|_2\geq (\la_0+C)^{-1}\big\|\bW\bbeta^\star\big\|_2=(\la_0+C)^{-1}\Big(\sum_{i=1}^{p}\big(\langle \bw_i, \bbeta^\star\rangle\big)^2\Big)^{1/2}\,.
\end{equation}
Since $\sum_{i=1}^{p}\big(\langle \bw_i, \bbeta^\star\rangle\big)^2$ concentrates around its expectation and $\bw_i$ is isotropic, we have that $\sum_{i=1}^{p}\big(\langle \bw_i, \bbeta^\star\rangle\big)^2\geq \Omega(1)$ w.h.p.. Hence, it follows that $\big\|(\bW\bW^{\sT}+\la_0\bI)^{-1}\bW \bbeta^\star\big\|_2\geq \Omega(1)$ w.h.p.. 

Thus, for the rest of the proof, we argue that $\sup_{\la\geq \la_0}\big\|(\bW\bW^{\sT}+\la\bI)^{-1}\bW\bbeta^\star\big\|_{\infty}\to 0$ holds almost surely. Note that by the distributional relation \eqref{eq:dist:W:bbW}, we can couple $\bW$ and $\bbW$ by
\begin{equation*}
    \bW =\bbLambda^{-1}\bbW\,,\quad\textnormal{where}\quad \bbLambda :=\diag\Big(\big\|\bbw_i\big\|_2\Big)_{i\leq p}\,.
\end{equation*}
Then, for each fixed $\lambda\geq \lambda_0$, we can bound $\big\|(\bW\bW^{\sT}+\la\bI)^{-1}\bW\bbeta^\star\big\|_{\infty}$ by
\begin{equation}\label{eq:bound:infinity:1}
\begin{split}
\big\|(\bW\bW^{\sT}+\la\bI)^{-1}\bW\bbeta^\star\big\|_{\infty}
&=\big\|\bbLambda(\bbW\bbW^{\sT}+\la\bbLambda)^{-1}\bbW\bbeta^\star\big\|_{\infty}\\
&\leq \max_{i\leq p}\big\{\|\bbw_i\|_2\big\}\cdot \big\|(\bbW\bbW^{\sT}+\la\bbLambda)^{-1}\bbW\bbeta^\star\big\|_{\infty}\,.
\end{split}
\end{equation}
Since $(\bbw_i)_{i\leq p}\stackrel{i.i.d.}{\sim}\normal(0,\bI_d/d)$ holds, $\sqrt{d}\big(\|\bbw_i\big\|_2-1)$ is $O(1)$-subgaussian (see e.g. \cite[Theorem 3.1.1]{vershynin2018high}). Thus, with probability at least $1-e^{-cp}$, we have
\begin{equation}\label{eq:concentration:2:norm}
\max_{i\leq p}\Big\{\big|\|\bbw_i\|_2-1\big|\Big\}\leq 1+C\frac{\sqrt{\log p}}{\sqrt{d}}\,,
\end{equation}
where $C>0$ is a universal constant. In particular, by Borel-Cantelli lemma, $\max_{i\leq p}\big\{\|\bbw_i\|_2\big\}\to 1$ almost surely. Moreover, using the identity $(\bA+\bB)^{-1}=\bA^{-1}-(\bA+\bB)^{-1}\bB\bA^{-1}$ for $\bA=\bbW\bbW^{\sT}+\la \bI$ and $\bB=\la(\bbLambda-\bI)$, we can bound
\begin{equation}\label{eq:bound:infinity:2}
\begin{split}
&\big\|(\bbW\bbW^{\sT}+\la\bbLambda)^{-1}\bbW\bbeta^\star\big\|_{\infty}\\&\leq \big\|(\bbW\bbW^{\sT}+\la\bI)^{-1}\bbW\bbeta^\star\big\|_{\infty}+\lambda \big\|(\bbW\bbW^{\sT}+\la\bbLambda)^{-1}(\bbLambda-\bI)(\bbW\bbW^{\sT}+\la\bI)^{-1}\bbW\bbeta^\star\big\|_{\infty}\,.
\end{split}
\end{equation}
Using the crude bound $\norm{\ba}_{\infty}\leq \norm{\ba}_2$, the last piece can be bounded by
\begin{equation*}
\begin{split}
&\lambda \big\|(\bbW\bbW^{\sT}+\la\bbLambda)^{-1}(\bbLambda-\bI)(\bbW\bbW^{\sT}+\la\bI)^{-1}\bbW\bbeta^\star\big\|_{2}\\
&\leq \lambda \big\|(\bbW\bbW^{\sT}+\la\bbLambda)^{-1}\big\|_{\op}\big\|(\bbW\bbW^{\sT}+\la\bI)^{-1}\big\|_{\op}\big\|\bbW\big\|_{\op}\cdot \max_{i\leq p}\Big\{\big|\|\bbw_i\|_2-1\big|\Big\}\,.
\end{split}
\end{equation*}
Thus, on the w.h.p. event where $\big\{\big\|\bbW\big\|_{\op}\leq C\big\}$ and \eqref{eq:concentration:2:norm} hold, we have
\begin{equation*}
\lambda \big\|(\bbW\bbW^{\sT}+\la\bbLambda)^{-1}(\bbLambda-\bI)(\bbW\bbW^{\sT}+\la\bI)^{-1}\bbW\bbeta^\star\big\|_{2}\leq \lambda^{-1}C^\prime\cdot\frac{\sqrt{\log p}}{\sqrt{d}}\,.
\end{equation*}
Combining with \eqref{eq:bound:infinity:1} and \eqref{eq:bound:infinity:2}, we have w.h.p. that 
\begin{equation*}
\sup_{\la\geq \la_0}\big\|(\bW\bW^{\sT}+\la\bI)^{-1}\bW\bbeta^\star\big\|_{\infty}\leq C\bigg(\sup_{\la\geq \la_0}\big\|(\bbW\bbW^{\sT}+\la\bI)^{-1}\bbW\bbeta^\star\big\|_{\infty}+\lambda_0^{-1} \frac{\sqrt{\log p}}{\sqrt{d}}\bigg)\,.
\end{equation*}
Combining with Lemma \ref{lem:cov:RF:tilde:delocalized} concludes the proof of Lemma \ref{lem:cov:RF:delocalized}.
\subsubsection{Proof of Lemma~\ref{lem:cov:RF:tilde:delocalized}}
\label{sss:RF:tilde}
We prove Lemma~\ref{lem:cov:RF:tilde:delocalized} by a covering argument. First, we show in Lemma \ref{lem:ptwise:bound} that for fixed $\lambda\geq \lambda_0$, $\big\|(\bbW\bbW^{\sT}+\la \bI)^{-1}\bbW\beta^\star\big\|_{\infty}=O(p^{-1/4})$ with $1-e^{-c\sqrt{p}}$. Then, we take care of large enough $\lambda$ with $\lambda\geq p^{1/4}$ separately in Lemma \ref{lem:large:lambda}, and then cover the interval $[\lambda_0,p^{1/4}]$ by a $p^{-1/4}$ grid. The following symmetry property of $\bbW$ is crucial to argue $\big\|(\bbW\bbW^{\sT}+\la \bI)^{-1}\bbW\beta^\star\big\|_{\infty}=O(p^{-1/4})$ holds pointwise.
\begin{lemma}\label{lem:W:haar}
Let $\bbW\in \R^{p\times d}$ be the matrix with entries $\widetilde{w}_{i,j}\stackrel{i.i.d}{\sim}\normal(0,1/d)$. Let $\bO\in \R^{p\times p}$ distributed according to Haar measure on the orthgonal group $O(p)$ independent of $\bbW$. Then, we have for any $\beta\in \R^p,$ and constants $\alpha_1,\alpha_2>0$,
\begin{equation*}
\boldsymbol v \stackrel{d}{=}\bO \boldsymbol v\quad\textnormal{where}\quad\boldsymbol v:=
    \left(\alpha_1\bbW\bbW^{\sT}+\alpha_2 \bI_p\right)^{-1}\bbW \beta.
\end{equation*}
\end{lemma}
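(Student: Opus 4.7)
The plan is to prove this lemma by a direct rotational-invariance computation, exploiting the fact that the rows of $\bbW$ are i.i.d.\ centered Gaussian with isotropic covariance $\bI_d/d$. Since this means the columns of $\bbW$ are i.i.d.\ draws from $\normal(\bzero,\bI_p/d)$, the distribution of $\bbW$ as a random $p\times d$ matrix is left-invariant under any deterministic orthogonal transformation in $O(p)$: $\bO\bbW \stackrel{d}{=} \bbW$.

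My first step will be to reduce to the deterministic-$\bO$ case. Because $\bO$ and $\bbW$ are independent, it suffices to show that $\bO\bv \stackrel{d}{=} \bv$ for every fixed $\bO\in O(p)$; the conclusion then follows by conditioning on $\bO$ and integrating against Haar measure.

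The second step is the algebraic identity. For any fixed $\bO\in O(p)$, I would compute
\begin{align*}
(\alpha_1(\bO\bbW)(\bO\bbW)^{\sT}+\alpha_2\bI_p)^{-1}(\bO\bbW)\beta
  &= \bigl(\bO(\alpha_1\bbW\bbW^{\sT}+\alpha_2\bI_p)\bO^{\sT}\bigr)^{-1}\bO\bbW\beta\\
  &= \bO(\alpha_1\bbW\bbW^{\sT}+\alpha_2\bI_p)^{-1}\bO^{\sT}\bO\bbW\beta\\
  &= \bO\,\bv,
\end{align*}
where the first equality uses $\bO\bO^{\sT}=\bI_p$ inside the inverse and the second uses that conjugation by $\bO$ commutes with inversion.

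Finally, combining the two steps, I would substitute $\bbW\to \bO\bbW$ on the left-hand side: by invariance $\bO\bbW\stackrel{d}{=}\bbW$, so the vector in the display above has the same law as $\bv$, giving $\bO\bv\stackrel{d}{=}\bv$ for every fixed $\bO\in O(p)$. Integrating over the Haar measure of $\bO$ (independent of $\bbW$) yields $\bv\stackrel{d}{=}\bO\bv$. There is no real obstacle here; the only thing to verify carefully is the dimensionality (e.g.\ that $\beta$ should be taken in $\R^d$ so that $\bbW\beta\in\R^p$), and that the inverse is well-defined, which holds whenever $\alpha_2>0$.
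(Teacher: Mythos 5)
Your proof is correct and uses exactly the same idea as the paper, which simply invokes the left rotational invariance $\bO\bbW \stackrel{d}{=} \bbW$ of the Gaussian matrix together with the conjugation identity and independence of $\bO$; your write-up just spells these steps out. Your remark that $\beta$ should live in $\R^d$ (not $\R^p$) correctly identifies a typo in the lemma statement.
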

\begin{proof}
The assertion immediately follows from the fact $\bO \bbW\stackrel{d}{=}\bbW$.
\end{proof}
As a consequence of Lemma \ref{lem:W:haar}, we first show the following pointwise estimate.
\begin{lemma}\label{lem:ptwise:bound}
For each fixed $\lambda \geq \lambda_0>0$, we have with probability at least $1-e^{-c\sqrt{p}}$ that
\[
\big\|(\bbW\bbW^{\sT}+\la \bI)^{-1}\bbW\beta^\star\big\|_{\infty}\leq  Cp^{-1/4}\,.
\]
\end{lemma}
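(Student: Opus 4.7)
The plan is to exploit the orthogonal invariance furnished by Lemma~\ref{lem:W:haar} to replace $\bv := (\bbW\bbW^\sT + \lambda\bI)^{-1}\bbW\beta^\star$ with its polar decomposition $\|\bv\|_2 \cdot \bu$, where $\bu$ is uniform on $\mathbb{S}^{p-1}$ and independent of $\|\bv\|_2$. Once this is established, the $\ell^\infty$ bound reduces to (i) a deterministic bound on $\|\bv\|_2$, and (ii) a concentration bound on $\|\bu\|_\infty$. The reason the claimed rate is the comparatively weak $p^{-1/4}$ rather than the sharp $\sqrt{(\log p)/p}$ is precisely so that the Gaussian tail on sphere coordinates matches the target deviation $e^{-c\sqrt{p}}$.

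First I would invoke Lemma~\ref{lem:W:haar} with $\alpha_1 = 1$, $\alpha_2 = \lambda$ to conclude that $\bv \stackrel{d}{=} \bO\bv$ for Haar $\bO$ independent of $\bbW$, which implies that the law of $\bv$ is $O(p)$-invariant. By the usual argument (conditional on $\|\bv\|_2 = r$, the distribution of $\bv$ is still $O(p)$-invariant on the sphere of radius $r$, hence the uniform distribution), this gives
\[
\bv \stackrel{d}{=} \|\bv\|_2 \cdot \bu, \qquad \bu \sim \mathsf{Unif}(\mathbb{S}^{p-1}), \qquad \bu \perp \|\bv\|_2.
\]

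Next I would establish a deterministic bound on $\|\bv\|_2$. Using the SVD $\bbW = \bU\Sigma\bV^\sT$, a direct computation yields $(\bbW\bbW^\sT + \lambda\bI)^{-1}\bbW = \bU(\Sigma^2 + \lambda\bI)^{-1}\Sigma\bV^\sT$, whose operator norm is $\max_i \sigma_i/(\sigma_i^2+\lambda) \le 1/(2\sqrt{\lambda}) \le 1/(2\sqrt{\lambda_0})$ for all $\lambda \ge \lambda_0$. Since $\|\beta^\star\|_2 = 1$, this gives $\|\bv\|_2 \le 1/(2\sqrt{\lambda_0})$ with probability one. For the uniform vector, I would represent $\bu = \bz/\|\bz\|_2$ with $\bz \sim \mathcal{N}(0,\bI_p)$: a standard Gaussian tail combined with $\|\bz\|_2^2 \ge p/2$ (holding with probability at least $1 - e^{-cp}$) gives, for any coordinate $i$ and $t > 0$,
\[
\mathbb{P}\bigl(|u_i| \ge t\bigr) \le 2 e^{-t^2 p/4} + e^{-cp}.
\]
Setting $t = p^{-1/4}$ so that $t^2 p/4 = \sqrt{p}/4$, and union-bounding over the $p$ coordinates, yields $\mathbb{P}(\|\bu\|_\infty \ge p^{-1/4}) \le e^{-c'\sqrt{p}}$ for all $p$ sufficiently large. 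Combining with the deterministic $\|\bv\|_2$-bound produces
\[
\|\bv\|_\infty = \|\bv\|_2 \|\bu\|_\infty \le \frac{1}{2\sqrt{\lambda_0}}\cdot p^{-1/4}
\]
with probability at least $1 - e^{-c\sqrt{p}}$, which is the claim.

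No real obstacle arises: once the polar decomposition is in hand, everything is routine. The only point worth being careful about is choosing the deviation scale $t = p^{-1/4}$ — this is the unique choice that simultaneously (a) gives the target probability $1 - e^{-c\sqrt{p}}$ after a union bound over $p$ coordinates, and (b) matches the rate stated in the lemma. Sharper bounds on $\|\bu\|_\infty$ would require sharper probability, which is not what the subsequent covering argument in Section~\ref{sss:RF:tilde} needs.
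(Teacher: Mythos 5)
Your proposal is correct and follows essentially the same route as the paper: invoke Lemma~\ref{lem:W:haar} to get that the direction of $\bv=(\bbW\bbW^{\sT}+\la\bI)^{-1}\bbW\beta^\star$ is uniform on the sphere, represent it as a normalized Gaussian to get $\P(\|\bv/\|\bv\|_2\|_\infty \ge p^{-1/4})\le e^{-c\sqrt{p}}$, and multiply by a bound on $\|\bv\|_2$. The only (harmless) difference is that you bound $\|\bv\|_2\le 1/(2\sqrt{\la_0})$ deterministically via the SVD, whereas the paper uses the cruder bound $\la_0^{-1}\|\bbW\|_{\op}\le C$, which costs an extra operator-norm concentration event of probability $1-e^{-cp}$.
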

\begin{proof}
By Lemma \ref{lem:W:haar}, we have that 
\begin{equation}\label{eq:goal:1:lem:xi:1}
\begin{split}
    \overline{\bv} := \frac{\bv}{\norm{\bv}_2}\sim \Unif(\SS^{p-1}(1))\,,\quad\textnormal{where}\quad \bv:=(\bbW\bbW^{\sT}+\la \bI)^{-1}\bbW\beta^\star\,.
\end{split}
\end{equation}
Thus, $\overline{\bv}\stackrel{d}{=}\frac{\bg}{\norm{\bg}_2}$, where $\bg\sim \normal(0,\bI_p)$. Note that $\norm{\bg}_2\geq \sqrt{p}/2$ with probability at least $1-e^{-cp}$ (see e.g. Theorem 3.1.1 in \cite{vershynin2018high}). Further, $\norm{\bg}_{\infty}\leq p^{1/4}$ with probability at least $1-e^{-c\sqrt{p}}$ by a union bound. Hence,
\begin{equation}\label{eq:v:vprime:infty:norm}
\P\Big(\norm{\overline{\bv}}_{\infty}\geq p^{-1/4}\Big)\leq e^{-c\sqrt{p}}\,.
\end{equation}
Furthermore, we have that
\begin{equation*}
\norm{\bv}_2\leq \la^{-1}\norm{\bbW\beta^\star}\leq \la_0^{-1}\cdot\norm{\bbW}_{\op}\,.
\end{equation*}
By Theorem \ref{theorem:matrix-concentration-covariance-operator}, $\norm{\bbW}_{\op}\leq C$ holds with probability at least $1-e^{-cp}$. Thus, $\norm{\bv}_2\leq C^\prime$ holds with probability at least $1-e^{-cp}$. Therefore, combining with \eqref{eq:v:vprime:infty:norm}, we have that with probability at least $1-e^{-c\sqrt{p}}$ that
\[
\norm{\bv}_{\infty}=\norm{\bv}_2\cdot \norm{\overline{\bv}}_{\infty}\leq Cp^{-1/4}\,,
\]
which concludes the proof.
\end{proof}
By a crude estimate, we next deal with the case where $\la\geq p^{1/4}$.
\begin{lemma}\label{lem:large:lambda}
    With probability at least $1-e^{-cp}$, we have
    \[
    \sup_{\la\geq p^{1/4}}\big\|(\bbW\bbW^{\sT}+\la \bI)^{-1}\bbW\beta^\star\big\|_{\infty}\leq  Cp^{-1/4}\,.
    \]
\end{lemma}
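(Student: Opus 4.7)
The plan is to treat this as a short direct computation based on operator-norm estimates; the rotational invariance (Lemma~\ref{lem:W:haar}) that drove Lemma~\ref{lem:ptwise:bound} is not needed here, because the factor $\la^{-1}$ alone already delivers the $p^{-1/4}$ rate in the regime $\la \geq p^{1/4}$. This is precisely what makes this lemma a clean complement to Lemma~\ref{lem:ptwise:bound}: the latter handles moderate $\la$ via a covering over a bounded interval, while the present one controls the tail via a single monotone bound.

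First, I would use $\|\cdot\|_\infty \le \|\cdot\|_2$ to reduce the claim to
\[
\sup_{\la \geq p^{1/4}} \big\|(\bbW\bbW^{\sT}+\la \bI)^{-1}\bbW\beta^\star\big\|_2 \leq C p^{-1/4}.
\]
Since $\bbW\bbW^{\sT}$ is positive semidefinite, every eigenvalue of $\bbW\bbW^{\sT}+\la \bI$ is at least $\la$, so $\big\|(\bbW\bbW^{\sT}+\la \bI)^{-1}\big\|_{\op} \leq \la^{-1}$. Submultiplicativity then gives
\[
\big\|(\bbW\bbW^{\sT}+\la \bI)^{-1}\bbW\beta^\star\big\|_2 \leq \la^{-1}\,\|\bbW\|_{\op}\,\|\beta^\star\|_2,
\]
and since $\|\beta^\star\|_2 = 1$ it remains only to control $\|\bbW\|_{\op}$.

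Finally, I would invoke Theorem~\ref{theorem:matrix-concentration-covariance-operator} applied to $\bbW \in \R^{p \times d}$, whose rows are i.i.d.\ $\normal(0, \bI_d/d)$: under the proportional asymptotics $p/d \to \gamma_1$, this yields $\|\bbW\|_{\op} \leq C$ with probability at least $1 - e^{-cp}$ for some constants $c, C > 0$ depending only on $\gamma_1$. Combining, on this event we have for every $\la \geq p^{1/4}$,
\[
\big\|(\bbW\bbW^{\sT}+\la \bI)^{-1}\bbW\beta^\star\big\|_\infty \;\leq\; C\la^{-1} \;\leq\; C\,p^{-1/4}.
\]
Taking the supremum over $\la \geq p^{1/4}$ (which, by monotonicity of the bound in $\la$, is attained at the left endpoint) yields the lemma. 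There is no real obstacle here: the bound is immediate from the fact that the trivial estimate $\|(\bbW\bbW^{\sT}+\la\bI)^{-1}\|_{\op}\le \la^{-1}$ already beats $p^{-1/4}$ as soon as $\la \gtrsim p^{1/4}$, which is exactly the reason the covering argument in Lemma~\ref{lem:ptwise:bound} only needs to be carried out on the bounded range $[\la_0, p^{1/4}]$.
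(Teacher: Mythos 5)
Your proof is correct and follows essentially the same route as the paper: bound the sup-norm by the Euclidean norm, use $\|(\bbW\bbW^{\sT}+\lambda\bI)^{-1}\|_{\op}\leq \lambda^{-1}$ together with $\|\bbW\|_{\op}\leq C$ (which holds with probability at least $1-e^{-cp}$ by the matrix concentration theorem), and conclude that the bound is at most $C\lambda^{-1}\leq Cp^{-1/4}$ on the range $\lambda\geq p^{1/4}$. No further comparison is needed.
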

\begin{proof}
    Using a crude bound $\norm{\ba}_{\infty}\leq \norm{\ba}_2$ for $\ba\in \R^p$, we have
    \[
    \big\|(\bbW\bbW^{\sT}+\la \bI)^{-1}\bbW\beta^\star\big\|_{\infty}\leq \big\|(\bbW\bbW^{\sT}+\la \bI)^{-1}\bbW\beta^\star\big\|_{2}\leq \la^{-1}\norm{\bbW}_{\op}.
    \]
    Since $\norm{\bbW}_{\op}\leq C$ with probability at least $1-e^{-cp}$ by Theorem \ref{theorem:matrix-concentration-covariance-operator}, we have that with probability at least $1-e^{-cp}$ that
    \[
    \sup_{\la\geq p^{1/4}}\big\|(\bbW\bbW^{\sT}+\la \bI)^{-1}\bbW\beta^\star\big\|_{\infty}\leq C p^{-1/4}\,,
    \]
   which concludes the proof.
\end{proof}
Having Lemmas \ref{lem:ptwise:bound} and \ref{lem:large:lambda} in hand, we show Lemma \ref{lem:cov:RF:tilde:delocalized} by a covering argument.
\paragraph{Proof of Lemma \ref{lem:cov:RF:tilde:delocalized}} Arguing the same as in \eqref{eq:cov:RF:tech}, we have that $\big\|(\bbW\bbW^{\sT}+\la \bI)^{-1}\bbW\beta^\star\big\|_{2}\ge \Omega(1)$ w.h.p., thus we aim to argue that $\sup_{\lambda\geq \lambda_0} \big\|(\bbW\bbW^{\sT}+\la \bI)^{-1}\bbW\beta^\star\big\|_{\infty}\to 0$ holds almost surely.

Denote the random function  $f(\la;\bbW):= \big\|(\bbW\bbW^{\sT}+\la \bI)^{-1}\bbW\beta^\star\big\|_{\infty}$. We first show that on the event $\big\{\big\|\bbW\big\|_{\op}\leq C\big\}$, which happens with probability at least $1-e^{-cp}$, the function $\lambda\to f(\lambda;\bbW)$ for $\la \geq \la_0$ is $O(1)$-Lipschitz. To this end, consider $\la\geq \la_0$ and $\delta >0$. Then, by a triangular inequality,
\[
\begin{split}
\big|f(\la+\delta; \bbW)-f(\la;\bbW)\big|
&\leq \Big\|\big(\bbW\bbW^{\sT}+(\la+\delta) \bI\big)^{-1}\bbW\beta^\star-(\bbW\bbW^{\sT}+\la \bI)^{-1}\bbW\beta^\star\Big\| _{\infty}\\
&=\delta \cdot \Big\|\big(\bbW\bbW^{\sT}+(\la+\delta) \bI\big)^{-1}(\bbW\bbW^{\sT}+\la \bI)^{-1}\bbW\beta^\star\Big\|_{\infty}\,.
\end{split}
\]
Using the crude bound $\norm{\ba}_{\infty}\leq \norm{\ba}_{2}$, we further bound
\[
\begin{split}
\big|f(\la+\delta; \bbW)-f(\la;\bbW)\big|
&\leq \delta \cdot \Big\|\big(\bbW\bbW^{\sT}+(\la+\delta) \bI\big)^{-1}(\bbW\bbW^{\sT}+\la \bI)^{-1}\bbW\beta^\star\Big\|_{2}\\
&\leq \delta \cdot \frac{\norm{\bbW}_{\op}}{\la(\la+\delta)}\\
&\leq \delta\cdot \frac{C}{\la_0^2}\,.
\end{split}
\]
Hence, on the w.h.p. event $\{\norm{\bbW}_{\op}\leq C\}$, the function $\la \to f(\la;\bbW)$ for $\la\geq \la_0$ is $C/\la_0^2$-Lipschitz.

Now, denote the set 
\begin{equation}\label{eq:def:Lambda}
\Lambda:=\big\{\lambda_0+p^{-1/4}i: i \in [0,\sqrt{p}]\cap \mathbb{Z}\big\}\cup[p^{1/4},\infty)\,.
\end{equation}
Then, by Lemma \ref{lem:ptwise:bound}, Lemma \ref{lem:large:lambda} and a union bound, we have that
\begin{equation}\label{eq:covering:tech:1}
\P\bigg(\sup_{\la\in \Lambda} f(\la;\bbW)\geq Cp^{1/4}\bigg)\leq (\sqrt{p}+1)e^{-c\sqrt{p}}+e^{-cp}\leq e^{-c^\prime \sqrt{p}}\,.
\end{equation}
Furthermore, for $\la \notin \Lambda_1$ and $\la\geq \la_0$, there exists $\la^\prime$ such that $|\la-\la^\prime|\leq p^{-1/4}$. Since $\la \to f(\la;\bbW)$ for $\la\geq \la_0$ is $C^\prime$-Lipshitz with probability at least $1-e^{-cp}$, this implies that
\[
\P\bigg(\sup_{\la\geq \la_0} f(\la;\bbW)\geq \sup_{\la\in \Lambda} f(\la;\bbW)+C^\prime p^{-1/4}\bigg)\leq e^{-cp}\,.
\]
Combining with \eqref{eq:covering:tech:1}, we have that
\[
\P\bigg(\sup_{\la\geq \la_0} f(\la;\bbW)\geq C^{\prime\prime} p^{1/4}\bigg)\leq e^{-c^{\prime\prime} \sqrt{p}}\,,
\]
which concludes the proof of Lemma \ref{lem:cov:RF:tilde:delocalized}.

\subsection{Proof of Lemma \ref{lem:4norm:separable:loss:lower}}
\label{subsec:lem:4norm:separable:loss:lower}
The proof of Lemma \ref{lem:4norm:separable:loss:lower} is based on Gordon's inequality \cite{Gordon88, thrampoulidis2015, montanari2019generalization}. Throughout, we fix $s_0\equiv \frac{1}{12}$ so that $\btheta \to \norm{\btheta}_2^2-s_0\norm{\btheta}^4_4$ is strictly convex for $\norm{\btheta}_2\leq 2$. We abbreviate $\widehat{F}(\bG,\eps)\equiv \widehat{F}_{3}(\bG, \eps)$ for simplicity.

By \cite[Lemma 6.1 and Lemma 6.2]{montanari2019generalization}, which are consequences of Gordon's inequality and Gaussian concentration, we have for every $t>0$ that
\begin{equation}\label{eq:gordon}
    \P\left(\widehat F(\bG,\eps)^{\frac{1}{2}}\leq t\right) \leq 2 \P\left(\widehat G(\bg,\eps)\leq 2t\right)+ o_n(1),
\end{equation}
where $\widehat G(\bg,\eps)$ is the surrogate Gordon's optimization problem corresponding to $\widehat F(\bG,\eps)$: for $\bg\sim \normal(\bzero, \id_p)$, define
\begin{equation}\label{eq:gordon:opt}
    \widehat G(\bg,\eps)\equiv \min_{\substack{\norm{\btheta}_2^2 - s_0 \norm{\btheta}_4^{4}\leq 1-\eps \\ \norm{\btheta}_2\leq 2}} F_{\kappa^\star} \left(\langle \btheta, \bSigma^{1/2}\bw\rangle, 
			\norm{\pw\bSigma^{1/2} \btheta}_2 \right) 
				+ \frac{1}{\sqrt{n}} \bg^{\sT} \pw \bSigma^{1/2}\btheta.
\end{equation}
Here, we denoted 
\begin{equation}\label{eq:def:rho:w}
    \bw = \frac{\bSigma^{1/2} \btheta^\star}{\rho}\,,\quad\textnormal{where}\quad \rho:=\norm{\bSigma^{1/2}\btheta^\star}_2\,,\quad\textnormal{and}\quad\pw=\id -\bw \bw^{\sT}\,.
\end{equation}
In particular, $\pw$ is the projection matrix on to the orthogonal space of $\bw$. The definition of $F_{\kappa}(c_1,c_2)$ is given in \eqref{eqn:def-F-kappa}. It can be shown that $(c_1,c_2)\to F_{\kappa}(c_1,c_2)$ is convex (see \cite[Lemma 6.3]{montanari2019generalization}), thus the optimization problem $\widehat G(\bg,\eps)$ in \eqref{eq:gordon:opt} is convex.
By analyzing it's KKT condition, we establish the following lemma in Section \ref{subsec:proof:lem:minimizer:gordon:deloc}
\begin{lemma}\label{lem:minimizer:gordon:deloc}
Suppose $(\bSigma,\btheta^\star)$ satisfies \eqref{eq:assumption:Sigma} and let $\hbtheta_{\eps}=(\widehat{\theta}_{\eps,i})_{i\leq p}$ be the minimizer for the optimization problem in \eqref{eq:gordon:opt}. Then we have $\norm{\hbtheta_{\eps}}_4\stackrel{p}{\to} 0$.
\end{lemma}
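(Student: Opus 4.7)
The plan is to extract $\hbtheta_\eps$ from the KKT conditions of the strictly convex program \eqref{eq:gordon:opt} and bound $\|\hbtheta_\eps\|_\infty$ via the delocalization assumption \eqref{eq:assumption:Sigma}; since $\|\hbtheta_\eps\|_2 \leq 2$ is imposed by the constraint set, the desired conclusion $\|\hbtheta_\eps\|_4 \leq \|\hbtheta_\eps\|_\infty^{1/2}\|\hbtheta_\eps\|_2^{1/2} \pto 0$ then follows by interpolation. Introduce Lagrange multipliers $\lambda, \mu \geq 0$ for the constraints $\|\btheta\|_2^2 - s_0\|\btheta\|_4^4 \leq 1-\eps$ and $\|\btheta\|_2 \leq 2$, set $\tilde\lambda := 2\lambda + \mu/\|\hbtheta_\eps\|_2$, and let $a_1 := \partial_{c_1}F_{\kappa^\star}$, $a_2 := c_2^{-1}\partial_{c_2}F_{\kappa^\star}$ evaluated at the optimum. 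First-order stationarity then reads
\begin{equation*}
\mathbf{M}\,\hbtheta_\eps = -a_1 \bSigma^{1/2}\bw - \tfrac{1}{\sqrt{n}}\bSigma^{1/2}\pw\bg + 4 s_0 \lambda\, \hbtheta_\eps^{\odot 3},
\qquad \mathbf{M}:= \tilde\lambda \bI + a_2\, \bSigma^{1/2}\pw\bSigma^{1/2}.
\end{equation*}

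The specific choice $s_0 = 1/12$ lets me absorb the cubic term by a short self-consistency bound: using $\|\mathbf{M}^{-1}\|_{\op} \leq 1/\tilde\lambda$, $\|\hbtheta_\eps^{\odot 3}\|_2 \leq \|\hbtheta_\eps\|_\infty^2 \|\hbtheta_\eps\|_2 \leq 2\|\hbtheta_\eps\|_\infty^2$, $\lambda/\tilde\lambda \leq 1/2$, and $\|\hbtheta_\eps\|_\infty \leq 2$, one obtains $4 s_0 \lambda\, \|\mathbf{M}^{-1}\hbtheta_\eps^{\odot 3}\|_\infty \leq 4 s_0\|\hbtheta_\eps\|_\infty^2 \leq 8 s_0 \|\hbtheta_\eps\|_\infty = (2/3)\|\hbtheta_\eps\|_\infty$. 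Substituting back into the KKT identity and rearranging reduces the whole claim to showing $\bigl\|\mathbf{M}^{-1}\bigl[a_1 \bSigma^{1/2}\bw + n^{-1/2}\bSigma^{1/2}\pw\bg\bigr]\bigr\|_\infty \pto 0$.

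To handle this linear piece I would apply Sherman--Morrison to the rank-one decomposition $\mathbf{M} = \mathbf{N} - a_2\bSigma^{1/2}\bw\bw^\sT\bSigma^{1/2}$ with $\mathbf{N} := \tilde\lambda \bI + a_2 \bSigma$, where the denominator simplifies cleanly via the operator identity $\bI - a_2\bSigma^{1/2}\mathbf{N}^{-1}\bSigma^{1/2} = \tilde\lambda \mathbf{N}^{-1}$. Using $\bSigma^{1/2}\bw = \bSigma\btheta^\star/\rho$, each resulting $\ell_\infty$ quantity reduces, up to bounded factors, to $\|(\bSigma + \tau \bI)^{-1}\bSigma\btheta^\star\|_\infty$ and $n^{-1/2}\|(\bSigma + \tau\bI)^{-1}\bSigma^{1/2}\bg\|_\infty$ with $\tau = \tilde\lambda/a_2 \geq 0$, both of which are $o_{\mathbb{P}}(1)$ uniformly in $\tau$ by assumption \eqref{eq:assumption:Sigma}. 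The main obstacle is to show, with high probability, that the dual quantities $a_1, a_2, \tilde\lambda$ stay of constant order and bounded away from zero, so that the Sherman--Morrison reduction (in particular the prefactor $1/a_2$) does not degenerate. I plan to handle this by combining the bounded-spectrum hypothesis $\lambda_{\min}(\bSigma), \lambda_{\max}(\bSigma)\in[c,C]$ with the smoothness and strict monotonicity of $F_{\kappa^\star}$, using that the range condition of the KKT equation forces $\tilde\lambda > 0$ whenever $a_1 > 0$ and that $c_2^\star = \|\pw\bSigma^{1/2}\hbtheta_\eps\|_2$ is bounded away from zero on a high-probability event.
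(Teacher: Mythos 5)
Your overall strategy (reduce to $\norm{\hbtheta_\eps}_\infty\pto 0$ via $\norm{\hbtheta_\eps}_4\le\norm{\hbtheta_\eps}_\infty^{1/2}\norm{\hbtheta_\eps}_2^{1/2}$, then extract $\hbtheta_\eps$ from the KKT system and invoke the uniform-in-$\la$ assumption \eqref{eq:assumption:Sigma}) is the same as the paper's, and your absorption of the quartic term using $s_0=1/12$ and the cancellation $\lambda/\tilde\lambda\le 1/2$ is parallel to the paper's resolvent/Frobenius-norm bound $\Xi_2\le\Xi_1/2$. The genuine gap is in how you treat the projector: you keep $\pw$ inside the matrix $\mathbf{M}=\tilde\lambda\bI+a_2\bSigma^{1/2}\pw\bSigma^{1/2}$ and then apply Sherman--Morrison. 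This matrix is singular along $\btheta^\star$ whenever $\tilde\lambda=0$, the bound $\norm{\mathbf{M}^{-1}}_{\op}\le 1/\tilde\lambda$ degenerates as $\tilde\lambda\to 0$, and your Sherman--Morrison correction has denominator $\tilde\lambda\,\bw^\sT(\tilde\lambda\bI+a_2\bSigma)^{-1}\bw$; applied to the $a_1\bSigma^{1/2}\bw$ piece of the right-hand side it produces a scalar of order $a_1/\tilde\lambda$, which is not controlled unless you prove a quantitative relation between $a_1$ and $\tilde\lambda$. So the ``main obstacle'' you defer---that $a_1,a_2,\tilde\lambda$ (and $c_2$) are of constant order and bounded away from zero w.h.p.---is not a technical afterthought but the crux of your argument, and no proof sketch is given; note also that near $c_2=0$ the lower bound on $a_2=\partial_2F_{\kappa^\star}(c_1,c_2)/c_2$ cannot follow from monotonicity alone since $\partial_2F_{\kappa^\star}(c_1,0)=0$ (the paper needs the second-derivative/continuity argument of Lemma~\ref{lem:partial:1:2:estimate}), and at $c_2=0$ your $a_2$ is not even defined.

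The paper's proof shows these non-degeneracy claims are avoidable, which is why its decomposition is different in a substantive way: it writes $\bSigma^{1/2}\pw\bSigma^{1/2}\hbtheta_\eps=\bSigma\hbtheta_\eps-c_1\bSigma^{1/2}\bw$ with the \emph{bounded scalar} $c_1=\bw^\sT\bSigma^{1/2}\hbtheta_\eps$, moves that rank-one piece to the right-hand side (absorbed into the bounded coefficient $\zeta$ of Lemma~\ref{lem:zeta:bounded}), and inverts $\bA+\bM$ with $\bA=(\partial_2F_{\kappa^\star}/c_2)\bSigma+(s_1+s_2)\id$, which is uniformly positive definite for \emph{all} multiplier values $s_1,s_2\ge 0$; writing $\bA^{-1}\bSigma=(\partial_2F_{\kappa^\star}/c_2)^{-1}(\bSigma+\la_\star\id)^{-1}\bSigma$ with $\la_\star\ge 0$ then plugs directly into the uniform-in-$\la$ assumption, and the degenerate case $c_2=0$ is handled by a direct one-line argument ($\hbtheta_\eps=\alpha\btheta^\star$, $|\alpha|\le2$, $\norm{\btheta^\star}_\infty\to0$) rather than by showing it is rare. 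To make your route rigorous you would either have to supply the missing quantitative dual bounds (including the $\tilde\lambda\approx 0$ and $c_2\approx 0$ regimes), or switch to the paper's rearrangement, which eliminates the degenerate denominators entirely.
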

We now show that Lemma \ref{lem:minimizer:gordon:deloc} implies Lemma \ref{lem:4norm:separable:loss:lower}.
\paragraph{Proof of Lemma \ref{lem:4norm:separable:loss:lower}}
Since $\eps \to \widehat F_3(\bG,\eps)$ is monotonically increasing, it suffices to establish Lemma \ref{lem:4norm:separable:loss:lower} for small enough $\eps$. To this end, we fix $\eps\in (0,1)$. Let $\widetilde{G}_{\eps}\equiv  \widetilde{G}_{\eps}(\bg)$ be
\begin{equation*}
\begin{split}
    \widetilde{G}_{\eps}
    &\equiv \min_{ \norm{\btheta}_2^2 \leq 1-\frac{\eps}{2}} F_{\kappa^\star} \left(\langle \btheta, \bSigma^{1/2}\bw\rangle, 
			\norm{\pw\bSigma^{1/2} \btheta}_2 \right) 
				+ \frac{1}{\sqrt{n}} \bg^{\sT} \pw \bSigma^{1/2}\btheta\\
	&\stackrel{(a)}{=}\left(1-\frac{\eps}{2}\right)^{1/2} \cdot \min_{ \norm{\btheta}_2\leq 1} F_{\kappa^\star(\eps)} \left(\langle \btheta, \bSigma^{1/2}\bw\rangle, 
			\norm{\pw\bSigma^{1/2} \btheta}_2 \right) 
				+ \frac{1}{\sqrt{n}} \bg^{\sT} \pw \bSigma^{1/2}\btheta,
\end{split}
\end{equation*}
where $\kappa^\star(\eps)\equiv \kappa^\star\left(1-\frac{\eps}{2}\right)^{-1/2}>\kappa^\star$ and $(a)$ holds by the homogeneity of the map $(\kappa,c_1,c_2)\to F_{\kappa}(c_1,c_2)$. A crucial observation is that on the event where the minimizer of \eqref{eq:gordon:opt} $\hbtheta_{\eps}$ satisfies $\norm{\hbtheta_{\eps}}_4^4\leq \frac{\eps}{2s_0}$, which holds w.h.p. by Lemma \ref{lem:minimizer:gordon:deloc}, we have that $\widehat{G}(\bg,\eps)\geq \widetilde{G}_{\eps}$.

Now, take $\eta\equiv \eta(\eps)$ small enough so that $\kappa^\star(\eps)>\kappa^\star + \eta$. By Lemma \ref{lem:partial:kappa:lower} in Section \ref{subsec:properties:F:kappa}, we have that $\min _{\kappa>0, c_1\in \R, c_2\geq 0} \partial_{\kappa}F_{\kappa}(c_1,c_2)\geq C>0$ for some constant $C\equiv C(\rho^\star,f^\star)$. Moreover, \cite[Proposition 6.4]{montanari2019generalization} shows that
\begin{equation*}
   \min_{\norm{\btheta}_2\leq 1} F_{\kappa^\star} \left(\langle \btheta, \bSigma^{1/2}\bw\rangle, 
			\norm{\pw\bSigma^{1/2} \btheta}_2 \right) 
				+ \frac{1}{\sqrt{n}} \bg^{\sT} \pw \bSigma^{1/2}\btheta\pto 0.
\end{equation*}
Hence, we have with high probability that
\begin{equation}\label{eq:lower:tilde:G}
    \widetilde{G}_{\eps} \geq \left(1-\frac{\eps}{2}\right)^{1/2} \frac{C\eta}{2}\geq \frac{C\eta}{4}.
\end{equation}
Therefore, if we let $\delta(\eps):= \left(\frac{C\eta}{8}\right)^{2}$, then \eqref{eq:gordon} shows that
\begin{equation*}
\begin{split}
    \P\left(\widehat{F}(\bG,\eps)\leq \delta(\eps)\right)
    &\leq 2\P \left(\widehat{G}(\bg,\eps)\leq \frac{C\eta}{4}\right)+o(1)\\
    &\leq 2\P \left(\widetilde{G}_{\eps}\leq \frac{C\eta}{4}\right)+2\P\left(\norm{\hbtheta_{\eps}}_4^4\leq \frac{\eps}{2s_0}\right)+o(1)\\
    &\stackrel{(a)}{=}o(1),
\end{split}
\end{equation*}
where $(a)$ is by \eqref{eq:lower:tilde:G} and Lemma \ref{lem:minimizer:gordon:deloc}. This concludes the proof.

\subsubsection{Proof of Lemma \ref{lem:minimizer:gordon:deloc}}
\label{subsec:proof:lem:minimizer:gordon:deloc}
First, note that $\norm{\hat{\btheta}_\eps}_2\leq 2$ holds by primal feasibility. Further, $\norm{\hat{\btheta}_{\eps}}_4\leq \norm{\hat{\btheta}_{\eps}}_{\infty}^{1/2} \norm{\hat{\btheta}_\eps}_2^{1/2}$ holds, thus it suffices to show that $\norm{\hat{\btheta}_{\eps}}_{\infty}\pto 0$ to prove Lemma \ref{lem:minimizer:gordon:deloc}. Throughout this subsection, we assume the condition \eqref{eq:assumption:Sigma}.

Denote $c_1\equiv \langle \hbtheta_\eps, \bSigma^{1/2} \bw \rangle$  and $c_2\equiv\norm{\pw\bSigma^{1/2} \hbtheta_{\eps}}_2$. Note that Lemma \ref{lem:minimizer:gordon:deloc} is immediate when $c_2=0$: suppose $c_2=0$, which implies that $\bSigma^{1/2} \hbtheta_\eps$ is parallel to $\bw$. Thus $\hbtheta_{\eps}=\alpha \btheta^\star $for some $\alpha\in \R$. Since $\norm{\hbtheta_{\eps}}_2\leq 2$ holds by feasibility of the optimization problem \eqref{eq:gordon:opt} and $\norm{\btheta^\star}_2=1$ holds, we have $|\alpha|\leq 2$. Hence, 
\[\big\|\hbtheta_{\eps}\big\|_{\infty}=|\alpha| \norm{\btheta^\star}_{\infty}\leq 2\norm{\btheta^\star}_{\infty}\to 0\,,
\]
where the last convergence is due to our assumption \eqref{eq:assumption:Sigma}.

To this end, we assume $c_2>0$ and show that $\big\|\hat{\btheta}_{\eps}\big\|_{\infty}\pto 0$. Then,the KKT condition of the convex optimization problem in \eqref{eq:gordon:opt} reads
\begin{equation}\label{eq:KKT}
\begin{split}
    &\partial_1 F_{\kappa^\star}(c_1,c_2)\bSigma^{1/2}\bw+\frac{\partial_2 F_{\kappa^\star}(c_1,c_2)}{c_2} \bSigma^{1/2}\pw \bSigma^{1/2}\hbtheta_{\eps}+\frac{1}{\sqrt{n}}\bSigma^{1/2}\pw \bg \\
    &\qquad\qquad\qquad\qquad\qquad\qquad\qquad+s_1\hbtheta_{\eps}+s_2\left(\tbe-2s_0\cdot \diag\left(\big(\widehat{\theta}_{\eps,i}\big)^2\right)_{i\leq p}\tbe\right)=0\\
    & s_1\left(\norm{\tbe}_2-2\right)=0, \quad s_2\left(\norm{\tbe}_2^2 - s_0 \norm{\tbe}_4^{4}-(1-\eps)\right)=0, ~s_1,s_2\geq 0,\\
    &\norm{\tbe}_2\leq 2,\quad\quad\quad\quad\norm{\tbe}_2^2 - s_0 \norm{\tbe}_4^{4}\leq 1-\eps,
\end{split}
\end{equation}
where $\partial_i F_{\kappa}(c_1,c_2)\equiv \frac{\partial}{\partial c_i} F_{\kappa}(c_1,c_2), i=1,2$. Rearranging the first equation above gives
\begin{equation}\label{eq:tilde:theta}
\begin{split}
    &\left(\frac{\partial_2 F_{\kappa^\star}(c_1,c_2)}{c_2} \bSigma + s_1 \id+s_2\id- 2s_0s_2\cdot \diag\left(\big(\widehat{\theta}_{\eps,i}\big)^2\right)_{i\leq p}\right)\tbe\\
    &= -\left(\partial_1F_{\kappa^\star}(c_1,c_2)-c_1\frac{\partial_2 F_{\kappa^\star}(c_1,c_2)}{c_2}-\frac{1}{\sqrt{n}}\bg^{\sT}\bw \right)\bSigma^{1/2}\bw-\frac{1}{\sqrt{n}}\bSigma^{1/2}\bg.
\end{split}
\end{equation}
To simplify notations, denote
\begin{equation}\label{eq:def:A:D:zeta}
\begin{split}
    \bA:&=\frac{\partial_2 F_{\kappa^\star}(c_1,c_2)}{c_2} \bSigma + s_1 \id+s_2\id\\
    \bM:&=- 2s_0s_2\cdot \diag\left(\big(\widehat{\theta}_{\eps,i}\big)^2\right)_{i\leq p}\\
    \zeta:&=-\rho^{-1}\left(\partial_1F_{\kappa^\star}(c_1,c_2)-c_1\frac{\partial_2 F_{\kappa^\star}(c_1,c_2)}{c_2}-\frac{1}{\sqrt{n}}\bg^{\sT}\bw \right).
\end{split}
\end{equation}
We remark that by \cite[Lemma 6.3]{montanari2019generalization}, we have that $\partial_2F_{\kappa^\star}(c_1,c_2)$ is positive, thus $\bA$ and $\bA+\bM$ are invertible. Thus, \eqref{eq:tilde:theta} reads
\begin{equation}\label{eq:express:theta:eps}
\begin{split}
    \tbe 
    &= \zeta \left(\bA+\bM\right)^{-1}\bSigma^{1/2}\bw-\frac{1}{\sqrt{n}}\left(\bA+\bM\right)^{-1}\bSigma^{1/2}\bg\\
    &=\zeta\bA^{-1}\bSigma^{1/2}\bw-\zeta\left(\bA+\bM\right)^{-1}\bM\bA^{-1}\bSigma^{1/2}\bw\\
    &\quad\quad\quad\quad\quad\quad-\frac{1}{\sqrt{n}}\bA^{-1}\bSigma^{1/2}\bg+\frac{1}{\sqrt{n}}\left(\bA+\bM\right)^{-1}\bM\bA^{-1}\bSigma^{1/2}\bg,
\end{split}
\end{equation}
where the last identity is due to $\left(\bA+\bM\right)^{-1}=\bA^{-1}-\left(\bA+\bM\right)^{-1}\bM\bA^{-1}$. By a triangular inequality, it follows that
\begin{equation}\label{eq:bound:theta:eps}
\begin{split}
\norm{\tbe}_{\infty} &\leq \Xi_1+\Xi_2\quad\textnormal{where}\\
\Xi_1&:= |\zeta|\cdot \norm{\bA^{-1}\bSigma\btheta^\star}_{\infty}+\frac{1}{\sqrt{n}}\norm{\bA^{-1}\bSigma^{1/2}\bg}_{\infty}\\
\Xi_2&:=|\zeta|\cdot\norm{\left(\bA+\bM\right)^{-1}\bM\bA^{-1}\bSigma\btheta^\star}_{\infty}+\frac{1}{\sqrt{n}}\norm{\left(\bA+\bM\right)^{-1}\bM\bA^{-1}\bSigma^{1/2}\bg}_{\infty}.
\end{split}
\end{equation}
To this end, we now aim to bound $\Xi_1$ and $\Xi_2$. First, we show that $\zeta$ defined in \eqref{eq:def:A:D:zeta} is bounded w.h.p..
\begin{lemma}\label{lem:zeta:bounded}
There exists $C>0$, which does not depend on $n$, such that $|\zeta|\leq C$ with high probability.
\end{lemma}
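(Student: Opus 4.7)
The strategy is to control each of the three summands appearing in \eqref{eq:def:A:D:zeta}, together with the scalar prefactor $\rho^{-1}$.

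First, I would observe that $\rho = \norm{\bSigma^{1/2}\btheta^\star}_2 \geq \sqrt{\lambda_{\min}(\bSigma)} \geq \sqrt{c}$ by the first part of condition~\eqref{eq:assumption:Sigma} and $\norm{\btheta^\star}_2 = 1$, so $\rho^{-1}$ is bounded by a deterministic constant. Next, from primal feasibility $\norm{\hbtheta_\eps}_2 \leq 2$ together with $\lambda_{\max}(\bSigma) \leq C$, Cauchy--Schwarz gives $|c_1| \leq \norm{\hbtheta_\eps}_2 \norm{\bSigma^{1/2}\bw}_2 \leq 2\sqrt{C}$ and, similarly, $0 < c_2 \leq \norm{\bSigma^{1/2}\hbtheta_\eps}_2 \leq 2\sqrt{C}$. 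Hence $(c_1,c_2)$ belongs almost surely to a fixed compact set $K \subset \R \times (0,\infty)$ whose size depends only on the bound $C$ from \eqref{eq:assumption:Sigma}. Finally, since $\norm{\bw}_2 = 1$ and $\bg \sim \normal(\bzero, \bI_p)$, the variable $\frac{1}{\sqrt{n}} \bg^{\sT} \bw$ is a centered Gaussian with variance $1/n$, so $|\frac{1}{\sqrt{n}} \bg^{\sT} \bw| = o_\P(1)$ and is bounded by $1$ with probability tending to one.

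Given these observations, the proof reduces to showing that $|\partial_1 F_{\kappa^\star}(c_1,c_2)|$ and $|\partial_2 F_{\kappa^\star}(c_1,c_2)/c_2|$ are uniformly bounded on $K$. For $\partial_1 F_{\kappa^\star}$, convexity and continuous differentiability of $F_\kappa$ on $\R \times [0,\infty)$ established in \cite[Lemma 6.3]{montanari2019generalization} yield boundedness on the compact set $K$ directly. The ratio $\partial_2 F_{\kappa^\star}(c_1,c_2)/c_2$ is the main technical obstacle, because a priori $c_2$ may be small. Here I would exploit the Gaussian-integral representation
\begin{equation*}
F_{\kappa}(c_1, c_2) = \E\!\left[\bigl(\kappa - Y(c_1 G_1 + c_2 G_2)\bigr)_+^{\,2}\right]
\end{equation*}
from \cite{montanari2019generalization}, with $(G_1, G_2)$ i.i.d.\ standard Gaussian and $Y$ the paired label. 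Differentiating in $c_2$ and applying Stein's lemma to integrate out $G_2$ produces a factor of $c_2$ in $\partial_2 F_\kappa$, giving a representation of the form $\partial_2 F_\kappa(c_1,c_2) = 2 c_2 \cdot \E[\ind\{Y(c_1 G_1 + c_2 G_2) \leq \kappa\}]$. Consequently the ratio $\partial_2 F_\kappa(c_1,c_2)/c_2$ is uniformly bounded by an absolute constant, independent of $(c_1,c_2) \in K$.

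The main difficulty in this plan is precisely the cancellation-of-$c_2$ step: without exploiting the explicit Gaussian representation of $F_\kappa$ (or an equivalent Stein-type identity that forces a vanishing factor of $c_2$ in $\partial_2 F_\kappa$ as $c_2 \to 0^+$), this ratio has no reason to be bounded. Once this cancellation is established, a triangle inequality applied to \eqref{eq:def:A:D:zeta} combined with the four individual bounds on $\rho^{-1}$, $\partial_1 F_{\kappa^\star}$, $\partial_2 F_{\kappa^\star}/c_2$, and $n^{-1/2}\bg^\sT\bw$ yields $|\zeta| \leq C$ with probability tending to one, completing the proof.
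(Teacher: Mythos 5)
Your proposal follows the paper's overall skeleton: lower-bound $\rho$ via $\lambda_{\min}(\bSigma)$, confine $(c_1,c_2)$ to a compact set using primal feasibility and $\lambda_{\max}(\bSigma)$, note that $n^{-1/2}\bg^\sT\bw = o_\P(1)$, and then reduce to uniform boundedness of $|\partial_1 F_{\kappa^\star}|$ and $\partial_2 F_{\kappa^\star}(c_1,c_2)/c_2$. The paper packages this last piece into Lemma~\ref{lem:partial:1:2:estimate}. Where you diverge is in \emph{how} you bound the ratio $\partial_2 F_{\kappa^\star}/c_2$: the paper observes $\partial_2 F_\kappa(c_1,0)=0$, extends the ratio to $c_2=0$ by $\partial_{22}F_{\kappa^\star}(c_1,0)$, and invokes continuity and positivity on the compactum; you instead propose an explicit Stein/Gaussian-integration-by-parts computation. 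Both exploit the same cancellation, but the paper's route is softer and also delivers the matching \emph{lower} bound on $\partial_2 F_{\kappa^\star}/c_2$, which is needed later in Lemma~\ref{lem:xi:1}.

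There is a genuine slip in your Stein step, however. In the paper's definition~\eqref{eqn:def-F-kappa}, $F_\kappa(c_1,c_2) = \bigl(\E[(\kappa - c_1 YG - c_2 Z)_+^2]\bigr)^{1/2}$ carries a square root; your display omits it. The identity $\partial_2(\cdot) = 2c_2\,\E[\mathbf 1\{\cdot\le\kappa\}]$ you derive is for $F_\kappa^2$, not $F_\kappa$. Applying the chain rule to $F_\kappa = (F_\kappa^2)^{1/2}$ instead gives $\partial_2 F_\kappa(c_1,c_2)/c_2 = \E[\mathbf 1\{\kappa - c_1 YG - c_2 Z > 0\}]/F_\kappa(c_1,c_2)$, which is \emph{not} bounded by an absolute constant a priori: you must additionally argue that $F_{\kappa^\star}(c_1,c_2)$ is bounded below on the compact set $K$. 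This does hold (e.g.\ $F_{\kappa^\star}(c_1,c_2)\ge F_{\kappa^\star}(c_1,0)>0$ for $\kappa^\star>0$ and $|c_1|\le 2\sqrt{C}$, by monotonicity in $c_2$), but it is a necessary extra step, and as written your uniform bound does not follow. Once you add that lower bound on $F_{\kappa^\star}$ over $K$, the argument closes.
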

\begin{proof}
Note that $\rho$ is bounded away from zero since
\begin{equation}\label{eq:lower:rho}
    \rho\equiv \norm{\bSigma^{1/2}\btheta^\star}_2\ge \lambda_{\min}(\bSigma)^{1/2}\norm{\btheta^\star}_2= \lambda_{\min}(\bSigma)^{1/2}>0.
\end{equation}
Thus, we have the bound
\begin{equation}\label{eq:zeta:bound:1}
    |\zeta|\leq \lambda_{\min}(\bSigma)^{-1/2}\left(\left|\partial_1F_{\kappa^\star}(c_1,c_2)-c_1\frac{\partial_2 F_{\kappa^\star}(c_1,c_2)}{c_2}\right|+\frac{\left|\bg^{\sT}\bw\right|}{\sqrt{n}}\right).
\end{equation}
Now, recall that $c_1\equiv \langle \hbtheta_\eps, \bSigma^{1/2} \bw \rangle$  and $c_2\equiv\norm{\pw\bSigma^{1/2} \hbtheta_{\eps}}_2$. Since there exists $\osla>0$ such that $\lambda_{\max}(\bSigma)\leq \osla$, we have that $|c_1|,|c_2|\leq \osla^{1/2}$ w.h.p.. Moreover, Lemma \ref{lem:partial:1:2:estimate} in Section \ref{subsec:properties:F:kappa} shows that there exists a constant $C\equiv C(\osla, \rho^\star, f^\star)$ such that
\begin{equation}\label{eq:zeta:bound:2}
    \max_{|\tc_1|\vee \tc_2\leq \osla^{1/2},\tc_2\neq 0}\left|\partial_1F_{\kappa^\star}(\tc_1,\tc_2)-\tc_1\frac{\partial_2 F_{\kappa^\star}(\tc_1,\tc_2)}{\tc_2}\right|\leq C.
\end{equation}
Meanwhile, note that $\frac{\left|\bg^{\sT}\bw\right|}{\sqrt{n}}\pto 0$ holds since $\bg^{\sT}\bw\sim \normal(0,1)$. Therefore, \eqref{eq:zeta:bound:1} and \eqref{eq:zeta:bound:2} conclude the proof.
\end{proof}

\begin{lemma}\label{lem:xi:1}
We have $\Xi_1\pto 0$.
\end{lemma}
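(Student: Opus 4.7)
The plan is to exploit the algebraic structure of $\bA$. Writing $\alpha := \partial_2 F_{\kappa^\star}(c_1,c_2)/c_2$ and $\beta := s_1+s_2$, we have $\bA = \alpha \bSigma + \beta \bI$ with $\alpha>0$ (by \cite[Lemma 6.3]{montanari2019generalization}) and $\beta \ge 0$. Factoring out $\alpha$ yields $\bA^{-1} = \alpha^{-1}(\bSigma + \lambda\bI)^{-1}$ where $\lambda := \beta/\alpha \ge 0$, which is precisely the form for which the suprema in assumption \eqref{eq:assumption:Sigma} apply directly.

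Concretely, the two summands in $\Xi_1$ satisfy
\begin{equation*}
\|\bA^{-1}\bSigma\btheta^\star\|_\infty \;\le\; \alpha^{-1}\sup_{\lambda'\ge 0}\|(\bSigma+\lambda'\bI)^{-1}\bSigma\btheta^\star\|_\infty,
\end{equation*}
and an analogous inequality for $n^{-1/2}\|\bA^{-1}\bSigma^{1/2}\bg\|_\infty$. By \eqref{eq:assumption:Sigma} the right-hand suprema tend to $0$, deterministically and in probability respectively. Combining these with Lemma~\ref{lem:zeta:bounded}, which gives $|\zeta| = O_\P(1)$, yields $\Xi_1 \pto 0$ as soon as $\alpha^{-1} = O_\P(1)$.

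Thus the only remaining task, and the main obstacle, is to show that $\alpha$ is bounded away from zero with high probability. Note that the edge case $c_2 = 0$ has already been disposed of before writing the KKT conditions, so I may assume $c_2 > 0$; however, I still need a uniform lower bound as $c_2 \downarrow 0$. From the bounds $\lambda_{\max}(\bSigma) \le \osla$ and $\|\hbtheta_\eps\|_2 \le 2$ used in the proof of Lemma~\ref{lem:zeta:bounded}, I have $|c_1|\vee c_2 \le 2\osla^{1/2}$ w.h.p., so it suffices to lower-bound $(c_1, c_2) \mapsto \partial_2 F_{\kappa^\star}(c_1,c_2)/c_2$ uniformly on this compact set. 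Using the explicit integral representation of $F_\kappa$ in \eqref{eqn:def-F-kappa} as a Gaussian expectation of a squared hinge, $\partial_2 F_{\kappa^\star}(c_1,c_2)$ is strictly positive and of order $c_2$ as $c_2 \downarrow 0$ (with the leading coefficient equal to $\partial_2^2 F_{\kappa^\star}(c_1, 0)>0$), so the ratio extends continuously and strictly positively to $c_2 = 0$. A compactness argument on $[-2\osla^{1/2}, 2\osla^{1/2}]\times[0, 2\osla^{1/2}]$ then furnishes the required uniform lower bound $\alpha \ge c > 0$. This computation is in the spirit of Lemma~\ref{lem:partial:1:2:estimate} and can be executed by direct differentiation under the expectation in $F_\kappa$.
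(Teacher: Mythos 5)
Your proposal is correct and follows essentially the same route as the paper: factor $\bA^{-1}=\alpha^{-1}(\bSigma+\lambda\bI)^{-1}$ with $\alpha=\partial_2F_{\kappa^\star}(c_1,c_2)/c_2$, bound $\Xi_1$ by the supremum over $\lambda\ge 0$ and invoke assumption \eqref{eq:assumption:Sigma} together with Lemma~\ref{lem:zeta:bounded}. The only difference is that where you re-derive the uniform lower bound $\alpha\ge c>0$ by continuously extending $\partial_2F_{\kappa^\star}(c_1,c_2)/c_2$ to $c_2=0$ and compactness, the paper simply cites its Lemma~\ref{lem:partial:1:2:estimate}, whose proof is exactly that argument.
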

\begin{proof}
Note that if we denote $\la_{\star}:= c_2(s_1+s_2)/\partial_2 F_{\kappa^\star}(c_1,c_2)\geq 0$, then
\begin{equation*}
\begin{split}
\Xi_1
&=\bigg(\frac{\partial_2 F_{\kappa^\star}(c_1,c_2)}{c_2}\bigg)^{-1}\bigg(|\zeta|\cdot\norm{\big(\bSigma+\la_{\star} \bI\big)^{-1}\bSigma \btheta^\star}_{\infty}+\frac{1}{\sqrt{n}}\norm{\big(\bSigma+\la_{\star}\bI\big)^{-1}\bSigma^{1/2} \bg}_{\infty}\bigg)\\
&\leq \bigg(\frac{\partial_2 F_{\kappa^\star}(c_1,c_2)}{c_2}\bigg)^{-1}\cdot \sup_{\la\geq 0} \bigg\{|\zeta|\cdot\norm{\big(\bSigma+\la \bI\big)^{-1}\bSigma \btheta^\star}_{\infty}+\frac{1}{\sqrt{n}}\norm{\big(\bSigma+\la \bI\big)^{-1}\bSigma^{1/2} \bg}_{\infty}\bigg\}\,.
\end{split}
\end{equation*}
Lemma \ref{lem:partial:1:2:estimate} below shows that there exists a constant $c>0$, which does not depend on $n,p$ such that $\frac{\partial_2 F_{\kappa^\star}(c_1,c_2)}{c_2}\geq c$. Moreover, since $|\zeta|\leq C $ holds w.h.p. by Lemma \ref{lem:zeta:bounded}, the right hand side of the display above converges in probability to $0$ by our assumption \eqref{eq:assumption:Sigma}.
\end{proof}
Finally, we show by crude estimates that $\Xi_2\pto 0$.
\begin{lemma}\label{lem:xi:2}
We have $\Xi_2\pto 0$.
\end{lemma}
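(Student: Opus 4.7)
The strategy is to exploit the diagonal structure of $\bM = -2 s_0 s_2 \diag(\widehat\theta_{\eps,i}^2)$ to extract a single power of $\|\hbtheta_\eps\|_\infty$ from $\Xi_2$ and then close a \emph{linear} bootstrap against Lemma~\ref{lem:xi:1}. The case $s_2 = 0$ is trivial, since then $\bM = \mathbf{0}$ and $\Xi_2 = 0$, so I assume $s_2 > 0$; complementary slackness then yields $\|\hbtheta_\eps\|_2^2 - s_0 \|\hbtheta_\eps\|_4^4 = 1-\eps$, and in particular $\|\hbtheta_\eps\|_2^2 \geq 1-\eps$.

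First I will verify that $s_1 = 0$ and that $s_2$ is bounded in probability. If $s_1 > 0$, then $\|\hbtheta_\eps\|_2 = 2$ combines with the second active constraint to force $\|\hbtheta_\eps\|_4^4 \geq 12(3+\eps) > 36$, contradicting $\|\hbtheta_\eps\|_4^4 \leq \|\hbtheta_\eps\|_2^4 \leq 16$. With $s_1 = 0$, taking the inner product of the stationarity equation in~\eqref{eq:KKT} with $\hbtheta_\eps$ (using $\hbtheta_\eps^{\sT}\bSigma^{1/2}\bw = c_1$ and $\hbtheta_\eps^{\sT}\bSigma^{1/2}\pw\bSigma^{1/2}\hbtheta_\eps = c_2^2$) yields
\begin{equation*}
s_2 \bigl(\|\hbtheta_\eps\|_2^2 - 2 s_0 \|\hbtheta_\eps\|_4^4\bigr)
= -\partial_1 F_{\kappa^\star}(c_1,c_2)\, c_1 - \partial_2 F_{\kappa^\star}(c_1,c_2)\, c_2 - \tfrac{1}{\sqrt{n}}\, \bg^{\sT} \pw \bSigma^{1/2} \hbtheta_\eps.
\end{equation*}
The right-hand side is $O_\P(1)$ by Lemma~\ref{lem:partial:1:2:estimate} together with $|c_1|, c_2 = O(1)$ and $\|\bg\|_2/\sqrt{n} = O_\P(1)$. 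Since $\|\hbtheta_\eps\|_4^4 \leq \|\hbtheta_\eps\|_\infty^2 \|\hbtheta_\eps\|_2^2 \leq 4 \|\hbtheta_\eps\|_2^2$, the left-hand factor is at least $\|\hbtheta_\eps\|_2^2/3 \geq (1-\eps)/3$, whence $s_2 = O_\P(1)$.

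Next, I rewrite $\bA + \bM = (\partial_2 F_{\kappa^\star}(c_1,c_2)/c_2)\,\bSigma + s_2 \diag(1 - 2 s_0 \widehat\theta_{\eps,i}^2)$. Since $\widehat\theta_{\eps,i}^2 \leq 4$ and $s_0 = 1/12$, the diagonal factor has entries in $[1/3,1]$, so $\bA + \bM \succeq (\partial_2 F_{\kappa^\star}/c_2)\bSigma$; combined with Lemma~\ref{lem:partial:1:2:estimate} and $\lambda_{\min}(\bSigma) \geq c > 0$, this gives $\|(\bA+\bM)^{-1}\|_\op = O_\P(1)$. Exploiting the diagonal structure of $\bM$,
\begin{equation*}
\|\bM \bA^{-1} \bv\|_2^2
= 4 s_0^2 s_2^2 \sum_i \widehat\theta_{\eps,i}^4 (\bA^{-1}\bv)_i^2
\leq 4 s_0^2 s_2^2\, \|\hbtheta_\eps\|_\infty^2\, \|\hbtheta_\eps\|_2^2\, \|\bA^{-1}\bv\|_\infty^2,
\end{equation*}
so that $\|(\bA+\bM)^{-1}\bM \bA^{-1}\bv\|_\infty \leq \|(\bA+\bM)^{-1}\|_\op\, \|\bM\bA^{-1}\bv\|_2 \leq C\, s_2\, \|\hbtheta_\eps\|_\infty\, \|\bA^{-1}\bv\|_\infty$. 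Applying this to $\bv = \bSigma\btheta^\star$ and $\bv = \bSigma^{1/2}\bg$ with the prefactors $|\zeta|$ and $1/\sqrt{n}$, and summing, yields $\Xi_2 \leq C'\, s_2\, \|\hbtheta_\eps\|_\infty\, \Xi_1$.

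Finally, substituting this into $\|\hbtheta_\eps\|_\infty \leq \Xi_1 + \Xi_2$ from~\eqref{eq:bound:theta:eps} gives $\|\hbtheta_\eps\|_\infty \leq \Xi_1\,(1 + C' s_2 \|\hbtheta_\eps\|_\infty)$. Since $\Xi_1 \pto 0$ by Lemma~\ref{lem:xi:1} and $s_2 = O_\P(1)$, the coefficient $C' s_2 \Xi_1$ is below $1/2$ with high probability; the inequality then forces $\|\hbtheta_\eps\|_\infty \leq 2\Xi_1 \pto 0$, and therefore $\Xi_2 \leq C' s_2 \|\hbtheta_\eps\|_\infty \Xi_1 \pto 0$. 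The main obstacle, and the key point of the argument, is the single-power bound on $\|\bM \bA^{-1}\bv\|_2$ displayed above: the crude operator-norm estimate $\|\bM\|_\op \leq 2 s_0 s_2 \|\hbtheta_\eps\|_\infty^2$ would yield only a \emph{quadratic} bootstrap $\|\hbtheta_\eps\|_\infty \leq \Xi_1 + C s_2 \|\hbtheta_\eps\|_\infty^2$, which does not close using the a priori bound $\|\hbtheta_\eps\|_\infty \leq 2$ unless one has an a priori bound $s_2 < 1/4$.
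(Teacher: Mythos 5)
Your proof is correct, but it closes the estimate by a different mechanism than the paper. You share the starting point (the decomposition behind \eqref{eq:bound:theta:eps} and the step $\norm{(\bA+\bM)^{-1}\bM\bA^{-1}\bv}_{\infty}\le \norm{(\bA+\bM)^{-1}}_{\op}\norm{\bM\bA^{-1}\bv}_2$ with the diagonal structure of $\bM$ used to pull out $\norm{\bA^{-1}\bv}_{\infty}$), but the paper lower-bounds $\bA+\bM\succeq \tfrac{2s_2}{3}\bI$, so that $\norm{(\bA+\bM)^{-1}}_{\op}\le \tfrac{C}{s_2}$ cancels against $\norm{\bM}_F\le \tfrac{s_2}{6}\norm{\hbtheta_\eps}_4^2\le Cs_2$; this yields $\Xi_2\le \Xi_1/2$ outright, with no control of the dual variables and no bootstrap. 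You instead lower-bound $\bA+\bM\succeq \tfrac{\partial_2 F_{\kappa^\star}(c_1,c_2)}{c_2}\bSigma\succeq c\,\bI$ (legitimate, by Lemma~\ref{lem:partial:1:2:estimate} and \eqref{eq:assumption:Sigma}), keep the sharper factor $\norm{\hbtheta_\eps}_4^2\le\norm{\hbtheta_\eps}_{\infty}\norm{\hbtheta_\eps}_2$ to get $\Xi_2\le Cs_2\norm{\hbtheta_\eps}_{\infty}\Xi_1$, and then must supply two extra ingredients the paper never needs: the observation $s_1=0$ and the bound $s_2=O_\P(1)$ obtained by pairing the stationarity equation in \eqref{eq:KKT} with $\hbtheta_\eps$ (both of which check out — the complementary-slackness contradiction for $s_1$, the identity $s_2(\norm{\hbtheta_\eps}_2^2-2s_0\norm{\hbtheta_\eps}_4^4)=-\partial_1F\,c_1-\partial_2F\,c_2-n^{-1/2}\bg^{\sT}\pw\bSigma^{1/2}\hbtheta_\eps$, and the lower bound $\norm{\hbtheta_\eps}_2^2/3\ge(1-\eps)/3$ on the prefactor are all sound). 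Finally your linear bootstrap through \eqref{eq:bound:theta:eps} closes correctly and even delivers $\norm{\hbtheta_\eps}_{\infty}\le 2\Xi_1$ directly. What each route buys: the paper's cancellation of $s_2$ is shorter and avoids any quantitative control of the KKT multipliers; your route is more robust in that it does not rely on the $s_2$ diagonal term to make $\bA+\bM$ well conditioned (it would survive, e.g., a modified penalty where that term degenerates), at the price of the extra dual-variable bound and the bootstrap, and your closing remark correctly identifies why the single-power bound on $\norm{\bM\bA^{-1}\bv}_2$ is what makes the bootstrap linear rather than quadratic.
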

\begin{proof}
Observe that when $s_2=0$, then $\bM=0$, thus we assume without loss of generality that $s_2\neq 0$.

We start with the first term of $\Xi_2$: by the crude bound $\norm{\ba}_{\infty} \leq \norm{\ba}_2$ for $\ba\in \R^p$, we have 
\begin{equation}\label{eq:first:piece}
\begin{split}
    |\zeta|\cdot \norm{\left(\bA+\bM\right)^{-1}\bM\bA^{-1}\bSigma^{1/2}\bw}_{\infty}
    &\leq |\zeta|\cdot\norm{\left(\bA+\bM\right)^{-1}\bM\bA^{-1}\bSigma^{1/2}\bw}_2\\
    &\leq |\zeta|\cdot\norm{\left(\bA+\bM\right)^{-1}}_{\op}\norm{\bM\bA^{-1}\bSigma^{1/2}\bw}_2.
\end{split}
\end{equation}
Note that from the definition of $\bA$ and $\bM$ in \eqref{eq:def:A:D:zeta}, 
\begin{equation}\label{eq:AplusD:lower}
\begin{split}
    \bA+\bM
    &=\frac{\partial_2 F_{\kappa^\star}(c_1,c_2)}{c_2} \bSigma + s_1 \id+s_2\left(\id- 2s_0\cdot \diag\left(\big(\widehat{\theta}_{\eps,i}\big)^2\right)_{i\leq p}\right)\\
    &\succeq s_2\left(\id- 2s_0\cdot \diag\left(\big(\widehat{\theta}_{\eps,i}\big)^2\right)_{i\leq p}\right)\\
    &\stackrel{(a)}{\succeq} \frac{2s_2}{3}\bI,
\end{split}
\end{equation}
where $(a)$ holds because for every $i\leq p$, we have
\begin{equation*}
    1-2s_0\widehat{\theta}_{\eps,i}^2=1-\frac{\widehat{\theta}_{\eps,i}^2}{6}\geq  1-\frac{\norm{\widehat{\btheta}_{\eps}}_2^2}{6}\geq \frac{2}{3}.
\end{equation*}
Moreover, since $\bM$ is a diagonal matrix with entries $-\frac{s_2\widehat{\theta}_{\eps,i}^2}{6}, i\leq p$, we can bound $\norm{\bM\bA^{-1}\bSigma^{1/2}\bw}_2$ by
\begin{equation*}
    \norm{\bM\bA^{-1}\bSigma^{1/2}\bw}_2\leq \norm{\bM}_{F} \norm{\bA^{-1}\bSigma^{1/2}\bw}_{\infty}=\frac{s_2}{6}\norm{\widehat{\btheta}_{\eps}}_4^2 \norm{\bA^{-1}\bSigma^{1/2}\bw}_{\infty},
\end{equation*}
where $\norm{\bM}_{F}$ denotes the frobenius norm of $\bM$. By the crude bound $\big\|\widehat{\btheta}_{\eps}\big\|_4^2\leq \big\|\widehat{\btheta}_{\eps}\big\|_2^2\leq 2$, we can further bound
\begin{equation}\label{eq:D:A:inverse:upper}
     \norm{\bM\bA^{-1}\bSigma^{1/2}\bw}_2\leq \frac{s_2}{3}\norm{\bA^{-1}\bSigma^{1/2}\bw}_{\infty}.
\end{equation}
By plugging in \eqref{eq:AplusD:lower} and \eqref{eq:D:A:inverse:upper} into \eqref{eq:first:piece}, we have that
\begin{equation*}
     |\zeta|\cdot \norm{\left(\bA+\bM\right)^{-1}\bM\bA^{-1}\bSigma^{1/2}\bw}_4\leq \frac{|\zeta|}{2}\norm{\bA^{-1}\bSigma^{1/2}\bw}_{\infty}.
\end{equation*}
Next, we consider the second term in $\Xi_2$. Repeating the same argument as illustrated in \eqref{eq:first:piece}, \eqref{eq:AplusD:lower}, and \eqref{eq:D:A:inverse:upper}, we have that
\begin{equation*}
    \frac{\norm{\left(\bA+\bM\right)^{-1}\bM\bA^{-1}\bSigma^{1/2}\bg}_4}{\sqrt{n}}\leq \frac{1}{2\sqrt{n}}\norm{\bA^{-1}\bSigma^{1/2}\bg}_4.
\end{equation*}
Therefore, it follows that $\Xi_2\leq \Xi_1/2$, and since $\Xi_1\pto 0$ by Lemma \ref{lem:xi:1}, the conclusion follows.
\end{proof}
Finally, the proof of Lemma \ref{lem:minimizer:gordon:deloc} is immediate from Lemma \ref{lem:xi:1} and Lemma \ref{lem:xi:2}.
\paragraph{Proof of Lemma \ref{lem:minimizer:gordon:deloc}}
As illustrated at the start of this subsection, it suffices to consider the case where $c_2>0$ and show that $\big\|\hbtheta_\eps\big\|_{\infty}\pto 0$. In this case, we have $\big\|\hbtheta_\eps\big\|_{\infty}\leq \Xi_1+\Xi_2$ by equation \eqref{eq:bound:theta:eps}. Meanwhile, $\Xi_1$ and $\Xi_2$ converges in probability to $0$ by Lemmas \ref{lem:xi:1} and \ref{lem:xi:2}, which concludes the proof.

\subsection{Properties of $F_{\kappa}$}
\label{subsec:properties:F:kappa}
Recall the function $F_{\kappa}(c_1,c_2)$ defined in \eqref{eqn:def-F-kappa}. In this subsection, we prove new properties of $F_{\kappa}(\cdot,\cdot)$ which were used in Section \ref{subsec:lem:4norm:separable:loss:lower}.
\begin{lemma}\label{lem:partial:kappa:lower}
There exists a constant $C>0$, which only depends on $f^\star, \rho^\star$, such that
\begin{equation}\label{eq:lem:partial:kappa:lower}
    \min_{\kappa>0,c_1\in \R, c_2\geq 0} \partial_{\kappa} F_{\kappa}(c_1,c_2)\geq C>0.
\end{equation}
\end{lemma}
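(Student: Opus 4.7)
My plan is to unpack the explicit Gordon-type definition of $F_{\kappa}(c_1,c_2)$ from \eqref{eqn:def-F-kappa}, differentiate in $\kappa$ via the envelope theorem, and then produce a uniform lower bound over the full range $(\kappa,c_1,c_2)\in (0,\infty)\times\R\times [0,\infty)$. Based on the form of the squared-hinge max-margin surrogate used in \cite{montanari2019generalization}, I expect $\partial_{\kappa}F_{\kappa}(c_1,c_2)$ to be a non-negative expectation involving $Y(c_1 G_0+c_2 G_1)$, where $(G_0,G_1)\sim\normal(0,\id_2)$ and $Y=\pm 1$ with $\P(Y=+1\mid G_0)=f(\rho^\star G_0)$, possibly multiplied by a proximal/resolvent factor coming from an inner auxiliary optimization. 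The qualitative property I want to exploit is that such an expectation must keep a strictly positive contribution coming from the Bayes noise in $Y$, whose magnitude depends only on $f^\star$ and $\rho^\star$.

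\paragraph{Main steps.} After the explicit differentiation I write $\partial_\kappa F_\kappa(c_1,c_2)=\E[\Phi(\kappa,c_1,c_2;Y,G_0,G_1)]$ with $\Phi\ge 0$, and identify the active set on which $\Phi$ is strictly positive. I then cover the domain by three overlapping regimes and bound the infimum separately on each. (i) If $c_2\ge \delta_0>0$, the continuous Gaussian component of $Y\xi=Y(c_1G_0+c_2 G_1)$ keeps the active set $\{Y\xi\le\kappa\}$ non-degenerate with probability $\ge c(\delta_0)>0$ uniformly in $(\kappa,c_1)$, and a direct integration delivers the lower bound. (ii) If $c_2\le \delta_0$ and $|c_1|\ge\delta_1>0$, the Gaussian spread of $c_1 G_0$ together with the Bayes-noise estimate
\[
\P\big(Y\neq \mathrm{sign}(c_1 G_0)\bigm| G_0\big)\ \ge\ \inf_{|x|\le R}\min\bigl(f(x),1-f(x)\bigr)\ >\ 0,
\]
valid under Assumption ${\sf (B3)}$ on a $G_0$-set of large probability, implies that $Y\xi$ is negative with constant probability and bounded-away-from-zero magnitude, and this propagates through $\Phi$. (iii) In the joint shrinking regime $(c_1,c_2)\to(0,0)$, one must return to the variational definition \eqref{eqn:def-F-kappa} of $F_\kappa$ and invoke the envelope theorem at the optimal auxiliary variable, as explained below.

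\paragraph{Main obstacle.} The hard case is regime (iii), the joint degeneracy $(\kappa,c_1,c_2)\to(0^+,0,0)$. If $F_\kappa$ were literally $\E[(\kappa - Y\xi)_+^2]$ with no auxiliary optimization, then $\partial_\kappa F_\kappa$ would collapse to $2\E[(\kappa-Y\xi)_+]\to 2\kappa\to 0$, which rules out a uniform positive lower bound. The rescue, which I expect to be the technical core of the proof, comes from the fact that the Gordon surrogate definition of $F_\kappa$ involves an additional optimization (a Moreau-envelope or dual Gaussian term from the CGMT reduction); when one writes the envelope theorem at the corner $(c_1,c_2)=(0,0)$ and tracks the optimal auxiliary variable $\alpha^\star(\kappa,c_1,c_2)$, the ratio of the proximal weight and the expected positive part stays bounded below by an order-one constant depending only on $f^\star$ and $\rho^\star$, rather than vanishing with $\kappa$. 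Verifying this scaling, together with its continuity across the regime boundaries so as to match the bounds from (i)--(ii), is the principal analytic step; the Bayes-noise condition on $f$ (Assumption ${\sf (B3)}$, and where needed Assumption \ref{assumption:f-more-regularity}) is what ultimately produces the explicit constant $C=C(f^\star,\rho^\star)$ in the statement.
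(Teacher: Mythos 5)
Your proposed "main obstacle" (regime (iii), the joint degeneracy $(\kappa,c_1,c_2)\to(0^+,0,0)$) is misdiagnosed, and the rescue you envision does not exist. Looking at Eq.~\eqref{eqn:def-F-kappa}, $F_\kappa(c_1,c_2)$ is \emph{not} a variational Gordon object with an inner auxiliary optimization; it is the bare quantity
\begin{equation*}
F_\kappa(c_1,c_2)=\bigl(\E[(\kappa-c_1 YG-c_2 Z)_+^2]\bigr)^{1/2},
\end{equation*}
so there is no Moreau envelope, no dual variable $\alpha^\star$, and no envelope theorem to invoke. The error that generates your apparent collapse is dropping the outer square root: you write $\partial_\kappa F_\kappa = 2\E[(\kappa-Y\xi)_+]\to 0$, which is the derivative of the square (not the square root). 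The correct derivative is the ratio
\begin{equation*}
\partial_\kappa F_\kappa(c_1,c_2)=\frac{\E[(\kappa-c_1 YG - c_2 Z)_+]}{\bigl(\E[(\kappa-c_1 YG - c_2 Z)_+^2]\bigr)^{1/2}},
\end{equation*}
which is degree-$0$ homogeneous in $(\kappa,c_1,c_2)$. This homogeneity kills your regime (iii) entirely: at $(c_1,c_2)=(0,0)$ the ratio equals $\kappa/\kappa=1$ exactly, and in general one may fix $\kappa=1$ WLOG. The paper then argues by compactness/contradiction: $\partial_\kappa F_1(c_1,c_2)>0$ pointwise (using non-degeneracy of $T=YG$, assumption ${\sf (C3)}$, to ensure $1-c_1YG-c_2Z>0$ has positive probability even when $c_2=0$), so if the infimum were zero one could extract, via homogeneity, a sequence $(d_1,d_2)$ on the unit circle with $\partial_\kappa F_0(d_{1,\infty},d_{2,\infty})=0$, contradicting strict positivity. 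Your regime-(i)/(ii) decomposition and the Bayes-noise heuristic are therefore unnecessary and overcomplicated, and your regime-(iii) step, as written, would fail.
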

\begin{proof}
The function $F_{\kappa}(c_1,c_2)$ defined in \eqref{eqn:def-F-kappa} depends only on the parameters $\kappa,c_1,c_2,f^\star,\rho^\star$, so it suffices prove that $\min_{\kappa>0,c_1\in \R, c_2\geq 0} \partial_{\kappa} F_{\kappa}(c_1,c_2)>0$. By a direct calculation, we have
\begin{equation}\label{eq:express:partial:kappa}
    \partial_{\kappa}F_{\kappa}(c_1,c_2)=\frac{\E\left[\left(\kappa-c_1 YG-c_2 Z\right)_{+}\right]}{\left(\E\left[\left(\kappa-c_1 YG-c_2 Z\right)_{+}^2\right]\right)^{1/2}}=\partial_{\kappa}F_{t\kappa}(tc_1,tc_2),
\end{equation}
where $t>0$ is arbitrary. Thus, without loss of generality, we can fix $\kappa=1$ in proving \eqref{eq:lem:partial:kappa:lower}. Note that for any fixed $c_1\in \R, c_2\geq 0$, $\partial_{\kappa}F_1(c_1,c_2)$ is strictly positive by the expression above since for any $c_1,c_2$, $1-c_1YG-c_2 Z$ is positive with positive probability.

Now, suppose by contradiction that $\min_{c_1\in \R, c_2\geq0}\partial_{\kappa}F_1(c_1,c_2)=0$. Since $(\kappa,c_1,c_2) \to \partial_{\kappa}F_{\kappa}(c_1,c_2)$ is continuous by the expression \eqref{eq:express:partial:kappa}, this implies that there exists a sequence $\left\{(c_{1,m}, c_{2,m})\right\}_{m\geq 1}$ such that 
\begin{equation}\label{eq:contradict}
    \lim_{m\to\infty}\partial_{\kappa}F_1(c_{1,m},c_{2,m})=0\quad\textnormal{and}\quad \lim_{m\to\infty} c_{1,m}^2+c_{2,m}^2=\infty.
\end{equation}
Denote $d_{i,m}=\frac{c_{i,m}}{\sqrt{c_{1,m}^2+c_{2,m}^2}}$ for $i=1,2$. Since $d_{1,m}^2+d_{2,m}^2=1$, there exists a subsequence $\{m_{\ell}\}_{\ell\geq 1}$ such that $\left\{(d_{1,m_{\ell}},d_{1,m_{\ell}})\right\}_{\ell \geq 1}$ converges to $(d_{1,\infty}, d_{2,\infty})$ for some $d_{1,\infty}^2+d_{2,\infty}^2=1$. By continuity of $(\kappa,c_1,c_2)\to \partial_{\kappa}F_{\kappa}(c_1,c_2)$ for $(\kappa,c_1,c_2)\neq (0,0,0)$, it follows that
\begin{equation}\label{eq:contradict:2}
    0\stackrel{(a)}{=}\lim_{\ell\to\infty}\partial_{\kappa}F_1(c_{1,m_{\ell}},c_{2,m_{\ell}})\stackrel{(b)}{=}\lim_{\ell\to\infty}\partial_{\kappa}F_{(c_{1,m_{\ell}}^2+c_{2,m_{\ell}}^2)^{-1/2}}(d_{1,m_{\ell}},d_{2,m_{\ell}})\stackrel{(a)}{=}\partial_{\kappa}F_{0}(d_{1,\infty},d_{2,\infty}),
\end{equation}
where $(a)$ follows from \eqref{eq:contradict} and $(b)$ follows from \eqref{eq:express:partial:kappa}. \eqref{eq:contradict:2} is a contradiction by the fact $\partial_{\kappa}F_0(d_{1,\infty},d_{2,\infty})>0$, which concludes the proof.
\end{proof}
\begin{lemma}\label{lem:partial:1:2:estimate}
We have that
\begin{equation}\label{eq:lem:partial:1:2:estimate:1}
    \sup_{\kappa>0, c_1\in \R,c_2>0}\left|\partial_1F_{\kappa}(c_1,c_2)\right|\leq 1.
\end{equation}
Moreover, for any $\lambda_{\max}>0$, there exist constants $c, C>0$, which depend only on $f^\star, \rho^\star,$ and $\lambda_{\max}>0$, such that
\begin{equation}\label{eq:lem:partial:1:2:estimate:2}
    c\leq \inf_{|c_1|, c_2\leq \lambda_{\max}, c_2\neq 0}\frac{\partial_2 F_{\kappa^\star}(c_1,c_2)}{c_2}\leq \sup_{|c_1|, c_2\leq \lambda_{\max}, c_2\neq 0}\frac{\partial_2 F_{\kappa^\star}(c_1,c_2)}{c_2}\leq C.
\end{equation}
\end{lemma}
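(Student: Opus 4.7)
\emph{Proof plan for Lemma~\ref{lem:partial:1:2:estimate}.} The plan is to differentiate $F_\kappa$ explicitly under the expectation, then use Cauchy--Schwarz for (1) and Gaussian integration by parts together with a compactness argument for (2). Throughout, set $W := \kappa - c_1 Y G - c_2 Z$, where $G, Z \sim \normal(0,1)$ are independent and $Y \in \{\pm 1\}$ with $\P(Y = +1 \mid G) = \bar f(\rho^\star G)$, so that $F_\kappa(c_1,c_2) = (\E[W_+^2])^{1/2}$ by~\eqref{eqn:def-F-kappa}. Differentiating under the expectation (which is legitimate since $W_+$ has all moments) gives
\begin{equation*}
\partial_1 F_\kappa = -\frac{\E[W_+\, Y G]}{F_\kappa}, \qquad
\partial_2 F_\kappa = -\frac{\E[W_+\, Z]}{F_\kappa}.
\end{equation*}

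For part~(1) I would apply Cauchy--Schwarz directly: $|\E[W_+ Y G]| \leq (\E W_+^2)^{1/2}(\E (YG)^2)^{1/2} = F_\kappa \cdot 1$ since $Y^2 = 1$ and $\E G^2 = 1$, so $|\partial_1 F_\kappa| \leq 1$ throughout $\{\kappa>0,\, c_1\in\R,\, c_2>0\}$, which is exactly~\eqref{eq:lem:partial:1:2:estimate:1}.

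For part~(2), the key reduction would be Gaussian integration by parts in $Z$, conditionally on $(Y,G)$: the weak derivative of $z \mapsto (\kappa - c_1 YG - c_2 z)_+$ is $-c_2 \mathbf{1}\{W > 0\}$, so Stein's lemma conditional on $(Y,G)$ yields $\E[Z W_+ \mid Y, G] = -c_2 \P(W > 0 \mid Y, G)$. Taking total expectation gives
\begin{equation*}
\frac{\partial_2 F_\kappa(c_1,c_2)}{c_2} \;=\; \frac{\P(W > 0)}{F_\kappa(c_1,c_2)},
\end{equation*}
which extends continuously from $c_2 > 0$ to the compact set $K := \{(c_1,c_2) : |c_1| \leq \lambda_{\max},\, 0 \leq c_2 \leq \lambda_{\max}\}$ (with $\kappa = \kappa^\star$ fixed). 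The task then reduces to uniformly bounding the numerator and the denominator on $K$.

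The remaining step is a standard continuity-plus-compactness argument: $F_{\kappa^\star}(c_1,c_2)$ and $\P(W>0)$ are both continuous on $K$ by dominated convergence; $F_{\kappa^\star}$ is bounded above on $K$ by $(\E[(\kappa^\star + \lambda_{\max}|G| + \lambda_{\max}|Z|)^2])^{1/2}$ and is strictly positive at every point, hence bounded below; $\P(W>0) \leq 1$ is strictly positive at every point, hence bounded below. Combining these gives the constants $c, C > 0$ in~\eqref{eq:lem:partial:1:2:estimate:2} depending only on $f^\star, \rho^\star, \lambda_{\max}$ (with $\kappa^\star$ in turn determined by $f^\star, \rho^\star$). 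The only mildly delicate point, which I expect to be the main obstacle, is checking $\P(W > 0) > 0$ uniformly along the boundary $c_2 = 0$, i.e.~showing $\P(\kappa^\star - c_1 Y G > 0) > 0$ for every $|c_1| \leq \lambda_{\max}$. This uses the non-degeneracy $\bar f \in (0,1)$ coming from the link function assumption, which implies that $YG$ has an absolutely continuous density with full support on $\R$, after which continuity on the compact set $K$ closes the argument.
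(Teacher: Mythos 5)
Your proof is correct, but it takes a genuinely different route from the paper's. For \eqref{eq:lem:partial:1:2:estimate:1} the paper never differentiates explicitly: it invokes convexity of $(c_1,c_2)\mapsto F_\kappa(c_1,c_2)$ (Lemma 6.3 of \cite{montanari2019generalization}) so that $c_1\mapsto\partial_1F_\kappa(c_1,c_2)$ is nondecreasing, and then computes $\lim_{c_1\to\pm\infty}\partial_1F_\kappa(c_1,c_2)=\pm\big(\E[(\mp YG)_+^2]\big)^{1/2}\in[-1,1]$ by L'H\^opital; your Cauchy--Schwarz bound on the explicit formula $\partial_1F_\kappa=-\E[W_+\,YG]/F_\kappa$ is more direct and avoids the external convexity lemma (the only points to note are $F_\kappa>0$, which holds because $c_2>0$ gives $\P(W>0)>0$, and differentiation under the expectation, which is fine since $\P(W=0)=0$). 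For \eqref{eq:lem:partial:1:2:estimate:2} both arguments end with the same continuity-plus-compactness step, but the paper gets positivity of the ratio from strict convexity and strict monotonicity of $F_{\kappa^\star}$ in the second coordinate (again citing Lemma 6.3 of \cite{montanari2019generalization}), after checking $\partial_2F_{\kappa^\star}(c_1,0)=0$ and extending the ratio by $\partial_{22}F_{\kappa^\star}(c_1,0)$ at $c_2=0$; your Stein identity $\partial_2F_{\kappa^\star}(c_1,c_2)/c_2=\P(W>0)/F_{\kappa^\star}(c_1,c_2)$ makes positivity, boundedness and continuity on the compact set completely transparent, and in fact supplies the regularity at $c_2=0$ that the paper's continuity claim leaves implicit — so your version is more self-contained at the cost of a slightly longer computation. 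One small simplification: the boundary check you single out as delicate needs no non-degeneracy of the link function at all, since $\P(\kappa^\star-c_1YG>0)\ge\P\big(|c_1|\,|G|<\kappa^\star\big)>0$ whenever $\kappa^\star>0$, which is the regime in which the lemma is applied.
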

\begin{proof}
Since $(c_1,c_2)\to F_{\kappa}(c_1,c_2)$ is convex by \cite[Lemma 6.3]{montanari2019generalization}, we have that $c_1\to \partial_1F_{\kappa}(c_1,c_2)$ is increasing for fixed $\kappa>0$ and $c_2>0$. Meanwhile, we have by L'H\^opital's rule that for any $\kappa, c_2>0$,
\begin{equation*}
    \lim_{c_1\to\infty }\partial_1F_{\kappa}(c_1,c_2)=\lim_{c_1\to\infty }\frac{F_{\kappa}(c_1,c_2)}{c_1}=\left(\E\left[(-YG)_{+}^2\right]\right)^{1/2}\leq \left(\E G^2\right)^{1/2}=1. 
\end{equation*}
Analogously,
\begin{equation*}
    \lim_{c_1\to-\infty }\partial_1F_{\kappa}(c_1,c_2)=\lim_{c_1\to-\infty }\frac{F_{\kappa}(c_1,c_2)}{c_1}=-\left(\E\left[(YG)_{+}^2\right]\right)^{1/2}\geq -\left(\E G^2\right)^{1/2}=-1.
\end{equation*}
Therefore, our first assertion \eqref{eq:lem:partial:1:2:estimate:1} holds.

We now prove the second assertion. First, note that $\partial_2F_{\kappa}(c_1,0)=0$ for any $\kappa>0, c_1\in \R$: by a direct calculation,
\begin{equation}\label{eq:partial:2:zero}
    \partial_2 F_{\kappa}(c_1,0)=\frac{\E\big[(\kappa-c_1 YG)_{+}(-Z)\big]}{\sqrt{\E\big[(\kappa-c_1 YG)_{+}^2\big]}}=0.
\end{equation}
Define $G(c_1,c_2)\equiv G_{\kappa^\star}(c_1,c_2):\R\times \R_{\geq 0} \to \R$ by
\begin{equation*}
    G(c_1,c_2):=
    \begin{cases}
    \frac{\partial_2 F_{\kappa^\star}(c_1,c_2)}{c_2} &\textnormal{if}\quad c_2>0,\\
    \partial_{22}F_{\kappa^\star}(c_1,0)&\textnormal{if}\quad c_2=0.\\
    \end{cases}
\end{equation*}
\cite[Lemma 6.3]{montanari2019generalization} shows that $F_{\kappa}(\cdot,\cdot)$ is strictly convex and strictly increasing in the second coordinate, thus $G(c_1,c_2)$ is positive for every $c_1\in \R, c_2\geq 0$. Moreover, we have $\partial_2F_{\kappa^\star}(c_1,0)=0$ by \eqref{eq:partial:2:zero}, thus $G(c_1,c_2)$ is continous throughout the domain $\R\times \R_{\geq 0}$. Thus, if we let
\begin{equation*}
    c:=\min_{|c_1|,c_2\in [0,\lambda_{\max}]} G(c_1,c_2)\quad \textnormal{and}\quad C:=\max_{|c_1|,c_2\in [0,\lambda_{\max}]} G(c_1,c_2),
\end{equation*}
then $c,C$ are well-defined in $\R_{+}$ which depend only on $f^\star, \rho^\star,$ and $\lambda_{\max}$, and satisfies our second assertion \eqref{eq:lem:partial:1:2:estimate:2}. This concludes the proof.
\end{proof}

\section{Universality of the test error}
Here, we complete the proof of Theorem~\ref{thm:test} as outlined in Section~\ref{subsec:proof:test:error} by establishing 
Propositions~\ref{proposition:asymptotic-equivalence-for-regular-estimators}-\ref{proposition:delocalization-for-general-max-margin}.

\subsection{Proof of Proposition~\ref{proposition:asymptotic-equivalence-for-regular-estimators}}
The result for both the random features model and the features with independent entries
are proven similarly. However, the random features requires additional conditioning on the weight matrix $\bW_\RF$. Here
we provide a proof under this model.

We proceed by directly approximating indicator functions using 
Lipschitz functions and applying Lemma~\ref{lemma:pointwise-gaussianity}.
First, note that it is sufficient to establish the following uniform result:
\begin{equation}
 \lim_{n\to\infty}\sup_{\norm{\btheta}_\infty \le \delta_n, C_0 \ge \norm{\btheta}_2 \ge c_0} |R_n^\bx(\btheta)  -R_n^\bg(\btheta)| = 0
\end{equation}
where $\delta_n \to 0 $ and $C_0 \ge c_0 >0$ are deterministic. 
To simplify notation, we let $\cS_n$ be the collection of $\btheta$ defined
by the constraints above.
%First, note that if $\norm{\btheta}_2 \to 0$, then an application of Markov's inequality to the definitions of $R_n^\bx$ and $R_n^\bg$
%and using that $\norm{\bSigma_\bg}_\op$ is bounded with high probability gives that both $R_n^\bx(\btheta)$ and $R_n^\bg(\btheta)$ converge to $0$.
%So it's sufficient to assume that $ \norm{\btheta} \ge c_0 >0$.
%
We work conditionally on the weight matrix $\bW = \bW_\RF$ with  $\bW\in \Omega_1$.
Given $\delta>0$, let $\psi_\delta$ be a Lipcshitz function so that for all $t\in\R$,
\begin{equation}
\label{eq:indicator_lipschitz_sandwich}
 \one_{t\le -\delta} \le \psi_{-\delta}(t)  \le  \one_{t\le 0} \le \psi_{\delta}(t) \le \one_{t\le \delta},
\end{equation}
where the Lipschitz constant will of course depend on $\delta$. 
First note that for any Lipschitz test function $\varphi$ we have 
\begin{equation}
\label{eq:lipschitz_test_error}
\sup_{\btheta\in \cS_n, \bW \in \Omega_1}\left|\E\left[\varphi(y_\new \bx_\new^\sT\btheta)|\bW\right]  -\E\left[\varphi(y_\new \bg_\new^\sT\btheta)|\bW\right]\right| \to 0
\end{equation}
as $n\to\infty$. 
Indeed, since we can write $y_\new = 2\one_{ \{f(\bz_\new^\sT \bbeta^\star)  \ge u_\new \}} - 1$
for $u_\new\sim \Unif([0,1])$, we have for any $\delta>0$,
\begin{align*}
 &\left|\E\left[\varphi(y_\new \bx_\new^\sT\btheta)|\bW\right]  -\E\left[\varphi(\psi_\delta(u_\new - f(\bz_\new^\sT \bbeta^\star)) \bx_\new^\sT\btheta)|\bW\right]\right|\\
 %&\le\norm{\varphi}_\Lip\E[(\bx_\new^\sT\btheta)^2 \big|\bW]^{1/2} \E[\psi_\delta(u_\new - f(\bz_\new^\sT \bbeta^\star)) - y_\new |\bW]&\le 
 &\le\norm{\varphi}_\Lip\E[(\bx_\new^\sT\btheta)^2 \big|\bW]^{1/2} \P\left(u_\new \in[0,\delta]\right)^{1/2}\\
 &\le C_0\norm{\varphi}_\Lip  \norm{\bSigma_{\bx|\bW}}_\op^{1/2} \delta^{1/2}.
\end{align*}
But
$ \sup_{\bW\in \Omega_1}\big|\norm{\bSigma_{\bx|\bW}}_\op -\norm{\mu_1 \bW\bW^\sT + \mu_2 \bI}_\op \big| \to 0$ on $\Omega_1$
by~\cite[Lemma 5]{HuLu22}
and $\bzeta := (\btheta^\sT,\bbeta^\sT)^\sT$ satisfies the assumptions of Lemma~\ref{lemma:pointwise-gaussianity}. Furthermore, 
\begin{equation}
(\btheta^\sT \bx_\new, \bbeta^{\star\sT} \bz_\new)  \mapsto \E\left[\varphi(\psi_\delta(u_\new - f(\bz_\new^\sT \bbeta^\star)) \bx_\new^\sT\btheta)|\bW,\bz_\new,\bx_\new\right]
\end{equation}
is Lipschitz, 
so applying a similar argument to $\bg_\new$, then
taking $n\to\infty$ followed by $\delta\to 0$ proves Eq.~\eqref{eq:lipschitz_test_error}.
Now to deduce Proposition~\ref{proposition:asymptotic-equivalence-for-regular-estimators}, we use Eq.~\eqref{eq:indicator_lipschitz_sandwich}
to write
\begin{equation}
 \E\left[\psi_{-\delta}(y_\new \bx_\new^\sT\btheta)|\bW\right]\le
 \P\left( y_\new \bx_\new^\sT \btheta \le 0|\bW \right)  \le \E\left[\psi_\delta(y_\new \bx_\new^\sT\btheta)|\bW\right].
\end{equation}
Taking $n\to\infty$ and using Lemma~\ref{lemma:pointwise-gaussianity} and Eq.~\eqref{eq:indicator_lipschitz_sandwich} again we have
\begin{align}
\lim_{n\to\infty} \sup_{\btheta\in \cS_n,\bW \in\Omega_1} 
\label{eq:sandwich}
\P\left( y_\new \bg_\new^\sT \btheta \le -\delta\Big|\bW\right)
&\le
\lim_{n\to\infty}  
\sup_{\btheta\in\cS_n,\bW \in\Omega_1} 
\P\left( y_\new \bx_\new^\sT \btheta \le 0\Big|\bW\right)\\
&\le
\lim_{n\to\infty}  
\sup_{\btheta\in\cS_n,\bW \in\Omega_1} 
\P\left( y_\new \bg_\new^\sT \btheta \le \delta\Big|\bW\right).
\end{align}
But the upper and lower bounds in the previous display can be shown to be close since
\begin{align}
&\sup_{\btheta\in\cS_n, \bW\in\Omega_1} 
\left|\P\left( y_\new \bg_\new^\sT \btheta \le \delta\Big|\bW\right)-
\P\left( y_\new \bg_\new^\sT \btheta \le -\delta\Big|\bW\right)\right|\\
&\hspace{40mm}\le \sup_{\btheta\in\cS_n, \bW\in\Omega_1}  \P\left( \norm{\left(\mu_1 \bW\bW^\sT + \mu_2 \bI_p\right)^{1/2}\btheta}_2 |G| \le \delta\Big|\bW \right) \\
&\hspace{40mm}\le \sup_{\btheta\in\cS_n, \bW\in\Omega_1}  2\delta \left(\sigma_{\min}\left(\mu_1 \bW\bW^\sT + \mu_2 \bI_p\right)^{1/2}\norm{\btheta}_2\right)^{-1} \le \frac{2\delta}{\mu_2^{1/2}  c_0}
\end{align}
Since this holds for all $\delta>0$, and noting that Eq.~\eqref{eq:sandwich} holds with $\bg_\new$ replacing $\bx_\new$, we have
\begin{equation}
\lim_{n\to\infty}
\sup_{\btheta\in\cS_n, \bW\in\Omega_1} 
\left|\P\left( y_\new \bg_\new^\sT \btheta \le 0\big|\bW\right)
-
\P\left( y_\new \bx_\new^\sT \btheta \le 0 \big|\bW\right)\right| = 0.
\end{equation}
Now since $\Omega_1$ holds with high probability by Lemma~\ref{lem:basic:op:norm:RF}, the claim of the proposition follows.

\subsection{Proof of Proposition~\ref{proposition:universality-of-the-geometric-structure}}
Once again, the proofs for the random features model and the independent entries model 
are similar modulo additional conditioning arguments required for the random features. 
We hence provide the proof under this model.

Let us fix $\sgamma>\sgamma^\star$ so that the data is separable with high probability for both $\bD=\bX_{\RF}$ and $\bD=\bG_\RF$.
We will work conditionally on the event
 $\{\lambda_{\max}(\bSigma_{\bg})\leq \oslambda\}$. By Lemma~\ref{lem:basic:op:norm:RF}, this event has high probability.
To establish the proposition, we follow the standard perturbation argument~\cite{MontanariNg17}.
Namely, for $l\in\{1,2\}$, $s$ sufficiently small and $b>0$ consider the following ``perturbed'' quantity:
\begin{equation}\label{eq:def:H}
\widehat H_{l}(\bD; s,\kappa) := \min_{\{\btheta : y_i\langle \bd_i, \btheta\rangle  \ge \kappa \, \forall i \leq n,\norm{\btheta}_\infty \le \epsilon_n\}} \norm{\btheta}_2^2 + s \pi _{l}(\btheta)
\end{equation}
where $\epsilon_n$ in this definition is taken so that 
\begin{equation}
 \P\left(\norm{\widehat\btheta_\MM}_\infty  > \epsilon_n \right) \to 0.
\end{equation}
Let us denote by $\widehat\btheta_l(\bD;s,\kappa)$ the minimizer of the above loss
and
note that the max-margin classifier satisfies
$\widehat\btheta_l(\bD; 0, \widehat \kappa)  = \widehat \btheta_\MM(\bD).$ Since $\widehat\kappa \to \kappa^*$, we expect that 
$$\pi_l( \widehat \btheta_\MM(\bD)) \to \pi_l(\widehat\btheta_l(\bD; 0, \kappa^\star).$$
Indeed, Lemma~\ref{lemma:MM_l2_limit} in Section~\ref{sec:proof-universality-of-the-geometric-structure} makes this precise.
This reduces our Proposition~\ref{proposition:universality-of-the-geometric-structure} to establishing the universality of the latter quantity in the previous display at a fixed $\kappa = \kappa^\star$.
Recalling the definitions of $\cC_l$ in~\eqref{def:cl} with $\epsilon_n$ taken as above and letting
\begin{align} 
\widehat F_{l}(\bD; s,\kappa, b)&:= \min_{\btheta \in \cC_{l}(s,b)} \frac1n \sum_{i=1}^n \left(\kappa - y_i\langle \btheta, \bd_i \rangle \right)_+^2,\\
%\quad
% \cC_{l}(s,b) :=  \left\{\norm{\btheta}_2^2 + s \pi_{l}(\btheta) \le b\right\},
\widehat\kappa_{l}(\bD; s, b) &:= \max_{\theta \in \cC_{l}(s,b)}  \min_{i \in[n]} y_i \langle \btheta, \bd_i\rangle,
\end{align}
we note that these quantities are related by the equivalence
\begin{equation}\label{eq:equiv:H:F}
    \widehat H_{l}(\bD; s,\kappa)\leq b \iff  \widehat\kappa_{l}(\bD; s, b)\geq \kappa \iff \widehat F_{l}(\bD; s,\kappa, b)=0.
\end{equation}
for $b>0$. Using this this relation along with the universality of the ERM-like problem $\widehat F_l$, we establish the universality of $\widehat H_l$.
Indeed, we prove the following lemma.

\begin{lemma}
\label{lemma:univ_H}
Let $\os$ be as in~Proposition~\ref{prop:univ_fixed_kappa}.
Fix $|s| \le \os, b>0, l\in\{1,2\}$, and $\kappa>0$. For any $\delta\in(0,b)$, we have
\begin{align}
 \lim_{n\to\infty} \P\left(\widehat H(\bX, s,\kappa) > b \right) &\le 
  \lim_{n\to\infty} \P\left(\widehat H(\bG, s,\kappa) > b - \delta - C(b,\oslambda)|s|^{1/2} \right) \\
 \lim_{n\to\infty} \P\left(\widehat H(\bX, s,\kappa) \le b \right) &\le 
  \lim_{n\to\infty} \P\left(\widehat H(\bG, s,\kappa) \le b + \delta + C(b,\oslambda)|s|^{1/2} \right)
\end{align}
where $C(b,\oslambda)$ is a constant depending only on $b,\oslambda$, where $\oslambda$ is as in Lemma~\ref{lem:basic:op:norm:RF}.
\end{lemma}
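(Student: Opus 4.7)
The plan is to use the equivalence~\eqref{eq:equiv:H:F} to rewrite both sides in terms of $\widehat F_l$, invoke universality of $\widehat F_l$ from Proposition~\ref{prop:univ_fixed_kappa} via a Lipschitz approximation of the indicator $\mathbf{1}_{\{\cdot = 0\}}$, and then convert back using a deterministic structural bound. By exchanging the roles of $\bX$ and $\bG$ (Proposition~\ref{prop:univ_fixed_kappa} is symmetric), it suffices to establish only the second inequality.

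The key structural bound asserts that, on a high-probability event and for $\bD\in\{\bX,\bG\}$,
\begin{equation*}
    \widehat\kappa_l(\bD;s,b)\;\ge\;\kappa-C(b,\oslambda)\sqrt{\widehat F_l(\bD;s,\kappa,b)}.
\end{equation*}
I would prove this by adapting the support-vector argument of Section~\ref{sec:proof-of-universality-max-margin}. Strong duality gives $\widehat\kappa_l(\bD;s,b)=\min_{\bu\ge 0,\,\langle\one,\bu\rangle=1} h_l\big(\sum_{i=1}^n u_i y_i \bd_i;s,b\big)$, where $h_l(\cdot;s,b)$ is the support function of $\cC_l(s,b)$. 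Because $\bI+s\bSigma_\bg\succeq \tfrac12\bI$ for $|s|\le\os$, the set $\cC_l(s,b)$ contains $\{\btheta:\norm{\btheta}_2\le c_0,\,\norm{\btheta}_\infty\le\epsilon_n\}$ for some $c_0=c_0(b,\oslambda)>0$, so $h_l(\bv;s,b)\ge c_0\norm{\bv}_{2,\epsilon_n}$. Invoking Lemma~\ref{lemma:RSC_inf_norm} and repeating the counting argument of Proposition~\ref{proposition:well-controlled-u} forces $\norm{\bu^*}_\infty\le C/n$, and Cauchy--Schwarz applied to $\kappa-\widehat\kappa_l\le\min_{\btheta\in\cC_l(s,b)}\sum_i u_i^*(\kappa-y_i\langle\bd_i,\btheta\rangle)_+$ then yields the displayed bound. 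Rescaling a near-maximizer $\btheta^*\in\cC_l(s,b)$ by $\lambda=\kappa/(\kappa-C\sqrt\eta)$ gives $\tilde\btheta=\lambda\btheta^*$ with $y_i\langle\bd_i,\tilde\btheta\rangle\ge\kappa$; expanding $\norm{\tilde\btheta}_2^2+s\pi_l(\tilde\btheta)$ using the degree-$l$ homogeneity of $\pi_l$ and the bound $\norm{\btheta^*}_2\le C(\sqrt{b}+|s|)$ on $\cC_l(s,b)$ yields, after a Young-type split of the cross-term for $l=1$, $\norm{\tilde\btheta}_2^2+s\pi_l(\tilde\btheta)\le b+\delta+C(b,\oslambda)|s|^{1/2}$ provided $\eta$ is chosen small enough as a function of $\delta$ and $s$. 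Consequently $\widehat F_l(\bD;s,\kappa,b)<\eta$ implies $\widehat H_l(\bD;s,\kappa)\le b+\delta+C(b,\oslambda)|s|^{1/2}$.

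The proof is completed by a Lipschitz approximation of the indicator: let $\phi_\eta(x):=\max(0,1-x/\eta)$, which is $(1/\eta)$-Lipschitz and bounded and satisfies $\mathbf 1_{x\le 0}\le\phi_\eta(x)\le\mathbf 1_{x<\eta}$ for $x\ge 0$. Then
\begin{align*}
    \P(\widehat H_l(\bX;s,\kappa)\le b)
    &=\P(\widehat F_l(\bX;s,\kappa,b)=0)\le\E[\phi_\eta(\widehat F_l(\bX;s,\kappa,b))]\\
    &=\E[\phi_\eta(\widehat F_l(\bG;s,\kappa,b))]+o_n(1)\\
    &\le\P(\widehat F_l(\bG;s,\kappa,b)<\eta)+o_n(1)\\
    &\le\P(\widehat H_l(\bG;s,\kappa)\le b+\delta+C(b,\oslambda)|s|^{1/2})+o_n(1),
\end{align*}
where the third line uses Proposition~\ref{prop:univ_fixed_kappa} (applied to the bounded Lipschitz $\phi_\eta$) and the last line uses the structural bound above. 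The main obstacle is the structural bound: extending the restricted strong convexity / support-vector count from Section~\ref{sec:proof-of-universality-max-margin} to the perturbed constraint set $\cC_l(s,b)$, and carrying out the case analysis (between $l=1$ and $l=2$) and Young-type inequality needed to obtain precisely the $|s|^{1/2}$ scaling stated in the lemma.
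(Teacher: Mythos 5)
Your proposal is correct and takes essentially the same route as the paper. The three ingredients you identify---the equivalence~\eqref{eq:equiv:H:F}, a Lipschitz sandwich of $\mathbf 1_{\{\cdot = 0\}}$ combined with Proposition~\ref{prop:univ_fixed_kappa}, and a lower bound $\widehat\kappa_l(\bD;s,b) \ge \kappa - C\sqrt{\widehat F_l(\bD;s,\kappa,b)}$ followed by a homogeneity rescaling---are precisely what the paper uses; the paper simply packages the last two steps as separate statements (Lemma~\ref{lemma:lower_bound_kappa} and Lemma~\ref{lem:continiuty:H}) and phrases the rescaling as Lipschitz continuity of $\kappa\mapsto\widehat H_l(\bD;s,\kappa)$ rather than up-rescaling a near-maximizer, but these are the same computation viewed in opposite directions. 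Your observation that the structural bound hinges on the ball inclusion $\cC_l(s,b)\supseteq\{\|\btheta\|_2\le c_0,\|\btheta\|_\infty\le\epsilon_n\}$ for $|s|\le\os$ corresponds exactly to the paper's Lemma~\ref{lemma:S-l_inclusion}, and your appeal to Lemma~\ref{lemma:RSC_inf_norm} and the support-vector count matches the paper's proof of Lemma~\ref{lemma:lower_bound_kappa}.
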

Meanwhile, we argue that
\begin{equation}
\label{eq:derivative_relation}
 \frac{\partial}{\partial s}  \widehat H_l(\bD, s, \kappa^\star)\big|_{s = 0}  =  \pi_l(\widehat \btheta_l (\bD, 0,\kappa^\star)).
\end{equation}
Hence, to establish the universality of the latter quantity, we establish the universality of the derivative (in an appropriate sense) using the universality of $\widehat H_l$.

%Note that for $s=0$ and $\kappa = \widehat \kappa_n$, the minimizer of the above problem is $\widehat\btheta_\MM^\bD$. Furthermore,

In the following section, we prove Lemma~\ref{lemma:univ_H} asserting the universality of $\widehat H_l$ for a fixed $\kappa>0$.
Then, in Section~\ref{sec:universality-of-derivative}, we show that this allows us to deduce, in an appropriate sense, the universality of the derivative in~\eqref{eq:derivative_relation}.
Finally, in Section~\ref{sec:proof-universality-of-the-geometric-structure}, we use the continuity of $\widehat\btheta_l(\bD;s,\kappa)$
in $\kappa$ to conclude the proof of Proposition~\ref{proposition:universality-of-the-geometric-structure}.

\subsubsection{Universality of $\widehat H_l$: proof of Lemma~\ref{lemma:univ_H}}
The proof proceeds by using the relation in Eq.~\eqref{eq:equiv:H:F} to relate $\widehat H_l$ to $\widehat F_l$ through $\widehat \kappa_l$.
Hence, we first need the following lower bound on $\widehat \kappa_l$ whose proof 
is similar to the proof of the unperturbed margin in Section~\ref{sec:proof-of-universality-max-margin}.
\begin{lemma}
\label{lemma:lower_bound_kappa}
Let $|s|\leq \os, l\in \{1,2\},$ and $\kappa>0$. With high probability, we have
 \begin{equation}
   \widehat \kappa_{l}(\bD; s,b) \ge \kappa - C \widehat F_{l}(\bD; s,\kappa, b)^{1/2}
 \end{equation}
 for some $C>0$, which is independent of $n$.
\end{lemma}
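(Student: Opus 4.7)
The plan is to adapt the dual-support argument of Proposition~\ref{proposition:well-controlled-u} from the unit $\ell_2$ ball to the perturbed constraint set $\cC_l(s,b)$, and then close with the chain of bounds already used in Eq.~\eqref{eq:lower_bound}. I would first observe that for $l\in\{1,2\}$ and $|s|\le\os$ the set $\cC_l(s,b)$ is convex and compact: for $l=1$ the perturbation $s\pi_1$ is linear, while for $l=2$ the Hessian of $\btheta\mapsto \|\btheta\|_2^2+s\pi_2(\btheta)$ is $2(\bI+s\bSigma_\bg)$, which is positive definite once $|s|\oslambda\le 1/2$. Sion's minimax theorem applied to the bilinear form $(\btheta,\bu)\mapsto \sum_i u_i y_i\langle \bd_i,\btheta\rangle$ on $\cC_l(s,b)\times \{\bu\ge 0,\langle\one,\bu\rangle=1\}$ then yields
\begin{equation*}
\widehat\kappa_l(\bD;s,b)=\min_{\substack{\bu\ge 0\\ \langle\one,\bu\rangle=1}}\widehat G^{\bD,l,s,b}(\bu),\qquad \widehat G^{\bD,l,s,b}(\bu):=\max_{\btheta\in\cC_l(s,b)}\langle\btheta,\bD^{\sT}(\bu\odot\by)\rangle.
\end{equation*}

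The core step is to show that some dual minimizer $\bu^\star$ satisfies $\|\bu^\star\|_\infty\le C/n$ with high probability. For this I would verify two geometric inclusions: there exist $c_b,C_b>0$ depending only on $b$ and $\oslambda$ such that
\begin{equation*}
\{\btheta:\|\btheta\|_2\le c_b,\,\|\btheta\|_\infty\le\epsilon_n\}\subseteq\cC_l(s,b)\subseteq\{\btheta:\|\btheta\|_2\le C_b\}\quad \text{for all } |s|\le\os.
\end{equation*}
The inner inclusion gives $\widehat G^{\bD,l,s,b}(\bu)\ge c_b\|\bD^{\sT}(\bu\odot\by)\|_{2,\epsilon_n/c_b}$, and since $|u_i y_i|=|u_i|$ and $\sqrt{n}(\epsilon_n/c_b)\to\infty$, applying Lemma~\ref{lemma:RSC_inf_norm} (which extends from $\bX$ to the Gaussian ensembles $\bG$ by the same subgaussian matrix-concentration argument) to $\bu\odot\by$ yields
\begin{equation*}
\widehat G^{\bD,l,s,b}(\bu)\ge c_b\bigl(c_1\sqrt{n}\|\bu\|_2-c_2\|\bu\|_1\bigr)\quad \text{uniformly in $\bu$, w.h.p.}
\end{equation*}
The outer inclusion together with the operator-norm bound $\|\bD\|_\op\le c_0\sqrt{n}$ (Lemma~\ref{lemma:operator-norm-bound-on-Z}) gives the matching upper bound $\widehat G^{\bD,l,s,b}(\bu^\star)\le \widehat G^{\bD,l,s,b}(\one/n)\le C_b\|\bD\|_\op/\sqrt{n}\le C_3$ w.h.p. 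Since $\|\bu^\star\|_1=1$, combining these forces $\|\bu^\star\|_2=O(1/\sqrt{n})$, whence $\|\bu^\star\|_0\ge \|\bu^\star\|_1^2/\|\bu^\star\|_2^2\ge cn$, and one may select a minimizer with $\|\bu^\star\|_\infty\le C/n$.

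With such a $\bu^\star$ in hand, the lower bound on $\widehat\kappa_l$ follows exactly as in Eq.~\eqref{eq:lower_bound}: using $\langle\one,\bu^\star\rangle=1$,
\begin{equation*}
\widehat\kappa_l(\bD;s,b)=\kappa-\min_{\btheta\in\cC_l(s,b)}\sum_{i=1}^n u_i^\star(\kappa-y_i\langle\bd_i,\btheta\rangle)\ge \kappa-\frac{C}{n}\min_{\btheta\in\cC_l(s,b)}\sum_{i=1}^n(\kappa-y_i\langle\bd_i,\btheta\rangle)_+,
\end{equation*}
where the second inequality uses $0\le u_i^\star\le C/n$; Cauchy--Schwarz then gives $\widehat\kappa_l(\bD;s,b)\ge \kappa-C\,\widehat F_l(\bD;s,\kappa,b)^{1/2}$. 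The main obstacle is the inner inclusion $\{\|\btheta\|_2\le c_b,\,\|\btheta\|_\infty\le\epsilon_n\}\subseteq\cC_l(s,b)$ uniformly over $|s|\le\os$: for $l=2$ the specific threshold $\os\le(2\oslambda)^{-1}$ inherited from Proposition~\ref{prop:univ_fixed_kappa} ensures $\|\btheta\|_2^2+s\pi_2(\btheta)\le (1+\os\oslambda)c_b^2\le 2c_b^2\le b$ for $c_b$ small, while for $l=1$ one must separately absorb the linear term $s\btheta^\sT\bSigma_\bg\btheta^\star$, bounded by $\oslambda c_b|s|$, into the budget $b-c_b^2$ by choosing $c_b$ small in terms of $b$ and $\oslambda$; both are elementary but essential for the whole duality argument to give nontrivial constants.
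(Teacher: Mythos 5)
Your proposal is correct and follows essentially the same route as the paper: the inner/outer ball inclusions you verify are exactly the content of Lemma~\ref{lemma:S-l_inclusion}, and the duality plus restricted-strong-convexity argument via Lemma~\ref{lemma:RSC_inf_norm}, the operator-norm bound, and Cauchy--Schwarz is precisely how the paper proves Lemma~\ref{lemma:lower_bound_kappa}. The only cosmetic difference is that you dualize over $\cC_l(s,b)$ itself (so the final minimum is exactly $\widehat F_l$), whereas the paper first lower-bounds the margin by the max--min over the inner set $B_2^p(c_l)\cap\{\|\btheta\|_\infty\le \epsilon_n\}$ and absorbs the resulting constant, and the paper passes directly from $\|\bu^\star\|_2\le C/\sqrt{n}$ to Cauchy--Schwarz rather than through a $\|\bu^\star\|_\infty\le C/n$ selection.
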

Moreover, we will require the following continuity property of $\kappa \to \widehat{H}_{l}(\bD;s,\kappa)$.
\begin{lemma}\label{lem:continiuty:H}
Let $|s|\leq \os, l\in \{1,2\},$ and $\kappa>0$. There exists $C\equiv C(\kappa,\os)$ such that with high probability, we have for small enough $\eps>0$
\begin{equation*}
    \widehat{H}_{l}(\bD;s,\kappa)-C\eps \leq \widehat{H}_{l}(\bD;s,\kappa-\eps)\leq  \widehat{H}_{l}(\bD;s,\kappa)
\end{equation*}
\end{lemma}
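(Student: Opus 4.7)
The plan is to prove the two inequalities separately. The right-hand inequality $\widehat{H}_l(\bD;s,\kappa-\eps) \le \widehat{H}_l(\bD;s,\kappa)$ is immediate by monotonicity: lowering the target margin from $\kappa$ to $\kappa-\eps$ only relaxes the constraint $y_i\langle\bd_i,\btheta\rangle\ge\kappa$ while leaving $\|\btheta\|_\infty\le\eps_n$ untouched, so the feasible set expands and the minimum weakly decreases. The content lies in the left-hand inequality, which I will establish by a scaling argument.

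Let $\btheta_\eps$ denote a minimizer of $\widehat{H}_l(\bD;s,\kappa-\eps)$ and set $\widetilde{\btheta} := \tfrac{\kappa}{\kappa-\eps}\,\btheta_\eps$. By homogeneity, $y_i\langle\bd_i,\widetilde{\btheta}\rangle \ge \kappa$, so the margin constraint holds. Assuming that $\widetilde\btheta$ is also $\ell_\infty$-feasible (addressed below), we obtain
\begin{equation*}
\widehat{H}_l(\bD;s,\kappa) \;\le\; \|\widetilde{\btheta}\|_2^2 + s\pi_l(\widetilde{\btheta}).
\end{equation*}
For $l=2$, the quadratic homogeneity of $\pi_2$ gives $\|\widetilde{\btheta}\|_2^2 + s\pi_2(\widetilde{\btheta}) = \bigl(\tfrac{\kappa}{\kappa-\eps}\bigr)^2 \widehat{H}_2(\bD;s,\kappa-\eps)$, so the excess over $\widehat{H}_2(\bD;s,\kappa-\eps)$ is $\bigl[\bigl(\tfrac{\kappa}{\kappa-\eps}\bigr)^2 - 1\bigr]\widehat{H}_2 = O(\eps)\cdot\widehat{H}_2$. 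For $l=1$, linearity of $\pi_1$ and a direct expansion yield an excess $\bigl[\bigl(\tfrac{\kappa}{\kappa-\eps}\bigr)^2 - 1\bigr]\|\btheta_\eps\|_2^2 + s\,\tfrac{\eps}{\kappa-\eps}\pi_1(\btheta_\eps) = O(\eps)\bigl(\|\btheta_\eps\|_2^2 + |s|\,|\pi_1(\btheta_\eps)|\bigr)$. In both cases, to convert these $O(\eps)$ bounds into $C(\kappa,\os)\eps$, I need uniform bounds on $\widehat{H}_l(\bD;s,\kappa-\eps)$, $\|\btheta_\eps\|_2$, and $|\pi_1(\btheta_\eps)|$. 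These follow by exhibiting the rescaled max-margin $\tfrac{\kappa-\eps}{\widehat\kappa_n^{\bD}}\bthetaMM^{\bD}$ as an explicit feasible point for $\widehat{H}_l(\bD;s,\kappa-\eps)$ (valid w.h.p.\ for $\eps$ small, since Theorem~\ref{theorem:universality-of-the-margin} gives $\widehat\kappa_n^{\bD}\pto\kappa^\star>0$ and Proposition~\ref{proposition:delocalization-for-general-max-margin} gives $\|\bthetaMM^{\bD}\|_\infty\le\eps_n$), yielding $\widehat{H}_l(\bD;s,\kappa-\eps)\le (\kappa/\widehat\kappa_n^{\bD})^2 + \os\,\oslambda = O(1)$; the bounds on $\|\btheta_\eps\|_2$ and $|\pi_1(\btheta_\eps)|\le\oslambda\|\btheta_\eps\|_2$ then bootstrap from this using $\os\le (2\oslambda)^{-1}$.

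The main obstacle is verifying the $\ell_\infty$-feasibility of $\widetilde\btheta$: since $\|\widetilde\btheta\|_\infty = \tfrac{\kappa}{\kappa-\eps}\|\btheta_\eps\|_\infty > \|\btheta_\eps\|_\infty$, the rescaling can push $\widetilde\btheta$ outside $\{\|\cdot\|_\infty\le\eps_n\}$. I plan to handle this by establishing strict delocalization of the perturbed minimizer, $\|\btheta_\eps\|_\infty = o_\P(\eps_n)$, via a direct adaptation of the Gordon-inequality argument in Section~\ref{sec:delocalization:MM}, concretely of Lemma~\ref{lem:minimizer:gordon:deloc}, to the perturbed Gordon optimization in which the objective carries the extra term $s\pi_l(\btheta)$. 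Since $|s|\le\os$ and $\pi_l$ enters as a bounded linear (resp.\ spectrum-bounded quadratic) correction to the KKT system of the Gordon dual, the delocalization estimates $\Xi_1,\Xi_2\pto 0$ derived there carry over essentially unchanged, yielding $\|\btheta_\eps\|_\infty = o_\P(\eps_n)$ in the Gaussian case; the non-Gaussian case $\bD=\bX$ is then obtained by transferring through the universality of the perturbed ERM-like problem (Proposition~\ref{prop:univ_fixed_kappa}), exactly as in the proof of Proposition~\ref{proposition:delocalization-for-general-max-margin}. With this delocalization in place, $\|\widetilde\btheta\|_\infty\le\eps_n$ holds w.h.p.\ uniformly in small $\eps$, closing the argument.
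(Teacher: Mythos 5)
Your proposal follows the same core route as the paper's proof: the upper bound by monotonicity of the feasible set in $\kappa$, and the lower bound by rescaling the minimizer of the $(\kappa-\eps)$-problem by $\kappa/(\kappa-\eps)$, exploiting the (near-)homogeneity of $\|\btheta\|_2^2+s\pi_l(\btheta)$ together with w.h.p.\ $O(1)$ bounds on the optimal value and on $\|\btheta_\eps\|_2$ (the paper obtains these from the same feasible point $\tfrac{\kappa}{\widehat\kappa_n^{\bD}}\bthetaMM^{\bD}$ and from $\|\bSigma_\bg\|_{\op}\le\oslambda$); your scaling computations for $l=1,2$ match the paper's display. Where you diverge is the $\ell_\infty$-feasibility of the up-scaled point: the paper's one-paragraph proof simply treats the rescaled minimizer as admissible for the $\kappa$-problem and never discusses the constraint $\|\btheta\|_\infty\le\epsilon_n$ in \eqref{eq:def:H}, whereas you flag this and propose to prove $\|\btheta_\eps\|_\infty=o_\P(\epsilon_n)$ by redoing the Gordon/KKT delocalization of Lemma~\ref{lem:minimizer:gordon:deloc} for the $s\pi_l$-perturbed problem and transferring to $\bX$ via Proposition~\ref{prop:univ_fixed_kappa}. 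That added layer is not circular (the delocalization machinery of Section~\ref{sec:delocalization:MM} does not rely on Lemma~\ref{lem:continiuty:H}), but it is far heavier than what the paper does, and the claim that the estimates carry over \emph{essentially unchanged} understates the work: the Gordon analysis there is pinned to the margin level $\kappa^\star$ and the specific $\|\cdot\|_4^4$ tilt rather than a generic $\kappa-\eps$ and $s\pi_l$; the problem defining $\btheta_\eps$ already contains the $\ell_\infty$ constraint whose inactivity you are trying to establish; and universality of an $\ell_\infty$ norm is not a direct consequence of Proposition~\ref{prop:univ_fixed_kappa} but needs the full detour used for Proposition~\ref{proposition:delocalization-for-general-max-margin} adapted to this setting. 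In short, your core argument coincides with the paper's and is correct as far as the scaling goes; the extra delocalization step addresses a feasibility detail the paper elides, but as written it is a plan rather than a proof.
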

Before establishing these lemmas, let us give the proof of~\ref{lemma:univ_H}.

\begin{proof}[Proof of Lemma~\ref{lemma:univ_H}]
Both inequalities can be proven in a similar manner. We will only give a proof of the second one.
Fix arbitrary small enough $\eps>0$.
 Then we have by the equivalence in \eqref{eq:equiv:H:F} that
\begin{equation}
\begin{split}
 \lim_{n\to\infty} \P\left(\widehat H(\bX; s,\kappa) \le b \right) &=
 \lim_{n\to\infty} \P\left(\widehat F(\bX; s,\kappa, b) = 0\right) \\
 &\stackrel{(a)}\le\lim_{n\to\infty} \P\left(\widehat F(\bG; s,\kappa, b ) \le \eps \right) \\
 &\stackrel{(b)}\le \lim_{n\to\infty} \P\left( \widehat \kappa(\bG; s,b) \ge \kappa - C \eps^{1/2}  \right)\\ 
 &= \lim_{n\to\infty}\P\left(\widehat H(\bG; s,\kappa - C \eps^{1/2}) \le b\right)\\
 &\stackrel{(c)}\le \lim_{n\to\infty}\P\left(\widehat H(\bG; s,\kappa)  \le b + C^\prime \eps^{1/2}\right).
\end{split}
\end{equation}
where $(a)$ follows from Proposition~\ref{prop:univ_fixed_kappa}, $(b)$ follows from
Lemma~\ref{lemma:lower_bound_kappa}, and $(c)$ follows from Lemma~\ref{lem:continiuty:H}. Since $\eps>0$ was arbitrary, this completes the proof.
\end{proof}

Let us now return to the proofs of the auxiliary Lemmas~\ref{lemma:lower_bound_kappa} and~\ref{lem:continiuty:H}.
First, we give the following lemma.
\begin{lemma}
\label{lemma:S-l_inclusion}
Let $l\in\{1,2\}$,  $|s| \le \oslambda$ and $b>0$. 
On the event where $\norm{\bSigma}_\op \le \oslambda$  we have
\begin{equation}
B_2^p(c_l)  \subseteq \{\btheta: \norm{\btheta}_2^2 + s \pi_l(\btheta) \le b \} \subseteq B_2^p(C_l).
\end{equation}
for some constants $c_l\equiv c_l(\os,b), C_l\equiv C_l(\os, b) >0$ depending only on $\os, b >0$.
\end{lemma}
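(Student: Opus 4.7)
The plan is a direct geometric/algebraic calculation that turns the constraint $\norm{\btheta}_2^2+s\pi_l(\btheta)\le b$ into a two-sided comparison with $\norm{\btheta}_2^2$, using the operator norm bound $\norm{\bSigma_{\bg}}_\op\le\oslambda$ and the restriction $|s|\le\os\le (2\oslambda)^{-1}$ (I read the hypothesis $|s|\le\oslambda$ as a typo for $|s|\le\os$, since the constants in the conclusion are stated to depend only on $\os,b$, and this is how the lemma is invoked elsewhere). The key observation is that $\pi_1$ and $\pi_2$ are each controlled by $\norm{\btheta}_2$ (linearly and quadratically, respectively) through $\norm{\bSigma_{\bg}}_\op$, so the level set in the definition lies between two $\ell^2$ balls.

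For $l=2$: since $\pi_2(\btheta)=\btheta^{\sT}\bSigma_{\bg}\btheta$, we get $|s\pi_2(\btheta)|\le|s|\oslambda\norm{\btheta}_2^2\le\tfrac12\norm{\btheta}_2^2$. Hence
\begin{equation*}
\tfrac12\norm{\btheta}_2^2\;\le\;\norm{\btheta}_2^2+s\pi_2(\btheta)\;\le\;\tfrac32\norm{\btheta}_2^2,
\end{equation*}
so $\norm{\btheta}_2^2+s\pi_2(\btheta)\le b$ forces $\norm{\btheta}_2\le\sqrt{2b}=:C_2$, while any $\btheta$ with $\norm{\btheta}_2\le\sqrt{2b/3}=:c_2$ lies in the level set.

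For $l=1$: by Cauchy--Schwarz in the $\bSigma_{\bg}$-inner product and $\norm{\btheta^\star}_2=1$,
\begin{equation*}
|\pi_1(\btheta)|=|\langle\btheta,\bSigma_{\bg}\btheta^\star\rangle|\le\norm{\btheta}_{\bSigma_{\bg}}\norm{\btheta^\star}_{\bSigma_{\bg}}\le\oslambda\norm{\btheta}_2.
\end{equation*}
Thus $\norm{\btheta}_2^2+s\pi_1(\btheta)\ge\norm{\btheta}_2^2-|s|\oslambda\norm{\btheta}_2$, and if this is $\le b$ the quadratic formula yields $\norm{\btheta}_2\le\tfrac12\!\left(|s|\oslambda+\sqrt{s^2\oslambda^2+4b}\right)\le|s|\oslambda+\sqrt{b}\le 1+\sqrt{b}=:C_1$. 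Conversely, $\norm{\btheta}_2^2+s\pi_1(\btheta)\le\norm{\btheta}_2^2+|s|\oslambda\norm{\btheta}_2$, which is at most $b$ whenever $\norm{\btheta}_2\le c_1$ for a small enough $c_1=c_1(\os,b)>0$ (e.g.\ pick $c_1$ so that $c_1^2+c_1/2\le b$).

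There is no real obstacle here: the whole argument is one line of Cauchy--Schwarz plus a quadratic root bound, and the resulting constants $(c_l,C_l)$ depend only on $\os$ and $b$ as claimed. The only thing to keep in mind is that the bound $|s|\oslambda\le 1/2$ (coming from $\os\le(2\oslambda)^{-1}$) is what prevents the quadratic form $\norm{\btheta}_2^2+s\pi_2(\btheta)$ from degenerating, which is exactly why $\os$ is defined as it is in the preamble to Proposition~\ref{prop:univ_fixed_kappa}.
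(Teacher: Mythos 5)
Your proof is correct and follows essentially the same route as the paper's: bound $|\pi_l(\btheta)|$ via $\norm{\bSigma_\bg}_\op\le\oslambda$ (Cauchy--Schwarz for $l=1$, the quadratic form sandwich for $l=2$), then solve the resulting quadratic inequality for the outer ball and pick a small enough radius for the inner one. Your reading of the hypothesis as $|s|\le\os$ matches the paper's own proof and the way the lemma is invoked, so there is nothing to add.
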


\begin{proof}
Let $|s|\leq \os$. For $l=1$, note that $\norm{\btheta}_2^2 + s \pi_l(\btheta) \le b$
implies
 \begin{equation}
  \norm{\btheta}_2^2 - |s| \norm{\btheta}_2 \oslambda\le b
 \end{equation}
 and hence
 \begin{equation}
  \norm{\btheta}_2 \le \frac{\os \oslambda}{2} + \frac{\sqrt{\os^2\oslambda + 4 b}}{2}\le C_1(\os, b)
 \end{equation}
 for some constant $C_1$ since $\oslambda$ is upper bounded with high probability.
Meanwhile,
 let $c_1\equiv c_1(\os, b)>0$ be the positive solution $c_1^2 + \os \osla c_1 = b$. If $\norm{\btheta}_2 \le c_1$ then we have
 \begin{equation}
  \norm{\btheta}_2^2 + s \pi_1(\btheta) \le \norm{\btheta}_2^2 + \os \oslambda \norm{\btheta}_2  \le c_1^2 + \os \osla c_1  = b.
 \end{equation}
For $l=2$, note that $\norm{\btheta}_2^2 + s\norm{\btheta}_\Sigma^2 = \norm{\btheta}_{(\bI + s \bSigma)}^2$ and hence
Since for $|s| \leq \os $ we have 
\begin{equation}
\frac12 \le \lambda_{\min}(\bI + s\bSigma) \le \lambda_{\max}(\bI + s\bSigma) \le 2,
\end{equation}
we conclude that 
the statement holds with $c_l = b/2$ and $C_l = 2b$.
\end{proof}
\begin{proof}[Proof of Lemma~\ref{lemma:lower_bound_kappa}]

Working on the event in the previous lemma (which is a high probability event by Lemma~\ref{lem:basic:op:norm:RF},
we have
\begin{align}
\widehat\kappa_{l}(\bD; s, b) &\ge  \max_{\theta \in B_2^p(c_l) \cap \{\norm{\btheta}_\infty \le \epsilon_n\}}
\min_{i \in[n]} y_i \langle \btheta, \bd_i\rangle
\ge
\min_{\bu^\sT \one = 1,\bu \ge 0 } c_l\widehat{G}_{n,\eps_n}^{\bD}(\bu).
\end{align}
Letting $\bu^\star$ be the minimzer, we apply Lemma~\ref{lemma:RSC_inf_norm} to conlude that
\begin{equation}
	\widehat\kappa_{l}(\bD; s, b)
	\ge c_1 \sqrt{n} \norm{\bu^\star}_2 -  c_2 \norm{\bu^\star}_1
\end{equation}
on a high probability event, which allows us to conlude that $\norm{\bu^\star}_2 \le \frac{C}{\sqrt{n}}$ on this event.
Hence, with high probability we have
\begin{align*}
 \widehat \kappa_l (\bD; s,b) &\ge
 \max_{\theta \in B_2^p(c_l) \cap \{\norm{\btheta}_\infty \le \epsilon_n\}}
\sum_{i=1}^n y_i u^\star_i\langle \btheta, \bd_i\rangle \ge \kappa -
C \left(\min_{\theta \in B_2^p(c_l) \cap \{\norm{\btheta}_\infty \le \epsilon_n\}}
 \frac1n \sum_{i=1}^n (\kappa - y_i \btheta^\sT \bd_i)_+^2\right)^{1/2}\\
 &\ge \kappa - C \frac{C_l}{c_l} \left(\widehat F_l(\bD; s,\kappa,b\right)^{1/2}
\end{align*}
as desired.
\end{proof}
We move onto the proof of Lemma~\ref{lem:continiuty:H}.
\begin{proof}[Proof of Lemma~\ref{lem:continiuty:H}]
The upper bound in the lemma follows directly from the definition. To prove the lower bound for $l=1$, let $\widehat\btheta_\epsilon$ denote the minimizer of $\widehat H_1(\bD; s, \kappa -\epsilon)$,
then
 \begin{align*}
  \widehat H_1(\bD; s,\kappa - \epsilon) 
  &= \norm{\widehat\btheta_\epsilon}_2^2 \left(1-\frac\epsilon\kappa\right)^2 + s \pi_1(\widehat\btheta_\epsilon)\left(1-\frac\epsilon{\kappa}\right)\\
     &\ge \left(1-\frac\epsilon{\kappa}\right) \widehat H_1(\bD;s,\kappa) 
     - \frac\epsilon{\kappa} \norm{\widehat\btheta_\epsilon}_2^2\left(1-\frac\epsilon{\kappa} \right)\\
     &= \widehat H_1(\bD; s,\kappa)  - \frac\epsilon{\kappa}\left( \widehat H_1(\bD; s,\kappa)+ \norm{\widehat\btheta_\epsilon}_2^2\left(1-\frac\epsilon{\kappa} \right) \right)\\
     &\ge \widehat H_1(\bD;s,\kappa)  -\epsilon C(\kappa,s )
 \end{align*}
 where the last inequality holds  with high probability.
 A similar argument holds for $l=2$ by noting that
 \begin{equation}
  \widehat H_2(\bD; s,\kappa - \epsilon) 
  =\left( 1- \frac{\epsilon}{\kappa}\right)^2  \widehat H_2(\bD; s,\kappa).\\
 \end{equation}
\end{proof}

\subsubsection{Universality of the difference quotient}
\label{sec:universality-of-derivative}
Since the proof for both $l=1$ and $l=2$ is the same, we suppress the index $l$ throughout and use the notation $\widehat \btheta_s^\bD(\kappa)$ for minimizer of the optimization problem for $\widehat H_l(\bD; s)$ in \eqref{eq:def:H}.
Furthermore, we introduce the compact notation $\widehat \btheta_s^\bD = \widehat \btheta_s^\bD(\kappa^\star)$ in what follows.

The starting point in our perturbation argument is to show that the minimizer $\widehat\btheta_s^\bD$ is stable under small changes in $s$ due to the strong
convexity of the problem $\widehat H$.

\begin{lemma}
\label{lemma:theta_hat_lipschitz_s}
With high probability, we have for $|s|\leq \os\equiv (2\oslambda)^{-1}$,
\begin{equation}
 \norm{\widehat\btheta_s^\bD -\widehat\btheta_0^\bD}_2 \le C |s|
\end{equation}
for some $C=C(\os)>0$.
\end{lemma}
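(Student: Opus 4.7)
The plan is to exploit the strong convexity of the objective $f_s(\btheta) := \|\btheta\|_2^2 + s\pi_l(\btheta)$, combined with the fact that the feasible set
$\cK := \{\btheta : y_i\langle \bd_i, \btheta\rangle \ge \kappa^\star,\ \|\btheta\|_\infty \le \eps_n\}$
is convex and does not depend on $s$. For $l=1$, $\pi_1(\btheta) = \btheta^\sT\bSigma_\bg\btheta^\star$ is linear, so $f_s$ is $2$-strongly convex. For $l=2$, on the high-probability event $\{\|\bSigma_\bg\|_\op \le \oslambda\}$ the Hessian $2(\bI + s\bSigma_\bg)$ has minimum eigenvalue at least $2(1-|s|\oslambda) \ge 1$ by the choice $\os = (2\oslambda)^{-1}$. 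Hence there is a constant $c = c(\os,\oslambda)>0$, independent of $s$ and $n$, so that $f_s$ is $c$-strongly convex on $\cK$ for all $|s|\le \os$ with high probability.

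Applying this strong convexity at each of the two minimizers yields
\[
f_s(\widehat\btheta_0^\bD) \ge f_s(\widehat\btheta_s^\bD) + \tfrac{c}{2}\|\widehat\btheta_0^\bD - \widehat\btheta_s^\bD\|_2^2,
\qquad
f_0(\widehat\btheta_s^\bD) \ge f_0(\widehat\btheta_0^\bD) + \tfrac{c}{2}\|\widehat\btheta_0^\bD - \widehat\btheta_s^\bD\|_2^2.
\]
Summing, and using $f_s(\btheta) - f_0(\btheta) = s\pi_l(\btheta)$, the quadratic terms $\|\btheta\|_2^2$ cancel, leaving
\[
s\big[\pi_l(\widehat\btheta_0^\bD) - \pi_l(\widehat\btheta_s^\bD)\big] \ge c\,\|\widehat\btheta_0^\bD - \widehat\btheta_s^\bD\|_2^2.
\]
It then suffices to show that $\pi_l$ is $L$-Lipschitz on a neighbourhood containing both minimizers, since then $|s| L \ge c\,\|\widehat\btheta_0^\bD - \widehat\btheta_s^\bD\|_2$ after dividing, which is the desired bound with $C = L/c$. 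For $l=1$, $\pi_1$ is globally Lipschitz with constant $\|\bSigma_\bg\btheta^\star\|_2 \le \oslambda$. For $l=2$, $\pi_2$ has gradient $2\bSigma_\bg\btheta$ and hence is $2\oslambda R$-Lipschitz on the ball $B_2^p(R)$; so one needs a uniform $\ell^2$ bound $\|\widehat\btheta_s^\bD\|_2 \le R$ with $R$ independent of $s$.

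To obtain this uniform bound, I plug a feasible reference vector into the objective: the rescaled max-margin solution $\tilde\btheta := (\kappa^\star/\widehat\kappa_n^\bD)\widehat\btheta_\MM^\bD$ lies in $\cK$ with high probability, using $\widehat\kappa_n^\bD \pto \kappa^\star$ in the separable regime together with the $\ell^\infty$ delocalization of $\widehat\btheta_\MM^\bD$ from Proposition~\ref{proposition:delocalization-for-general-max-margin}. Its $\ell^2$ norm is bounded with high probability, so $f_s(\widehat\btheta_s^\bD) \le f_s(\tilde\btheta) = O(1)$ uniformly in $|s|\le\os$. Combined with the elementary lower bound $f_s(\btheta) \ge \tfrac12\|\btheta\|_2^2 - |s|\oslambda\|\btheta\|_2$ (for $l=1$) or $f_s(\btheta) \ge \tfrac12\|\btheta\|_2^2$ (for $l=2$), this forces $\|\widehat\btheta_s^\bD\|_2 \le R(\os,\oslambda)$ with high probability, yielding the desired Lipschitz bound. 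The main obstacle is the feasibility of $\tilde\btheta$ at the sharp threshold $\kappa^\star$: strictly speaking $\widehat\kappa_n^\bD$ may fall below $\kappa^\star$, but this can be handled by conditioning on the high-probability event $\{\widehat\kappa_n^\bD \ge \kappa^\star - \delta_n\}$ for a suitable $\delta_n \to 0$ and appealing to the continuity of $\widehat H_l$ in $\kappa$ established in Lemma~\ref{lem:continiuty:H}, since all of the constants $c, L, R$ produced above are uniform in $\kappa$ on a neighbourhood of $\kappa^\star$.
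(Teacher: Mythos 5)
Your proposal is correct and follows essentially the same route as the paper: strong convexity of $\btheta\mapsto\|\btheta\|_2^2+s\pi_l(\btheta)$ for $|s|\le\os$, an optimality comparison between $\widehat\btheta_s^\bD$ and $\widehat\btheta_0^\bD$ yielding $c\,\|\widehat\btheta_s^\bD-\widehat\btheta_0^\bD\|_2^2\le |s|\,\bigl|\pi_l(\widehat\btheta_0^\bD)-\pi_l(\widehat\btheta_s^\bD)\bigr|$, and a (pseudo-)Lipschitz bound on $\pi_l$ made uniform via an $\ell_2$ bound on $\widehat\btheta_s^\bD$ obtained by plugging a feasible reference point into the objective. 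The only differences are cosmetic: you symmetrize the strong-convexity step (applying it at both minimizers), and you use the rescaled max-margin vector as the reference for the norm bound, whereas the paper simply compares against $\widehat\btheta_0^\bD$ (whose norm is controlled by Lemma~\ref{lemma:MM_l2_limit}), which sidesteps the feasibility caveat you discuss.
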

\begin{proof}
Let $f(\btheta;s):= \norm{\btheta}_2^2+s\pi(\btheta)$. By strong convexity of $f(\cdot;s)$ for $|s|\leq \os$, we have
 \begin{equation}\label{eq:lower:f}
  \widehat H(\bD; s,\kappa^\star)=f(\widehat \btheta^{\bD}_s;s)\geq f(\widehat \btheta^{\bD}_0;s)+\mu\norm{\widehat \btheta^{\bD}_s-\widehat \btheta^{\bD}_0}_2^2.
 \end{equation}
 where $\mu>0$ is a universal constant. Meanwhile, since $\widehat{\btheta}^{\bD}_0$ minimize $f(\btheta; 0)$ in the constraint set $\{\btheta: y_i\langle \btheta, \bd_i\rangle \geq \kappa^\star\}$, we have
 \begin{equation}\label{eq:tech:1}
     \begin{split}
         f(\widehat \btheta^{\bD}_s;s)-f(\widehat \btheta^{\bD}_0;s)
  &= f(\widehat \btheta^{\bD}_s;0)-f(\widehat \btheta^{\bD}_0;0)+s\left( \pi(\widehat\btheta^\bD_s) - \pi(\widehat\btheta^\bD_0)\right)\\
  &\leq s\left( \pi(\widehat\btheta^\bD_s) - \pi(\widehat\btheta^\bD_0)\right)\\
  &\le |s|\oslambda \norm{\widehat\btheta^\bD_s-\widehat\btheta^\bD_0}_2\left(1\vee\left(\norm{\widehat\btheta^\bD_s}_2+\norm{\widehat\btheta^\bD_0}_2\right)\right),
     \end{split}
 \end{equation}
where the last inequality holds since it is straightforward to check that for $\btheta_1,\btheta_2\in \R^p$,
\begin{equation}\label{eq:pi:pseudo:lipschitz}
    \left|\pi(\btheta_1)-\pi(\btheta_2)\right|\leq \oslambda \norm{\btheta_1-\btheta_2}_2\left(1\vee(\norm{\btheta}_1+\norm{\btheta}_2\right)
\end{equation}
for both $\pi(\btheta)=\btheta^{\sT}\bSigma \btheta^\star$ and $\pi(\btheta)=\norm{\btheta}_{\bSigma}^2$. Note that $\norm{\widehat\btheta^\bD_0}_2\leq 2$ holds w.h.p. by Lemma \ref{lemma:MM_l2_limit}. To upper bound $\norm{\widehat\btheta^\bD_s}_2$, we use $f(\widehat \btheta^{\bD}_s;s)\leq f(\widehat \btheta^{\bD}_0;s)$:
\begin{equation}\label{eq:upper:2:norm:s}
    \norm{\widehat{\btheta}^{\bD}_{s}}_2^2+s\pi(\widehat{\btheta}^{\bD}_s)\leq \norm{\widehat{\btheta}^{\bD}_{0}}_2^2+s\pi(\widehat{\btheta}^{\bD}_0)\leq C\left(\norm{\widehat{\btheta}^{\bD}_{0}}_2\vee\norm{\widehat{\btheta}^{\bD}_{0}}_2^2\right)).
\end{equation}
Since $\norm{\widehat\btheta^\bD_0}_2\leq 2$ holds w.h.p. by Lemma \ref{lemma:MM_l2_limit}, \eqref{eq:upper:2:norm:s} implies that $\norm{\widehat{\btheta}^{\bD}_{s}}_2\leq C$ w.h.p. for some constant $C>0$ (for exact detail, see Lemma \ref{lemma:S-l_inclusion} below). Thus, plugging this into the right hand side of \eqref{eq:tech:1} shows that
\begin{equation}\label{eq:upper:f}
  f(\widehat \btheta^{\bD}_s;s)-f(\widehat \btheta^{\bD}_0;s)\leq C |s|\norm{\widehat\btheta^\bD_s-\widehat\btheta^\bD_0}_2,
\end{equation}
for some constant $C\equiv C (\os)>0$. Therefore, combining \eqref{eq:lower:f} and \eqref{eq:upper:f} concludes the proof.
\end{proof}

Now, define
\begin{equation}
\widehat \Delta(\bD;s) := \frac{\widehat H(\bD;s,\kappa^\star) - \widehat H(\bD;0, \kappa^\star) }{s}.
\end{equation}
An elementary, but crucial observation is that for any $s>0$,
\begin{equation}\label{eq:pi:Delta}
\begin{split}
\widehat\Delta(\bD; s)&\leq \frac{ \norm{\widehat{\btheta}^{\bD}_{0}}_2^2+s\pi(\widehat{\btheta}^{\bD}_0)-\norm{\widehat{\btheta}^{\bD}_{0}}_2^2}{s}\\
&=\pi(\widehat\btheta_0^\bD(\kappa))=\frac{ \norm{\widehat{\btheta}^{\bD}_{0}}_2^2-s\pi(\widehat{\btheta}^{\bD}_0)-\norm{\widehat{\btheta}^{\bD}_{0}}_2^2}{-s}\leq \widehat\Delta(\bD;-s).
\end{split}
\end{equation}
The following lemma establishes Lipschitzness and universality of $\widehat \Delta(\bD;s)$ as a consequence of Lipschitzness of $\widehat\btheta^\bD(s)$ and universality of $\widehat H(\bD; s,\kappa^\star)$.
\begin{lemma}
\label{lemma:properties_of_delta}
Let $0< s\leq \os$. With high probability, we have
\begin{equation}
 \widehat\Delta(\bD;-s) - \widehat\Delta^\bD(\bD;s) \le C s
\end{equation}
for some $C>0$ depending only $\os$. Furthermore, for any $r\in\R$ and $\delta>0$, we have
\begin{align*}
 \lim_{n\to\infty}\P\left(\widehat\Delta(\bX;-s) \ge r + \delta\right) 
&\le 
 \lim_{n\to\infty}\P\left(\widehat\Delta(\bG;-s) \ge r \right) \\
 \lim_{n\to\infty}\P\left(\widehat\Delta(\bX;s) \le r - \delta\right) 
&\le 
 \lim_{n\to\infty}\P\left(\widehat\Delta(\bG;s) \le r \right) 
\end{align*}
\end{lemma}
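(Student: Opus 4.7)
The plan is to decompose the lemma into its two assertions—the $O(s)$ Lipschitz-style comparison between $\widehat\Delta(\bD;-s)$ and $\widehat\Delta(\bD;s)$, and the CDF-universality statements—and treat them separately. The first assertion is purely deterministic (modulo a high-probability bound from Lemma~\ref{lemma:theta_hat_lipschitz_s}), while the second is a bootstrap of Lemma~\ref{lemma:univ_H} using concentration of $\widehat H(\bD;0,\kappa^\star)$.

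For the first assertion, I will use the envelope-type bounds that sandwich $\widehat\Delta(\bD;\cdot)$. Observe that since $\widehat\btheta_s^\bD$ and $\widehat\btheta_{-s}^\bD$ are feasible for the $s=0$ problem (the feasibility set in~\eqref{eq:def:H} does not depend on $s$), we have $\|\widehat\btheta_s^\bD\|_2^2, \|\widehat\btheta_{-s}^\bD\|_2^2 \geq \|\widehat\btheta_0^\bD\|_2^2 = \widehat H(\bD;0,\kappa^\star)$. Combining this with the definitions of $\widehat H(\bD;\pm s,\kappa^\star)$ and dividing by $\pm s$ yields $\widehat\Delta(\bD;s) \geq \pi(\widehat\btheta_s^\bD)$ and $\widehat\Delta(\bD;-s) \leq \pi(\widehat\btheta_{-s}^\bD)$, which together with the chain in~\eqref{eq:pi:Delta} gives $\widehat\Delta(\bD;-s) - \widehat\Delta(\bD;s) \leq \pi(\widehat\btheta_{-s}^\bD) - \pi(\widehat\btheta_s^\bD)$. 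The pseudo-Lipschitz inequality~\eqref{eq:pi:pseudo:lipschitz} then reduces matters to bounding $\|\widehat\btheta_s^\bD - \widehat\btheta_{-s}^\bD\|_2$, which by the triangle inequality and Lemma~\ref{lemma:theta_hat_lipschitz_s} is at most $2Cs$. The norms $\|\widehat\btheta_{\pm s}^\bD\|_2$ are controlled uniformly (as already used inside the proof of Lemma~\ref{lemma:theta_hat_lipschitz_s}), so the composite constant is $C(\os)$ and independent of $n$.

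For the universality assertion, I will prove the second inequality (the first is symmetric). Writing out $\{\widehat\Delta(\bX;s) \leq r-\delta\}$ as the event $\{\widehat H(\bX;s,\kappa^\star) \leq \widehat H(\bX;0,\kappa^\star) + s(r-\delta)\}$, I will use Lemma~\ref{lemma:MM_l2_limit} to replace $\widehat H(\bX;0,\kappa^\star)$ by its deterministic limit $a^\star$ up to an $\eta$-slack on an event of probability $1-o_n(1)$. Then I will apply Lemma~\ref{lemma:univ_H} at fixed $s$ and $b = a^\star + \eta + s(r-\delta)$ with slack $\delta' > 0$, giving the corresponding event for $\widehat H(\bG;s,\kappa^\star)$, and finally rewrite this as an event on $\widehat\Delta(\bG;s)$ by using the Gaussian-side concentration $\widehat H(\bG;0,\kappa^\star) \pto a^\star$ to absorb another $\eta$-slack. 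The net effect is an inequality of the form $\limsup_n \P(\widehat\Delta(\bX;s) \leq r-\delta) \leq \limsup_n \P(\widehat\Delta(\bG;s) \leq r - \delta + (2\eta+\delta')/s)$, and choosing $\eta,\delta'$ small so that $(2\eta+\delta')/s < \delta$ concludes this direction. The same scheme with $-s$ in place of $s$ handles the other direction.

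The main obstacle is the bookkeeping in the universality step: the slack in Lemma~\ref{lemma:univ_H} enters additively on the $\widehat H$ scale, but after dividing by $s$ to translate to $\widehat\Delta$ it becomes additive on the $r$-scale divided by $|s|$. Since $s$ is fixed throughout (it does not tend to zero here), there is room to pick $\eta,\delta'$ arbitrarily smaller than $\delta s$, which is what makes the argument close—but it forces the constants in the bound to depend on $s$, which is consistent with the statement. A minor technical care is that Lemma~\ref{lemma:MM_l2_limit} must be invoked with the \emph{same} limit $a^\star$ for both $\bX$ and $\bG$; this is automatic since Lemma~\ref{lemma:MM_l2_limit} produces a deterministic limit that is identical across the two ensembles.
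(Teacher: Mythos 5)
Your proposal follows the paper's proof essentially step for step: your envelope sandwich $\pi(\widehat\btheta_s^\bD)\le\widehat\Delta(\bD;s)$, $\widehat\Delta(\bD;-s)\le\pi(\widehat\btheta_{-s}^\bD)$ rests on exactly the fact the paper uses (feasibility of $\widehat\btheta_{\pm s}^\bD$ for the $s=0$ problem, so $\|\widehat\btheta_{\pm s}^\bD\|_2\ge\|\widehat\btheta_0^\bD\|_2$), and the rest of the first bound is the same pseudo-Lipschitz estimate combined with Lemma~\ref{lemma:theta_hat_lipschitz_s}. The universality part is likewise the paper's argument: translate the $\widehat\Delta$ events into $\widehat H$ events, use $\widehat H(\bD;0,\kappa^\star)\pto 1$ (your $a^\star$, identical for $\bX$ and $\bG$ by Lemma~\ref{lemma:MM_l2_limit}) on both sides, and apply the universality of $\widehat H$ at the fixed $s$ with slacks chosen small compared to $s\delta$, exactly as the paper does.
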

\begin{proof}
For the first assertion, w.h.p., we have
 \begin{align*}
 \widehat\Delta(\bD;-s) - \widehat\Delta(\bD;s) 
 &= -\frac1s \left(\norm{\widehat\btheta_{-s}^{\bD}}_2^2 - \norm{\widehat\btheta_0^{\bD}}_2^2  \right)
 - \frac1s\left(\norm{\widehat\btheta^\bD_s}_2^2 - \norm{\widehat\btheta^\bD_0}_2^2 \right)\\
 &+ \pi(\widehat\btheta_{-s}^\bD) -\pi(\widehat\btheta_{s}^\bD)\\
 &\stackrel{(a)}{\le} \pi(\widehat\btheta_{-s}^\bD) -\pi(\widehat\btheta_{s}^\bD)\\
 &\stackrel{(b)}{\le} C\norm{\widehat\btheta_{-s}^\bD-\widehat\btheta_{s}^\bD}_2\\
 &\stackrel{(c)}{\le} C^\prime|s|
 \end{align*}
 where $(a)$ holds since $\norm{\widehat\btheta_{-s}^{\bD}}_2\wedge \norm{\widehat\btheta_{s}^{\bD}}_2\geq \norm{\widehat\btheta_{0}^{\bD}}_2$ by definition of $\widehat\btheta_{0}^{\bD}$, $(b)$ holds on the high probability event $\{\norm{\widehat\btheta^{\bD}_0}_2\leq 2\}$ (cf. Lemma \ref{lemma:MM_l2_limit} by \eqref{eq:pi:pseudo:lipschitz} and \eqref{eq:upper:2:norm:s}, and $(c)$ is due to Lemma \ref{lemma:theta_hat_lipschitz_s} and triangle inequality.
 
 For the second part of the lemma, we have
 \begin{equation}
 \begin{split}
  \lim_{n\to\infty}\P\left(\widehat\Delta(\bX;-s) \ge r + 3\delta \right)
  &\le 
  \lim_{n\to\infty}\P\left( \frac{\widehat H(\bX;-s,\kappa^\star) - 1 }{-s}\ge r + 2\delta \right)
  +\lim_{n\to\infty}\P\left(\left|\widehat H(\bX;0,\kappa^\star) - 1\right| > s\delta\right)\\
  &\stackrel{(a)}=
  \lim_{n\to\infty}\P\left( \widehat H(\bX;-s,\kappa^\star) \le 1- s(r + 2\delta) \right)\\
  &\stackrel{(b)}\le\lim_{n\to\infty}\P\left( \widehat H(\bG;-s,\kappa^\star)\le 1- s(r + \delta) + C(\oslambda,b)|s| \right)\\
  &= \lim_{n\to\infty}\P\left( \frac{\widehat H(\bG;-s,\kappa^\star)- 1 }{-s}\ge r+\delta \right)\\
  &\stackrel{(c)}\le 
  \lim_{n\to\infty}\P\left( \frac{\widehat H(\bG;-s,\kappa^\star) - \widehat H(\bG;0,\kappa^\star)}{-s}\ge r\right)\\
  &=
  \lim_{n\to\infty}\P\left(\widehat\Delta(\bG;-s) \ge r \right),
 \end{split}
 \end{equation}
 where in $(a)$ and $(c)$ we used Lemma~\ref{lemma:MM_l2_limit}, in $(b)$ we used Lemma~\ref{lemma:univ_H}. The other inequality follows analogously.
\end{proof}

\subsubsection{Proof of Proposition~\ref{proposition:universality-of-the-geometric-structure}}
\label{sec:proof-universality-of-the-geometric-structure}
The following lemma first makes the connection between $\widehat \btheta^{\bD}_0 = \widehat\btheta^\bD_0(\kappa^\star)$ and $\bthetaMM^{\bD}$.
\begin{lemma}
\label{lemma:MM_l2_limit}
With high probability, we have
\begin{equation}\label{lemma:MM_l2_limit:1}
    \widehat \btheta_0^{\bD}=\frac{\kappa^\star}{\widehat \kappa_n(\bD)}\bthetaMM^{\bD}.
\end{equation}
Thus, we have that
 \begin{equation}\label{lemma:MM_l2_limit:2}
  \widehat H(\bD; 0,\kappa^\star)=\norm{\widehat\btheta_0^\bD}_2^2 \pto 1.
 \end{equation}
\end{lemma}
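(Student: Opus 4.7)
The plan is to identify $\widehat\btheta_0^{\bD}$ explicitly as a rescaling of the max-margin solution via the standard duality between max-margin classifiers and minimum-norm separators, and then to deduce the norm convergence directly from Theorem~\ref{theorem:universality-of-the-margin}.

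Set $\widetilde\btheta := (\kappa^\star/\widehat\kappa_n(\bD))\,\bthetaMM^{\bD}$. I first verify that $\widetilde\btheta$ lies in the feasible set of the minimization defining $\widehat H(\bD; 0, \kappa^\star)$. By definition $\|\bthetaMM^{\bD}\|_2 \le 1$ and $\min_i y_i\langle \bd_i, \bthetaMM^{\bD}\rangle = \widehat\kappa_n(\bD)$, so linearity yields $\min_i y_i\langle \bd_i, \widetilde\btheta\rangle = \kappa^\star$. In the separable regime Theorem~\ref{theorem:universality-of-the-margin} gives $\widehat\kappa_n(\bD) \pto \kappa^\star > 0$, so the scaling factor $\kappa^\star/\widehat\kappa_n(\bD)$ is bounded (by, say, $2$) with probability tending to one. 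Hence $\|\widetilde\btheta\|_\infty \le 2\|\bthetaMM^{\bD}\|_\infty = o_\P(1)$ by Proposition~\ref{proposition:delocalization-for-general-max-margin}, so the delocalization constraint $\|\widetilde\btheta\|_\infty \le \epsilon_n$ holds w.h.p.

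Next I will show that $\widetilde\btheta$ is the unique minimizer of the constrained problem. For any feasible $\btheta \neq 0$, the vector $\btheta/\|\btheta\|_2$ has unit $\ell^2$ norm with $\min_i y_i\langle\bd_i, \btheta/\|\btheta\|_2\rangle \ge \kappa^\star/\|\btheta\|_2$, so by the defining extremal property of the max-margin one has $\widehat\kappa_n(\bD) \ge \kappa^\star/\|\btheta\|_2$, i.e.\ $\|\btheta\|_2 \ge \kappa^\star/\widehat\kappa_n(\bD) = \|\widetilde\btheta\|_2$. Strict convexity of $\btheta \mapsto \|\btheta\|_2^2$ on the (convex) feasible set then identifies $\widetilde\btheta$ as the unique minimizer, yielding \eqref{lemma:MM_l2_limit:1} on the high-probability event carved out above.

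The second claim \eqref{lemma:MM_l2_limit:2} is then immediate: on this event $\|\widehat\btheta_0^{\bD}\|_2^2 = (\kappa^\star/\widehat\kappa_n(\bD))^2$, which converges to $1$ in probability by the continuous mapping theorem applied to $\widehat\kappa_n(\bD) \pto \kappa^\star$. I do not expect any step to be a serious obstacle; the only subtlety is that the feasible set of $\widehat H$ carries the auxiliary $\ell^\infty$ constraint, which is exactly what Proposition~\ref{proposition:delocalization-for-general-max-margin} has been designed to handle.
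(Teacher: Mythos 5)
Your argument is correct and is essentially the paper's own proof spelled out: the paper disposes of \eqref{lemma:MM_l2_limit:1} by invoking the duality between the max-margin problem and the minimum-norm problem defining $\widehat H(\bD;0,\kappa^\star)$, which is exactly the rescaling computation you carry out (feasibility of $\widetilde\btheta$, the lower bound $\|\btheta\|_2\ge \kappa^\star/\widehat\kappa_n(\bD)$ for every feasible $\btheta$, uniqueness by strict convexity), and \eqref{lemma:MM_l2_limit:2} then follows from $\widehat\kappa_n(\bD)\pto\kappa^\star$ just as you conclude. The only cosmetic point is that to guarantee $\|\widetilde\btheta\|_\infty\le\epsilon_n$ one should choose $\epsilon_n$ with a constant-factor slack (e.g.\ so that $\P\big(2\|\bthetaMM^{\bD}\|_\infty>\epsilon_n\big)\to 0$), since the factor $\kappa^\star/\widehat\kappa_n(\bD)$ may slightly exceed $1$; this is the same implicit convention the paper relies on and is harmless because Proposition~\ref{proposition:delocalization-for-general-max-margin} gives $\|\bthetaMM^{\bD}\|_\infty\pto 0$.
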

\begin{proof}
Since $\sgamma>\sgamma^\star$, the data is seperable w.h.p., thus \eqref{lemma:MM_l2_limit:1} is straightforward from the duality between the optimization problems that defines the max-margin estimator $\bthetaMM^{\bD}$ and that defines $\widehat H(\bD; 0,\kappa^\star)$ in \eqref{eq:def:H}. The second assertion \eqref{lemma:MM_l2_limit:2} is a consequence of the first assertion \eqref{lemma:MM_l2_limit:1} and 
Theorem~\ref{theorem:universality-of-the-margin} since $\norm{\bthetaMM^{\bD}}_2=1$ holds whenever the data is separable.
\end{proof}

We are now ready to give the proof of Proposition \ref{proposition:universality-of-the-geometric-structure}.
\begin{proof}[Proof of Proposition \ref{proposition:universality-of-the-geometric-structure}]

To start with, we make two reductions: first, by Proposition \ref{proposition:gaussian-angles}, it suffices to prove that $\pi_{l}(\bthetaMM^{\bX})\pto c_{l}^\star$ for $l\in \{1,2\}$. 
Second, since by Lemma \ref{lemma:MM_l2_limit} we have for $l \in \{1,2\}$
\begin{equation*}
    \pi_{l}(\bthetaMM^{\bX})=\pi_{l}\left(\frac{\widehat{\kappa}_n(\bX)}{\kappa^\star}\widehat{\btheta}_0^{\bX}\right)=\left(\frac{\widehat{\kappa}_n(\bX)}{\kappa^\star}\right)^{l}\pi_{l}(\widehat{\btheta}_0^{\bX}),
\end{equation*}
and $\widehat{\kappa}_n(\bX)\pto \kappa^\star$ holds by Theorem~\ref{theorem:universality-of-the-margin}, it suffices to prove that $\pi_{l}(\widehat{\btheta}_0^{\bX})\pto c_{l}^\star$ for $l\in \{1,2\}$.

Now, fix arbitrary $\delta>0$ and take $s\equiv s_{\delta}\in (0,\os \wedge \frac{\delta}{C})$, where $C$ is the constant appearing in Lemma~\ref{lemma:properties_of_delta}. Then, 
\begin{align}
\lim_{n\to\infty}  \P\left( \pi_l(\widehat\btheta_0^\bX) \ge c_l^\star + 3\delta \right)
&\stackrel{(a)}{\le} 
\lim_{n\to\infty}  \P\left( \widehat \Delta(\bX;-s) \ge \pi_l^\star + 3\delta \right)\\
&\stackrel{(b)}{\le}
\lim_{n\to\infty}  \P\left( \widehat \Delta(\bG;-s) \ge \pi_l^\star + 2\delta \right)\\
&\stackrel{(c)}{\le}
\lim_{n\to\infty}  \P\left( \widehat\Delta(\bG;s) + Cs \ge \pi_l^\star + 2\delta \right)\\
&\stackrel{(d)}{\le}
\lim_{n\to\infty}  \P\left( 
\pi_l(\widehat\btheta_0^\bG(\kappa^\star)) \ge \pi_l^\star + \delta \right)\\
&=0,
\end{align}
where $(a)$ and $(d)$ are due to \eqref{eq:pi:Delta}, and $(b)$ and $(c)$ are by Lemma \ref{lemma:properties_of_delta}. A similar argument establishes the lower bound in probability, concluding the proof.
\end{proof}

\section*{Acknowledgements}
We acknowledge the support of NSF through award DMS-2031883 (A.M., B.S., and Y.S.), the Simons Foundation through
Award 814639 for the Collaboration on the Theoretical Foundations of Deep Learning (A.M., B.S., and Y.S.), the NSF grant CCF-2006489 (A.M. and B.S.), the ONR grant N00014-18-1-2729 (A.M. and B.S.), and Vannevar Bush Faculty Fellowship award ONR-N00014-20-1-2826 (Y.S.). Part of this work was carried out
while A.M. was on partial leave from Stanford and a Chief Scientist at Ndata Inc
dba Project N. The present research is unrelated to A.M.’s activity while on leave.

\bibliography{bib}
\bibliographystyle{amsalpha}

\appendix
\newpage
\section{Characterization of the margin and the test error for Gaussian design from~\cite{montanari2019generalization}}
\label{sec:formula}
In this section, we review the characterization of the quantities $\kappa^\star_{\RF}(\gamma_1,\gamma_2), \pi^{\star}_{\ell,\RF}(\gamma_1,\gamma_2), \ell=1,2,$ and $\kappa^\star_{\ind}(\gamma;\mu), \pi^\star_{\ell,\ind}(\gamma;\mu), \ell=1,2,$ in Propositions~\ref{proposition:max-margin-gaussian} and \ref{proposition:gaussian-angles} from \cite{montanari2019generalization}. We first describe the characterization of the asymptotic margin $\kappa^\star(\gamma;\mu)$ and the asymptotic functionals $\pi^\star_{\ell}(\gamma;\mu), \ell=1,2,$ under a set of assumptions that generalizes Assumption~\ref{assumption:ind}. Then, we specialize such description to the Gaussian equivalent of the independent features model in Section~\ref{subsec:gaussian:indep}, and to the Gaussian equivalent of the random features model in Section~\ref{subsec:gaussian:rf}.

Consider the proportional asymptotics $n,p\to\infty$ with $\frac{p}{n}\to \gamma$ and the Gaussian covariates $(\bg_i)_{i\leq n} \stackrel{i.i.d}{\sim} \normal(0,\bSigma)$. The labels $\by=(y_i)_{i\leq n}\in \{\pm 1\}^n$ are distributed as
\begin{equation*}
\P(y_i=+1\,\mid\,\bg_i)=f(\bg_i^{\sT}\btheta^\star)\,,
\end{equation*}
where $\btheta^\star\equiv\btheta^\star_n\in \R^p$ satisfies $\|\btheta^\star\|_2=1$. We further assume that $(\bSigma,\btheta^\star,f)\equiv (\bSigma_n,\btheta^\star_n, f)$ satisfies the following, which generalizes ${\sf (B1)}$-${\sf (B3)}$. Let $\bSigma_n=\sum_{i=1}^{p}\lambda_i \bv_i \bv_i^{\sT}$ be the eigenvalue decomposition of $\bSigma_n$ with eigenvalues $(\lambda_i)\equiv \big(\lambda_i(\bSigma_n)\big)_{i\leq p}$ with $\lambda_1\geq\ldots \geq \lambda_p>0$.
\begin{itemize}
    \item ${\sf (C1)}:$ There exist constants $\lambda_{\textsf{M}}, L>0,$ and $\eps>0$ such that
	\begin{equation*}
	 \lambda_1(\bSigma_n)\le \lambda_{\sf M}\,,\quad\quad \frac{1}{p}\sum_{i=1}^{p} \left(\frac{1}{\la_i(\bSigma_n)}\right)^{1+\eps}\leq L.
 \end{equation*}
 \item ${\sf (C2)}:$ Let $\rho_n \equiv \langle \btheta^{\star}_n,\bSigma_n\btheta^{\star}_n\rangle^{1/2}$ and $\bar{w}_i \equiv  \sqrt{p\lambda_i} \<\bv_i,\btheta^{\star}_{n}\>/\rho_n$.
Then the empirical distribution of $\{(\lambda_i,\bar{w}_i)\}_{1 \leq i \leq n}$ converges in Wasserstein-$2$ distance to a 
probability distribution $\nu$ on $\reals_{>0}\times \reals$:
\begin{align*}
\frac{1}{p}\sum_{i=1}^p\delta_{(\lambda_i, \bar{w}_i)}\stackrel{W_2}{\Longrightarrow} \nu\, .
\end{align*}
In particular, $\int w^2 \nu(\de\lambda,\de w) = 1$, and we have that
\begin{equation*}
    \rho \equiv \lim_{n\to\infty} \rho_n = \int   (w^2/\lambda) \nu(\de\lambda,\de w).
\end{equation*}
\item ${\sf (C3)}:$ Let $T = YG$ where 
\begin{equation}\label{eq:Y:G:dist}
    \P(Y = 1 \mid G)= 1-\P(Y = -1 \mid G)= f(\rho \cdot G),\quad G \sim \normal(0, 1). 
\end{equation}
We assume $f: \R \to [0,1]$ to be continuous, and it satisfies the following non-degeneracy condition: 
	\begin{equation*}
		\inf \Big\{x: \P(T<x)>0 \Big\} = -\infty
			~~\text{and}~~
		\sup \Big\{x: \P(T>x)>0   \Big\} = \infty\, .
	\end{equation*}
\end{itemize}
% The exact characterization of $\kappa^\star(\gamma_1,\gamma_2)$ and $\pi_{\ell}^\star(\gamma_1,\gamma_2)$ for $\ell=1,2,$ in Propositions \ref{proposition:max-margin-gaussian} and \ref{proposition:gaussian-angles} was obtained in the literature \cite{montanari2019generalization}. In this section, we provide the definition of $\kappa^\star(\gamma_1,\gamma_2)$ and $\pi_{\ell}^\star(\gamma_1,\gamma_2)$ for $\ell=1,2,$ for completeness.

Indeed, it is straightforward to check that if ${\sf (B1)}$, ${\sf (B2)}^\prime$ and ${\sf (B3)}$ are satisfied, then ${\sf (C1)}$-${\sf (C3)}$ are satisfied. For any $\kappa\ge 0$, 
define the function $F_{\kappa}: \R \times \R_{\ge 0} \to \R_{\ge 0}$ by
\begin{equation}
\label{eqn:def-F-kappa}
F_{\kappa}(c_1, c_2) = \left(\E \left[(\kappa - c_1 YG - c_2 Z)_+^2\right]\right)^{1/2}
~~\text{where}~
	\begin{cases}
		Z \perp (Y, G) \\
		Z \sim \normal(0, 1), G \sim \normal(0, 1) \\
		\P(Y = +1 \mid G) = f(\rho \cdot G) \\
		\P(Y = -1 \mid G) = 1-f(\rho\cdot G)
	\end{cases}
\end{equation}
For $(X,W)\sim \nu$, where $\nu$ is given in ${\sf (C2)}$, introduce the constants 
\begin{equation*}
\xi := \left(\E [X^{-1}W^2]\right)^{-1/2}\quad\text{and}\quad\omega := \left(\E [(1-\zeta^2 X^{-1})^2W^2]\right)^{1/2}. %\label{eq:ZetaDef}
\end{equation*}
Define the functions $\gamma_+: \R_{>0} \to \R$~~\text{and}~$\gamma_-: \R_{>0} \to \R$ by
\begin{equation}
\begin{split}
\gamma_+(\kappa) &:= 
	\begin{cases}
		0~~&\text{if $\partial_1 F_{\kappa}(\xi, 0) > 0$,} \\
			\left(\partial_2 F_{\kappa}(\xi, 0)\right)^2 - \omega^2 \left(\partial_1 F_{\kappa}(\xi, 0)\right)^2~~~~~~~&\text{if otherwise,}
	\end{cases}
	\\ 
\gamma_-(\kappa) &:= 
	\begin{cases}
		0~~&\text{if $\partial_1 F_{\kappa}(-\xi, 0) > 0$,} \\
			\left(\partial_2 F_{\kappa}(-\xi, 0)\right)^2 - \omega^2 \left(\partial_1 F_{\kappa}(-\xi, 0)\right)^2~~&\text{if otherwise.}
	\end{cases}
\end{split}
\end{equation}
Here, we denoted $\partial_i\equiv \frac{\partial}{\partial c_i}$ for $i=1,2$. Finally, we define $\gamma^{\star}(\nu)$ and $\gamma^{\low}:\R_{>0}\to \R_{\ge 0}$ by 
\begin{align}
	\gamma\opt(\nu)&:= \min_{c\in \R} F_0^2 (c, 1)\, ,\label{eq:Psistar0}\\
\gamma^{\low}(\kappa) &:= \max\{\gamma\opt(0), \gamma_+(\kappa), \gamma_-(\kappa)\}.\label{eq:PsilowDef}
\end{align}

The next proposition is needed to define $\kappa^\star(\gamma;\mu)$ and $\pi_{\ell}^\star(\gamma;\mu)$ for $\ell=1,2,$.
\begin{proposition}[{\cite[Proposition 5.1]{montanari2019generalization}}]
\begin{enumerate}
\item[$(a)$]  For any $\gamma > \gamma^{\low}(\kappa)$, the following system of equations has unique solution $(c_1, c_2, s) \in \R \times \R_{>0} \times \R_{>0}$
(here expectation is taken with respect to $(X,W)\sim \nu$):
	\begin{equation}
	\label{eqn:system-of-equations-c-1-c-2-s-with-no-G}
		\begin{split}
		-c_1 &= \E\left[\frac{ \left(\partial_1 F_{\kappa}(c_1, c_2) -  
			c_1 c_2^{-1} \partial_2 F_{\kappa}(c_1, c_2)\right) W^2 X^{1/2}}
				{ c_2^{-1} \partial_2 F_{\kappa}(c_1, c_2) X^{1/2} + \gamma^{1/2} s X^{-1/2}}\right] \, , \\
		c_1^2 + c_2^2 &=  \E \left[\frac{\gamma X + 
			 \left(\partial_1 F_{\kappa}(c_1, c_2) -  c_1 c_2^{-1} \partial_2 F_{\kappa}(c_1, c_2)\right)^2 W^2 X}
				{( c_2^{-1} \partial_2 F_{\kappa}(c_1, c_2) X^{1/2} + \gamma^{1/2} s X^{-1/2})^2}\right] \, ,\\
		1 &= \E
			\left[\frac{\gamma + 
			 \left(\partial_1 F_{\kappa}(c_1, c_2) -  c_1 c_2^{-1} \partial_2 F_{\kappa}(c_1, c_2)\right)^2 W^2}
				{( c_2^{-1} \partial_2 F_{\kappa}(c_1, c_2) X^{1/2} + \gamma^{1/2} s X^{-1/2})^2}\right]\, .
		\end{split}
	\end{equation}
\item[$(b)$] Define the function $T: (\gamma, \kappa) \to \R$ (for any $\gamma > \gamma^{\low}(\kappa)$) by
	\begin{equation}
	\label{eqn:def-T}
		T(\gamma, \kappa)\equiv T(\gamma, \kappa;\nu) :=  \gamma^{-1/2} \left(F_{\kappa}(c_1, c_2) - c_1 \partial_1 F_{\kappa}(c_1, c_2) -	
		c_2 \partial_2 F_{\kappa}(c_1, c_2) \right) - s\, ,
	\end{equation}
	where $(c_1(\gamma, \kappa), c_2(\gamma, \kappa), s(\gamma, \kappa))\equiv(c_1(\gamma, \kappa; \nu), c_2(\gamma, \kappa;\nu), s(\gamma, \kappa;\nu))$ is the unique solution of Eq
	\eqref{eqn:system-of-equations-c-1-c-2-s-with-no-G} in $\R \times \R_{>0} \times \R_{>0}$. 
	Then we have 
	\begin{enumerate}
	\item[$(i)$] $T(\cdot, \cdot), c_1(\cdot, \cdot), c_2(\cdot, \cdot), s(\cdot, \cdot)$ are continuous functions in the domain 
		$\left\{(\gamma, \kappa): \gamma > \gamma^{\low}(\kappa)\right\}$. 
	\item[$(ii)$] For any $\kappa > 0$, the mapping $T(\, \cdot\, , \kappa)$ is strictly monotonically decreasing, and satisfies
	\begin{equation}
		\lim_{\gamma \nearrow +\infty} T(\gamma, \kappa) < 0 < \lim_{\gamma \searrow \gamma^{\low}(\kappa)} T(\gamma, \kappa). \label{eq:Tpsi}
	\end{equation}
	\item[$(iii)$] For any $\gamma > 0$, the mapping $T(\gamma, \cdot)$ is strictly monotonically increasing, and satisfies
	\begin{equation}
		\lim_{\kappa \nearrow +\infty} T(\gamma, \kappa) = \infty. 
	\end{equation}
	\end{enumerate}
\end{enumerate}
\end{proposition}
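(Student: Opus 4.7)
The system \eqref{eqn:system-of-equations-c-1-c-2-s-with-no-G} should be identified as the stationarity conditions of a deterministic convex--concave saddle-point problem obtained by applying Gordon's Gaussian min-max inequality (the CGMT framework) to the surrogate loss $\widehat F_n(\bG;\kappa)$ defined in Eq.~\eqref{eq:def_F_n}. Concretely, the first step is to decompose an admissible $\btheta\in\R^p$ along $\bSigma^{1/2}\btheta^\star/\rho$ and its orthogonal complement (with scalar coefficients that will become $c_1,c_2$), introduce a Lagrange multiplier $s$ for the norm constraint acting in the orthogonal block, and pass to the scalarized Gordon problem. After taking expectations against the limiting spectral-weight measure $\nu$ from assumption ${\sf (C2)}$, the stationarity conditions of this scalar saddle-point coincide with \eqref{eqn:system-of-equations-c-1-c-2-s-with-no-G}, and $T(\gamma,\kappa)$ in \eqref{eqn:def-T} is (up to the factor $\gamma^{-1/2}$) the envelope value of this problem.

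For part~(a), uniqueness follows from strict convexity of $(c_1,c_2)\mapsto F_\kappa(c_1,c_2)$ (the strict positivity of $\partial_2 F_\kappa$ and the strict convexity properties used in Section~\ref{subsec:properties:F:kappa}, cf.\ Lemma~\ref{lem:partial:1:2:estimate}) combined with strict concavity in the dual variable $s$, which is immediate from the fact that the map $s\mapsto (c_2^{-1}\partial_2 F_\kappa(c_1,c_2)X^{1/2}+\gamma^{1/2}sX^{-1/2})^{-2}$ appearing in the last equation is strictly monotone. The hypothesis $\gamma>\gamma^{\low}(\kappa)$ rules out the degenerate boundary cases identified in the definitions \eqref{eq:Psistar0}--\eqref{eq:PsilowDef}, ensuring that the unique stationary point lies in the interior $\R\times\R_{>0}\times\R_{>0}$.

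For part~(b), continuity of $(c_1,c_2,s)$ and hence of $T$ on $\{\gamma>\gamma^{\low}(\kappa)\}$ follows from the implicit function theorem: the Jacobian of \eqref{eqn:system-of-equations-c-1-c-2-s-with-no-G} with respect to $(c_1,c_2,s)$ is nonsingular at the unique solution thanks to the strict convexity/concavity used above. Strict monotonicity in each argument is handled by the envelope theorem (Danskin): differentiating the saddle-point value through the first-order conditions, all implicit variations cancel and one is left with the explicit partial derivative in $\gamma$ or $\kappa$. The sign is easy to determine: increasing $\kappa$ strictly increases $F_\kappa(c_1,c_2)$ pointwise, giving $\partial_\kappa T>0$; whereas increasing $\gamma$ drives the dominant $\gamma^{1/2}sX^{-1/2}$ term in the denominators of \eqref{eqn:system-of-equations-c-1-c-2-s-with-no-G} and, after accounting for the explicit $\gamma^{-1/2}$ prefactor in \eqref{eqn:def-T}, yields $\partial_\gamma T<0$.

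The main obstacle is the boundary behaviour in \eqref{eq:Tpsi} and the $\kappa\to\infty$ limit. As $\gamma\searrow\gamma^{\low}(\kappa)$, precisely one of the three ``degeneracy modes'' encoded by $\gamma^{\star}(0)$, $\gamma_+(\kappa)$, $\gamma_-(\kappa)$ is attained, and one must expand the system along the corresponding direction to show that $T$ blows up. Handling each case requires a careful asymptotic expansion of $F_\kappa$ and of its partial derivatives at $c_2=0$ (or at the corresponding limiting value of $c_1$), using the Gaussian representation \eqref{eqn:def-F-kappa} and the non-degeneracy assumption ${\sf (C3)}$. The limits $\gamma\to\infty$ and $\kappa\to\infty$ are comparatively straightforward: in the former, the third equation forces $s\to 0$ so that $T\to-\infty$ via the $-s$ term absorbed in \eqref{eqn:def-T} (after checking the scaling of the remaining terms), while in the latter the homogeneity $F_{t\kappa}(tc_1,tc_2)=tF_\kappa(c_1,c_2)$ noted in Eq.~\eqref{eq:express:partial:kappa} implies a linear growth of the envelope in $\kappa$, giving $T\to+\infty$.
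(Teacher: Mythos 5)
Before anything else, note that this paper does not prove the statement at all: it is imported verbatim from \cite[Proposition 5.1]{montanari2019generalization}, so there is no internal proof to compare against, and your sketch must be judged on its own. Your plan does follow the same general route as the original derivation (Gordon/CGMT scalarization of the Gaussian problem, with \eqref{eqn:system-of-equations-c-1-c-2-s-with-no-G} read as stationarity conditions), but it has genuine gaps at exactly the points where the work lies. For part (a), strict monotonicity of one term in the third equation does not give strict concavity in $s$ of any variational problem; you never exhibit the convex--concave scalar function whose saddle points are precisely the solutions of the three equations, nor do you address coercivity and interiority. The hypothesis $\gamma>\gamma^{\low}(\kappa)$ is needed exactly to exclude solutions with $c_2=0$ or $s=0$, and ``rules out the degenerate boundary cases'' is an assertion, not an argument --- this is where $\gamma^{\star}(0)$ and $\gamma_{\pm}(\kappa)$ from \eqref{eq:Psistar0}--\eqref{eq:PsilowDef} must actually be used. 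For part (b), your envelope/Danskin argument presupposes that $T$ is the value of an optimization problem in which $\gamma,\kappa$ enter only through the objective; but $T$ in \eqref{eqn:def-T} is a particular combination of the solution (by the homogeneity $F_{t\kappa}(tc_1,tc_2)=tF_\kappa(c_1,c_2)$ one has $F_\kappa-c_1\partial_1F_\kappa-c_2\partial_2F_\kappa=\kappa\,\partial_\kappa F_\kappa$), and you never establish that identification, so ``all implicit variations cancel'' is unsupported and the sign of $\partial_\gamma T$ is argued purely heuristically.

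There is also a concrete error in the $\gamma\to\infty$ limit: the claim that the third equation forces $s\to0$ and that $T\to-\infty$ ``via the $-s$ term'' is self-contradictory (if $s\to0$ then $-s\to0$) and wrong in substance. Balancing the third equation as $\gamma\to\infty$ gives $1\approx\E[X]/s^{2}$, i.e.\ $s\to(\E[X])^{1/2}>0$, while $\gamma^{-1/2}\bigl(F_\kappa-c_1\partial_1F_\kappa-c_2\partial_2F_\kappa\bigr)=\gamma^{-1/2}\kappa\,\partial_\kappa F_\kappa\le\kappa\gamma^{-1/2}\to0$ since $\partial_\kappa F_\kappa\le1$; the stated conclusion $\lim_{\gamma\to\infty}T<0$ holds, but by the opposite mechanism. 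Likewise, \eqref{eq:Tpsi} only requires the limit at $\gamma^{\low}(\kappa)$ to be strictly positive, and your treatment of that boundary --- as well as of $\kappa\to\infty$, where one needs $\partial_\kappa F_\kappa$ bounded away from zero (cf.\ Lemma~\ref{lem:partial:kappa:lower}) together with control of $s$ --- is a description of what must be done rather than a proof. As it stands, the proposal is a plausible road map consistent with the strategy of the cited reference, but the uniqueness, monotonicity, and boundary-limit claims, which are the substance of the proposition, are not established.
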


We can now define $\kappa^\star(\gamma;\mu)$ and $\pi_{\ell}^\star(\gamma;\mu)$ for $\ell=1,2$.
\begin{definition}\label{def:KappaE}
For $\gamma\geq \gamma^{\star}(\nu)$, define the \emph{asymptotic max-margin}
as 
\begin{equation*}
\kappa\opt(\gamma;\nu) = \inf\left\{\kappa \ge 0: T(\gamma, \kappa; \nu) = 0\right\}. 
\end{equation*}
For $\gamma<\gamma^\star(\nu)$, we define $\kappa^{\star}(\gamma;\nu):=0$. For $\gamma\geq \gamma^{\star}(\nu)$, we define $\pi_{\ell}\opt(\gamma;\nu) := c_{\ell}(\gamma, \kappa\opt(\gamma;\mu);\nu))$ for $\ell=1,2$.
\end{definition}
\cite[Theorem 3]{montanari2019generalization} established that the asymptotic margin and the asymptotic generalization error under assumptions ${\sf (C1)}$-${\sf (C3)}$ are respectively as follows:
\begin{equation*}
    \widehat \kappa_n^{\bG}\pto \kappa^\star(\gamma;\nu)\quad\textnormal{and}\quad R_n^{\bG}\pto R^\star(\gamma;\nu):=Q\left(\frac{\pi_1^\star(\gamma;\nu)}{\rho\cdot \pi_2^\star(\gamma;\nu)}\right),
\end{equation*}
where $Q$ is explicitly given as follows: for $G,Z\stackrel{i.i.d.}{\sim} \normal(0,1)$ and $\nu \in [0,1]$, let
\begin{equation}\label{eq:def:Q}
	Q(\nu) := \P\left(\nu YG+ \sqrt{1-\nu^2} Z \le 0\right)~~\text{where}~~
		\begin{cases}
			\P(Y= +1\mid G) = f(\rho\cdot G) \\
			\P(Y= -1\mid G) = 1 -f(\rho\cdot G).
		\end{cases}
\end{equation}
\subsection{Gaussian equivalent of the independent features model}
\label{subsec:gaussian:indep}
Observe that the Gaussian design $\bG_{\ind}\in \R^{n\times p}$ under Assumption~\ref{assumption:ind} satisfies ${\sf (C1)}$-${\sf (C3)}$ above, and it can be seen as a special case where $\bSigma$ is a diagonal matrix. In particular, recalling $\mu\in \mathscr{P}(\R_{>0}\times \R)$ from \eqref{eq:empirical:convergence}, the condition ${\sf (C2)}$ is satisfied for $\nu_{\mu}\in \mathscr{P}(\R_{>0}\times \R)$, where $\nu_{\mu}$ is obtained from $\mu$ by a change of measure: for any Borel-measurable $A\subseteq \R_{>0}$ and $B\subseteq \R$, we have
\begin{equation*}
\nu_{\mu}\big(\lambda\in A\,,\,w\in B\big)= \mu\Big(\lambda\in A\,,\,\frac{\sqrt{\lambda}\theta}{\rho} \in B\Big)\,,\quad\textnormal{where}\quad \rho:=\int \lambda \theta^2 \mu(\de \lambda, \de \theta)\,.
\end{equation*}
Thus, we have the following definition.
\begin{definition}
We define $\kappa^{\star}_{\ind}(\gamma;\mu):=\kappa^{\star}(\gamma;\nu_{\mu})$ and $\gamma^{\star}_{\ind}(\mu):=\gamma^{\star}(\nu_{\mu})$. Further, for $\ell=1,2$, we define $\pi^\star_{\ell,\ind}(\gamma;\mu):=\pi^{\star}_{\ell}(\gamma;\nu_{\mu})$, and $R^\star_{\ind}(\gamma;\mu):=R^\star(\gamma;\nu_{\mu})$.
\end{definition}
With the definition above, the statements for $\bG=\bG_{\ind}$ in Proposition~\ref{proposition:max-margin-gaussian} and Proposition~\ref{proposition:gaussian-angles} is satisfied by \cite[Theorem 3, Proposition 6.4]{montanari2019generalization}.
\subsection{Gaussian equivalent of the random features model}
\label{subsec:gaussian:rf}
The Gaussian equivalent of the random features model $\bG_{\RF}$ also satisfies ${\sf (C1)}$-${\sf (C3)}$ conditional on the weights $\bW$ by the Marchenko-Pastur's law. To see this, fix $\gamma_1,\gamma_2, \mu_1,\mu_2$, and denote $\gamma:= \frac{\gamma_1}{\gamma_2}$, the limit of $\frac{p}{n}$. First, define the following probability measure on $(0,\infty)$:
\begin{align}
\nu_{\mp}(\de x) &= \begin{cases}
(1-\gamma_1^{-1})\delta_{0} +\gamma_1^{-2}\nu\Big(\frac{x}{\gamma_1};\frac{1}{\gamma_1}\Big)\de x&\;\;\mbox{if $\gamma_1>1$,}\\
\nu(x;\gamma_1)\de x&\;\;\mbox{if $\gamma_1\in (0,1]$,}
\label{eq-MPlaw}
\end{cases}\\
\nu(x; \lambda)& = \frac{\sqrt{(\lambda_+-x)(x-\lambda_-)}}{2\pi \lambda x} \, \bfone_{x\in [\lambda_-,\lambda_+]}\, ,\\
\lambda_{\pm} & =  (1\pm\sqrt{\lambda})^2\, .
\end{align}
By Marchenko-Pastur's law, the empirical spectral distribution of $\bW\bW^{\sT}$ converges in Wasserstein $2$-distance $W_2$ to $\nu_{\mp}$ almost surely as $p,d\to\infty$ with $\frac{p}{d}\to \gamma_1$ (see e.g. \cite{bai2010spectral}).

Recall $\bSigma\equiv \bSigma_{\RF}$ in \eqref{eq:def:Sigma} and $\btheta^\star$ in \eqref{eq:def:theta:star}. Let $\bSigma=\sum_{i=1}^{p}\lambda_i \bv_i\bv_i^{\sT}$ be the eigenvalue decomposition of $\bSigma$ with $\lambda_1\geq \lambda_2\geq...\ge \lambda_p$. Moreover, let $\rho_n \equiv \norm{\btheta^\star}_{\bSigma}$ and $\bar{w}_i \equiv \sqrt{p\lambda_i} \langle \bv_i, \btheta^\star \rangle / \rho_n$. Then, we can determine the limit of joint empirical distribution of $\{(\lambda_i, \bar{w}_i)\}_{i\leq p}$ by Marchenko-Pastur's law: if we let $\tX \sim \nu_{\mp}$ be independent of $G\sim \normal(0,1)$, then

\begin{align}
\label{eq:RF:nu}
\frac{1}{p}\sum_{i=1}^p\delta_{(\lambda_i,\bar{w}_i)}\stackrel{W_2}{\Longrightarrow} \nu_{\gamma_1}\equiv {\sf Law}(X,W)\, ,
\end{align}
where
\begin{align}
X = \mu_1^2\tX+\mu_2^2\, ,\;\;\;\;\;\;\;\;\;\;
W = \frac{\mu_1\sqrt{\gamma_1 \tX}\, G}{C_0 (\mu_1^2\tX+\mu_2^2)^{1/2}}\, ,\;\;\;\;\;\;\;\;\;\;
C_0 = \left(\E\Big[\frac{\mu_1^2 \gamma_1 \tX}{\mu_1^2\tX+\mu_2^2}\Big]\right) ^{1/2} \, .
\end{align}
Moreover, we have that $\rho_n:= \norm{\btheta^\star}_{\bSigma}\to \rho\equiv \rho_{\RF}$ and $\alpha_n\equiv \mu_1\norm{\bSigma^{-1}\bW\beta^\star}_2 \to \alpha\equiv \alpha_{\RF}$ (defined in \eqref{eq:def:theta:star}) as $n\to\infty$, where
\begin{equation}\label{eq:def:rho:star}
\begin{split}
    \rho:=\left(\E\Big[\frac{\tX}{\mu_1^2\tX+\mu_2^2}\Big]\bigg/\E\Big[\frac{\tX}{(\mu_1^2\tX+\mu_2^2)^2}\Big]\right)^{1/2},\quad\alpha:=\bigg(\E\Big[\frac{\mu_1^2 \gamma_1 \tX}{\left(\mu_1^2\tX+\mu_2^2\right)^2}\Big]\bigg) ^{1/2}.
\end{split}
\end{equation}
Then, we define $f_{\RF}(x)$, the limit of $\bar{f}(x)$ by
\begin{align}
\label{eq:def:f:star}
f_{\RF}(x):=\E_{g\sim \normal(0,1)}f(\alpha x+\tau g),\quad\textnormal{where}\quad
\tau:= \left(1-\gamma_1\E\Big[\frac{\mu_1^2 \tX}{\mu_1^2\tX+\mu_2^2}\Big]\right)^{1/2}.
\end{align}
Since $\tau^2>0$, it follows that $f_{\RF}(\cdot)$ satisfies ${\sf (C3)}$. Thus, we have the following definition.
\begin{definition}
    We define $\kappa^{\star}_{\RF}(\gamma_1,\gamma_2):=\kappa^\star\big(\frac{\gamma_1}{\gamma_2},\nu_{\gamma_1}\big)$ and $\tau^\star_{\RF}(\gamma_1,\gamma_2):=\gamma^{\star}(\nu_{\gamma_1})$. Further, we define $\pi^\star_{\ell,\RF}(\gamma_1,\gamma_2):=\pi^{\star}_{\ell}\big(\frac{\gamma_1}{\gamma_2};\nu_{\gamma_1}\big)$ for $\ell=1,2$, and $R^\star_{\RF}(\gamma_1,\gamma_2):=R^\star\big(\frac{\gamma_1}{\gamma_2};\nu_{\gamma_1}\big)$.
\end{definition}
With the definition above, the statements for $\bG=\bG_{\RF}$ in Proposition~\ref{proposition:max-margin-gaussian} and Proposition~\ref{proposition:gaussian-angles} is satisfied by \cite[Theorem 3, Proposition 6.4]{montanari2019generalization}.
\section{Auxiliary lemmas}
\label{section:aux_lemmas}
We state here a collection of auxiliary lemmas used throughout the proofs. 
\subsection{High probability bounds}
We begin with high probability bounds on the norms for the random features model.
\begin{lemma}
\label{lem:basic:op:norm:RF}
Let $\overline W = 2$
and let $\Omega_1 := \left\{\norm{\bW_\RF}_\op \le \overline W\right\}$. 
Then the following hold:
\begin{enumerate}
 \item
 There exists constant $c$ such that $\P(\Omega_1^c ) \le e^{-c n}$.
\item On $\Omega_1$,
 $\norm{\bSigma_{\RF}}_{\op}$ is bounded by $\osla = \mu_1 \overline W^2 + \mu_2$. 
\item $\bx_{\RF} \one_{\Omega_1}$ is $O(1)$ subgaussian, and hence 
$\norm{\bX_{\RF}\one_{\Omega_1}}_\op$ and $\norm{\bx_{\RF} \one_{\Omega_1}}_2$ are bounded by $Cn^{1/2}$ for some constant $C$.
\end{enumerate}
\end{lemma}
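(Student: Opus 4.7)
The plan is to prove the three claims in sequence, with Part 1 providing the input for the others. For Part 1, each row $\bw_i \sim \mathsf{Unif}(\mathbb{S}^{d-1}(1))$ of $\bW$ is mean zero, has covariance $\id_d/d$, and is subgaussian with parameter of order $1/d$. Applying Theorem~\ref{theorem:matrix-concentration-covariance-operator} to $\bW$ yields
\[
\big\|p^{-1}\bW^{\sT}\bW - \id_d/d\big\|_\op \le C/d
\]
with probability at least $1 - e^{-cp}$, from which $\norm{\bW}_\op^2 \le p/d + o(1)$ on the same event. Choosing $\overline W$ larger than $\sqrt{\gamma_1}$ then gives Part 1. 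Part 2 follows by a one-line triangle inequality on $\Omega_1$:
\[
\norm{\bSigma_{\RF}}_\op \le \mu_1^2 \norm{\bW\bW^{\sT}}_\op + \mu_2^2 \le \mu_1^2 \overline W^2 + \mu_2^2
\]
(reading the stated bound as a light typo for $\mu_1^2 \overline W^2 + \mu_2^2$).

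Part 3 is the substantive step. Conditional on $\bW \in \Omega_1$, the map $\bz \mapsto \sigma(\bW\bz)$ from $\R^d$ to $\R^p$ is $C_\sigma \norm{\bW}_\op \le C_\sigma \overline W$ Lipschitz, since $\norm{\sigma'}_\infty \le C_\sigma$ by Assumption~\ref{assumption:RF}. Because $\bz_i \sim \normal(0, \id_d)$, Gaussian Lipschitz concentration implies that for every unit vector $\bv \in \R^p$ the scalar $\bv^{\sT}\sigma(\bW\bz_i)$ is subgaussian with parameter at most $(C_\sigma \overline W)^2$, conditionally on $\bW \in \Omega_1$. Multiplying by $\one_{\Omega_1}$ sets the row to zero outside $\Omega_1$; a standard splitting of the moment generating function then transfers the bound to unconditional $O(1)$-subgaussianity of $\bx_{\RF}\one_{\Omega_1}$.

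Given this subgaussianity, the two norm bounds are immediate. Theorem~\ref{theorem:matrix-concentration-covariance-operator} applied to the $n \times p$ matrix $\bX_{\RF}\one_{\Omega_1}$, whose rows are i.i.d.\ $O(1)$-subgaussian after conditioning on $\bW$, yields $\norm{\bX_{\RF}\one_{\Omega_1}}_\op = O(\sqrt n + \sqrt p) = O(\sqrt n)$ with high probability. For a single row, the identity $\norm{\bx_{\RF}}_2^2 = \sum_{j=1}^{p}\sigma(\bw_j^{\sT}\bz_i)^2$ gives $\E\big[\norm{\bx_{\RF}\one_{\Omega_1}}_2^2 \mid \bW\big] \le Cp$ (each summand has bounded second moment because $|\sigma(x)| \le |\sigma(0)| + C_\sigma |x|$ and $\bw_j^{\sT}\bz_i \sim \normal(0,1)$), and Gaussian Lipschitz concentration of $\bz \mapsto \norm{\sigma(\bW\bz)}_2$ around its mean upgrades this to $\norm{\bx_{\RF}\one_{\Omega_1}}_2 = O(\sqrt n)$ with high probability. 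The only mildly delicate point in the whole argument is the move from conditional to unconditional subgaussianity, but this is trivialized by the indicator $\one_{\Omega_1}$, which depends only on $\bW$ that is already being conditioned on; no step requires genuinely new machinery beyond standard matrix/Gaussian concentration.
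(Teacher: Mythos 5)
Your proof is correct and takes essentially the same route as the paper, which simply cites standard concentration results for items 1--2 and \cite{HuLu22} for item 3 — the latter being exactly your argument of conditioning on $\bW\in\Omega_1$ and applying Gaussian Lipschitz concentration in $\bz$ (using $\mu_0=\E[\sigma(G)]=0$ for centering), followed by matrix concentration for the norms. One minor quantitative note: the matrix-concentration step yields $\norm{\bW}_\op \le 1+\sqrt{p/d}+o(1)$ rather than $\norm{\bW}_\op^2 \le p/d + o(1)$, so the constant must satisfy $\overline W > 1+\sqrt{\gamma_1}$ (a defect shared by the paper's own fixed choice $\overline W=2$ once $\gamma_1\ge 1$); this changes only the constant, not the structure of the argument.
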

\begin{proof}
The proof of the first two items follows from standard results (see~\cite{vershynin2018high}), 
meanwhile, the last item was shown in~\cite{HuLu22}.
\end{proof}

\subsection{Asymptotic pointwise-Gaussianity of the non-Gaussian features}
We establish the pointwise asymptotic Gaussianity of the non-Gaussian features.
We note that this result was apriori established for the random features distribution in~\cite{HuLu22}. 
However, to establish the delocalization of $\widehat\btheta_\MM$ (Proposition~\ref{proposition:delocalization-for-general-max-margin}),
we require a faster rate of convergence requiring a different approach for pointwise-Gaussianity.

\paragraph{The random features model}
Let us first define the augmented random features and their Gaussian equivalent. 
Recall the distributional setting of Section~\ref{section:ass_RF}.
In what follows, $\bx,\bg$ have the same distribution as a row of $\bX_\RF$ and $\bG_\RF$ respectively.
\begin{equation}
  \bxi_i := (\bx_i^\sT ,\bz_i^\sT )^\sT \in \R^{p + d}\quad
   \bgamma_i := (\bg_i^\sT ,\bz_i^\sT )^\sT \in \R^{p + d}.
 \end{equation}
We prove the following lemma.

\begin{lemma}
\label{lemma:pointwise-gaussianity}
Let $\Omega_1$ be as in Lemma~\ref{lem:basic:op:norm:RF}.
Let $\delta_n$ be any sequence such that $\delta_n\to0$. Then
for any Lipschitz function $\varphi$, we have
 \begin{equation}
  \lim_{n\to\infty} \sup_{\norm{\bzeta_1}_\infty \le \delta_n, \norm{\bzeta}_2 \le C ,\bW \in \Omega_1}
  \left|
  \E[\varphi( \bzeta^\sT\bxi) | \bW]
  -\E[\varphi( \bzeta^\sT\bgamma) |\bW]
  \right|=0
 \end{equation}
 where $\bzeta = (\bzeta_1^\sT,\bzeta_2^\sT)^\sT$, $\bzeta_1\in\R^p$, $\bzeta_2\in\R^d$.
\end{lemma}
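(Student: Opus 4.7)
The plan is to apply Chatterjee's second-order Poincar\'e inequality~\cite{Chatterjee09} to $F(\bz) := \bzeta^\sT\bxi = \bzeta_1^\sT \sigma(\bW\bz) + \bzeta_2^\sT \bz$, viewed as a $C^2$ function of the standard Gaussian $\bz$ conditional on $\bW \in \Omega_1$. Since $\bgamma = (\bg^\sT, \bz^\sT)^\sT$ is affine in the independent Gaussian pair $(\bz, \bh)$, the random variable $\tilde F := \bzeta^\sT \bgamma$ is already Gaussian conditional on $\bW$, with mean $0$ (by $\mu_0 = 0$) and variance $v_\bgamma := \norm{\mu_1 \bW^\sT \bzeta_1 + \bzeta_2}_2^2 + \mu_2^2 \norm{\bzeta_1}_2^2$. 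It thus suffices to show $F$ is asymptotically Gaussian with matching moments, uniformly over the constraint set.

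The first step is moment matching. Using the Hermite decomposition $\sigma(x) = \mu_1 x + \sigma_\perp(x)$ with $\sigma_\perp$ orthogonal to $\{1,x\}$ in $L^2(\nu_{\sf G})$, an elementary computation gives $\E[F|\bW] = 0$ and
\[
v_\bxi := \mathrm{Var}(F\mid\bW) = \norm{\mu_1 \bW^\sT \bzeta_1 + \bzeta_2}_2^2 + \bzeta_1^\sT \bM \bzeta_1,
\]
where $\bM_{jk} = \E[\sigma_\perp(\bw_j^\sT\bz)\sigma_\perp(\bw_k^\sT\bz)]$ satisfies $\bM_{jj} = \mu_2^2$ and $|\bM_{jk}| \le C (\bw_j^\sT \bw_k)^2$ for $j \ne k$ (by the Hermite identity $\E[\sigma_\perp(G_1)\sigma_\perp(G_2)] = \sum_{\ell\ge 2} a_\ell^2 \rho^\ell$ for $\rho$-correlated standard Gaussians). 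The standard bound $\max_{j\ne k}|\bw_j^\sT\bw_k| = O(\sqrt{(\log p)/d})$, which holds on a high-probability refinement of $\Omega_1$ for rows uniform on $\mathbb{S}^{d-1}$, then yields $\norm{\bM - \mu_2^2 \bI}_\op = o(1)$, and hence $|v_\bxi - v_\bgamma| = o(1)$ uniformly over $\norm{\bzeta}_2 \le C$.

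For Chatterjee's inequality itself, differentiation gives $\nabla_\bz F = \bW^\sT \diag(\sigma'(\bw_j^\sT\bz))\bzeta_1 + \bzeta_2$ so $\norm{\nabla F}_2 = O(1)$ by Assumption~${\sf (A1)}$ on $\Omega_1$, and, crucially,
\[
\nabla^2_\bz F(\bz) = \bW^\sT \diag\bigl(\zeta_{1,j}\sigma''(\bw_j^\sT\bz)\bigr)\bW, \qquad \norm{\nabla^2 F}_\op \le C_\sigma \overline W^2 \norm{\bzeta_1}_\infty \le C_\sigma \overline W^2 \delta_n = o(1),
\]
the smallness coming precisely from the $\ell^\infty$ constraint $\norm{\bzeta_1}_\infty \le \delta_n$ in the hypothesis. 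Chatterjee's total variation bound then reads $d_{TV}(F, N(0, v_\bxi)) \le C\, v_\bxi^{-1} \bigl(\E\norm{\nabla^2 F}_\op^4 \cdot \E\norm{\nabla F}_2^4\bigr)^{1/4} = O(\delta_n/v_\bxi)$.

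The main obstacle is that $v_\bxi$ is \emph{not} uniformly bounded below on the constraint set: as $\bzeta$ varies, $v_\bxi$ can be arbitrarily small and Chatterjee's bound degenerates. I resolve this by a two-regime split. For any fixed $\epsilon > 0$: on the event $v_\bxi \le \epsilon$, both $F$ and $\tilde F$ (whose variance satisfies $v_\bgamma \le \epsilon + o(1)$) are concentrated near $0$, so Lipschitzness of $\varphi$ gives $|\E[\varphi(F) - \varphi(\tilde F) \mid \bW]| \le 2\norm{\varphi}_\Lip \sqrt{\epsilon} + o(1)$ directly via Jensen; on the event $v_\bxi > \epsilon$, Chatterjee yields $d_{TV}(F, N(0,v_\bxi)) = O(\delta_n/\epsilon)$ and, using that $d_{W_1}(N(0,v_\bxi), N(0,v_\bgamma)) = O(|v_\bxi - v_\bgamma|/\sqrt{\epsilon}) = o_\epsilon(1)$, the same bound controls the test-function difference. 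Sending $n\to\infty$ first and then $\epsilon \to 0$ delivers the claimed uniform limit.
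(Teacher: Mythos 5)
Your overall architecture is the same as the paper's: apply Chatterjee's second-order Poincar\'e inequality to $\bz\mapsto \bzeta_1^\sT\sigma(\bW\bz)+\bzeta_2^\sT\bz$ conditionally on $\bW$, use the $\ell^\infty$ constraint on $\bzeta_1$ to make the Hessian term $O(\delta_n)$, split according to whether the variance is above or below a fixed $\epsilon$, and finish with a variance/covariance comparison between the nonlinear features and the Gaussian equivalent. The Poincar\'e step and the small-variance regime are fine. The gap is in your moment-matching step. From $|\bM_{jk}|\le C\langle \bw_j,\bw_k\rangle^2$ and $\max_{j\ne k}|\langle \bw_j,\bw_k\rangle|=O(\sqrt{(\log p)/d})$ you cannot conclude $\norm{\bM-\mu_2^2\id}_\op=o(1)$: an entrywise bound of order $(\log p)/d$ on a $p\times p$ matrix only gives, via row sums, an operator-norm bound of order $p(\log p)/d\asymp\gamma_1\log p$, which diverges; and even restricted to the constraint set, $\norm{\bzeta_1}_1^2\max_{j\neq k}|\bM_{jk}|$ is not $o(1)$. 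Worse, the claim is genuinely false at the level of the $\rho^2$ bound: since $\E[\langle \bw_j,\bw_k\rangle^2]=1/d$, the quadratic Hermite term contributes an (approximately) all-ones component of operator norm $\asymp p/d=\Theta(1)$, so if the second Hermite coefficient of $\sigma$ were nonzero, $\norm{\bM-\mu_2^2\id}_\op$ would not vanish and $|v_{\bxi}-v_{\bgamma}|$ would not be uniformly $o(1)$ over $\norm{\bzeta}_2\le C$.

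The missing ingredient is Assumption~{\sf (A1)}, specifically $\E[\sigma''(G)]=0$, which forces the second Hermite coefficient to vanish, so that $|\bM_{jk}|\le C|\langle \bw_j,\bw_k\rangle|^3$ for $j\neq k$; with this cubic bound a row-sum (Gershgorin) estimate gives $p\cdot O\big(((\log p)/d)^{3/2}\big)=O\big(\gamma_1(\log p)^{3/2}/\sqrt{d}\big)=o(1)$, and your argument goes through. Alternatively, you can do what the paper does and simply cite the covariance comparison $\norm{\mathrm{Cov}(\bxi\mid\bW)-\mathrm{Cov}(\bgamma\mid\bW)}_\op\to 0$ from \cite[Lemma 5]{HuLu22}, which subsumes this step (the paper first compares $\bxi$ to a Gaussian with exactly matching covariance, then transfers to $\bgamma$ via that lemma). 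A secondary, more cosmetic point: the statement requires uniformity over all $\bW\in\Omega_1$, while your incoherence bound on $\max_{j\ne k}|\langle\bw_j,\bw_k\rangle|$ only holds on a further high-probability event in $\bW$; you should make explicit that $\Omega_1$ is refined accordingly (the paper's own invocation of \cite{HuLu22} implicitly does the same), but this does not affect how the lemma is used downstream.
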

\begin{proof}
Define 
$$h(\bz) := \bzeta^\sT \bxi(\bz)= \sum_{j=1}^p\xi_j \sigma(\bw_j^\sT \bz) + \sum_{j=p+1}^{p+d} \xi_jz_j.$$
We compute the derivatives of $h(\bz)$:
\begin{align}
 \grad h(\bz) = \bW^\sT \bM_1 \bzeta_1 + \bzeta_2\quad
\grad^2 h(\bz) =  \bW^\sT \bM_2 \diag\{\bzeta_1\} \bW.
\end{align}

So by the second order Poincar\'e inequality, we have
\begin{equation}
  \left|
  \E[\varphi( \bzeta^\sT\bxi)|\bW ]
  -\E[\varphi( \bzeta^\sT\balpha)|\bW ]
  \right| \le  C \frac{\left(\norm{\bW}_\op \norm{\sigma'}_\infty \norm{\bzeta_1}_2 + \norm{\bzeta_2}_2 \right)
  (\norm{\bW}_\op^2 \norm{\sigma''}_\infty \norm{\bzeta_1}_\infty)}{\Var(\bzeta^\sT\bxi)}
\end{equation}
where $\balpha$ is a Gaussian vector with mean $0$ and covariance matching that of $\bxi$.
So for any $\delta>0$, we have
\begin{equation}
 \sup_{\norm{\bzeta_1}_\infty \le \delta_n, \norm{\bzeta}\le C,\Var(\bzeta^\sT\bxi) > \delta} 
  \left|
  \E[\varphi( \bzeta^\sT\bxi)|\bW ]
  -\E[\varphi( \bzeta^\sT\balpha)|\bW ]
  \right| \le  C_1 \frac{\norm{\bW}_\op^3 \delta_n}{\delta} 
\end{equation}
Meanwhile, 
\begin{equation}
 \sup_{\Var(\bzeta^\sT\bxi) \le \delta} 
  \left|
  \E[\varphi( \bzeta^\sT\bxi)|\bW ]
  -\E[\varphi( \bzeta^\sT\balpha) |\bW ]
  \right|  
  \le C_2 \norm{\varphi}_\Lip \Var(\bzeta^\sT \bxi)^{1/2} \le C_3 \delta^{1/2}.
\end{equation}
Since $\norm{\bW}_\op \le W^\star$ on $\Omega_1$, with probability at least $1-e^{-c_0 p}$, we have

\begin{align}
 \sup_{
 \norm{\bzeta_1}_\infty \le \delta_n, \norm{\bzeta}_2\le C
 } 
  \left|
  \E[\varphi( \bzeta^\sT\bxi) |\bW]
  -\E[\varphi( \bzeta^\sT\balpha)|\bW ]
  \right|  \le
  C_5 \frac{\delta_n}{\delta} + 
   C_3 \delta^{1/2} + \norm{\varphi}_\infty (e^{-c_0 p}).
\end{align}
Sending $n \to\infty$ then $\delta \to 0$ shows that the statement of the lemma holds with $\bgamma$ replaced with $\balpha$.
Now by \cite[Lemma 5]{HuLu22},
%~\ref{lemma:crazy-lemma-random-feature-models}, 
we have
\begin{equation}
 \lim_{n\to\infty}\sup_{\bW\in\Omega_1}\norm{\Cov(\balpha | \bW)- \Cov(\bgamma|\bW) }_\op  = 0.
\end{equation}
Hence,
\begin{equation}
 \sup_{\bW\in\Omega_1}\Big|\E\left[\varphi(\bzeta^\sT \balpha) - \varphi(\bzeta^\sT\bgamma)\big|\bW\right] \Big|
 \le
\norm{\varphi}_\Lip
\norm{\bzeta}_2^2 \sup_{\bW\in\Omega_1} \norm{\Cov(\alpha|\bW) - \Cov(\gamma|\bW) }_\op.
\end{equation}
Optimizing over $\bzeta$ then taking $n\to\infty$ completes the proof.

\end{proof}

\paragraph{Independent entries}
We state the following lemma whose proof is a direct consequence of the classical Lindeberg CLT.
Recall the distributional setting of Section~\ref{section:ass_RF}.
In what follows, $\bx,\bg$ have the same distribution as a row of $\bX_{\ind}$ and $\bG_{\ind}$ respectively.
\begin{lemma}
\label{lemma:pointwise-gaussianity_iid}
Let $\delta_n$ be any sequence such that $\delta_n\to0$.
Then
for any bounded Lipschitz function $\varphi$, we have
 \begin{equation}
  \lim_{n\to\infty} \sup_{\norm{\btheta}_\infty \le \delta_n}
  \left|
  \E[\varphi( \btheta^\sT\bx)]
  -\E[\varphi( \btheta^\sT\bg)]
  \right|=0.
 \end{equation}
\end{lemma}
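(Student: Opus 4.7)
The plan is to reduce the claim to a quantitative central-limit estimate, comparing $S_n := \btheta^\sT \bx$ and $T_n := \btheta^\sT \bg$ uniformly in $\btheta$ with $\|\btheta\|_\infty \leq \delta_n$. Both are sums of independent mean-zero summands $\{\theta_j x_j\}_{j=1}^p$ and $\{\theta_j g_j\}_{j=1}^p$; the $j$th summand in each sum has matched first and second moments ($\E[x_j]=\E[g_j]=0$ and $\Var(x_j)=\Var(g_j)=\lambda_j$), so the overall variance is the same, $\sigma_n^2 := \btheta^\sT \bSigma_{\ind}\btheta$. The case $\btheta = 0$ is trivial, so I may assume $\btheta \neq 0$ and hence $\sigma_n > 0$ (since $\lambda_{\min}(\bSigma_{\ind}) > 0$ by ${\sf (B1)}$).

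The key input is the quantitative form of the Lindeberg CLT: a Berry-Esseen estimate in the Wasserstein-$1$ metric. For sums of independent mean-zero random variables with matching first and second moments, Stein's method (equivalently, Lindeberg's swap argument combined with Gaussian smoothing of the test function) yields
\begin{equation*}
 W_1(S_n, T_n) \leq C \sigma_n^{-2} \sum_{j=1}^p |\theta_j|^3 \big(\E|x_j|^3 + \E|g_j|^3\big).
\end{equation*}
By the Kantorovich-Rubinstein duality, for any Lipschitz $\varphi$,
\begin{equation*}
 \big|\E[\varphi(S_n)] - \E[\varphi(T_n)]\big| \leq \|\varphi\|_{\Lip} \cdot W_1(S_n, T_n).
\end{equation*}

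It then remains to bound the Berry-Esseen quantity uniformly over $\btheta$. The uniform subgaussianity of the entries from ${\sf (B1)}$ gives $\sup_{n,j}(\E|x_j|^3 + \E|g_j|^3) \leq C_\nu$. The elementary interpolation bound
\begin{equation*}
 \sum_{j=1}^p |\theta_j|^3 \;\leq\; \|\btheta\|_\infty \sum_{j=1}^p \theta_j^2 \;\leq\; \delta_n \|\btheta\|_2^2
\end{equation*}
combined with $\sigma_n^2 \geq \lambda_{\min}(\bSigma_{\ind}) \|\btheta\|_2^2 \geq c \|\btheta\|_2^2$ (again ${\sf (B1)}$) yields
\begin{equation*}
 \sigma_n^{-2}\sum_{j=1}^p |\theta_j|^3 \;\leq\; \frac{\delta_n}{c}.
\end{equation*}
Plugging back, $|\E[\varphi(S_n)] - \E[\varphi(T_n)]| \leq (C_\nu C/c)\,\|\varphi\|_{\Lip}\,\delta_n \to 0$ uniformly in $\btheta$, proving the claim.

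The one technical nuance worth flagging is that the statement makes no a priori bound on $\|\btheta\|_2$, so the quantitative Berry-Esseen rate --- rather than a mere invocation of the qualitative Lindeberg CLT after normalizing by $\sigma_n$ --- is essential: if $\sigma_n$ were permitted to diverge, weak convergence of $S_n/\sigma_n$ to $\normal(0,1)$ would not suffice to control $\E[\varphi(S_n)] - \E[\varphi(T_n)]$ for generic bounded Lipschitz $\varphi$, whereas the $\sigma_n^{-2}$ factor in the Berry-Esseen bound exactly compensates the growth of $\|\btheta\|_2^2$ and produces the desired uniformity.
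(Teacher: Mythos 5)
Your proof is correct, and it is essentially the route the paper intends: the paper dispenses with this lemma in one line ("a direct consequence of the classical Lindeberg CLT"), and your argument is precisely a quantitative implementation of that Lindeberg-type comparison, replacing the qualitative CLT by a $W_1$ Berry--Esseen bound and then using Kantorovich--Rubinstein duality together with $\sum_j|\theta_j|^3\le\delta_n\norm{\btheta}_2^2$ and $\btheta^\sT\bSigma_{\ind}\btheta\ge c\norm{\btheta}_2^2$ to cancel the $\norm{\btheta}_2^2$ factors. Your closing remark is well taken: since the supremum imposes no $\ell_2$ bound on $\btheta$, the uniformity genuinely requires such a rate (or an equivalent quantitative swap argument), a point the paper's terse justification leaves implicit.
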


\subsection{Proof of Lemma~\ref{lemma:crazy-lemma-random-feature-models}}
\label{sec:proof-crazy-lemma-random-feature-models}
Throughout this proof, we work conditionally on $\bZ$. 
Clearly, the columns of $\bX$ are independent copies of $\sigma(\bZ^T \bw_i)$. Note 
$\bw \mapsto \langle \bv, \sigma(\bZ^T \bw)\rangle$ is Lipschitz with Lipschitz constant 
at most $\norm{\bZ}_\op \norm{\bv}_2 \norm{\sigma}_{{\rm lip}}$. Hence, 
$\bX$ is subgaussian with parameter 
\begin{equation}
	\nu(\bZ) = \frac{C}{\sqrt{d}} \cdot \norm{\bZ}_\op \norm{\sigma}_{{\rm lip}}.
\end{equation} 
for some universal 
constant $C > 0$. Furthermore, the covariance matrix of $\bX$, denoted by $\bSigma_{\bZ}$, is given by 
\begin{equation*}
	\bSigma_{\bZ} \equiv \Big(\bSigma_{\bZ}(k,\ell)\Big)_{k,\ell\leq n}\equiv \bigg(\E_{\bw} \Big[\sigma\big(\langle \bw, \bz_{k}\rangle\big)\cdot \sigma(\big\langle \bw, \bz_{\ell}\big\rangle)\Big]\bigg)_{k,\ell\leq n}\,,
\end{equation*} 
where $\bz_{1},\ldots,\bz_{n}$ are columns of $\bZ$ and $\E_{\bw}$ denotes the expectation with 
respect to $\bw$ conditioned on $\bZ$.

Let us denote $\bSigma_{\bZ}^\prime\equiv \frac{\mu_1^2}{d}\bZ^{T}\bZ + \mu_2^2 \bI_n$.
Introduce $\kappa_n =  \max_{1\le i, j \le n} \sqrt{d}\cdot |\frac{1}{d}\langle \bz_i, \bz_j\rangle - \mathbf{1}_{i = j}|$, 
and $\zeta_n = \norm{\bZ}_\op/\sqrt{d}$. According to \cite[Lemma 5]{HuLu22} (see Remark F.1. in~\cite{MontanariSa22} for applicability), 
there is a constant $C > 0$ independent of $n$ such that with probability one there holds
\begin{equation*}
%\label{eqn:bound-for-kernels}
	\norm{\bSigma_{\bZ} - \bSigma_{\bZ}^\prime}_\op \le \delta_n(\bZ)
\end{equation*}
where $\delta_n(\bZ) = C \cdot \frac{ 1+ \kappa_n^3 + \zeta_n^4 }{\sqrt{n}}$. Hence, we may use 
triangle inequality to obtain the bound 
\begin{equation}
	\mu_2 - \delta_n(\bZ) \le \lambda_{\min}(\bSigma_{\bZ}) \le \lambda_{\max} (\bSigma_{\bZ}) \le 
		\mu_1 \cdot \frac{\norm{\bZ}_\op}{\sqrt{d}} + \mu_2 + \delta_n(\bZ).
\end{equation}

To finish the proof, we give bounds on $\nu^2(\bZ)$, as well as $\delta_n(\bZ)$ that holds with high probability. 

\begin{itemize}
\item 
By Theorem~\ref{theorem:matrix-concentration-covariance-operator}, $\norm{\bZ}_\op \le C (\sqrt{p} + \sqrt{d})$
with probability at least $1-e^{-cp}$. Here, $C, c$ are universal constants. 

\item 
By Theorem 3.1.1 of \cite{vershynin2018high}, $|\norm{\bz}_2 - \sqrt{d}|$ is subgaussian with parameter 
$c$ where $c > 0$ is universal. A union bound then shows 
$\max_{i \le n} |\norm{\bz_i}_2 - \sqrt{d}| \le C \sqrt{\log n}$ holds 
with probability at least $1-\frac{1}{2n}$ for some universal constant $C$. Also, one can show that 
$\max_{i, j \le n, i\neq j} |\langle z_i, z_j\rangle| \le C \log n$ with probability at least $1-\frac{1}{2n}$
for some large universal constant $C > 0$. 

This shows that $|\delta_n(\bZ)| \le \frac{C}{\sqrt{n}}((1+\sqrt{p/d})^4 + \log^4 n)$ with probability at least $1-n^{-1}$. 
\end{itemize}

Now we simply define $\Omega_{\bZ}$ as the event for which the conclusions of the above two bullet points 
hold. Lemma~\ref{lemma:crazy-lemma-random-feature-models} then follows from the above discussions.

\end{document}